\setlist{leftmargin=*}
\newenvironment{proofof}[1]{\par\noindent{\bfseries\upshape Proof of #1.\ }}{\qed}
\DeclarePairedDelimiter{\group}{(}{)}
\DeclarePairedDelimiter{\sqgroup}{[}{]}
\DeclarePairedDelimiter{\set}{\{}{\}}
\newcommand{\reals}{\mathbb{R}}
\newcommand{\extreals}{\overline{\mathbb{R}}}
\newcommand{\posreals}{\reals_{> 0}}
\newcommand{\nnegreals}{\reals_{\geq 0}}
\newcommand{\posextreals}{\extreals_{> 0}}
\newcommand{\nnegextreals}{\extreals_{\geq 0}}
\newcommand{\nats}{\mathbb{N}}
\newcommand{\natz}{\mathbb{N}_{0}}
\newcommand{\indica}[1]{\mathbb{I}_{#1}}
\newcommand{\upprev}{\overline{\mathrm{E}}}
\newcommand{\upprevacc}[1]{\overline{\mathrm{E}}_{\raisebox{1pt}{\scriptsize $#1$}}^{\raisebox{-2pt}{\scriptsize $\prime$}}}
\newcommand{\upprevvovkk}{\overline{\mathrm{E}}_\mathrm{V}}
\newcommand{\oldupprevvovk}{\overline{\mathrm{E}}_{\raisebox{1pt}{\scriptsize $\mathrm{V}$}}^{\raisebox{-2pt}{\scriptsize $\prime$}}}
\newcommand{\oldupprevvovkb}{\overline{\mathrm{E}}_{\raisebox{1pt}{\scriptsize $\mathrm{V}$}}^{\raisebox{-2pt}{\scriptsize $\prime\prime$}}}
\newcommand{\uboldupprevvovk}{\overline{\mathrm{E}}_{\raisebox{1pt}{\scriptsize $\mathrm{ub}$}}^{\raisebox{-2pt}{\scriptsize $\prime$}}}
\newcommand{\ubupprevvovk}{\overline{\mathrm{E}}_\mathrm{ub}}
\newcommand{\lowprev}{\underline{\mathrm{E}}}
\newcommand{\lowprevvovkk}{\underline{\mathrm{E}}_\mathrm{V}}
\newcommand{\uboldlowprevvovk}{\underline{\mathrm{E}}_{\raisebox{1pt}{\scriptsize $\mathrm{ub}$}}^{\raisebox{-2pt}{\scriptsize $\prime$}}}
\newcommand{\ublowprevvovk}{\underline{\mathrm{E}}_\mathrm{ub}}
\newcommand{\lupprev}[1]{\overline{\mathrm{Q}}_{#1}}
\newcommand{\sit}{x_{1:n}}
\newcommand{\situa}[1]{x_{#1}}
\newcommand{\situation}[2]{x_{#1:#2}}
\newcommand{\martingale}{\mathscr{M}}
\newcommand{\setofextsupmartb}{\overline{\mathbb{M}}_\mathrm{b}}
\newcommand{\setofextsupmartbb}{\overline{\mathbb{M}}_{\raisebox{1pt}{\scriptsize $\mathrm{b}$}}^{\raisebox{-2pt}{\scriptsize $\prime$}}}
\newcommand{\setofextsupmartbbb}{\overline{\mathbb{M}}_{\raisebox{1pt}{\scriptsize $\mathrm{b}$}}^{\raisebox{-2pt}{\scriptsize $\prime\prime$}}}
\newcommand{\setofubextsupmartb}{\overline{\mathbb{M}}}
\newcommand{\setofubextsupmartbb}{\overline{\mathbb{M}}^{\raisebox{-2pt}{\scriptsize $\prime$}}}
\newcommand{\process}{\mathscr{P}}
\newcommand{\situations}{{\mathscr{X}^{\ast}}}
\newcommand{\statespace}{\mathscr{X}}
\newcommand{\statespaceseq}[2]{\mathscr{X}_{#1:#2}}
\newcommand{\samplespace}{\Omega}
\newcommand{\setofgengambles}{\mathscr{L}}
\newcommand{\setofgenextvariables}{\overline{\mathscr{L}}}
\newcommand{\setofgenextvariablesb}{\overline{\mathscr{L}}_b}
\newcommand{\setofgambles}{\mathbb{V}}
\newcommand{\setofextvariables}{\overline{\mathbb{V}}}
\newcommand{\setofextvariablesb}{\overline{\mathbb{V}}_b}
\newcommand{\setoflimitsoffinmeasb}{\overline{\mathbb{V}}_{b,\mathrm{lim}}}
\newcommand{\prodindica}[2]{#1\,\indica{#2}}
\newcommand{\andstate}{\,\cdot}
\newtheorem{example}{Example}{\itshape}{\rmfamily}
\newtheorem{definition}{Definition}{\bfseries}{\rmfamily}
\newtheorem{theorem}{Theorem}{\bfseries}
\newtheorem{proposition}[theorem]{Proposition}{\bfseries}
\newtheorem{corollary}[theorem]{Corollary}{\bfseries}
\newtheorem{lemma}[theorem]{Lemma}{\bfseries}
\begin{document}

\title{Game-Theoretic Upper Expectations for Discrete-Time Finite-State Uncertain Processes}
\author{Natan T'Joens}
\ead{natan.tjoens@ugent.be}
\author{Jasper De Bock}
\author{Gert de Cooman}

\address{FLip, Ghent University, Belgium \\
Technologiepark-Zwijnaarde 125,
9052 Zwijnaarde}

\begin{abstract}
Game-theoretic upper expectations are joint (global) probability models that mathematically describe the behaviour of uncertain processes in terms of supermartingales; capital processes corresponding to available betting strategies.
Compared to (the more common) measure-theoretic expectation functionals, they are not bounded to restrictive assumptions such as measurability or precision, yet succeed in preserving, or even generalising many of their fundamental properties.
We focus on a discrete-time setting where local state spaces are finite and, in this specific context, build on the existing work of Shafer and Vovk, the main developers of the framework of game-theoretic upper expectations.
In a first part, we study Shafer and Vovk's characterisation of a local upper expectation and show how it is related to Walley's behavioural notion of coherence.
The second part consists in a study of game-theoretic upper expectations on a more global level, where several alternative definitions, as well as a broad range of properties are derived, e.g. the law of iterated upper expectations, compatibility with local models, coherence properties, \dots\
Our main contribution, however, concerns the continuity behaviour of these operators.
We prove continuity with respect to non-increasing sequences of so-called lower cuts and continuity with respect to non-increasing sequences of finitary functions.
We moreover show that the game-theoretic upper expectation is uniquely determined by its values on the domain of bounded below limits of finitary functions, and show in addition that, for any such limit, the limiting sequence can be constructed in such a way that the game-theoretic upper expectation is continuous with respect to this particular sequence.

\end{abstract}

\begin{keyword}
Game-theoretic probability \sep Upper expectation \sep Coherence \sep Continuity properties \sep Finitary function
\end{keyword}

\maketitle
\openup4pt

\section{Introduction}

We consider sequences $X_1$, $X_2$, \dots, $X_n$, \dots\ of uncertain states where the state $X_k$ at each discrete time $k\in\nats$ takes values in a fixed finite state space $\statespace{}$.
The uncertain evolution of the state $X_k$ in such a sequence---which we will call a discrete-time finite-state uncertain process---can be described mathematically in various ways.
Most authors prefer a measure-theoretic approach, which involves the use of countably additive probability measures.
In this paper, we consider an alternative, game-theoretic approach that uses (super)martingales as a primitive notion.
Probabilities and expectations then only appear as derived, secondary objects.
This approach was largely developed by Shafer and Vovk \cite{Shafer:2005wx,Vovk2019finance}, but some of the essential underlying ideas are due to Ville \cite{ville1939etudecritique}.


Irrespectively of the chosen mathematical framework, the starting point for modelling an uncertain process is typically a set of assessments about the local uncertain behaviour of the process.
Such local assessments represent our beliefs on how the state $X_k$ of the process will change from one time instant to the next.
In a traditional approach, they are given in the form of probabilities $\mathrm{P}(X_{k+1} = x_{k+1} \vert X_{1} = x_1 , \cdots,  X_{k} = x_{k})$ for each possible history $X_{1} = x_1 , \cdots,  X_{k} = x_{k}$ (with $x_i \in \statespace{}$) and for each possible value $x_{k+1} \in \statespace{}$ for the next state $X_{k+1}$.
However, when information is scarce or when we want to act conservatively, specifying such a single probability measure for each history might not be appropriate or not even justified.
In such cases, our beliefs can be more accurately described by imprecise probabilities models \cite{Augustin:2014di,Walley:1991vk}, e.g. probability intervals, sets of probability charges (or measures), sets of desirable gambles or upper and lower expectations.
Each of these models allows us to express and incorporate partial knowledge about the parameters that would normally make up a traditional precise probability model.
We will here focus on one particular---yet general and unifying---such imprecise probabilities model: upper and lower expectations (or previsions).

Upper and lower expectations generalise traditional expectation operators in the sense that the additivity property is replaced by the weaker condition of sub- or super-additivity, respectively.
A typical interpretation for them is that they are respective upper and lower bounds on a set of plausible expectations.
In fact, it can be shown that they are in a one-to-one relation with closed, convex sets of linear expectations \cite{Walley:1991vk} or, equivalently, closed convex sets of probability charges or measures.
Interval probabilities therefore only correspond to a special case.
Walley \cite{Walley:1991vk}, on the other hand, uses a behavioural interpretation where bounded real-valued functions on a non-empty set $\mathscr{Y}$ represent gambles with an uncertain payoff and where the upper (or lower) expectation $\upprev{}(f)$ (or $\lowprev{}(f)$) of such a gamble $f$ represents a subject's infimum selling price (or supremum buying price) for $f$.
By imposing that a subject should gamble rationally---which he calls `coherence'---Walley then obtains the same axioms as those of a sub- or superadditive expectation operator.
Walley's interpretation moreover naturally connects upper and lower expectations with another type of imprecise probabilities model, called `sets of desirable gambles', which models uncertainty by considering the gambles a subject finds desirable.
Due to its different interpretations and its connections with various other (imprecise) uncertainty models, the theory of upper and lower expectations has a unifying role within the field of imprecise probabilities, hence our choice for using them here to model uncertainty in stochastic processes.
More detailed information about lower and upper expectations can be found in References~\cite{Walley:1991vk,troffaes2014}.

So suppose that, for each possible history $X_{1} = x_1 , \cdots,  X_{k} = x_{k}$, we are given an upper expectation $\lupprev{x_{1:k}}$ that expresses our beliefs about the value of the next state $X_{k+1}$.
How can we use these assessments to draw conclusions about more general behaviour of the process?
For instance, what is the probability that the process state will ever be equal to a particular value $x \in \statespace{}$?
When do we expect this to happen for the first time?
Such inferences relate to the value of the process state at multiple time instances.
To make statements about them, we need a global uncertainty model that incorporates and extends the information included in the local uncertainty models $\lupprev{x_{1:k}}$.
One particularly interesting way of constructing such a global model was proposed by Shafer and Vovk \cite{Shafer:2005wx,Vovk2019finance}.

They picture a game that consists of a (possibly infinite) series of rounds and three players, called Forecaster, Skeptic and Reality.
In the beginning of each round, Forecaster declares how he is willing to gamble on the outcome of the current round.
Subsequently, Skeptic takes him up on his commitments and chooses a gamble from the ones offered by Forecaster.
At the end of the round, Reality decides the outcome and Forecaster and Skeptic exchange money accordingly.
Now, given Forecaster's assessments, how should Skeptic determine a selling (or buying) price for a gamble $f$ whose uncertain payoff depends on an entire realisation $\omega$---which may take an infinitely long time---of the game?
Shafer and Vovk argue that Skeptic should certainly agree on selling the gamble $f$ for any price $\alpha$ such that there is a gambling strategy for Skeptic that starts with an initial capital~$\alpha$ and allows him to end up with at least as much money as the payoff $f(\omega)$ corresponding to $f$, irrespective of the realisation $\omega$ of the game.
The infimum of all such prices $\alpha$, which depends entirely on Forecaster's local assessments, is then what Shafer and Vovk call the game-theoretic upper expectation of $f$.
Game-theoretic lower expectations are subsequently defined in an analogous, yet dual way.

In the setting that we consider, Forecaster's local assessments are modelled by the local upper expectations $\lupprev{x_{1:k}}$ and the allowed gambles for Skeptic are those functions $f$ on $\statespace{}$ for which $\lupprev{x_{1:k}}(f) \leq 0$.\footnote{Note that there is a close relation with Walley's interpretation for upper expectations here; we refer to \cite{deCooman:2008km} for an extensive study.}
The evolution of Skeptic's capital corresponding with an allowable betting strategy is called a supermartingale.
As sketched above, these supermartingales can be used to define (global) game-theoretic upper and lower expectations.
Mathematically speaking, these operators have the advantage of being very general, in the sense that they do not rely on an assumption of measurability or on an assumption that uncertainty should be modelled through traditional (precise) probability models.
Despite their generality, they also possess rather strong technical properties, and therefore maintain great practical usefulness.
Though many slightly different definitions of the game-theoretic upper expectation can be found throughout the literature, the key elements always remain the same.
We consider one particular definition here, since we believe it to have, in our setting, superior mathematical properties compared to other versions; we will argue so in Section~\ref{sect: Discussion}.
Moreover, the version we consider can also be elegantly characterised as the most conservative uncertainty model that satisfies a set of intuitive axioms, and its use can therefore also be motivated without relying on an interpretation in terms of supermartingales or other game-theoretic concepts  \cite{Tjoens2019NaturalExtensionISIPTA,TJOENS202130}.


The contribution of this paper is fourfold, yet serves the general aim of strengthening the theory and relevance of game-theoretic upper expectations for the case where the state space $\statespace{}$ is finite.

Our first contribution consists in bringing clarity about a number of properties that were already proved elsewhere for game-theoretic upper expectations, but usually in a slightly other setting or for a slightly other version of its definition.
This concerns in particular the more basic properties (e.g. compatibility with local models, law of iterated upper expectations, \dots) in Section~\ref{Sect: Basic properties of game}, the equivalent definitions given in Section~\ref{Sect: equivalent characterisations} and the non-decreasing continuity property in Section~\ref{Sect: Continuity}.
Since it is our aim to bring clarity concerning these properties, we give self-contained proofs for these results.
We will state explicitly when we borrow ideas from other work.

Our second contribution consists in showing that---for a finite state space $\statespace{}$---our version of the game-theoretic upper expectation also satisfies several new continuity properties.
Such properties are powerful mathematical tools that provide the resulting theory with elegance and also greatly enhance its practical scope.
The continuity of the Lebesgue integral, for instance, is one of the reasons why it is the integral of choice for computing expected values associated with a probability measure \cite{billingsley1995probability,shiryaev1995probability}.
We show in Section~\ref{Sect: Continuity} that game-theoretic upper expectations satisfy continuity with respect to non-decreasing sequences (as pointed out in the previous paragraph, this property is not entirely new), continuity with respect to non-increasing sequences of lower cuts and a version of Fatou's lemma.
In Section~\ref{Sect: Continuity wrt n-measurables}, we focus on continuity with respect to sequences of so-called finitary functions.
Such finitary functions only depend on the process state at a finite number of time instances.
They can be given a clear interpretation \cite{Tjoens2019NaturalExtensionISIPTA,TJOENS202130} and their upper expectations can often be calculated in a straightforward manner \cite{8629186,8535240}.
And although many practically relevant functions are themselves not finitary, most of them can still be written as the (pointwise) limit of a sequence of finitary functions \cite{8627473,8629186}, e.g.~hitting times, hitting probabilities, \dots\
We prove continuity with respect to non-increasing sequences of finitary functions, and show that for any bounded below (pointwise) limit of finitary functions, the limiting sequence can be constructed in such a way that the game-theoretic upper expectation is continuous with respect to this particular sequence.
Finally, we will also show that the game-theoretic upper expectation is uniquely determined by its values on the domain of all bounded below (pointwise) limits of finitary functions.

Our third contribution---which will be the topic of Section~\ref{Sect: Upper Expectations} and, to a small extent, also of Section~\ref{Sect: Continuity}---consists in bridging part of the gap that currently exists between what Shafer and Vovk do, and the work on upper and lower expectations (or previsions) in the field of imprecise probabilities.
The latter is often based on Walley's notion of coherence
which only considers upper (and lower) expectations on bounded real-valued functions.
In such a context, the connection with Shafer and Vovk's earlier work was already studied elaborately in \cite{deCooman:2008km}.
Here, we deal with extended real-valued functions and study how both approaches are related on a local level.
We first propose a slightly weaker---and therefore, more general---version of Shafer and Vovk's axioms for a local upper expectation and show that, for a finite state space, this weaker version can equivalently be obtained by combining coherence (on bounded real-valued functions) with an upward continuity axiom.
This result shows that the local upper expectations defined in this way are, on the domain of bounded below extended real-valued functions, uniquely determined by their values on bounded real-valued functions.
This allows us to justify the use of our local models (on the domain of bounded below functions)---and therefore, also the use of the global game-theoretic upper expectations that are derived from them---from a more conventional imprecise probabilities point of view \cite{TJOENS202130}.
Moreover, by the end of Section~\ref{Sect: Continuity}, it will become clear that a particular downward continuity axiom needs to be satisfied by the local upper expectations in order for them to be (completely) compatible with the global game-theoretic upper expectation.
This axiom will generally not be satisfied, even if we use the stronger axiomatisation for local upper expectations proposed by Shafer and Vovk; see Section~\ref{Sect: Upper Expectations}.
Hence, if we desire compatibility of local and global models, we will need to additionally impose the downward continuity axiom.

Our fourth and final contribution can be found in Section~\ref{sect: Discussion}, where we compare different definitions of the game-theoretic upper expectation and argue why we have chosen to work with the particular version considered here.
We take into account not only technical aspects, but also interpretational aspects of the possible definitions.
We feel the need to include such a discussion here, because, as already mentioned, many different definitions have been used in the literature and it is often unclear---especially for an unexperienced reader---how these different versions relate to each other.
We moreover show there that our particular definition of the game-theoretic upper expectation can be replaced by a more intuitive one, if we restrict ourselves to bounded real-valued functions.

In short, the outline of the paper is as follows:
Section~\ref{Sect: Upper Expectations} shows how upper expectations on a finite state space can be equivalently axiomatised, either in a way similar to what Shafer and Vovk propose, or using Walley's concept of coherence together with a continuity axiom.
In Section~\ref{Sect: Game-theoretic Upper Expectations}, we use the upper expectations of Section~\ref{Sect: Upper Expectations} as local uncertainty models in a process and show how these can then be used to define supermartingales and a global game-theoretic upper expectation.
Sections \ref{Sect: Basic properties of game}--\ref{Sect: Continuity wrt n-measurables} then focus on the mathematical properties of this global operator.
We conclude the article in Section~\ref{sect: Discussion} with a discussion of other possible versions of game-theoretic upper expectations.

\section{Upper Expectations}\label{Sect: Upper Expectations}

We consider and relate two possible approaches to characterising upper expectations on extended real-valued functions: one similar to Shafer and Vovk's axiomatisation, and one that uses Walley's notion of coherence in combination with an upward continuity axiom.
Since we will later on use these characterisations to define local upper expectations in uncertain processes with a finite state space, we will mainly focus on extended real-valued functions with finite domain here.
We show that, in this case, both characterisations are equivalent.
Moreover, it will turn out that the characterising axioms of an upper expectation are also satisfied by the global game-theoretic upper expectation that we will consider further on, which indeed allows us to call this global operator an upper expectation.

We start by introducing some preliminary notions.
We denote the set of all natural numbers, without $0$, by $\nats$, and let $\natz \coloneqq\nats \cup \{0\}$.
The set of extended real numbers is denoted by $\extreals \coloneqq\reals \cup \{+\infty, -\infty\}$.
The set of positive real numbers is denoted by $\posreals$, the set of non-negative real numbers by $\nnegreals$, and we also let $\posextreals \coloneqq \posreals \cup \{+\infty\}$ and $\nnegextreals \coloneqq \nnegreals \cup \{+\infty\}$.
We extend the total order relation $<$ on $\reals{}$ to $\extreals$ by positing that $-\infty<c<+\infty$ for all $c\in\reals{}$ and endow $\extreals{}$ with the associated order topology.

For any non-empty set $\mathscr{Y}$, a \emph{variable} $f$ on $\mathscr{Y}$ is a map on $\mathscr{Y}$.
A \emph{real(-valued) variable} is a variable taking values in the reals $\reals{}$, and similarly for an \emph{extended real(-valued) variable}.
We say that an extended real variable $f$ is \emph{bounded below} if there is an $M\in\reals{}$ such that $f\geq M$---meaning that $f(y) \geq M$ for all $y\in\mathscr{Y}$---and bounded above if $-f$ is bounded below.
A \emph{gamble} on $\mathscr{Y}$ is a real variable on $\mathscr{Y}$ that is \emph{bounded}, meaning that it is both bounded below and bounded above.
The set of all extended real variables on $\mathscr{Y}$ is denoted by $\setofgenextvariables(\mathscr{Y})$, the set of all bounded below extended real variables on $\mathscr{Y}$ by $\setofgenextvariablesb(\mathscr{Y})$ and the linear space of all gambles on $\mathscr{Y}$ by $\setofgengambles(\mathscr{Y})$.
For any $f\in\setofgenextvariables(\mathscr{Y})$ we use $\sup f$ and $\inf f$ to denote the supremum $\sup\{f(y) \colon y\in\mathscr{Y}\}$ and infimum $\inf\{f(y) \colon y\in\mathscr{Y}\}$ of the variable $f$, respectively.
We say that a sequence $\{f_n\}_{n\in\natz}$ in $\setofgenextvariables(\mathscr{Y})$ is \emph{uniformly bounded below} if there is an $M\in\reals{}$ such that $f_n \geq M$ for all $n\in\natz$.
For any sequence $\{f_n\}_{n\in\natz}$ in $\setofgenextvariables(\mathscr{Y})$, we write $\lim_{n\to+\infty} f_n$ to mean the pointwise limit of the functions $f_n$.
For a subset $A$ of $\mathscr{Y}$, we define the \emph{indicator} $\indica{A}$ of $A$ as the gamble on $\mathscr{Y}$ that assumes the value $1$ on $A$ and $0$ elsewhere.

\subsection{Upper expectations according to Shafer and Vovk}

The following definition of an upper expectation is very similar to what Shafer, Vovk and Takemura call a `superexpectation' in \cite{shafer2011levy} and an `upper expectation' in \cite{Vovk2019finance}.\footnote{Their definition is, as far as we know, not based on a single interpretation. Rather, they draw inspiration from various subfields in probability theory to obtain these axioms.}

\begin{definition}\label{def: upper expectation}
Consider any non-empty set $\mathscr{Y}$ and any $\setofgenextvariablesb(\mathscr{Y})\subseteq\mathscr{D}\subseteq\setofgenextvariables(\mathscr{Y})$.
Then we define an \emph{upper expectation $\upprev$ on $\mathscr{D}$} as an extended real-valued map on $\mathscr{D}$  that satisfies the following axioms:
\begin{enumerate}[leftmargin=*,ref={\upshape E\arabic*},label={\upshape E\arabic*}.,series=vovkcoherence]
\item \label{coherence: const is const}
$\upprev(c)=c$ for all $c\in\reals{}$;
\item \label{coherence: sublinearity}
$\upprev(f+g)\leq\upprev(f) + \upprev(g)$ for all $f,g\in\setofgenextvariablesb(\mathscr{Y})$;
\item \label{coherence: homog for positive lambda}
$\upprev(\lambda f)=\lambda \upprev(f)$ for all $\lambda\in\posreals$ and all $f\in\setofgenextvariablesb(\mathscr{Y})$;
\item \label{coherence: monotonicity}
$f \leq g \, \Rightarrow \, \upprev(f)\leq\upprev(g)$ for all $f, g\in\setofgenextvariablesb(\mathscr{Y})$.
\item \label{coherence: continuity wrt non-negative functions}
$\lim_{n\to+\infty} \upprev(f_n)=\upprev\left( \lim_{n\to+\infty} f_n \right)$ for any non-decreasing sequence $\{f_n\}_{n\in\natz}$ of non-negative variables in \/ $\setofgenextvariablesb(\mathscr{Y})$.
\end{enumerate}
\end{definition}
Alternatively, we can also consider the so-called \emph{conjugate lower expectation} $\lowprev$, defined by $\lowprev(f)\coloneqq-\upprev(-f)$ for all $f \in-\mathscr{D}\coloneqq\{f\in\setofgenextvariables{}(\mathscr{Y})\colon -f\in\mathscr{D}\}$.
It clearly suffices to focus on only one of the two functionals and we will work mainly with upper expectations.
Furthermore, in the definition above, as well as further on, we adopt the following conventions: $c +\infty=+\infty$ for all real $c$, $+\infty+\infty=+\infty$, $\lambda \, (+\infty)= +\infty$ for all $\lambda\in\posextreals$ and $0 \, (+\infty)=0$.
In addition to \ref{coherence: const is const}--\ref{coherence: continuity wrt non-negative functions}, we will sometimes also impose `continuity with respect to lower cuts' (also called `bounded-below support') on an upper expectation.
To introduce this property, let $f^{\vee c}$ for any $f \in \setofgenextvariables{}(\mathscr{Y})$ and any $c \in \reals{}$ be the variable defined by $f^{\vee c}(y) \coloneqq \max \{f(y), c \}$ for all $y \in \mathscr{Y}$.
An upper expectation $\upprev{}$ on $\mathscr{D}$ is then continuous with respect to lower cuts if
\begin{enumerate}[leftmargin=*,ref={\upshape E\arabic*},label={\upshape E\arabic*}., resume=vovkcoherence]
\item \label{ext coherence lower cuts continuity}
$\upprev(f) = \lim_{c\to \, -\infty} \upprev(f^{\vee c})$ for all $f \in \mathscr{D}$.
\end{enumerate}
The limit on the right hand side exists because $f^{\vee c} \in \setofgenextvariablesb{}(\mathscr{Y})$ is non-decreasing in $c\in\reals$ and $\upprev$ satisfies \ref{coherence: monotonicity} by assumption.
Axiom~\ref{ext coherence lower cuts continuity} will play a particularly important role further on, where we will study the compatibility of local and global game-theoretic upper expectations.
Moreover, we will also already use \ref{ext coherence lower cuts continuity} in this section, to establish an important relation between our definition of an upper expectation and the one used by Shafer and Vovk.

The definition of Shafer and Vovk (and Takemura) \cite[Section~6.1]{Vovk2019finance} only differs from Definition~\ref{def: upper expectation} with regard to the choice of domain and the variables for which axioms \ref{coherence: sublinearity}, \ref{coherence: homog for positive lambda} and \ref{coherence: monotonicity} should hold.
Their definition is stated for $\mathscr{D}=\setofgenextvariables{}(\mathscr{Y})$ and requires that \ref{coherence: sublinearity}--\ref{coherence: monotonicity} should hold on all of $\setofgenextvariables{}(\mathscr{Y})$:
\begin{enumerate}[leftmargin=*,ref={\upshape E\arabic*'},label={\upshape E\arabic*'}.,start=2]
\item \label{coherence: sublinearity extended}
$\upprev(f+g)\leq\upprev(f) + \upprev(g)$ for all $f,g\in\setofgenextvariables(\mathscr{Y})$;
\item \label{coherence: homog for positive lambda extended}
$\upprev(\lambda f)=\lambda \upprev(f)$ for all $\lambda\in\posreals$ and all $f\in\setofgenextvariables(\mathscr{Y})$;
\item \label{coherence: monotonicity extended}
$f \leq g \, \Rightarrow \, \upprev(f)\leq\upprev(g)$ for all $f, g\in\setofgenextvariables(\mathscr{Y})$,
\end{enumerate}
where they use the convention that $+\infty -\infty=-\infty+\infty=+\infty$.
This is a typical choice when working with upper expectations---also see \cite{DECOOMAN201618} where they use the dual convention for lower expectations---and we will henceforth use this convention without mentioning it explicitly.
So, for example, $a \geq b$ implies that $a-b \geq 0$, but not necessarily $0 \geq b-a$ for any two $a$ and $b$ in $\extreals$.
Moreover, we also adopt the conventions that $c -\infty=-\infty$ for all real $c$, $-\infty-\infty=-\infty$, $\lambda \, (-\infty)= -\infty$ and $(- \lambda) \, (+\infty)= -\infty$ for all $\lambda\in\posextreals$ and $0 \, (-\infty)=0$.

Now, note that axioms \ref{coherence: sublinearity}--\ref{coherence: monotonicity} are weaker than axioms \ref{coherence: sublinearity extended}--\ref{coherence: monotonicity extended}.
Indeed, any extended real-valued map $\overline{\mathrm{E}}$ on $\mathscr{D}=\overline{\mathscr{L}}(\mathscr{Y})$ that satisfies \ref{coherence: sublinearity extended}--\ref{coherence: monotonicity extended} automatically satisfies \ref{coherence: sublinearity}--\ref{coherence: monotonicity}, simply because $\overline{\mathscr{L}}_{\mathrm{b}}(\mathscr{Y}) \subset \overline{\mathscr{L}}(\mathscr{Y})$.
The converse is not necessarily true because \ref{coherence: sublinearity}--\ref{coherence: monotonicity} do not impose any conditions on the values of $\overline{\mathrm{E}}$ on the domain $\overline{\mathscr{L}}(\mathscr{Y})\setminus\overline{\mathscr{L}}_{\mathrm{b}}(\mathscr{Y})$.
Therefore, and since our domain $\mathscr{D}$ is not bound to be equal to $\overline{\mathscr{L}}(\mathscr{Y})$, our definition of an upper expectation is more general than Shafer and Vovk's.
However, if we choose $\mathscr{D}=\setofgenextvariables(\mathscr{Y})$ and, besides \ref{coherence: const is const}--\ref{coherence: continuity wrt non-negative functions}, additionally impose \ref{ext coherence lower cuts continuity} on our upper expectations, then we always obtain an upper expectation according to Shafer and Vovk:

\begin{proposition}\label{prop: local models with lower cuts satisfy vovk extended axioms}
Consider any non-empty set $\mathscr{Y}$ and any upper expectation $\upprev$ on $\setofgenextvariables(\mathscr{Y})$.
Then $\upprev$ satisfies \ref{coherence: sublinearity extended}--\ref{coherence: monotonicity extended} if it satisfies \ref{ext coherence lower cuts continuity}.
\end{proposition}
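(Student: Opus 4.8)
The plan is to leverage axiom~\ref{ext coherence lower cuts continuity} to reduce each of \ref{coherence: sublinearity extended}--\ref{coherence: monotonicity extended} to its already available bounded-below counterpart among \ref{coherence: sublinearity}--\ref{coherence: monotonicity}. The mechanism is the same in all three cases: for $f,g\in\setofgenextvariables(\mathscr{Y})$ the lower cuts $f^{\vee c}$ and $g^{\vee c}$ lie in $\setofgenextvariablesb(\mathscr{Y})$ for every $c\in\reals{}$, so the unprimed axioms apply to them; one records the appropriate relation between the cuts, applies the unprimed axiom, and then lets $c\to-\infty$, using \ref{ext coherence lower cuts continuity} to recover the values of $\upprev$ at $f$, $g$ and $f+g$ (or $\lambda f$).

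\emph{Monotonicity and homogeneity.} If $f\le g$, then $f^{\vee c}\le g^{\vee c}$ for all $c$, so \ref{coherence: monotonicity} gives $\upprev(f^{\vee c})\le\upprev(g^{\vee c})$; passing to the limit $c\to-\infty$ and invoking \ref{ext coherence lower cuts continuity} on each side yields \ref{coherence: monotonicity extended}. For $\lambda\in\posreals$ I would first check the pointwise identity $(\lambda f)^{\vee c}=\lambda\, f^{\vee (c/\lambda)}$ by a short case split on the value of $f$ (the cases $f(y)=\pm\infty$ being dealt with by the conventions $\lambda(\pm\infty)=\pm\infty$); then \ref{coherence: homog for positive lambda} gives $\upprev\bigl((\lambda f)^{\vee c}\bigr)=\lambda\,\upprev\bigl(f^{\vee (c/\lambda)}\bigr)$, and letting $c\to-\infty$ (hence $c/\lambda\to-\infty$), together with \ref{ext coherence lower cuts continuity} and the continuity of $t\mapsto\lambda t$ on $\extreals{}$, yields \ref{coherence: homog for positive lambda extended}.

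\emph{Subadditivity.} The crucial ingredient is the pointwise inequality $(f+g)^{\vee 2c}\le f^{\vee c}+g^{\vee c}$, valid for all $c\in\reals{}$; I would prove it by a case analysis on the extended-real values of $f$ and $g$, the only slightly delicate cases being those in which one of them equals $-\infty$ and the other $+\infty$, where the convention $+\infty-\infty=+\infty$ has to be applied on the left-hand side. Since both summands on the right are $\ge c$, the sum $f^{\vee c}+g^{\vee c}$ is unambiguously defined and lies in $\setofgenextvariablesb(\mathscr{Y})$, so \ref{coherence: monotonicity} followed by \ref{coherence: sublinearity} gives $\upprev\bigl((f+g)^{\vee 2c}\bigr)\le\upprev\bigl(f^{\vee c}+g^{\vee c}\bigr)\le\upprev(f^{\vee c})+\upprev(g^{\vee c})$. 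By \ref{ext coherence lower cuts continuity} the leftmost term tends to $\upprev(f+g)$ as $c\to-\infty$; for the right-hand side I would distinguish two cases. If $\upprev(f)=+\infty$ or $\upprev(g)=+\infty$, then $\upprev(f)+\upprev(g)=+\infty$ and \ref{coherence: sublinearity extended} is trivial; otherwise both $\upprev(f^{\vee c})$ and $\upprev(g^{\vee c})$ are non-decreasing in $c$ (by \ref{coherence: monotonicity}) with limits $\upprev(f),\upprev(g)\in[-\infty,+\infty)$, so their sum converges to $\upprev(f)+\upprev(g)$ in $\extreals{}$, and \ref{coherence: sublinearity extended} follows.

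I expect the main obstacle to be precisely this last limit-passing, namely guaranteeing $\lim_c\bigl(\upprev(f^{\vee c})+\upprev(g^{\vee c})\bigr)=\upprev(f)+\upprev(g)$ even when one of these limits is $-\infty$, together with the bookkeeping forced by the asymmetric conventions on $\extreals{}$ (where, e.g., $a\ge b$ need not imply $0\ge b-a$). Both are handled by combining the monotonicity of $c\mapsto\upprev(f^{\vee c})$ with the separate, trivial treatment of the case $\upprev(f)+\upprev(g)=+\infty$; beyond that, the verification of the two pointwise identities/inequalities is routine case-checking.
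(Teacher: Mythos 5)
Your proposal is correct and follows essentially the same route as the paper's proof: the same pointwise facts $(f+g)^{\vee 2c}\leq f^{\vee c}+g^{\vee c}$ and $(\lambda f)^{\vee c}=\lambda f^{\vee\, c/\lambda}$, followed by \ref{coherence: sublinearity}--\ref{coherence: monotonicity} on the cuts and a passage to the limit via \ref{ext coherence lower cuts continuity}. The only difference is cosmetic: where you split into cases to justify $\lim_{c\to-\infty}\bigl(\upprev(f^{\vee c})+\upprev(g^{\vee c})\bigr)=\upprev(f)+\upprev(g)$, the paper appeals directly to the monotonicity of $\upprev(f^{\vee c})$ and $\upprev(g^{\vee c})$ in $c$ together with the convention $+\infty-\infty=+\infty$.
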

\begin{proof}
\ref{coherence: sublinearity extended}:
Consider any two $f,g \in \setofgenextvariables{}(\mathscr{Y})$ and any $c \in \reals{}$.
Then, since $f^{\vee c} \geq c$ and $g^{\vee c} \geq c$, we have that $f^{\vee c} + g^{\vee c} \geq 2c$.
In a similar way, we deduce that $f^{\vee c} + g^{\vee c} \geq f + g$.
Hence, combining both inequalities, we obtain that $f^{\vee c} + g^{\vee c} \geq \max\{f + g, 2c\} = (f+g)^{\vee 2c}$.
Moreover note that $f^{\vee c}$, $g^{\vee c}$ and $(f+g)^{\vee 2c}$ are all variables in $\setofgenextvariablesb{}(\mathscr{Y})$, so we can apply \ref{coherence: monotonicity} and subsequently \ref{coherence: sublinearity} to infer that
\begin{align*}
\upprev{}\left( (f+g)^{\vee 2c} \right)
\leq \upprev{}( f^{\vee c} + g^{\vee c} )
\leq \upprev{}(f^{\vee c}) + \upprev{}(g^{\vee c}).
\end{align*}
The inequality above holds for any $c \in \reals{}$, so we have that
\begin{align*}
\upprev{}(f+g)
\overset{\text{\ref{ext coherence lower cuts continuity}}}{=}
\lim_{c \to \, -\infty} \upprev{}\left( (f+g)^{\vee c} \right)
= \lim_{c \to \, -\infty} \upprev{}\left( (f+g)^{\vee 2c} \right)
\leq \lim_{c \to \, -\infty} \left[ \upprev{}(f^{\vee c}) + \upprev{}(g^{\vee c}) \right]
&= \lim_{c \to \, -\infty} \upprev{}(f^{\vee c}) + \lim_{c \to \, -\infty} \upprev{}(g^{\vee c}) \\
&\overset{\text{\ref{ext coherence lower cuts continuity}}}{=}
\upprev{}(f) + \upprev{}(g),
\end{align*}
where the existence of all the limits follows from the monotonicity [\ref{coherence: monotonicity}] of $\upprev{}$, and where the second to last equality follows from the fact that $\upprev{}(f^{\vee c})$ and $\upprev{}(g^{\vee c})$ are non-decreasing in $c$ and our convention that $+\infty-\infty=+\infty$.

\ref{coherence: homog for positive lambda extended}:
Consider any $f \in \setofgenextvariables{}(\mathscr{Y})$.
First note that, since multiplication with a positive constant $\lambda \in \posreals{}$ is order preserving on $\extreals{}$, we have that
\begin{equation*}
\max\{\lambda f(y) , c\} = \max\{\lambda f(y) , \lambda \tfrac{c}{\lambda}\} = \lambda \max\{f(y) , \tfrac{c}{\lambda}\} \text{ for all } y \in \mathscr{Y} \text{ and all } c \in \reals{}.
\end{equation*}
Hence, $(\lambda f)^{\vee c} = \lambda f^{\vee \, \sfrac{c}{\lambda}}$ for all $c \in \reals{}$ and all $\lambda \in \posreals{}$.
Since moreover $f^{\vee \, \sfrac{c}{\lambda}} \in \setofgenextvariablesb{}(\mathscr{Y})$, we can apply \ref{coherence: homog for positive lambda} to infer that
$\upprev{}\left( (\lambda f)^{\vee c} \right)
= \upprev{}\left(\lambda f^{\vee \, \sfrac{c}{\lambda}}\right)
= \lambda \upprev{}\left(f^{\vee \, \sfrac{c}{\lambda}}\right)$ for all $c \in \reals{}$ and all $\lambda \in \posreals{}$.
This then implies that
\begin{align*}
\upprev{}\left(\lambda f\right)
\overset{\text{\ref{ext coherence lower cuts continuity}}}{=}
\lim_{c \to \, -\infty} \upprev{}\left((\lambda f)^{\vee c}\right)
= \lim_{c \to \, -\infty} \lambda \upprev{}\left(f^{\vee \, \sfrac{c}{\lambda}}\right)
= \lambda \lim_{c \to \, -\infty} \upprev{}\left(f^{\vee \, \sfrac{c}{\lambda}}\right)
= \lambda \lim_{c \to \, -\infty} \upprev{}\left(f^{\vee c}\right)
\overset{\text{\ref{ext coherence lower cuts continuity}}}{=}
\lambda \upprev{}\left(f\right) \text{ for all } \lambda \in \posreals{}.
\end{align*}

\ref{coherence: monotonicity extended}:
This follows trivially from the monotonicity [\ref{coherence: monotonicity}] of $\upprev{}$ on $\setofgenextvariablesb{}(\mathscr{Y})$ in combination with \ref{ext coherence lower cuts continuity}.
\end{proof}

The following counterexample shows that the converse is not necessarily true and hence, that axioms~\ref{coherence: const is const}--\ref{ext coherence lower cuts continuity} define a strictly smaller set of upper expectation operators on $\setofgenextvariables{}(\mathscr{Y})$ compared to Shafer and Vovk's axioms.

\begin{example}\label{example: Vovk's axioms do not necessarily satisfy lower cuts continuity}
Consider any set $\mathscr{Y}$ such that \/ $\vert \mathscr{Y} \vert > 1$ and the extended real-valued map $\upprev{}\colon \setofgenextvariables{}(\mathscr{Y}) \to \extreals{}$ defined by
\begin{align*}
\upprev{}(f) \coloneqq
\begin{cases}
-\infty &\text{ if } f < +\infty \text{ pointwise and } f(y) = -\infty \text{ for some } y \in \mathscr{Y}; \\
\sup f &\text{ otherwise,}
\end{cases}
\end{align*}
for all $f\in\setofgenextvariables{}(\mathscr{Y})$.
We show that $\upprev{}$ satisfies \ref{coherence: const is const}, \ref{coherence: sublinearity extended}--\ref{coherence: monotonicity extended} and \ref{coherence: continuity wrt non-negative functions}, but not \ref{ext coherence lower cuts continuity}.

\ref{coherence: const is const}:
This follows trivially from the definition of \/ $\upprev{}$.

\ref{coherence: sublinearity extended}:
Consider any two $f,g \in \setofgenextvariables{}(\mathscr{Y})$.
If there is some $y \in \mathscr{Y}$ such that $f(y) = +\infty$, then we have that $\upprev{}(f)=+\infty$ and therefore also that $\upprev{}(f) + \upprev{}(g) = +\infty$, which implies the desired inequality.
Due to symmetry, the inequality is also satisfied if $g(y)=+\infty$ for some $y\in\mathscr{Y}$.
Hence, consider the case where both $f < +\infty$ and $g < +\infty$ pointwise.
Then we clearly also have that $f+g < +\infty$ pointwise.
If moreover $f(y)=-\infty$ for some $y \in\mathscr{Y}$, then also $f(y)+g(y)=-\infty$ (because $g(y)<+\infty$) which, together with the fact that $f+g < +\infty$ pointwise, implies that $\upprev{}(f+g) = -\infty$ and thus the desired inequality.
Once more, the same can be concluded if $g(y)=-\infty$ for some $y \in\mathscr{Y}$ because of symmetry.
Hence, we are left with the situation where both $f$ and $g$---and therefore also $f+g$---are real-valued.
Then we can immediately infer that $\upprev{}(f+g) = \sup (f+g) \leq \sup f + \sup g = \upprev{}(f) + \upprev{}(g)$.

\ref{coherence: homog for positive lambda extended}:
Consider any $\lambda \in\posreals{}$ and any $f\in\setofgenextvariables{}(\mathscr{Y})$.
If $f < +\infty$ pointwise and $f(y)=-\infty$ for some $y\in\mathscr{Y}$, then also $\lambda f < +\infty$ pointwise and $\lambda f(y)=-\infty$, which implies that $\lambda \upprev{}(f) = \lambda (-\infty) = -\infty = \upprev{}(\lambda f)$.
Otherwise, if $f > -\infty$ pointwise or $f(y) = +\infty$ for some $y\in\mathscr{Y}$, then also $\lambda f > -\infty$ pointwise or $\lambda f(y) = +\infty$, which implies that $\lambda \upprev{}(f) = \lambda \sup f = \sup \lambda f = \upprev{}(\lambda f)$.

\ref{coherence: monotonicity extended}:
Consider any $f,g\in \setofgenextvariables{}(\mathscr{Y})$ such that $f \leq g$.
If $f<+\infty$ pointwise and $f(y)=-\infty$ for some $y \in \mathscr{Y}$, then $\upprev{}(f)=-\infty$ and therefore automatically $\upprev{}(f)\leq\upprev{}(g)$.
Otherwise, if $f>-\infty$ pointwise or $f(y)=+\infty$ for some $y \in \mathscr{Y}$, then also $g>-\infty$ pointwise or $g(y)=+\infty$ for some $y \in \mathscr{Y}$.
Then it follows from the definition of \/ $\upprev{}$ that $\upprev{}(f) = \sup f \leq \sup g = \upprev{}(g)$.

\ref{coherence: continuity wrt non-negative functions}:
Consider any non-decreasing sequence $\{f_n\}_{n\in\natz}$ of non-negative variables in $\setofgenextvariables{}(\mathscr{Y})$.
Since $f_n\geq 0 >-\infty$ pointwise, we have that $\upprev{}(f_n)=\sup f_n$ for all $n\in\natz$.
Clearly, $f\coloneqq \lim_{n \to +\infty} f_n$ is non-negative too, so we also have that $\upprev(f) = \sup f$.
Hence, we infer that
\begin{align*}
\lim_{n\to+\infty} \upprev(f_n)
= \lim_{n\to+\infty} \sup_{y\in\mathscr{Y}} f_n(y)
= \sup_{n\in\natz} \sup_{y\in\mathscr{Y}} f_n(y)
= \sup_{y\in\mathscr{Y}} \sup_{n\in\natz} f_n(y)
= \sup_{y\in\mathscr{Y}} f(y)
= \upprev(f),
\end{align*}
where the second and the fourth equality follows from the non-decreasing character of $\{f_n\}_{n\in\natz}$.

So $\upprev$ is an upper expectation on $\setofgenextvariables{}(\mathscr{Y})$ that moreover satisfies the additional axioms \ref{coherence: sublinearity extended}--\ref{coherence: monotonicity extended}.
However, it is easy to see that it does not satisfy \ref{ext coherence lower cuts continuity}.
Indeed, consider the extended real variable $-\infty \indica{y}$ where $y \in \mathscr{Y}$.
Then we have that $\upprev(-\infty \indica{y}) = -\infty$.
On the other hand, $\upprev( (-\infty\indica{y})^{\vee c}) = \upprev(c\indica{y}) = 0$ for all non-positive $c\in\reals{}$ (indeed, note that $\sup c\indica{y} = 0$ because $\vert\mathscr{Y}\vert > 1$).
So $\upprev(-\infty \indica{y})=+\infty\not=0=\lim_{c\to-\infty} \upprev( (-\infty\indica{y})^{\vee c})$, which implies that \/ $\upprev$ does not satisfy \ref{ext coherence lower cuts continuity}.
\hfill$\Diamond$
\end{example}

Now, our reason for establishing Proposition~\ref{prop: local models with lower cuts satisfy vovk extended axioms} is not merely because of the result itself, but rather because it will allow us to draw the following essential conclusion:
\emph{as far as the global game-theoretic upper expectation---our main object of interest in this paper---is concerned, it does not matter what $\mathscr{D}$ is and whether we do, or do not additionally impose \/ \ref{coherence: sublinearity extended}--\ref{coherence: monotonicity extended} on the local upper expectations.}
In order to see this, it suffices for now to know that the global game-theoretic upper expectation---which will only be introduced later on in Section~\ref{Sect: Game-theoretic Upper Expectations}---will only depend on the values of our local upper expectations on the (sub)domain $\setofgenextvariablesb{}(\mathscr{Y})$ (these local upper expectations are simply upper expectations on $\mathscr{D}$, with $\setofgenextvariablesb{}(\mathscr{Y})\subseteq\mathscr{D}\subseteq\setofgenextvariables{}(\mathscr{Y})$, where $\mathscr{Y}$ is the local state space $\statespace{}$ of the considered process).
So our statement follows if we can show that letting $\mathscr{D}=\setofgenextvariables{}(\mathscr{Y})$ and imposing axioms~\ref{coherence: sublinearity extended}--\ref{coherence: monotonicity extended} on (local) upper expectations, does not restrict the possible values that these upper expectations can take on the subdomain $\setofgenextvariablesb{}(\mathscr{Y})$.
To show this, we consider for any upper expectation $\upprev{}$ on $\mathscr{D}$, with $\setofgenextvariablesb{}(\mathscr{Y})\subseteq\mathscr{D}\subseteq\setofgenextvariables{}(\mathscr{Y})$, the map $\upprev{}_{\textnormal{\tiny E6}} \colon \setofgenextvariables{}(\mathscr{Y})\to\extreals{}$ defined by
\begin{equation*}
\upprev{}_{\textnormal{\tiny E6}}(f)\coloneqq \lim_{c\to -\infty} \upprev{} (f^{\vee c}) \text{ for all } f\in\setofgenextvariables{}(\mathscr{Y}),
\end{equation*}
where the right hand side is well-defined because $f^{\vee c}\in\setofgenextvariablesb{}(\mathscr{Y})$ for any $c\in\reals$ and $\upprev{}$ satisfies \ref{coherence: monotonicity} (therefore implying the existence of the limit).
Then it is quite easy to see that $\upprev{}_{\textnormal{\tiny E6}}$ coincides with $\upprev{}$ on $\setofgenextvariablesb{}(\mathscr{Y})$ and that it is an upper expectation that furthermore satisfies \ref{ext coherence lower cuts continuity} and therefore, by Proposition~\ref{prop: local models with lower cuts satisfy vovk extended axioms}, also satisfies \ref{coherence: sublinearity extended}--\ref{coherence: monotonicity extended}.

\begin{proposition}\label{prop: extension with E6 is an extension}
Consider any non-empty set $\mathscr{Y}$ and any $\setofgenextvariablesb(\mathscr{Y})\subseteq\mathscr{D}\subseteq\setofgenextvariables(\mathscr{Y})$.
For any upper expectation $\upprev{}$ on $\mathscr{D}$, the associated map \/ $\upprev{}_{\textnormal{\tiny E6}}$ is an upper expectation on $\setofgenextvariables{}(\mathscr{Y})$ that coincides with $\upprev{}$ on $\setofgenextvariablesb{}(\mathscr{Y})$---and hence extends $\upprev{}$ if $\mathscr{D}=\setofgenextvariablesb{}(\mathscr{Y})$---and that moreover satisfies~\ref{ext coherence lower cuts continuity} and \ref{coherence: sublinearity extended}--\ref{coherence: monotonicity extended}.
\end{proposition}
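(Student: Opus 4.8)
The plan is to establish the four assertions about $\upprev_{\textnormal{\tiny E6}}$ in sequence — well-definedness of the defining limit, coincidence with $\upprev$ on $\setofgenextvariablesb(\mathscr{Y})$, the upper-expectation axioms \ref{coherence: const is const}--\ref{coherence: continuity wrt non-negative functions} for $\upprev_{\textnormal{\tiny E6}}$ on $\setofgenextvariables(\mathscr{Y})$, and \ref{ext coherence lower cuts continuity} — and then to read off \ref{coherence: sublinearity extended}--\ref{coherence: monotonicity extended} directly from Proposition~\ref{prop: local models with lower cuts satisfy vovk extended axioms}. First I would note, exactly as in the remark preceding the statement, that for every $f\in\setofgenextvariables(\mathscr{Y})$ and every $c\in\reals{}$ the cut $f^{\vee c}$ satisfies $f^{\vee c}\geq c$, hence $f^{\vee c}\in\setofgenextvariablesb(\mathscr{Y})\subseteq\mathscr{D}$, so $\upprev(f^{\vee c})$ is defined; since $f^{\vee c}$ is non-decreasing in $c$, axiom \ref{coherence: monotonicity} makes $c\mapsto\upprev(f^{\vee c})$ non-decreasing, and therefore $\lim_{c\to-\infty}\upprev(f^{\vee c})=\inf_{c\in\reals{}}\upprev(f^{\vee c})$ exists in $\extreals{}$. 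Next, the coincidence: if $f\in\setofgenextvariablesb(\mathscr{Y})$, fix $M\in\reals{}$ with $f\geq M$; then $f^{\vee c}=f$ for all $c\leq M$, so $\upprev_{\textnormal{\tiny E6}}(f)=\lim_{c\to-\infty}\upprev(f^{\vee c})=\upprev(f)$.

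With the coincidence in hand, each of \ref{coherence: const is const}--\ref{coherence: continuity wrt non-negative functions} for $\upprev_{\textnormal{\tiny E6}}$ reduces in one line to the corresponding axiom for $\upprev$, because all these axioms are stated only for real constants and for variables in $\setofgenextvariablesb(\mathscr{Y})$, and the relevant derived objects stay in $\setofgenextvariablesb(\mathscr{Y})$: the constant $c$ is bounded below, so $\upprev_{\textnormal{\tiny E6}}(c)=\upprev(c)=c$; if $f,g\in\setofgenextvariablesb(\mathscr{Y})$ then $f+g\in\setofgenextvariablesb(\mathscr{Y})$, so $\upprev_{\textnormal{\tiny E6}}(f+g)=\upprev(f+g)\leq\upprev(f)+\upprev(g)=\upprev_{\textnormal{\tiny E6}}(f)+\upprev_{\textnormal{\tiny E6}}(g)$ by \ref{coherence: sublinearity}; similarly $\lambda f\in\setofgenextvariablesb(\mathscr{Y})$ for $\lambda\in\posreals{}$, giving \ref{coherence: homog for positive lambda}, and monotonicity transfers verbatim; finally, if $\{f_n\}_{n\in\natz}$ is a non-decreasing sequence of non-negative variables in $\setofgenextvariablesb(\mathscr{Y})$, its pointwise limit is again non-negative, hence bounded below, so $\lim_n\upprev_{\textnormal{\tiny E6}}(f_n)=\lim_n\upprev(f_n)=\upprev(\lim_n f_n)=\upprev_{\textnormal{\tiny E6}}(\lim_n f_n)$ by \ref{coherence: continuity wrt non-negative functions}. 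For \ref{ext coherence lower cuts continuity}, take any $f\in\setofgenextvariables(\mathscr{Y})$ and $c\in\reals{}$; since $f^{\vee c}\in\setofgenextvariablesb(\mathscr{Y})$ we get $\upprev_{\textnormal{\tiny E6}}(f^{\vee c})=\upprev(f^{\vee c})$, so $\lim_{c\to-\infty}\upprev_{\textnormal{\tiny E6}}(f^{\vee c})=\lim_{c\to-\infty}\upprev(f^{\vee c})=\upprev_{\textnormal{\tiny E6}}(f)$ by definition. Having verified that $\upprev_{\textnormal{\tiny E6}}$ is an upper expectation on $\setofgenextvariables(\mathscr{Y})$ satisfying \ref{ext coherence lower cuts continuity}, Proposition~\ref{prop: local models with lower cuts satisfy vovk extended axioms} delivers \ref{coherence: sublinearity extended}--\ref{coherence: monotonicity extended} at once.

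I do not expect a genuine obstacle: the argument is pure bookkeeping built on the single observation that truncations $f^{\vee c}$ are bounded below while the limit recovers the original $\upprev$ on bounded-below variables. The only points where care is needed are arithmetic in $\extreals{}$ — for \ref{coherence: sublinearity} the sum $\upprev_{\textnormal{\tiny E6}}(f)+\upprev_{\textnormal{\tiny E6}}(g)$ must be read with the conventions in force (in particular $+\infty-\infty=+\infty$), but since that sum literally equals $\upprev(f)+\upprev(g)$ for bounded-below $f,g$, no new subtlety arises — and the small remark, used for \ref{coherence: continuity wrt non-negative functions}, that a pointwise limit of non-negative variables is non-negative (hence bounded below) and so lands in the subdomain on which $\upprev_{\textnormal{\tiny E6}}$ agrees with $\upprev$.
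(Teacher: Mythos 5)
Your proposal is correct and follows essentially the same route as the paper: establish coincidence with $\upprev{}$ on $\setofgenextvariablesb{}(\mathscr{Y})$ via $f^{\vee c}=f$ for $c$ below a lower bound, observe that axioms \ref{coherence: const is const}--\ref{coherence: continuity wrt non-negative functions} only constrain values on that subdomain so they transfer immediately, read off \ref{ext coherence lower cuts continuity} from the definition, and invoke Proposition~\ref{prop: local models with lower cuts satisfy vovk extended axioms} for \ref{coherence: sublinearity extended}--\ref{coherence: monotonicity extended}. The only difference is that you spell out in detail the transfer of each axiom (including the closure of $\setofgenextvariablesb{}(\mathscr{Y})$ under the relevant operations and the non-negativity of the limit in \ref{coherence: continuity wrt non-negative functions}), which the paper dispatches as ``immediate.''
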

\begin{proof}
The map $\upprev{}_{\textnormal{\tiny E6}}$ coincides with $\upprev{}$ on $\setofgenextvariablesb{}(\mathscr{Y})$---and hence extends $\upprev{}$ if $\mathscr{D}=\setofgenextvariablesb{}(\mathscr{Y})$---because, for any $f\in\setofgenextvariablesb{}(\mathscr{Y})$, we have that $f^{\vee c} = f$ for all $c\in\reals{}$ such that $c\leq\inf f$, and hence, $\upprev{}_{\textnormal{\tiny E6}}(f) = \lim_{c\to -\infty} \upprev{} (f^{\vee c}) = \upprev{} (f)$.
Since $\upprev{}_{\textnormal{\tiny E6}}$ coincides with $\upprev{}$ on $\setofgenextvariablesb{}(\mathscr{Y})$ and since $\upprev{}$ is an upper expectation, it is now immediate that $\upprev{}_{\textnormal{\tiny E6}}$ is an upper expectation as well.
Furthermore, again since $\upprev{}_{\textnormal{\tiny E6}}$ coincides with $\upprev{}$ on $\setofgenextvariablesb{}(\mathscr{Y})$, it follows from its definition that $\upprev{}_{\textnormal{\tiny E6}}$ satisfies \ref{ext coherence lower cuts continuity} and hence, due to Proposition~\ref{prop: local models with lower cuts satisfy vovk extended axioms}, that it also satisfies \ref{coherence: sublinearity extended}--\ref{coherence: monotonicity extended}.
\end{proof}

This result,
together with the fact that our global game-theoretic upper expectations will only depend on the restriction of our local models to $\setofgenextvariablesb{}(\mathscr{Y})$, indeed confirms our statement above.
For this reason, and since letting $\mathscr{D}=\setofgenextvariables{}(\mathscr{Y})$ and imposing axioms~\ref{coherence: sublinearity extended}--\ref{coherence: monotonicity extended} would only reduce the generality of what we do, we have chosen to only require that $\setofgenextvariablesb(\mathscr{Y})\subseteq\mathscr{D}\subseteq\setofgenextvariables(\mathscr{Y})$ and to adopt \ref{coherence: const is const}--\ref{coherence: continuity wrt non-negative functions} as our axioms for an upper expectation.
In some cases, however, when we desire compatibility of local and global models, we will let $\mathscr{D}=\setofgenextvariables{}(\mathscr{Y})$ and impose \ref{ext coherence lower cuts continuity} on our local upper expectations, which by Proposition~\ref{prop: local models with lower cuts satisfy vovk extended axioms} and Example~\ref{example: Vovk's axioms do not necessarily satisfy lower cuts continuity} is actually stronger than imposing \ref{coherence: sublinearity extended}--\ref{coherence: monotonicity extended}.

\subsection{Some Basic Properties of Upper Expectations}
We now continue this section with a second part in which we establish various properties of upper expectations that will prove convenient for the rest of the paper, or, as is for example the case for Proposition~\ref{Lemma: local continuity wrt non-decreasing seq} below, are just interesting in their own right.
\begin{proposition}\label{prop: coherence properties}
Consider any non-empty set $\mathscr{Y}$, any $\setofgenextvariablesb(\mathscr{Y})\subseteq\mathscr{D}\subseteq\setofgenextvariables(\mathscr{Y})$ and any extended real-valued map $\upprev$ on $\mathscr{D}$ that satisfies \ref{coherence: const is const}--\ref{coherence: monotonicity}, and let \/ $\lowprev{}$ be the conjugate map of \/ $\upprev{}$ defined on $-\mathscr{D}$.
Then we have that
\begin{enumerate}[leftmargin=*,labelwidth=*,ref={\upshape E\arabic*},label={\upshape E\arabic*}., resume=vovkcoherence]
\item \label{coherence: bounds}
$-\infty <\inf f\leq\upprev(f)\leq\sup f$ for all $f\in\setofgenextvariablesb(\mathscr{Y})$;
\item \label{coherence: const add}
$\upprev(f + \mu)=\upprev(f) + \mu$ for all $\mu\in\reals{} \cup \{+\infty\}$ and all $f\in\setofgenextvariablesb(\mathscr{Y})$;
\item \label{coherence: homog for non-negative lambda}
$\upprev(\lambda f)=\lambda \upprev(f)$ for all $\lambda\in\nnegreals{}$ and all $f\in\setofgenextvariablesb(\mathscr{Y})$.
\item \label{coherence: low smaller than up}
$\lowprev(f + g)\leq\upprev(f) + \lowprev(g)\leq\upprev(f+g)$ for all $f,g\in\setofgengambles(\mathscr{Y})$;
\item \label{coherence: uniform convergence}
if\/ $\lim_{n\to+\infty} \sup{\vert f-f_n \vert}=0$ then $\lim_{n\to+\infty} \vert \upprev(f)-\upprev(f_n) \vert=0$
for any sequence $\{f_n\}_{n\in\natz}$ in $\setofgengambles(\mathscr{Y})$.
\end{enumerate}
If \/ $\upprev{}$ moreover satisfies \ref{coherence: continuity wrt non-negative functions}, then we also have that
\begin{enumerate}[leftmargin=*,labelwidth=*,ref={\upshape E\arabic*},label={\upshape E\arabic*}., resume=vovkcoherence]
\item \label{coherence: continuity wrt bounded below functions}
$\lim_{n\to+\infty} \upprev(f_n)=\upprev\left( \lim_{n\to+\infty} f_n \right)$ for any non-decreasing sequence $\{f_n\}_{n\in\natz}$ in $\setofgenextvariablesb(\mathscr{Y})$.
\end{enumerate}
\end{proposition}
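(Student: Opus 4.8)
The plan is to derive \ref{coherence: bounds}--\ref{coherence: continuity wrt bounded below functions} one at a time, each time reusing \ref{coherence: const is const}--\ref{coherence: monotonicity} (and, for \ref{coherence: continuity wrt bounded below functions}, also \ref{coherence: continuity wrt non-negative functions}) together with the earlier items in the list. For \ref{coherence: bounds} I would sandwich $f\in\setofgenextvariablesb(\mathscr{Y})$ between its constant bounds: since $f$ is bounded below, $\inf f\in\reals$, and $\inf f\leq f\leq\sup f$ pointwise, so \ref{coherence: monotonicity} and \ref{coherence: const is const} give $\inf f=\upprev(\inf f)\leq\upprev(f)\leq\upprev(\sup f)$, where the upper bound equals $\sup f$ when $\sup f<+\infty$ and is trivial otherwise. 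A small auxiliary fact worth recording here, to be reused below, is that $\upprev(+\infty)=+\infty$: indeed $+\infty\geq c$ for every $c\in\reals$, whence $\upprev(+\infty)\geq\upprev(c)=c$ by \ref{coherence: monotonicity} and \ref{coherence: const is const}.

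For \ref{coherence: const add} with real $\mu$ I would apply \ref{coherence: sublinearity} twice: to $f+\mu$ to get $\upprev(f+\mu)\leq\upprev(f)+\mu$, and to $(f+\mu)+(-\mu)=f$ to get $\upprev(f)\leq\upprev(f+\mu)-\mu$; combined these give equality, with no ill-defined sums arising because $\upprev(f)>-\infty$ by \ref{coherence: bounds}. For $\mu=+\infty$, $f+\infty$ is the constant $+\infty$ (as $f>-\infty$ pointwise), so the left-hand side is $\upprev(+\infty)=+\infty$ while the right-hand side is $\upprev(f)+\infty=+\infty$ as well. Item \ref{coherence: homog for non-negative lambda} then only adds the case $\lambda=0$ to \ref{coherence: homog for positive lambda}, where $0f$ is the constant $0$ (using $0\,(+\infty)=0$) so both sides vanish.

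For \ref{coherence: low smaller than up} I would note that for $f,g\in\setofgengambles(\mathscr{Y})$ all the upper expectations involved are real by \ref{coherence: bounds}, and then unfold the conjugate: with $\lowprev(f+g)=-\upprev(-f-g)$ and $\lowprev(g)=-\upprev(-g)$, the two claimed inequalities rearrange to $\upprev(-g)\leq\upprev(f)+\upprev(-f-g)$ and $\upprev(f)\leq\upprev(f+g)+\upprev(-g)$, which are both instances of \ref{coherence: sublinearity}. For \ref{coherence: uniform convergence}, put $\varepsilon_n\coloneqq\sup\lvert f-f_n\rvert$; then $f_n-\varepsilon_n\leq f\leq f_n+\varepsilon_n$ pointwise, so \ref{coherence: monotonicity} together with \ref{coherence: const add} yields $\upprev(f_n)-\varepsilon_n\leq\upprev(f)\leq\upprev(f_n)+\varepsilon_n$, i.e.\ $\lvert\upprev(f)-\upprev(f_n)\rvert\leq\varepsilon_n\to0$.

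Finally, for \ref{coherence: continuity wrt bounded below functions} I would reduce to \ref{coherence: continuity wrt non-negative functions} by a constant shift: pick $M\in\reals$ with $f_0\geq M$; then $g_n\coloneqq f_n+(-M)$ is a non-decreasing sequence of non-negative variables in $\setofgenextvariablesb(\mathscr{Y})$ with $\lim_n g_n=(\lim_n f_n)+(-M)$, so \ref{coherence: continuity wrt non-negative functions} gives $\lim_n\upprev(g_n)=\upprev\bigl((\lim_n f_n)+(-M)\bigr)$. Applying \ref{coherence: const add} on both sides (to $g_n=f_n+(-M)$ and to $(\lim_n f_n)+(-M)$) turns this into $\lim_n\upprev(f_n)-M=\upprev(\lim_n f_n)-M$, whence the claim; the limit on the left exists because $\{\upprev(f_n)\}$ is non-decreasing by \ref{coherence: monotonicity}. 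The work here is not conceptual but bookkeeping: the main thing to watch is the extended-real arithmetic — checking that $f_n+(-M)$, the pointwise limits, and the shifts back and forth behave as expected at points where $+\infty$ occurs, and making sure \ref{coherence: bounds} is in place (so that $-\infty+\infty$ never arises) before invoking it in \ref{coherence: const add}, \ref{coherence: low smaller than up} and \ref{coherence: uniform convergence}.
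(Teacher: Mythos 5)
Your proposal is correct and follows essentially the same route as the paper: sandwich between constant bounds for \ref{coherence: bounds}, two applications of \ref{coherence: sublinearity} for \ref{coherence: const add}, the trivial $\lambda=0$ case for \ref{coherence: homog for non-negative lambda}, sub-additivity plus conjugacy/real-valuedness for \ref{coherence: low smaller than up} and \ref{coherence: uniform convergence}, and the constant shift reducing \ref{coherence: continuity wrt bounded below functions} to \ref{coherence: continuity wrt non-negative functions} via \ref{coherence: const add}. The only differences are cosmetic: you prove both inequalities of \ref{coherence: low smaller than up} directly from \ref{coherence: sublinearity} where the paper derives the second by conjugacy, and your \ref{coherence: uniform convergence} uses a direct sandwich via \ref{coherence: monotonicity} and \ref{coherence: const add} instead of routing through \ref{coherence: low smaller than up}; both are fine. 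One small slip: in \ref{coherence: bounds} you assert that $\inf f\in\reals{}$ for every $f\in\setofgenextvariablesb(\mathscr{Y})$, which fails when $f\equiv+\infty$ (then $\inf f=+\infty$ and \ref{coherence: const is const} cannot be applied to the constant $\inf f$). The paper avoids this by bounding $\upprev(f)$ from below by arbitrary real $\alpha<\inf f$; alternatively, your own auxiliary fact $\upprev(+\infty)=+\infty$ disposes of that case directly, so the repair is immediate.
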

\begin{proof}
\ref{coherence: bounds}:
Consider any $f\in\setofgenextvariablesb(\mathscr{Y})$.
If $\sup f=+\infty$, we trivially have that $\upprev(f)\leq\sup f$.
If $\sup f$ is real, it follows immediately from \ref{coherence: monotonicity} that $\upprev(f)\leq\upprev(\sup f)$ and therefore that $\upprev(f)\leq\sup f$ because of \ref{coherence: const is const}.
That $\sup f=-\infty$, is impossible because $f$ is bounded below.
To see that $-\infty <\inf f\leq\upprev(f)$, note that $\inf f$ is real or equal to $+\infty$ [because $f$ is bounded below] and therefore that $-\infty <\inf f$ is automatically satisfied.
Moreover, for any real $\alpha <\inf f$ we clearly have that $\alpha < f$, implying by \ref{coherence: monotonicity} and \ref{coherence: const is const} that $\alpha < \upprev(f)$.
Since this holds for any $\alpha <\inf f$ we indeed have that $\inf f\leq\upprev(f)$.

\ref{coherence: const add}:
That $\upprev(f + \mu)\leq\upprev(f) + \mu$ for all real $\mu$ and all $f\in\setofgenextvariablesb(\mathscr{Y})$, follows directly from \ref{coherence: sublinearity} and \ref{coherence: const is const}.
The other inequality follows from the fact that
\begin{align*}
\upprev(f)=\upprev(f +\mu-\mu) \overset{\text{\ref{coherence: sublinearity}}}{\leq} \upprev(f + \mu) + \upprev(- \mu) \overset{\text{\ref{coherence: const is const}}}{=} \upprev(f + \mu)-\mu,
\end{align*}
 for all $f\in\setofgenextvariablesb(\mathscr{Y})$ and all real $\mu$.
 If $\mu=+\infty$, then since $\upprev(f) > -\infty$ [because of \ref{coherence: bounds}], it remains to check that $\upprev(+\infty)=+\infty$, which follows trivially from \ref{coherence: bounds}.

 \ref{coherence: homog for non-negative lambda}:
If $\lambda=0$, we have to show that $\upprev(0)=0$, which follows immediately from \ref{coherence: const is const}.
Otherwise, if $\lambda \in \posreals{}$, the equality follows from \ref{coherence: homog for positive lambda}.

 \ref{coherence: low smaller than up}:
 For all $f,g\in\setofgengambles(\mathscr{Y})$ we have that
 \begin{align*}
 \upprev(f)=\upprev(f + g-g) \overset{\text{\ref{coherence: sublinearity}}}{\leq} \upprev(f + g) + \upprev(- g)=\upprev(f + g)-\lowprev(g),
 \end{align*}
 where the last step follows from the definition of $\lowprev$.
 Hence, because $- \lowprev(g) = \upprev(-g)$ is real by \ref{coherence: bounds} [$g$ is a gamble], we have that $\upprev(f) + \lowprev(g)\leq\upprev(f + g)$ for all $f,g\in\setofgengambles(\mathscr{Y})$.
 The remaining inequality then follows immediately from conjugacy.
 Indeed, for any $f,g\in\setofgengambles(\mathscr{Y})$, we already have that $\upprev(-g) + \lowprev(-f)\leq\upprev(-g + (-f))$.
 By conjugacy, this implies that $-\lowprev(g)-\upprev(f)\leq-\lowprev(g + f)$ and therefore that $\upprev(f)+\lowprev(g)\geq\lowprev(f + g)$ for any $f,g\in\setofgengambles(\mathscr{Y})$.

\ref{coherence: uniform convergence}:
It is easy to see that, if $\lim_{n\to+\infty} \sup{\vert f-f_n \vert}=0$ for some sequence $\{f_n\}_{n\in\natz}$ of gambles, then $f$ is also a gamble and so is each $f-f_n$.
Hence, it follows from \ref{coherence: low smaller than up} that
\begin{align}\label{Eq: proof coherence uniform convergence}
\lowprev(f-f_n)\leq\upprev(f) + \lowprev(-f_n)\leq\upprev(f-f_n) \text{ for all } n\in\natz.
\end{align}
If we now apply \ref{coherence: bounds} to $\upprev(f-f_n)$, and \ref{coherence: bounds} and conjugacy to $\lowprev(f-f_n)$, it follows from \eqref{Eq: proof coherence uniform convergence} that $\inf (f-f_n)\leq\upprev(f)+\lowprev(-f_n)\leq\sup (f-f_n)$ for all  $n\in\natz$.
Since moreover $\upprev(f)+\lowprev(-f_n) = \upprev(f)-\upprev(f_n)$ for all $n\in\natz$, we then have that $\lim_{n\to+\infty} \smash{\vert \upprev(f)-\upprev(f_n) \vert}=0$ due to the fact that $\lim_{n\to+\infty} \sup{\vert f-f_n \vert}=0$.

\ref{coherence: continuity wrt bounded below functions}:
Let $\{f_n\}_{n \in \natz{}}$ be any non-decreasing sequence in $\setofgenextvariablesb{}(\mathscr{Y})$.
Since $f_0$ is bounded below and $\{f_n\}_{n \in \natz{}}$ is non-decreasing, $\{f_n\}_{n \in \natz{}}$ is uniformly bounded below by some $M \in \reals{}$.
Therefore, $\{f_n - M\}_{n \in \natz{}}$ is a non-decreasing sequence of non-negative variables in $\setofgenextvariables{}(\mathscr{Y})$.
Hence, due to \ref{coherence: continuity wrt non-negative functions}, we have that $\lim_{n \to +\infty} \upprev{}(f_n - M) = \upprev{}(\lim_{n \to +\infty} f_n - M)$, which is equivalent to $\lim_{n \to +\infty} \upprev{}(f_n) = \upprev{}(\lim_{n \to +\infty} f_n)$ due to \ref{coherence: const add} [which is applicable because $\lim_{n \to +\infty} f_n$ and all $f_n$ are bounded below].
\end{proof}

As a consequence of their continuity with respect to non-decreasing sequences [\ref{coherence: continuity wrt non-negative functions}], upper expectations also satisfy the following countable sub-additivity property.

\begin{proposition}\label{proposition: countable subadditivity}
Consider any non-empty set $\mathscr{Y}$, any $\setofgenextvariablesb(\mathscr{Y})\subseteq\mathscr{D}\subseteq\setofgenextvariables(\mathscr{Y})$ and any upper expectation $\upprev$ on $\mathscr{D}$.
Then
$\upprev\left(\sum_{n\in\natz} f_n\right)\leq\sum_{n\in\natz} \upprev(f_n)$
for any sequence $\{f_n\}_{n\in\natz}$ of non-negative variables in $\setofgenextvariablesb(\mathscr{Y})$.
\end{proposition}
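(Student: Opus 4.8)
The plan is to reduce the countable statement to finite sub-additivity combined with continuity along non-decreasing sequences. First I would introduce, for each $N\in\natz$, the partial sum $g_N\coloneqq\sum_{n=0}^{N}f_n$. Since every $f_n$ is non-negative and belongs to $\setofgenextvariablesb(\mathscr{Y})$, each $g_N$ is likewise non-negative, hence an element of $\setofgenextvariablesb(\mathscr{Y})\subseteq\mathscr{D}$; moreover $\{g_N\}_{N\in\natz}$ is non-decreasing and its pointwise limit is exactly $\sum_{n\in\natz}f_n$, which is again non-negative---so bounded below and a member of $\setofgenextvariablesb(\mathscr{Y})\subseteq\mathscr{D}$---possibly taking the value $+\infty$ at some points, which our arithmetic conventions accommodate.

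Next I would establish the finite version by a straightforward induction on $N$ using \ref{coherence: sublinearity}: because all the partial sums involved lie in $\setofgenextvariablesb(\mathscr{Y})$, the axiom applies and yields $\upprev(g_N)\leq\sum_{n=0}^{N}\upprev(f_n)$ for every $N\in\natz$. I would also note that $\upprev(f_n)\geq\inf f_n\geq 0$ by \ref{coherence: bounds}, so the sequence $\bigl\{\sum_{n=0}^{N}\upprev(f_n)\bigr\}_{N\in\natz}$ is non-decreasing in $\extreals$ and therefore converges to $\sum_{n\in\natz}\upprev(f_n)$.

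Finally I would invoke continuity with respect to non-decreasing sequences of non-negative variables [\ref{coherence: continuity wrt non-negative functions}], applied to $\{g_N\}_{N\in\natz}$, to obtain $\upprev\bigl(\sum_{n\in\natz}f_n\bigr)=\lim_{N\to+\infty}\upprev(g_N)$, and then combine this with the finite bound and monotonicity of limits in $\extreals$:
\[
\upprev\Bigl(\sum_{n\in\natz}f_n\Bigr)
=\lim_{N\to+\infty}\upprev(g_N)
\leq\lim_{N\to+\infty}\sum_{n=0}^{N}\upprev(f_n)
=\sum_{n\in\natz}\upprev(f_n).
\]
There is no real obstacle here; the only points requiring a little care are verifying that each partial sum is bounded below (so that \ref{coherence: sublinearity} is applicable) and that the pointwise limit of the partial sums is the countable sum even where it equals $+\infty$---both immediate from non-negativity and the stated conventions for arithmetic involving $+\infty$.
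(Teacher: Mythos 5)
Your proposal is correct and follows essentially the same route as the paper's proof: partial sums, finite sub-additivity via \ref{coherence: sublinearity}, non-negativity of each $\upprev(f_n)$ via \ref{coherence: bounds}, and continuity with respect to non-decreasing sequences of non-negative variables [\ref{coherence: continuity wrt non-negative functions}] to pass to the limit. No gaps.
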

\begin{proof}
Consider the sequence $\{g_n\}_{n\in\natz}$ of non-negative variables defined by $g_n \coloneqq\sum_{i=0}^{n} f_i$ for all $n\in\natz$.
Then, $\{g_n\}_{n\in\natz}$ is non-decreasing because $\{f_n\}_{n\in\natz}$ is non-negative.
Moreover, it is clear that $\{g_n\}_{n\in\natz}$ converges pointwise to $\sum_{n\in\natz} f_n$.
Hence, we can apply \ref{coherence: continuity wrt non-negative functions} to find that
\begin{align*}
\upprev\left(\sum_{n\in\natz} f_n\right)
= \lim_{n\to+\infty} \upprev(g_n)
= \lim_{n\to+\infty} \upprev\left(\sum_{i=0}^{n} f_i\right)
\overset{\text{\ref{coherence: sublinearity}}}{\leq}  \lim_{n\to+\infty} \sum_{i=0}^{n} \upprev(f_i)
= \sum_{n\in\natz} \upprev(f_n),
\end{align*}
where the limit on the right hand side of the inequality exists because all $\upprev(f_n)$ are non-negative as a consequence of \ref{coherence: bounds}.
\end{proof}

Finally, the following proposition shows that for a \emph{finite} set $\mathscr{Y}$, the continuity axiom \ref{coherence: continuity wrt non-negative functions} can in fact be replaced by a much simpler property.

\begin{proposition}\label{Lemma: local continuity wrt non-decreasing seq}
For any finite non-empty set $\mathscr{Y}$ and any $\setofgenextvariablesb(\mathscr{Y})\subseteq\mathscr{D}\subseteq\setofgenextvariables(\mathscr{Y})$, an extended real-valued map $\upprev$ on $\mathscr{D}$ is an upper expectation if and only if it satisfies \ref{coherence: const is const}--\ref{coherence: monotonicity} and
\begin{enumerate}[leftmargin=*,labelwidth=*,ref={\upshape E\arabic*},label={\upshape E\arabic*}., resume=vovkcoherence]
\item \label{coherence: homogeneity wrt infty}
$(+\infty) \, \upprev(f)=\upprev \left( (+\infty) f \right)$ for all non-negative $f \in \setofgenextvariablesb(\mathscr{Y})$.
\end{enumerate}
\end{proposition}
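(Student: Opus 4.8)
The plan is to prove the two implications separately; only the ``if'' direction genuinely uses finiteness of $\mathscr{Y}$.

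\emph{``Only if''.} If $\upprev$ is an upper expectation then \ref{coherence: const is const}--\ref{coherence: monotonicity} hold by definition, so only \ref{coherence: homogeneity wrt infty} needs checking. Given a non-negative $f\in\setofgenextvariablesb(\mathscr{Y})$, I would apply \ref{coherence: continuity wrt non-negative functions} to the sequence $f_n\coloneqq n f$ ($n\in\natz$), which is non-decreasing and non-negative because $f\geq 0$ and which converges pointwise to $(+\infty)f$ (with $0\,(+\infty)=0$ where $f$ vanishes); together with \ref{coherence: homog for positive lambda} this gives $\upprev((+\infty)f)=\lim_n n\,\upprev(f)$, and since $\upprev(f)\geq 0$ by \ref{coherence: monotonicity} and \ref{coherence: const is const}, the right-hand side equals $(+\infty)\upprev(f)$ in both the case $\upprev(f)>0$ and the case $\upprev(f)=0$.

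\emph{``If''.} Now assume $\upprev$ satisfies \ref{coherence: const is const}--\ref{coherence: monotonicity} and \ref{coherence: homogeneity wrt infty}; by Proposition~\ref{prop: coherence properties} we may then also use \ref{coherence: bounds} and the uniform-convergence property \ref{coherence: uniform convergence}. It remains to establish \ref{coherence: continuity wrt non-negative functions}: let $\{f_n\}_{n\in\natz}$ be non-decreasing and non-negative in $\setofgenextvariablesb(\mathscr{Y})$ with pointwise limit $f$. Since $f_n\leq f$, monotonicity \ref{coherence: monotonicity} immediately gives $\lim_n\upprev(f_n)\leq\upprev(f)$ (the limit existing since $n\mapsto\upprev(f_n)$ is non-decreasing). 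For the reverse inequality I would use finiteness of $\mathscr{Y}$: for every real $\lambda>0$ the truncations $\min\{f_n,\lambda\}$ and $\min\{f,\lambda\}$ are gambles, and $\{\min\{f_n,\lambda\}\}_n$ converges pointwise to $\min\{f,\lambda\}$, hence---because $\mathscr{Y}$ is finite---uniformly; so \ref{coherence: uniform convergence} yields $\upprev(\min\{f_n,\lambda\})\to\upprev(\min\{f,\lambda\})$, and combining with $f_n\geq\min\{f_n,\lambda\}$ and \ref{coherence: monotonicity} we obtain $\lim_n\upprev(f_n)\geq\upprev(\min\{f,\lambda\})$ for every $\lambda>0$. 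Thus it suffices to prove $\lim_{\lambda\to+\infty}\upprev(\min\{f,\lambda\})=\upprev(f)$.

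To this end I would split $\mathscr{Y}$ into $A\coloneqq\{y\colon f(y)<+\infty\}$ and $B\coloneqq\{y\colon f(y)=+\infty\}$, and set $g\coloneqq f\indica{A}$, which is a gamble, and $L\coloneqq\max(\{0\}\cup\{f(y)\colon y\in A\})\in\reals$. For $\lambda\geq L$ one has $\min\{f,\lambda\}=g+\lambda\indica{B}$ while $f=g+(+\infty)\indica{B}$. If $\upprev(\indica{B})=0$, then \ref{coherence: monotonicity}, \ref{coherence: sublinearity}, \ref{coherence: homog for positive lambda} and \ref{coherence: homogeneity wrt infty} give $\upprev(g)\leq\upprev(g+\lambda\indica{B})\leq\upprev(g)+\lambda\upprev(\indica{B})=\upprev(g)$ and similarly $\upprev(f)=\upprev(g)$, so both quantities equal $\upprev(g)$. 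If instead $\upprev(\indica{B})>0$, then from $\min\{f,\lambda\}\geq\lambda\indica{B}$ and $f\geq\lambda\indica{B}$ together with \ref{coherence: monotonicity} and \ref{coherence: homog for positive lambda} we get $\upprev(\min\{f,\lambda\})\geq\lambda\upprev(\indica{B})\to+\infty$ and $\upprev(f)=+\infty$. In both cases $\lim_{\lambda\to+\infty}\upprev(\min\{f,\lambda\})=\upprev(f)$, which finishes the proof. The main obstacle is this last step---pinning down $\upprev$ on limits that are infinite on part of $\mathscr{Y}$---and it is here that both the finiteness of $\mathscr{Y}$ (which turns pointwise into uniform convergence for gambles, activating \ref{coherence: uniform convergence}) and axiom \ref{coherence: homogeneity wrt infty} (which controls $\upprev$ on the ``infinite part'' $B$) are essential; the case split on the sign of $\upprev(\indica{B})$ reflects the discontinuity of $(+\infty)\,\cdot$ at $0$.
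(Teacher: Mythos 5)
Your proof is correct, and in the ``only if'' direction it is identical to the paper's (the sequence $\{nf\}_{n\in\natz}$ together with \ref{coherence: homog for positive lambda} and the convention $(+\infty)\,0=0$). In the converse direction you reach the same destination by a slightly different technical route: the paper splits $f$ and the $f_n$ by multiplying with the indicator of the finite part, i.e.\ it applies \ref{coherence: uniform convergence} once to the gambles $\prodindica{f_n}{A^c}\to\prodindica{f}{A^c}$ and handles the set $A$ where $f=+\infty$ via \ref{coherence: sublinearity}, \ref{coherence: homogeneity wrt infty} and the case split on whether $\upprev(\indica{A})$ vanishes, exactly the case split you perform on $\upprev(\indica{B})$. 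You instead truncate with upper cuts $\min\{f_n,\lambda\}$ for each fixed $\lambda$, use finiteness of $\mathscr{Y}$ to upgrade pointwise to uniform convergence and invoke \ref{coherence: uniform convergence} once per $\lambda$, thereby reducing the problem to showing $\lim_{\lambda\to+\infty}\upprev(\min\{f,\lambda\})=\upprev(f)$; this intermediate step is precisely upper-cut continuity \ref{ext coherence cuts continuity} at $f$, derived here from \ref{coherence: const is const}--\ref{coherence: monotonicity} and \ref{coherence: homogeneity wrt infty} alone, which echoes (a fragment of) Lemma~\ref{lemma: cont. wrt cuts iff cont. wrt nondecreasing for coherent upper prevision}. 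The paper's version is a little more direct (one application of \ref{coherence: uniform convergence}, no $\lambda$-limit), while yours is slightly more modular and makes explicit the link between \ref{coherence: homogeneity wrt infty} and cut-continuity; both rely on the same two essential ingredients, finiteness of $\mathscr{Y}$ and the control of the ``infinite part'' through \ref{coherence: homogeneity wrt infty}.
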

\begin{proof}
We first prove the direct implication; that is, we consider any upper expectation $\upprev{}$ on $\mathscr{D}$ and show that it satisfies \ref{coherence: homogeneity wrt infty}.
Fix any non-negative $f\in\setofgenextvariablesb(\mathscr{Y})$
and observe that $\{n f\}_{n \in \nats}$ is a non-decreasing sequence in $\setofgenextvariables{}(\mathscr{Y})$ that converges pointwise to $(+\infty)f$ [because of the convention that $(+\infty) \, 0 = 0$].
Hence,
\begin{align*}
\upprev\left( (+\infty) f \right)
= \upprev \left( \lim_{n\to+\infty} n f \right)
\overset{\text{\ref{coherence: continuity wrt bounded below functions}}}{=} \lim_{n\to+\infty} \upprev(n f)
\overset{\text{\ref{coherence: homog for positive lambda}}}{=} \lim_{n\to+\infty} n \upprev(f)
= (+\infty) \upprev(f),
\end{align*}
where we once more used the convention that $(+\infty) \, 0 = 0$ for the last step, together with the fact that $\upprev(f)\geq0$ because of~\ref{coherence: bounds}.

To prove the converse implication, consider any extended real-valued map $\upprev{}$ on $\mathscr{D}$ that satisfies \ref{coherence: const is const}--\ref{coherence: monotonicity} and \ref{coherence: homogeneity wrt infty}.
Let $\{f_n\}_{n\in\natz}$ be a non-decreasing sequence of non-negative variables in $\setofgenextvariablesb(\mathscr{Y})$ and let $f \coloneqq\lim_{n\to+\infty} f_n$.
We show that $\lim_{n\to+\infty} \upprev{}(f_n) = \upprev{}(f)$.
Because $\{f_n\}_{n\in\natz}$ is non-decreasing, we have that $f_n \leq f_{n+1} \leq f$ for all $n\in\natz$.
Then it follows from \ref{coherence: monotonicity} [which we are allowed to use because $f$ and all $f_n$ are non-negative and therefore bounded below] that $\upprev(f_n)\leq\upprev(f_{n+1})\leq\upprev(f)$ for all $n\in\natz$.
Hence, $\lim_{n\to+\infty} \upprev(f_n)$ exists and $\lim_{n\to+\infty} \upprev(f_n)\leq\upprev(f)$.
To show that the converse inequality holds, let $A \coloneqq\{ y\in\mathscr{Y} \colon f(y)=+\infty \}$ and consider the following two cases.

If $\upprev(\indica{A})=0$, we have that
\begin{equation}\label{Eq: proof cont wrt nondecreasing seq 1}
\upprev(f)
= \upprev\big((+\infty)\indica{A} + \prodindica{f}{A^c} \big)
\overset{\text{\ref{coherence: sublinearity}}}{\leq} \upprev\big((+\infty)\indica{A}\big) + \upprev(\prodindica{f}{A^c})
\overset{\text{\ref{coherence: homogeneity wrt infty}}}{=} (+\infty) \upprev(\indica{A}) + \upprev(\prodindica{f}{A^c})
= \upprev(\prodindica{f}{A^c}).
\end{equation}
Because $\prodindica{f}{A^c}$ is real-valued [it cannot be $-\infty$ because it is non-negative] and $\mathscr{Y}$ is finite, $\prodindica{f}{A^c}$ is a gamble and $\{\prodindica{f_n}{A^c} \}_{n\in\natz}$ converges uniformly to $\prodindica{f}{A^c}$.
$\{\prodindica{f_n}{A^c} \}_{n\in\natz}$ is moreover also a sequence of gambles because its elements are non-negative and are bounded from above by the gamble $\prodindica{f}{A^c}$.
Hence, it follows from \ref{coherence: uniform convergence} that
\begin{align*}
\upprev(\prodindica{f}{A^c})
= \lim_{n\to+\infty} \upprev(\prodindica{f_n}{A^c})
\leq \lim_{n\to+\infty} \upprev(f_n),
\end{align*}
where we used the non-negativity of $\{f_n\}_{n\in\natz}$ and \ref{coherence: monotonicity} in the last step.
Together with Equation \eqref{Eq: proof cont wrt nondecreasing seq 1}, this then leads to the desired inequality.

If $\upprev(\indica{A}) \not= 0$, we have that $\upprev(\indica{A}) > 0$ because of \ref{coherence: bounds}.
Furthermore, all $f_n$ are non-negative, and therefore
\begin{align}\label{Eq: proof cont wrt nondecreasing seq 2}
\upprev(f_n)
\overset{\text{\ref{coherence: monotonicity}}}{\geq} \upprev(\prodindica{f_n}{A})
\overset{\text{\ref{coherence: monotonicity}}}{\geq} \upprev\group[\bigg]{\sqgroup[\Big]{\inf_{y\in A} f_n(y)}\,\indica{A}}
\overset{\text{\ref{coherence: homog for non-negative lambda},\ref{coherence: homogeneity wrt infty}}}{=} \sqgroup[\Big]{\inf_{y\in A} f_n(y)}  \upprev(\indica{A}) \text{ for all } n\in\natz.
\end{align}
Since $\{f_n\}_{n\in\natz}$ converges to $+\infty$ on $A$ and $A$ is moreover finite [since $\mathscr{Y}$ is], we have that $\lim_{n\to+\infty}\inf_{y\in A} f_n(y)=+\infty$.
This implies, together with $\upprev(\indica{A}) > 0$ and \eqref{Eq: proof cont wrt nondecreasing seq 2}, that $\lim_{n\to+\infty} \upprev(f_n)=+\infty$.
Hence, the desired inequality follows.
\end{proof}

\subsection{An Alternative Characterisation using Coherence}
The axioms in Definition~\ref{def: upper expectation} are rather abstract, particularly because the concept of infinity plays such a prominent role in them.
Walley \cite{Walley:1991vk} avoids this issue by only considering upper expectations---which he calls upper previsions---on gambles, that is, bounded real-valued variables.
This allows him to give upper expectations a clear behavioural interpretation in terms of attitudes towards gambling \cite{Walley:1991vk,Quaeghebeur:2012vw}.
Concretely, the upper expectation $\upprev(f)$ for any $f\in\setofgengambles(\mathscr{Y})$ then represents a subject's infimum selling price for the gamble $f$.
This interpretation in turn leads to a notion of rationality that he calls coherence.

\begin{definition}
Consider any non-empty set $\mathscr{Y}$ and any map $\upprev$ on the linear space $\setofgengambles(\mathscr{Y})$ of all gambles on $\mathscr{Y}$.
Then $\upprev$ is called a \emph{coherent upper prevision} if it is real-valued and satisfies the following three \emph{coherence axioms} \cite[Definition 2.3.3]{Walley:1991vk}:
\begin{enumerate}[leftmargin=*,ref={\upshape C\arabic*},label={\upshape C\arabic*}., series=sepcoherence ]
\item \label{sep coherence 1}
$\upprev(f)\leq\sup f$ for all $f\in\setofgengambles(\mathscr{Y})$;
\item \label{sep coherence 2}
$\upprev(f+g)\leq\upprev(f) + \upprev(g)$ for all $f,g\in\setofgengambles(\mathscr{Y})$;
\item \label{sep coherence 3}
$\upprev(\lambda f)=\lambda \upprev(f)$ for all $\lambda\in\posreals{}$ and $f\in\setofgengambles(\mathscr{Y})$.
\end{enumerate}
\end{definition}

One can easily show \cite[Section~2.6.1]{Walley:1991vk} that the coherence axioms \ref{sep coherence 1}--\ref{sep coherence 3} imply the following additional properties, with $\lowprev(f) \coloneqq -\upprev(-f)$ for all $f\in\setofgengambles(\mathscr{Y})$:
\begin{enumerate}[leftmargin=*,ref={\upshape C\arabic*},label=\normalfont{\upshape C\arabic*}.,resume=sepcoherence]
\item \label{sep coherence 4}
$f \leq g \, \Rightarrow \, \upprev(f)\leq\upprev(g)$ for all $f, g\in\setofgengambles(\mathscr{Y})$;
\item \label{sep coherence 5}
$\inf f\leq\lowprev(f)\leq\upprev(f)\leq\sup f$ for all $f\in\setofgengambles(\mathscr{Y})$;
\item \label{sep coherence 6}
$\upprev(f + \mu)=\upprev(f) + \mu$ for all real $\mu$ and all $f\in\setofgengambles(\mathscr{Y})$;
\item \label{sep coherence 7}
$\lim_{n\to+\infty} \sup{\vert f-f_n \vert}=0 \Rightarrow \lim_{n\to+\infty} \vert \upprev(f)-\upprev(f_n) \vert=0$ for any sequence $\{f_n\}_{n\in\natz}$ in $\setofgengambles(\mathscr{Y})$.
\end{enumerate}
Now, in order to use coherent upper previsions as local uncertainty models in a game-theoretic framework, we need to extend their domain to at least the set $\setofgenextvariablesb{}(\mathscr{Y})$ of all bounded below extended real-valued variables, and in some cases to all of $\setofgenextvariables{}(\mathscr{Y})$.
We propose the following step-wise approach.

Consider any non-empty set $\mathscr{Y}$ and any $\setofgengambles{}(\mathscr{Y})\subseteq\mathscr{D}\subseteq\setofgenextvariables{}(\mathscr{Y})$.
Furthermore, for any $f \in \setofgenextvariables{}(\mathscr{Y})$ and any $c \in \reals{}$, let $f^{\wedge c}$ be the variable defined by $f^{\wedge c}(y) \coloneqq \min \{f(y), c \}$ for all $y \in \mathscr{Y}$.
Consider now the following continuity property for an extended real-valued map $\upprev{}$ on $\mathscr{D}$ whose restriction to $\setofgengambles{}(\mathscr{Y})$ is a coherent upper prevision:
\begin{enumerate}[leftmargin=28pt,ref={\upshape E\arabic*},label={\upshape E\arabic*}., resume=vovkcoherence]
\item \label{ext coherence cuts continuity}
$\upprev(f) = \lim_{c\to+\infty} \upprev(f^{\wedge c})$ for all $f \in \mathscr{D}\cap\setofgenextvariablesb{}(\mathscr{Y})$.
\end{enumerate}
Axiom~\ref{ext coherence cuts continuity} is called `continuity with respect to upper cuts' (also called `bounded-above support').
The limit on the right hand side exists, because $f^{\wedge c} \in \setofgengambles{}(\mathscr{Y})$ is non-decreasing in $c \in \reals{}$ and $\upprev{}$---or rather, its restriction to $\setofgengambles{}(\mathscr{Y})$---satisfies \ref{sep coherence 4}.
Similar to what we did with \ref{ext coherence lower cuts continuity}, property~\ref{ext coherence cuts continuity} can be used to (uniquely) extend a coherent upper prevision on $\setofgengambles{}(\mathscr{Y})$ to $\setofgenextvariablesb{}(\mathscr{Y})$.
Indeed, let $\upprev{}$ be any coherent upper prevision on $\setofgengambles{}(\mathscr{Y})$ and let $\upprev{}_{\textnormal{\tiny E14}} \colon \setofgenextvariablesb{}(\mathscr{Y})\to\extreals{}$ be defined by
\begin{align*}
\upprev{}_{\textnormal{\tiny E14}}(f)\coloneqq \lim_{c\to+\infty} \upprev(f^{\wedge c}) \text{ for all } f\in\setofgenextvariablesb{}(\mathscr{Y}),
\end{align*}
where the right hand side is well-defined because $f^{\wedge c}\in\setofgengambles{}(\mathscr{Y})$ for all $c \in \reals{}$ and $\upprev{}$ satisfies \ref{sep coherence 4} [which implies the existence of the limit]. Then $\upprev{}_{\textnormal{\tiny E14}}$ is an extension of $\upprev$:

\begin{proposition}\label{prop: upprev_E14 is an extension}
Consider any non-empty set $\mathscr{Y}$ and let \/ $\upprev{}$ be any coherent upper prevision on $\setofgengambles{}(\mathscr{Y})$.
Then $\upprev{}_{\textnormal{\tiny E14}}$ is an extension of \/ $\upprev{}$ that satisfies \ref{ext coherence cuts continuity}.
\end{proposition}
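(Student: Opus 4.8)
The plan is to check the two assertions in turn: that $\upprev{}_{\textnormal{\tiny E14}}$ agrees with $\upprev{}$ on $\setofgengambles{}(\mathscr{Y})$, and that it satisfies \ref{ext coherence cuts continuity} on its whole domain $\setofgenextvariablesb{}(\mathscr{Y})$ (here $\mathscr{D}=\setofgenextvariablesb{}(\mathscr{Y})$, so $\mathscr{D}\cap\setofgenextvariablesb{}(\mathscr{Y})=\setofgenextvariablesb{}(\mathscr{Y})$). Both rest on one elementary remark: for any $f\in\setofgenextvariablesb{}(\mathscr{Y})$ and any $c\in\reals{}$, the upper cut $f^{\wedge c}$ is a gamble---it is bounded above by $c$, it is bounded below since $f$ is, and it is real-valued because $f(y)=-\infty$ cannot occur while $f(y)=+\infty$ is capped to $c$---so that $\upprev{}(f^{\wedge c})$ is well-defined, and the function $c\mapsto\upprev{}(f^{\wedge c})$ is non-decreasing by \ref{sep coherence 4}, whence the limit defining $\upprev{}_{\textnormal{\tiny E14}}(f)$ exists (as already noted just before the statement).

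For the extension claim I would fix $f\in\setofgengambles{}(\mathscr{Y})$. Then $\sup f\in\reals{}$, and for every real $c\geq\sup f$ we have $f^{\wedge c}=f$ pointwise, hence $\upprev{}(f^{\wedge c})=\upprev{}(f)$. Thus the non-decreasing function $c\mapsto\upprev{}(f^{\wedge c})$ is eventually constant and equal to $\upprev{}(f)$, so $\upprev{}_{\textnormal{\tiny E14}}(f)=\lim_{c\to+\infty}\upprev{}(f^{\wedge c})=\upprev{}(f)$. Since $\setofgengambles{}(\mathscr{Y})\subseteq\setofgenextvariablesb{}(\mathscr{Y})$, this shows $\upprev{}_{\textnormal{\tiny E14}}$ extends $\upprev{}$.

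For \ref{ext coherence cuts continuity} I would fix $f\in\setofgenextvariablesb{}(\mathscr{Y})$ and note that, for each $c\in\reals{}$, $f^{\wedge c}$ is a gamble by the remark above, so the previous paragraph yields $\upprev{}_{\textnormal{\tiny E14}}(f^{\wedge c})=\upprev{}(f^{\wedge c})$. Taking limits and invoking the definition of $\upprev{}_{\textnormal{\tiny E14}}$,
\[
\lim_{c\to+\infty}\upprev{}_{\textnormal{\tiny E14}}(f^{\wedge c})
=\lim_{c\to+\infty}\upprev{}(f^{\wedge c})
=\upprev{}_{\textnormal{\tiny E14}}(f),
\]
which is precisely \ref{ext coherence cuts continuity} for $\upprev{}_{\textnormal{\tiny E14}}$.

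I do not expect a genuine obstacle: the argument is a direct unwinding of definitions. The only points deserving a line of justification are the well-definedness of $f^{\wedge c}$ as a gamble (so that $\upprev{}$ may legitimately be applied to it) and the existence of the limits, both of which reduce to the boundedness below of $f$ together with monotonicity [\ref{sep coherence 4}] of the coherent upper prevision $\upprev{}$. If one preferred not to route the proof of \ref{ext coherence cuts continuity} through the extension claim, one could instead use directly that $(f^{\wedge c})^{\wedge c'}=f^{\wedge\min\{c,c'\}}$ and take the limit in $c'$, but the path above is shorter.
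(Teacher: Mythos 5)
Your proposal is correct and follows essentially the same route as the paper: the extension claim via $f^{\wedge c}=f$ for $c\geq\sup f$, and \ref{ext coherence cuts continuity} read off from the definition of $\upprev{}_{\textnormal{\tiny E14}}$ (you merely spell out the intermediate step $\upprev{}_{\textnormal{\tiny E14}}(f^{\wedge c})=\upprev{}(f^{\wedge c})$, which the paper leaves implicit with ``follows immediately from its definition'').
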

\begin{proof}
$\upprev{}_{\textnormal{\tiny E14}}$ is an extension of $\upprev{}$ because, for any gamble $f\in\setofgengambles{}(\mathscr{Y})$, we have that $f^{\wedge c} = f$ for all $c\in\reals{}$ such that $c\geq\sup f$, and hence, $\upprev{}_{\textnormal{\tiny E14}}(f) = \lim_{c\to+\infty} \upprev{} (f^{\wedge c}) = \upprev{} (f)$. That $\upprev{}_{\textnormal{\tiny E14}}$ satisfies \ref{ext coherence cuts continuity} now follows immediately from its definition.
\end{proof}

Our next result shows, for finite $\mathscr{Y}$, that this extension $\upprev{}_{\textnormal{\tiny E14}}$ is an upper expectation on $\setofgenextvariablesb{}(\mathscr{Y})$ and, moreover, that the restriction to $\setofgenextvariablesb{}(\mathscr{Y})$ of any upper expectation is the extension $\upprev{}_{\textnormal{\tiny E14}}$ of some coherent upper prevision $\upprev{}$ on $\setofgengambles{}(\mathscr{Y})$.


\begin{proposition}\label{Prop: alt. characterisation upper exp.}
Consider any finite non-empty set $\mathscr{Y}$ and any extended real-valued map $\upprevacc{}$ on $\mathscr{D}$, with $\setofgenextvariablesb(\mathscr{Y})\subseteq\mathscr{D}\subseteq\setofgenextvariables(\mathscr{Y})$.
Then $\upprevacc{}$ is an upper expectation if and only if there is some coherent upper prevision $\upprev$ on $\setofgengambles(\mathscr{Y})$ such that \/ $\upprev{}_{\textnormal{\tiny E14}}$ coincides with $\upprevacc{}$ on $\setofgenextvariablesb(\mathscr{Y})$.
\end{proposition}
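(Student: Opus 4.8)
The plan is to establish the two implications separately, using throughout the earlier Propositions~\ref{prop: coherence properties}, \ref{Lemma: local continuity wrt non-decreasing seq} and \ref{prop: upprev_E14 is an extension}. I would open with the observation that axioms~\ref{coherence: const is const}--\ref{coherence: continuity wrt non-negative functions} only restrict the values of $\upprevacc{}$ on $\setofgenextvariablesb(\mathscr{Y})$, so that being an upper expectation is a property of $\upprevacc{}$ restricted to that subdomain, and moreover --- since $\mathscr{Y}$ is finite --- that by Proposition~\ref{Lemma: local continuity wrt non-decreasing seq} the continuity axiom~\ref{coherence: continuity wrt non-negative functions} may be traded for the pointwise condition~\ref{coherence: homogeneity wrt infty}.

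For the \emph{only if} direction, assume $\upprevacc{}$ is an upper expectation and set $\upprev\coloneqq\upprevacc{}|_{\setofgengambles(\mathscr{Y})}$. Then $\upprev$ is real-valued and satisfies \ref{sep coherence 1} by the bounds~\ref{coherence: bounds} (from Proposition~\ref{prop: coherence properties}), while \ref{sep coherence 2} and \ref{sep coherence 3} are just \ref{coherence: sublinearity} and \ref{coherence: homog for positive lambda} read on gambles; hence $\upprev$ is a coherent upper prevision. It then remains to show that $\upprev{}_{\textnormal{\tiny E14}}$ coincides with $\upprevacc{}$ on $\setofgenextvariablesb(\mathscr{Y})$, that is, that $\upprevacc{}$ satisfies the upper-cut continuity~\ref{ext coherence cuts continuity} there. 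Fix $f\in\setofgenextvariablesb(\mathscr{Y})$: since $\mathscr{Y}$ is finite, $\{f^{\wedge n}\}_{n\in\natz}$ is a non-decreasing, uniformly bounded below sequence of gambles converging pointwise to $f$, so \ref{coherence: continuity wrt bounded below functions} gives $\upprevacc{}(f)=\lim_{n\to+\infty}\upprevacc{}(f^{\wedge n})$; and as $c\mapsto\upprevacc{}(f^{\wedge c})$ is non-decreasing by \ref{coherence: monotonicity}, this limit equals $\lim_{c\to+\infty}\upprevacc{}(f^{\wedge c})=\lim_{c\to+\infty}\upprev(f^{\wedge c})=\upprev{}_{\textnormal{\tiny E14}}(f)$.

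For the \emph{if} direction, let $\upprev$ be a coherent upper prevision with $\upprev{}_{\textnormal{\tiny E14}}=\upprevacc{}$ on $\setofgenextvariablesb(\mathscr{Y})$; by Proposition~\ref{prop: upprev_E14 is an extension} it extends $\upprev$, and by \ref{sep coherence 4} it satisfies $\upprev{}_{\textnormal{\tiny E14}}(f)=\sup_{c\in\reals}\upprev(f^{\wedge c})$. By Proposition~\ref{Lemma: local continuity wrt non-decreasing seq} it suffices to verify \ref{coherence: const is const}--\ref{coherence: monotonicity} and \ref{coherence: homogeneity wrt infty} for $\upprev{}_{\textnormal{\tiny E14}}$. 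Axiom~\ref{coherence: const is const} holds because $\upprev{}_{\textnormal{\tiny E14}}$ extends the coherent $\upprev$; \ref{coherence: monotonicity} and \ref{coherence: homog for positive lambda} follow from \ref{sep coherence 4} and \ref{sep coherence 3} together with the identities $f\le g\Rightarrow f^{\wedge c}\le g^{\wedge c}$ and $(\lambda f)^{\wedge c}=\lambda f^{\wedge c/\lambda}$ upon letting $c\to+\infty$. For \ref{coherence: sublinearity}, given $f,g\in\setofgenextvariablesb(\mathscr{Y})$ --- the subcase where $f$ or $g$ is identically $+\infty$ being trivial, since then the right-hand side is $+\infty$ --- I would fix a real common lower bound $M\le0$ and check the pointwise inequality $(f+g)^{\wedge c}\le f^{\wedge(c-M)}+g^{\wedge(c-M)}$ for all $c\ge0$ by a short case split on whether $f(y)$ and/or $g(y)$ exceeds $c-M$; applying \ref{sep coherence 4} and \ref{sep coherence 2} and then letting $c\to+\infty$ (using the $\sup$-form above) yields $\upprev{}_{\textnormal{\tiny E14}}(f+g)\le\upprev{}_{\textnormal{\tiny E14}}(f)+\upprev{}_{\textnormal{\tiny E14}}(g)$. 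Finally, for \ref{coherence: homogeneity wrt infty} with $f\ge0$, setting $A\coloneqq\{y\in\mathscr{Y}\colon f(y)>0\}$ (the case $A=\emptyset$ being trivial) and $m\coloneqq\min\{f(y)\colon y\in A\}>0$, finiteness of $\mathscr{Y}$ gives $((+\infty)f)^{\wedge c}=c\,\indica{A}$ and $m\,\indica{A}\le f^{\wedge c}\le c\,\indica{A}$ for all $c\ge m$; taking $\upprev$ of these and using \ref{sep coherence 3}--\ref{sep coherence 4} and $\upprev(\indica{A})\ge0$, I obtain $\upprev{}_{\textnormal{\tiny E14}}((+\infty)f)=(+\infty)\,\upprev(\indica{A})$ together with $\upprev{}_{\textnormal{\tiny E14}}(f)>0$ exactly when $\upprev(\indica{A})>0$, which is precisely the identity $(+\infty)\,\upprev{}_{\textnormal{\tiny E14}}(f)=\upprev{}_{\textnormal{\tiny E14}}((+\infty)f)$.

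The step I expect to be the main obstacle is the sublinearity~\ref{coherence: sublinearity} in the \emph{if} direction: truncation does not distribute over sums, so the naive bound $(f+g)^{\wedge c}\le f^{\wedge c}+g^{\wedge c}$ is false, and one must instead truncate the summands at the shifted level $c-M$ fixed by a common lower bound; the resulting pointwise inequality then holds only for all sufficiently large $c$, which is harmless because the quantities $\upprev((f+g)^{\wedge c})$ are non-decreasing in $c$ and only their limit enters. I would also flag that finiteness of $\mathscr{Y}$ is used essentially in two spots --- to get the pointwise convergence $f^{\wedge n}\to f$, and (via Proposition~\ref{Lemma: local continuity wrt non-decreasing seq}) to replace the continuity axiom~\ref{coherence: continuity wrt non-negative functions} by the tractable~\ref{coherence: homogeneity wrt infty}.
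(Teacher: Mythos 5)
Your proposal is correct, and the ``only if'' direction is essentially the paper's: restrict to gambles, get coherence from \ref{coherence: bounds}, \ref{coherence: sublinearity}, \ref{coherence: homog for positive lambda}, and recover \ref{ext coherence cuts continuity} from \ref{coherence: continuity wrt bounded below functions} applied to the cut sequence $\{f^{\wedge n}\}_{n\in\natz}$ (this is exactly the direct implication of the paper's Lemma~\ref{lemma: cont. wrt cuts iff cont. wrt nondecreasing for coherent upper prevision}, which you re-derive inline). The converse direction, however, follows a genuinely different route. The paper proves it through Lemma~\ref{lemma: cont. wrt cuts iff cont. wrt nondecreasing for coherent upper prevision} (equivalence of \ref{ext coherence cuts continuity} and \ref{coherence: continuity wrt bounded below functions} for maps coherent on gambles, whose converse is the delicate step: a uniform-convergence argument plus a $\sup$--$\sup$ interchange, where finiteness of $\mathscr{Y}$ enters) and then Lemma~\ref{lemma: C1-C3 + cont.=upper exp.}, which upgrades coherence plus \ref{coherence: continuity wrt bounded below functions} to all of \ref{coherence: const is const}--\ref{coherence: continuity wrt non-negative functions} via monotone truncation sequences. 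You instead route finiteness through Proposition~\ref{Lemma: local continuity wrt non-decreasing seq}, replacing the continuity axiom by \ref{coherence: homogeneity wrt infty}, and verify \ref{coherence: const is const}--\ref{coherence: monotonicity} and \ref{coherence: homogeneity wrt infty} for $\upprev{}_{\textnormal{\tiny E14}}$ directly from \ref{sep coherence 1}--\ref{sep coherence 4}; your shifted-level inequality $(f+g)^{\wedge c}\leq f^{\wedge(c-M)}+g^{\wedge(c-M)}$ is a clean way to get \ref{coherence: sublinearity} without first establishing \ref{coherence: continuity wrt bounded below functions}, and your indicator argument for \ref{coherence: homogeneity wrt infty} is sound (the cases $\upprev(\indica{A})=0$ and $>0$ are both covered). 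What each approach buys: the paper's detour through Lemma~\ref{lemma: cont. wrt cuts iff cont. wrt nondecreasing for coherent upper prevision} also delivers \ref{ext coherence cuts continuity} for $\upprevacc{}$ and the \ref{coherence: continuity wrt bounded below functions}$\Leftrightarrow$\ref{ext coherence cuts continuity} equivalence, which is reused in Proposition~\ref{prop: upper expectation iff extended coherent upper prevision}; your argument avoids the double-limit interchange altogether and is more elementary, at the cost of checking the axioms one by one. One small inaccuracy in your closing remark: finiteness of $\mathscr{Y}$ is not needed for the pointwise convergence $f^{\wedge n}\to f$ or for $f^{\wedge n}$ being a gamble (both hold for any bounded below $f$); it is only needed where you invoke Proposition~\ref{Lemma: local continuity wrt non-decreasing seq} (and, in your \ref{coherence: homogeneity wrt infty} verification, to get $m=\min_{y\in A}f(y)>0$).
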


The proof of this result is based on the following two lemmas.

\begin{lemma}\label{lemma: cont. wrt cuts iff cont. wrt nondecreasing for coherent upper prevision}
Consider any finite non-empty set $\mathscr{Y}$, any $\setofgenextvariablesb(\mathscr{Y})\subseteq\mathscr{D}\subseteq\setofgenextvariables(\mathscr{Y})$ and any extended real-valued map $\upprev$ on $\mathscr{D}$ whose restriction to $\setofgengambles{}(\mathscr{Y})$ is a coherent upper prevision.
Then $\upprev{}$ satisfies \ref{coherence: continuity wrt bounded below functions} if and only if it satisfies \ref{ext coherence cuts continuity}.
\end{lemma}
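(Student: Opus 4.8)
The plan is to establish the two implications separately. Throughout, I would use that the restriction of $\upprev{}$ to $\setofgengambles{}(\mathscr{Y})$ satisfies \ref{sep coherence 1}--\ref{sep coherence 7}, that $f^{\wedge c}\in\setofgengambles{}(\mathscr{Y})$ for every $f\in\setofgenextvariablesb{}(\mathscr{Y})$ and every $c\in\reals{}$, and that $c\mapsto\upprev{}(f^{\wedge c})$ is non-decreasing [by \ref{sep coherence 4}], so that $\lim_{c\to+\infty}\upprev{}(f^{\wedge c})=\lim_{n\to+\infty}\upprev{}(f^{\wedge n})$ whenever either limit is considered.

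For the implication \ref{coherence: continuity wrt bounded below functions}~$\Rightarrow$~\ref{ext coherence cuts continuity}, I would fix $f\in\mathscr{D}\cap\setofgenextvariablesb{}(\mathscr{Y})=\setofgenextvariablesb{}(\mathscr{Y})$ and apply \ref{coherence: continuity wrt bounded below functions} to the sequence $\{f^{\wedge n}\}_{n\in\natz}$: it is non-decreasing, lies in $\setofgenextvariablesb{}(\mathscr{Y})$, and converges pointwise to $f$, so $\upprev{}(f)=\lim_{n\to+\infty}\upprev{}(f^{\wedge n})=\lim_{c\to+\infty}\upprev{}(f^{\wedge c})$, which is \ref{ext coherence cuts continuity}. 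This direction does not use finiteness of $\mathscr{Y}$.

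For the converse \ref{ext coherence cuts continuity}~$\Rightarrow$~\ref{coherence: continuity wrt bounded below functions}, I would first observe that \ref{ext coherence cuts continuity} upgrades monotonicity on gambles [\ref{sep coherence 4}] to monotonicity of $\upprev{}$ on all of $\setofgenextvariablesb{}(\mathscr{Y})$: for $g\leq h$ in $\setofgenextvariablesb{}(\mathscr{Y})$ one compares the gambles $g^{\wedge c}\leq h^{\wedge c}$ and lets $c\to+\infty$. Then, given a non-decreasing $\{f_n\}_{n\in\natz}$ in $\setofgenextvariablesb{}(\mathscr{Y})$ with pointwise limit $f$, monotonicity shows $\{\upprev{}(f_n)\}_{n\in\natz}$ is non-decreasing and $\lim_{n\to+\infty}\upprev{}(f_n)\leq\upprev{}(f)$, so only the reverse inequality needs work. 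For each fixed $c\in\reals{}$, the gambles $f_n^{\wedge c}$ converge pointwise to the gamble $f^{\wedge c}$, hence uniformly since $\mathscr{Y}$ is finite, so \ref{sep coherence 7} gives $\upprev{}(f^{\wedge c})=\lim_{n\to+\infty}\upprev{}(f_n^{\wedge c})\leq\lim_{n\to+\infty}\upprev{}(f_n)$, the inequality using $f_n^{\wedge c}\leq f_n$ and the monotonicity just established. Letting $c\to+\infty$ and invoking \ref{ext coherence cuts continuity} yields $\upprev{}(f)=\lim_{c\to+\infty}\upprev{}(f^{\wedge c})\leq\lim_{n\to+\infty}\upprev{}(f_n)$, which closes the argument.

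The delicate point is the converse direction: \ref{sep coherence 7} cannot be applied directly to $\{f_n\}$, because the $f_n$ need not be gambles and need not converge uniformly. The fix is the two-parameter argument above---interpolating through the truncated gambles $f_n^{\wedge c}$---and the key use of the hypothesis $\vert\mathscr{Y}\vert<+\infty$ is exactly the passage from pointwise to uniform convergence of $\{f_n^{\wedge c}\}_{n\in\natz}$. One should also check that this survives the possibility that $f$ or some $f_n$ takes the value $+\infty$ on part of $\mathscr{Y}$, which causes no trouble because truncation at the real level $c$ removes it.
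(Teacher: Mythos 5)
Your proof is correct and follows essentially the same route as the paper's: the first direction applies \ref{coherence: continuity wrt bounded below functions} to the non-decreasing truncations $\{f^{\wedge n}\}_{n\in\natz}$, and the converse interpolates through the gambles $f_n^{\wedge c}$, using finiteness of $\mathscr{Y}$ to upgrade pointwise to uniform convergence so that \ref{sep coherence 7} applies, and then \ref{ext coherence cuts continuity} to pass to the limit in $c$. The only difference is bookkeeping: you split the converse into two inequalities via an auxiliary monotonicity extension, whereas the paper obtains the same conclusion in one chain by exchanging the two monotone limits $\sup_c\sup_n=\sup_n\sup_c$ and applying \ref{ext coherence cuts continuity} to each $f_n$.
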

\begin{proof}
In order to prove the direct implication, suppose that $\upprev{}$ satisfies \ref{coherence: continuity wrt bounded below functions} and consider any $f\in\setofgenextvariablesb{}(\mathscr{Y})=\mathscr{D}\cap\setofgenextvariablesb{}(\mathscr{Y})$ and any non-decreasing sequence $\{c_n\}_{n\in\natz{}}$ of reals such that $\lim_{n\to+\infty}c_n=+\infty$.
Then clearly $\{f^{\wedge c_n}\}_{n\in\natz}$ is non-decreasing in $\setofgenextvariablesb{}(\mathscr{Y})$ and converges to $f$.
Hence, \ref{coherence: continuity wrt bounded below functions} implies that $\lim_{n\to+\infty}\upprev(f^{\wedge c_n}) = \upprev(f)$.
Furthermore, for any $n,m\in\natz{}$ such that $m>n$, we clearly have that $f^{\wedge c_{n}} \leq f^{\wedge c} \leq f^{\wedge c_{m}}$ for all $c\in\reals$ such that $c_{n} \leq c \leq c_{m}$.
Due to \ref{sep coherence 4} [which we can apply because the restriction of $\upprev{}$ to $\setofgengambles{}(\mathscr{Y})$ is a coherent upper prevision and because $f^{\wedge c}$ is a gamble for all $c\in\reals{}$], this also implies that $\upprev(f^{\wedge c_{n}}) \leq \upprev(f^{\wedge c}) \leq \upprev(f^{\wedge c_{m}})$ for all $c\in\reals$ such that $c_{n} \leq c \leq c_{m}$.
Since this holds for any $n,m\in\natz{}$ such that $m>n$, it follows that $\lim_{c\to+\infty} \upprev(f^{\wedge c}) = \lim_{n\to+\infty}\upprev(f^{\wedge c_n}) = \upprev(f)$, where the last equality follows from our earlier considerations.

To see that the converse implication holds, suppose that $\upprev{}$ satisfies \ref{ext coherence cuts continuity} and fix any non-decreasing sequence $\{f_n\}_{n \in \natz}$ in $\setofgenextvariablesb{}(\mathscr{Y})$.
Let $f \coloneqq \lim_{n \to +\infty} f_n \in \setofgenextvariablesb{}(\mathscr{Y})$.
Then, for any $c\in\reals$, $\{f^{\wedge c}_n\}_{n \in \natz}$ is a non-decreasing sequence in $\setofgengambles{}(\mathscr{Y})$ that clearly converges pointwise to $f^{\wedge c} \in \setofgengambles{}(\mathscr{Y})$.
Moreover, since $f^{\wedge c}$ is a real-valued function on a finite set $\mathscr{Y}$, the sequence $\{f^{\wedge c}_n\}_{n \in \natz}$ converges uniformly to $f^{\wedge c}$.
Hence, we have that
\begin{align*}
\upprev{}(f)
\overset{\text{\ref{ext coherence cuts continuity}}}{=} \lim_{c \to +\infty} \upprev{}(f^{\wedge c})
\overset{\text{\ref{sep coherence 7}}}{=} \lim_{c \to +\infty} \lim_{n \to +\infty} \upprev{}(f^{\wedge c}_n)
\overset{\text{\ref{sep coherence 4}}}{=} \sup_{c \in \reals} \sup_{n \in \natz} \upprev{}(f^{\wedge c}_n)
= \sup_{n \in \natz} \sup_{c \in \reals} \upprev{}(f^{\wedge c}_n)
&\overset{\text{\ref{sep coherence 4}}}{=} \lim_{n \to +\infty} \sup_{c \in \reals} \upprev{}(f^{\wedge c}_n) \\
&\overset{\text{\ref{sep coherence 4}}}{=} \lim_{n \to +\infty} \lim_{c \to +\infty} \upprev{}(f^{\wedge c}_n)  \\
&\overset{\text{\ref{ext coherence cuts continuity}}}{=} \lim_{n \to +\infty} \upprev{}(f_n).
\tag*{\qedhere}
\end{align*}
\end{proof}

\begin{lemma}\label{lemma: C1-C3 + cont.=upper exp.}
Consider any finite non-empty set $\mathscr{Y}$, any $\setofgenextvariablesb(\mathscr{Y})\subseteq\mathscr{D}\subseteq\setofgenextvariables(\mathscr{Y})$ and any extended real-valued map $\upprev$ on $\mathscr{D}$. If the restriction of  \/ $\upprev$ to $\setofgengambles{}(\mathscr{Y})$ is a coherent upper prevision and \/ $\upprev{}$ satisfies \ref{coherence: continuity wrt bounded below functions}, then $\upprev$ is an upper expectation.
\end{lemma}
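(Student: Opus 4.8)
The plan is to verify that $\upprevacc{}$ satisfies the five defining axioms \ref{coherence: const is const}--\ref{coherence: continuity wrt non-negative functions} of Definition~\ref{def: upper expectation}; since these axioms only constrain $\upprevacc{}$ on $\setofgenextvariablesb{}(\mathscr{Y})$ and on real constants, nothing needs to be checked on $\mathscr{D}\setminus\setofgenextvariablesb{}(\mathscr{Y})$. The key preliminary observation is that, because the restriction of $\upprevacc{}$ to $\setofgengambles{}(\mathscr{Y})$ is a coherent upper prevision and $\upprevacc{}$ satisfies \ref{coherence: continuity wrt bounded below functions}, Lemma~\ref{lemma: cont. wrt cuts iff cont. wrt nondecreasing for coherent upper prevision} tells us that $\upprevacc{}$ also satisfies \ref{ext coherence cuts continuity}, i.e. $\upprevacc{}(f)=\lim_{c\to+\infty}\upprevacc{}(f^{\wedge c})$ for every $f\in\setofgenextvariablesb{}(\mathscr{Y})=\mathscr{D}\cap\setofgenextvariablesb{}(\mathscr{Y})$. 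Every cut $f^{\wedge c}$ with $c\in\reals{}$ is a gamble, so the working template throughout is: derive the wanted (in)equality for the gambles $f^{\wedge c}$ from the coherence properties \ref{sep coherence 1}--\ref{sep coherence 7}, and then let $c\to+\infty$ using \ref{ext coherence cuts continuity}.

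The easy axioms go as follows. \ref{coherence: const is const} is immediate from \ref{sep coherence 5} applied to a constant gamble. For \ref{coherence: monotonicity}, if $f\leq g$ in $\setofgenextvariablesb{}(\mathscr{Y})$ then $f^{\wedge c}\leq g^{\wedge c}$ are gambles, so \ref{sep coherence 4} gives $\upprevacc{}(f^{\wedge c})\leq\upprevacc{}(g^{\wedge c})$, and letting $c\to+\infty$ yields $\upprevacc{}(f)\leq\upprevacc{}(g)$. For \ref{coherence: homog for positive lambda}, since multiplication by a positive constant commutes with taking a minimum, $(\lambda f)^{\wedge c}=\lambda f^{\wedge(c/\lambda)}$, so \ref{sep coherence 3} gives $\upprevacc{}((\lambda f)^{\wedge c})=\lambda\upprevacc{}(f^{\wedge(c/\lambda)})$, and letting $c\to+\infty$ (hence $c/\lambda\to+\infty$) yields $\upprevacc{}(\lambda f)=\lambda\upprevacc{}(f)$. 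Running the same template with \ref{sep coherence 6} and the identity $(f+\mu)^{\wedge c}=f^{\wedge(c-\mu)}+\mu$ gives the auxiliary fact $\upprevacc{}(f+\mu)=\upprevacc{}(f)+\mu$ for all $f\in\setofgenextvariablesb{}(\mathscr{Y})$ and all $\mu\in\reals{}$, which I will need below; note also that \ref{sep coherence 5} applied to cuts gives $\upprevacc{}(f)\geq\inf f>-\infty$ for every $f\in\setofgenextvariablesb{}(\mathscr{Y})$. Finally, \ref{coherence: continuity wrt non-negative functions} is nothing but the special case of the assumed \ref{coherence: continuity wrt bounded below functions} for non-negative sequences, so there is nothing left to prove there.

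The real work is subadditivity \ref{coherence: sublinearity}, which I would handle in two steps. First, for non-negative $f,g\in\setofgenextvariablesb{}(\mathscr{Y})$ and $c\geq 0$ one checks the pointwise inequality $(f+g)^{\wedge c}\leq f^{\wedge c}+g^{\wedge c}$ (distinguishing whether $f(y)$ or $g(y)$ exceeds $c$); since all functions here are gambles, \ref{sep coherence 4} followed by \ref{sep coherence 2} gives $\upprevacc{}((f+g)^{\wedge c})\leq\upprevacc{}(f^{\wedge c})+\upprevacc{}(g^{\wedge c})$, and letting $c\to+\infty$ via \ref{ext coherence cuts continuity} gives $\upprevacc{}(f+g)\leq\upprevacc{}(f)+\upprevacc{}(g)$ — here the limit of the sum is the sum of the limits because both sequences are non-decreasing in $c$ and bounded below, so no $+\infty-\infty$ occurs. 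Second, for arbitrary $f,g\in\setofgenextvariablesb{}(\mathscr{Y})$ I pick a common real lower bound $M\leq 0$, so that $f-M$ and $g-M$ are non-negative, and combine the non-negative case with the constant-additivity fact from the previous paragraph: $\upprevacc{}(f+g)=\upprevacc{}((f-M)+(g-M))+2M\leq\upprevacc{}(f-M)+\upprevacc{}(g-M)+2M=\upprevacc{}(f)+\upprevacc{}(g)$. The main obstacle is exactly this reduction: the naive symmetric truncation bound $(f+g)^{\wedge 2c}\leq f^{\wedge c}+g^{\wedge c}$ simply fails once $f$ or $g$ may take negative values, which forces us to establish constant-additivity first and use it to translate the inequality into the non-negative regime; the rest is routine bookkeeping with the extended-real conventions (finiteness of $\upprevacc{}$ on bounded-below variables, the $+\infty$ cases being trivially true).
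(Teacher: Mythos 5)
Your proof is correct, but it takes a somewhat different route from the paper's, and the ``main obstacle'' you identify is an artifact of that route rather than of the problem itself. The paper never passes through \ref{ext coherence cuts continuity} or through constant additivity: for each axiom it applies the assumed \ref{coherence: continuity wrt bounded below functions} directly to a concrete non-decreasing sequence of gambles. In particular, for \ref{coherence: sublinearity} it observes that $\{f^{\wedge n}+g^{\wedge n}\}_{n\in\natz}$ is itself a non-decreasing sequence in $\setofgengambles{}(\mathscr{Y})$ converging pointwise to $f+g$, so that
\begin{align*}
\upprev{}(f+g)=\lim_{n\to+\infty}\upprev{}(f^{\wedge n}+g^{\wedge n})
\overset{\text{\ref{sep coherence 2}}}{\leq}\lim_{n\to+\infty}\bigl[\upprev{}(f^{\wedge n})+\upprev{}(g^{\wedge n})\bigr]
=\upprev{}(f)+\upprev{}(g),
\end{align*}
with no need to compare $(f+g)^{\wedge c}$ with $f^{\wedge c}+g^{\wedge c}$ pointwise, and hence no need for the reduction to non-negative variables or for establishing $\upprev{}(f+\mu)=\upprev{}(f)+\mu$ first. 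Your version instead commits to approximating $f+g$ by its own upper cuts, correctly notes that the pointwise bound fails when $f$ or $g$ can be negative, and repairs this via constant additivity plus a shift to the non-negative regime; this works, and your invocation of Lemma~\ref{lemma: cont. wrt cuts iff cont. wrt nondecreasing for coherent upper prevision} to obtain \ref{ext coherence cuts continuity} is legitimate and non-circular (its proof does not use the present lemma). What the paper's choice buys is brevity and fewer auxiliary facts; what yours buys is the explicit intermediate property \ref{ext coherence cuts continuity} and constant additivity on $\setofgenextvariablesb{}(\mathscr{Y})$, which the paper anyway derives elsewhere (Lemma~\ref{lemma: cont. wrt cuts iff cont. wrt nondecreasing for coherent upper prevision} and \ref{coherence: const add}). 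The remaining steps (\ref{coherence: const is const} from \ref{sep coherence 5}, \ref{coherence: homog for positive lambda} and \ref{coherence: monotonicity} via cuts, \ref{coherence: continuity wrt non-negative functions} as a special case of \ref{coherence: continuity wrt bounded below functions}) match the paper's argument up to replacing the discrete sequence $\{f^{\wedge n}\}_{n\in\natz}$ by the continuous parameter $c$.
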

\begin{proof}
Assume that the restriction of  \/ $\upprev$ to $\setofgengambles{}(\mathscr{Y})$ is a coherent upper prevision and \/ $\upprev{}$ satisfies \ref{coherence: continuity wrt bounded below functions}.
That \ref{coherence: const is const} holds, follows immediately from \ref{sep coherence 5}.
To prove \ref{coherence: sublinearity}, consider any two $f,g\in\setofgenextvariablesb(\mathscr{Y})$ and the corresponding non-decreasing sequences $\smash{\{f^{\wedge n}\}_{n\in\natz}}$ and $\smash{\{g^{\wedge n}\}_{n\in\natz}}$ in $\setofgengambles(\mathscr{Y})$.
Then due to \ref{coherence: continuity wrt bounded below functions}, $\lim_{n\to+\infty} \upprev(f^{\wedge n})=\upprev(f)$ and $\lim_{n\to+\infty} \upprev(g^{\wedge n})=\smash{\upprev(g)}$.
Moreover, $\{f^{\wedge n} + g^{\wedge n}\}_{n\in\natz}$ is also a non-decreasing sequence in $\setofgengambles(\mathscr{Y})$ and clearly $\lim_{n\to+\infty} (f^{\wedge n} + g^{\wedge n})=f + g\in\setofgenextvariablesb{}(\mathscr{Y})$, which again implies by \ref{coherence: continuity wrt bounded below functions} that $\lim_{n\to+\infty} \upprev(f^{\wedge n} + g^{\wedge n})=\upprev(f + g)$.
All together, we have that
\begin{align*}
\upprev(f+g)=\lim_{n\to+\infty} \upprev(f^{\wedge n} + g^{\wedge n}) \overset{\text{\ref{sep coherence 2}}}{\leq} \lim_{n\to+\infty} \left[ \upprev(f^{\wedge n}) + \upprev(g^{\wedge n})\right]
= \upprev(f) + \upprev(g),
\end{align*}
which concludes the proof of \ref{coherence: sublinearity}.

Property~\ref{coherence: homog for positive lambda} can be proved in a similar way.
Consider any $\lambda \in \posreals{}$ and any $f\in\setofgenextvariablesb{}(\mathscr{Y})$.
Note that $\{\lambda f^{\wedge n}\}_{n \in \natz}$ is a non-decreasing sequence [because $\lambda > 0$] in $\setofgengambles{}(\mathscr{Y})$ that converges pointwise to $\lambda f\in\setofgenextvariablesb{}(\mathscr{Y})$.
Hence,
\begin{align*}
\upprev{}(\lambda f)
\overset{\text{\ref{coherence: continuity wrt bounded below functions}}}{=} \lim_{n \to +\infty} \upprev{}(\lambda f^{\wedge n})
\overset{\text{\ref{sep coherence 3}}}{=} \lim_{n \to +\infty} \lambda \upprev{}(f^{\wedge n})
\overset{\text{\ref{coherence: continuity wrt bounded below functions}}}{=} \lambda \upprev{}(f).
\end{align*}

To prove \ref{coherence: monotonicity}, consider any two $f,g\in\setofgenextvariablesb(\mathscr{Y})$ such that $f \leq g$, and the corresponding non-decreasing sequences $\smash{\{f^{\wedge n}\}_{n\in\natz}}$ and $\smash{\{g^{\wedge n}\}_{n\in\natz}}$ in $\setofgengambles(\mathscr{Y})$.
Clearly, $f^{\wedge n} \leq g^{\wedge n}$ for all $n\in\natz$ and therefore $\upprev(f^{\wedge n})\leq\upprev(g^{\wedge n})$ by \ref{sep coherence 4}.
Hence, $\lim_{n\to+\infty} \upprev(f^{\wedge n})\leq\lim_{n\to+\infty} \upprev(g^{\wedge n})$ and therefore, because of \ref{coherence: continuity wrt bounded below functions}, also $\upprev(f)\leq\upprev(g)$.
Finally, property~\ref{coherence: continuity wrt non-negative functions} follows immediately from \ref{coherence: continuity wrt bounded below functions}.
\end{proof}

\begin{proofof}{Proposition~\ref{Prop: alt. characterisation upper exp.}}
First assume that $\upprevacc{}$ is an upper expectation.
We then let $\upprev{}$ be the restriction of $\upprevacc{}$ to $\setofgengambles{}(\mathscr{Y})$. $\upprev{}$ trivially satisfies~\ref{sep coherence 1}--\ref{sep coherence 3} because $\upprevacc{}$ is an upper expectation, and $\upprev$ is real-valued because $\upprevacc{}$ satisfies \ref{coherence: bounds}.
Hence, $\upprev$ is a coherent upper prevision.
Moreover, since $\upprevacc{}$ satisfies \ref{coherence: continuity wrt bounded below functions} because of Proposition~\ref{prop: coherence properties}, Lemma~\ref{lemma: cont. wrt cuts iff cont. wrt nondecreasing for coherent upper prevision} implies that it also satisfies \ref{ext coherence cuts continuity}.
Hence,
\begin{equation*}
\upprevacc{}(f)
= \lim_{c\to+\infty} \upprevacc{}(f^{\wedge c})
= \lim_{c\to+\infty} \upprev{}(f^{\wedge c})
= \upprev{}_{\textnormal{\tiny E14}}(f) \text{ for all } f\in\setofgenextvariablesb{}(\mathscr{Y}),
\end{equation*}
where the second step follows from the fact that $\upprev{}$ is the restriction of $\upprevacc{}$ to $\setofgengambles{}(\mathscr{Y})$ and because $f^{\wedge c}\in\setofgengambles{}(\mathscr{Y})$ for all $c\in\reals{}$, and where the last step follows from the definition of $\upprev{}_{\textnormal{\tiny E14}}$.

For the converse implication, we assume that there is some coherent upper prevision $\upprev$ on $\setofgengambles(\mathscr{Y})$ such that $\upprev{}_{\textnormal{\tiny E14}}$ coincides with $\upprevacc{}$ on $\setofgenextvariablesb(\mathscr{Y})$. Now recall from Proposition~\ref{prop: upprev_E14 is an extension} that $\upprev{}_{\textnormal{\tiny E14}}$ is an extension of $\upprev{}$ that satisfies \ref{ext coherence cuts continuity}.
Since $\upprev{}_{\textnormal{\tiny E14}}$ coincides with $\upprevacc{}$ on $\setofgenextvariablesb(\mathscr{Y})$, this implies that $\upprevacc{}$ satisfies~\ref{ext coherence cuts continuity} and that the restriction of $\upprevacc{}$ to $\setofgengambles(\mathscr{Y})$ is equal to $\upprev$ and therefore a coherent upper prevision.
Then, by Lemma~\ref{lemma: cont. wrt cuts iff cont. wrt nondecreasing for coherent upper prevision}, $\upprevacc{}$ satisfies \ref{coherence: continuity wrt bounded below functions}, which by Lemma~\ref{lemma: C1-C3 + cont.=upper exp.} implies that $\upprevacc{}$ is an upper expectation.
\end{proofof}

\vspace*{0.8em}

An important consequence of this alternative characterisation for an upper expectation is that the restriction of an upper expectation to $\setofgenextvariablesb{}(\mathscr{Y})$ is uniquely determined by its values on the domain $\setofgengambles{}(\mathscr{Y})$ of all gambles.
This allows us to justify the use of upper expectations on $\setofgenextvariablesb{}(\mathscr{Y})$---and therefore the use of these as local uncertainty models further on---from a more conventional imprecise probabilities point of view, by combining coherence on gambles with~\ref{ext coherence cuts continuity}~\cite{TJOENS202130}.
The same cannot be said about the values of our upper expectations on $\setofgenextvariables{}(\mathscr{Y})\setminus\setofgenextvariablesb{}(\mathscr{Y})$.
If we also want these values to be uniquely determined by the values on the gambles, we can additionally impose \ref{ext coherence lower cuts continuity}.
This is made explicit in our next result, where we will use, for any coherent upper prevision $\upprev{}$ on $\setofgengambles{}(\mathscr{Y})$, the notation $\upprev{}_{\mathrm{ext}}$ to denote the map $[\upprev{}_{\textnormal{\tiny E14}}]_{\raisebox{-1pt}{\textnormal{\tiny E6}}}$ that extends $\upprev{}$ to $\setofgenextvariablesb{}(\mathscr{Y})$ using~\ref{ext coherence cuts continuity}, and subsequently to $\setofgenextvariables{}(\mathscr{Y})$ using~\ref{ext coherence lower cuts continuity}.
Note that $\upprev{}_{\mathrm{ext}}$ is well-defined because $\upprev{}_{\textnormal{\tiny E14}}$ is an upper expectation according to Proposition~\ref{Prop: alt. characterisation upper exp.}.
It should moreover be clear that $\upprev{}_{\mathrm{ext}}$ is also an extension of $\upprev$; this can be checked using Propositions~\ref{prop: upprev_E14 is an extension}, \ref{Prop: alt. characterisation upper exp.} and~\ref{prop: extension with E6 is an extension}.


\begin{proposition}\label{prop: upper expectation iff extended coherent upper prevision}
Consider any finite non-empty set $\mathscr{Y}$, any extended real-valued map \/ $\upprevacc{}$ on $\setofgenextvariables(\mathscr{Y})$ and let \/ $\upprev{}$ be the restriction of \/ $\upprevacc{}$ to $\setofgengambles{}(\mathscr{Y})$.
Then the following are equivalent:
\begin{enumerate}[leftmargin=*,ref=\upshape{\roman*},label={\upshape{\roman*.}},itemsep=3pt]
\item $\upprevacc{}$ is an upper expectation on $\setofgenextvariables{}(\mathscr{Y})$ that satisfies \ref{ext coherence lower cuts continuity}. \label{Eq: upper expectation iff extended coherent upper prevision i}
\item $\upprev{}$ is a coherent upper prevision such that \/ $\upprevacc{}=\upprev{}_{\textnormal{ext}}$. \label{Eq: upper expectation iff extended coherent upper prevision ii}
\item $\upprev{}$ is a coherent upper prevision and \/ $\upprevacc{}$ satisfies \ref{ext coherence lower cuts continuity} and \ref{ext coherence cuts continuity}. \label{Eq: upper expectation iff extended coherent upper prevision iii}
\end{enumerate}
\end{proposition}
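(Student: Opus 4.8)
The plan is to establish the cyclic chain of implications $\ref{Eq: upper expectation iff extended coherent upper prevision i}\Rightarrow\ref{Eq: upper expectation iff extended coherent upper prevision iii}\Rightarrow\ref{Eq: upper expectation iff extended coherent upper prevision ii}\Rightarrow\ref{Eq: upper expectation iff extended coherent upper prevision i}$, since each individual step is mostly a matter of combining results that are already available. For $\ref{Eq: upper expectation iff extended coherent upper prevision i}\Rightarrow\ref{Eq: upper expectation iff extended coherent upper prevision iii}$, I would start from the assumption that $\upprevacc{}$ is an upper expectation on $\setofgenextvariables(\mathscr{Y})$ satisfying \ref{ext coherence lower cuts continuity}. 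The restriction $\upprev{}$ of $\upprevacc{}$ to $\setofgengambles(\mathscr{Y})$ is then a coherent upper prevision: it is real-valued by \ref{coherence: bounds}, and \ref{sep coherence 1}--\ref{sep coherence 3} follow at once from \ref{coherence: bounds}, \ref{coherence: sublinearity} and \ref{coherence: homog for positive lambda} applied to gambles, which are in particular bounded below. Since $\upprevacc{}$ is an upper expectation it satisfies \ref{coherence: continuity wrt bounded below functions} by Proposition~\ref{prop: coherence properties}, so Lemma~\ref{lemma: cont. wrt cuts iff cont. wrt nondecreasing for coherent upper prevision}---whose hypothesis is met because $\upprev{}$ is coherent---gives that $\upprevacc{}$ satisfies \ref{ext coherence cuts continuity}. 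Together with the assumed \ref{ext coherence lower cuts continuity}, this is exactly what \ref{Eq: upper expectation iff extended coherent upper prevision iii} asserts.

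For $\ref{Eq: upper expectation iff extended coherent upper prevision iii}\Rightarrow\ref{Eq: upper expectation iff extended coherent upper prevision ii}$---the genuine ``uniqueness'' content of the result---I would first note that, $\upprev{}$ being a coherent upper prevision, Proposition~\ref{Prop: alt. characterisation upper exp.} (with $\mathscr{D}=\setofgenextvariablesb(\mathscr{Y})$) guarantees that $\upprev{}_{\textnormal{\tiny E14}}$ is an upper expectation on $\setofgenextvariablesb(\mathscr{Y})$, so that $\upprev{}_{\mathrm{ext}}=[\upprev{}_{\textnormal{\tiny E14}}]_{\raisebox{-1pt}{\textnormal{\tiny E6}}}$ is well-defined. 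Then I would reconstruct $\upprevacc{}$ in two stages. On $\setofgenextvariablesb(\mathscr{Y})$: for $f\in\setofgenextvariablesb(\mathscr{Y})$, applying \ref{ext coherence cuts continuity} to $\upprevacc{}$ and using that each $f^{\wedge c}$ is a gamble, on which $\upprevacc{}$ coincides with $\upprev{}$, yields $\upprevacc{}(f)=\lim_{c\to+\infty}\upprev{}(f^{\wedge c})=\upprev{}_{\textnormal{\tiny E14}}(f)$, so $\upprevacc{}$ and $\upprev{}_{\textnormal{\tiny E14}}$ agree on $\setofgenextvariablesb(\mathscr{Y})$. On all of $\setofgenextvariables(\mathscr{Y})$: for arbitrary $f\in\setofgenextvariables(\mathscr{Y})$, applying \ref{ext coherence lower cuts continuity} to $\upprevacc{}$ and using that each $f^{\vee c}$ lies in $\setofgenextvariablesb(\mathscr{Y})$, where the agreement just established applies, yields $\upprevacc{}(f)=\lim_{c\to-\infty}\upprev{}_{\textnormal{\tiny E14}}(f^{\vee c})=[\upprev{}_{\textnormal{\tiny E14}}]_{\raisebox{-1pt}{\textnormal{\tiny E6}}}(f)=\upprev{}_{\mathrm{ext}}(f)$, which is \ref{Eq: upper expectation iff extended coherent upper prevision ii}.

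Finally, for $\ref{Eq: upper expectation iff extended coherent upper prevision ii}\Rightarrow\ref{Eq: upper expectation iff extended coherent upper prevision i}$, with $\upprev{}$ a coherent upper prevision, Proposition~\ref{Prop: alt. characterisation upper exp.} again provides that $\upprev{}_{\textnormal{\tiny E14}}$ is an upper expectation on $\setofgenextvariablesb(\mathscr{Y})$, and then Proposition~\ref{prop: extension with E6 is an extension}---applied with $\mathscr{D}=\setofgenextvariablesb(\mathscr{Y})$ to this upper expectation---shows that $\upprev{}_{\mathrm{ext}}=[\upprev{}_{\textnormal{\tiny E14}}]_{\raisebox{-1pt}{\textnormal{\tiny E6}}}$ is an upper expectation on $\setofgenextvariables(\mathscr{Y})$ satisfying \ref{ext coherence lower cuts continuity}; since $\upprevacc{}=\upprev{}_{\mathrm{ext}}$, this gives \ref{Eq: upper expectation iff extended coherent upper prevision i}. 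I do not expect a deep obstacle anywhere: the only points that need care are the bookkeeping of applying each auxiliary result with the right choice of $\mathscr{D}$, and, in the step $\ref{Eq: upper expectation iff extended coherent upper prevision iii}\Rightarrow\ref{Eq: upper expectation iff extended coherent upper prevision ii}$, respecting the order of the two continuity axioms---first rebuilding $\upprevacc{}$ on $\setofgenextvariablesb(\mathscr{Y})$ via upper cuts, then on $\setofgenextvariables(\mathscr{Y})$ via lower cuts---so that at each stage the cut functions land in the domain where agreement has already been secured.
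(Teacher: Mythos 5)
Your proposal is correct: every step goes through, and the bookkeeping you flag (choice of $\mathscr{D}$, order of the two cut-continuity arguments) is handled properly. The route differs from the paper's mainly in the orientation of the cycle: the paper proves \ref{Eq: upper expectation iff extended coherent upper prevision i}$\Rightarrow$\ref{Eq: upper expectation iff extended coherent upper prevision ii}$\Rightarrow$\ref{Eq: upper expectation iff extended coherent upper prevision iii}$\Rightarrow$\ref{Eq: upper expectation iff extended coherent upper prevision i}, whereas you prove \ref{Eq: upper expectation iff extended coherent upper prevision i}$\Rightarrow$\ref{Eq: upper expectation iff extended coherent upper prevision iii}$\Rightarrow$\ref{Eq: upper expectation iff extended coherent upper prevision ii}$\Rightarrow$\ref{Eq: upper expectation iff extended coherent upper prevision i}. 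Concretely, the paper's step \ref{Eq: upper expectation iff extended coherent upper prevision i}$\Rightarrow$\ref{Eq: upper expectation iff extended coherent upper prevision ii} invokes Proposition~\ref{Prop: alt. characterisation upper exp.} to obtain \emph{some} coherent upper prevision $\upprev{}^{\raisebox{1pt}{\scriptsize$\ast$}}$ with $\upprev{}^{\raisebox{1pt}{\scriptsize$\ast$}}_{\raisebox{1pt}{\textnormal{\tiny E14}}}=\upprevacc{}$ on $\setofgenextvariablesb{}(\mathscr{Y})$ and must then identify $\upprev{}^{\raisebox{1pt}{\scriptsize$\ast$}}=\upprev{}$ before using \ref{ext coherence lower cuts continuity} to propagate the identity to all of $\setofgenextvariables{}(\mathscr{Y})$; your \ref{Eq: upper expectation iff extended coherent upper prevision iii}$\Rightarrow$\ref{Eq: upper expectation iff extended coherent upper prevision ii} replaces this existential detour by the direct two-stage computation with \ref{ext coherence cuts continuity} on gambles (where $\upprevacc{}$ and $\upprev{}$ trivially agree) and then \ref{ext coherence lower cuts continuity}, which is arguably cleaner. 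Also, the paper closes the cycle via Lemmas~\ref{lemma: cont. wrt cuts iff cont. wrt nondecreasing for coherent upper prevision} and~\ref{lemma: C1-C3 + cont.=upper exp.} in its step \ref{Eq: upper expectation iff extended coherent upper prevision iii}$\Rightarrow$\ref{Eq: upper expectation iff extended coherent upper prevision i}, whereas you close it by combining Proposition~\ref{Prop: alt. characterisation upper exp.} with Proposition~\ref{prop: extension with E6 is an extension} in \ref{Eq: upper expectation iff extended coherent upper prevision ii}$\Rightarrow$\ref{Eq: upper expectation iff extended coherent upper prevision i}, and instead use Proposition~\ref{prop: coherence properties} (property~\ref{coherence: continuity wrt bounded below functions}) together with Lemma~\ref{lemma: cont. wrt cuts iff cont. wrt nondecreasing for coherent upper prevision} in your first step. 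The toolkit is the same, so neither version is deeper than the other; yours trades the paper's uniqueness-identification argument for slightly more explicit limit computations.
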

\begin{proof}
\ref{Eq: upper expectation iff extended coherent upper prevision i} $\Rightarrow$ \ref{Eq: upper expectation iff extended coherent upper prevision ii}.
Assume that $\upprevacc{}$ is an upper expectation on $\setofgenextvariables{}(\mathscr{Y})$ that satisfies \ref{ext coherence lower cuts continuity}.
Then, according to Proposition~\ref{Prop: alt. characterisation upper exp.}, there is a coherent upper prevision $\upprev{}^{\raisebox{1pt}{\scriptsize$\ast$}}$ on $\setofgengambles{}(\mathscr{Y})$ such that $\upprevacc{}(f)=\upprev{}^{\raisebox{1pt}{\scriptsize$\ast$}}_{\raisebox{1pt}{\textnormal{\tiny E14}}}(f)$ for all $f\in\setofgenextvariablesb{}(\mathscr{Y})$.
Then, since $\upprev{}^{\raisebox{1pt}{\scriptsize$\ast$}}_{\raisebox{1pt}{\textnormal{\tiny E14}}}$ is an extension of $\upprev{}^{\raisebox{1pt}{\scriptsize$\ast$}}$ [due to Proposition~\ref{prop: upprev_E14 is an extension}] and since $\upprevacc{}$ is an extension of $\upprev{}$, we have that $\upprev{}^{\raisebox{1pt}{\scriptsize$\ast$}}=\upprev{}$. On the on hand, this already implies that $\upprev$ is indeed a coherent upper prevision. On the other hand, it implies that $\upprevacc{}(f)=\upprev{}^{\raisebox{1pt}{\scriptsize$\ast$}}_{\raisebox{1pt}{\textnormal{\tiny E14}}}(f)=\upprev{}_{\textnormal{\tiny E14}}(f)$ for all $f\in\setofgenextvariablesb{}(\mathscr{Y})$.
Note that $\upprev{}_{\textnormal{\tiny E14}}$ is therefore an upper expectation on $\setofgenextvariablesb{}(\mathscr{Y})$ [because $\upprevacc{}$ was assumed to be an upper expectation on $\setofgenextvariables{}(\mathscr{Y})$], which, due to Proposition~\ref{prop: extension with E6 is an extension}, implies that $\upprev{}_{\textnormal{ext}}$ is an extension of $\upprev{}_{\textnormal{\tiny E14}}$ satisfying \ref{ext coherence lower cuts continuity}.
Hence, we have that $\smash{\upprevacc{}(f)=\upprev{}_{\textnormal{\tiny E14}}(f)=\upprev{}_{\textnormal{ext}}(f)}$ for all $f\in\setofgenextvariablesb{}(\mathscr{Y})$.
Since both $\smash{\upprevacc{}}$ and $\upprev{}_{\textnormal{ext}}$ satisfy \ref{ext coherence lower cuts continuity}, it then also follows that $\smash{\upprevacc{}(f)=\lim_{c\to-\infty}\upprevacc{}(f^{\vee c})=\lim_{c\to-\infty}\upprev{}_{\textnormal{ext}}(f^{\vee c})=\upprev{}_{\textnormal{ext}}(f)}$ for all $f\in\setofgenextvariables{}(\mathscr{Y})$.

\ref{Eq: upper expectation iff extended coherent upper prevision ii} $\Rightarrow$ \ref{Eq: upper expectation iff extended coherent upper prevision iii}.
Assume that $\upprev{}$ is a coherent upper prevision such that $\upprevacc{}=\upprev{}_{\textnormal{ext}}$.
Then, due to Proposition~\ref{Prop: alt. characterisation upper exp.}, $\upprev{}_{\textnormal{\tiny E14}}$ is an upper expectation on $\setofgenextvariablesb{}(\mathscr{Y})$, which, by Proposition~\ref{prop: extension with E6 is an extension} implies that $\upprev{}_{\textnormal{ext}}$---and therefore, also $\upprevacc{}$---satisfies \ref{ext coherence lower cuts continuity}.
To see that $\upprevacc{}$ moreover satisfies \ref{ext coherence cuts continuity}, it suffices to recall that $\upprevacc{}=\upprev{}_{\textnormal{ext}}$ is an extension of $\upprev{}_{\textnormal{\tiny E14}}$ [because of Proposition~\ref{prop: extension with E6 is an extension}] and that $\upprev{}_{\textnormal{\tiny E14}}$ satisfies \ref{ext coherence cuts continuity} [because of Proposition~\ref{prop: upprev_E14 is an extension}].

\ref{Eq: upper expectation iff extended coherent upper prevision iii} $\Rightarrow$ \ref{Eq: upper expectation iff extended coherent upper prevision i}.
Assume that $\upprev{}$ is a coherent upper prevision and $\upprevacc{}$ satisfies \ref{ext coherence lower cuts continuity} and \ref{ext coherence cuts continuity}.
Then it suffices to prove that $\upprevacc{}$ is an upper expectation on $\setofgenextvariables{}(\mathscr{Y})$, which can readily be inferred by combining Lemma~\ref{lemma: cont. wrt cuts iff cont. wrt nondecreasing for coherent upper prevision} and Lemma~\ref{lemma: C1-C3 + cont.=upper exp.}.
\end{proof}

So if we assume our upper expectations to satisfy \ref{ext coherence lower cuts continuity}, we can justify their use on the entire domain $\setofgenextvariables{}(\mathscr{Y})$ by combining Walley's behavioural interpretation \cite{TJOENS202130} with \ref{ext coherence lower cuts continuity} and~\ref{ext coherence cuts continuity}.
For the local uncertainty models further on---which will simply be upper expectations on $\setofgenextvariables{}(\statespace{})$ with $\statespace{}$ a finite state space---, we would therefore typically be inclined to adopt this assumption, as we do in \cite{TJOENS202130}.
Moreover, it will turn out that imposing~\ref{ext coherence lower cuts continuity} on the local models is necessary if we want to guarantee compatibility of local and global models; see Section~\ref{Sect: Continuity}.
Nonetheless, we will generally not impose ~\ref{ext coherence lower cuts continuity} on our local models here, because the current paper focusses on the mathematical properties of global game-theoretic upper expectations, which---apart from the compatibility with local models--- are not affected by this additional continuity axiom.
It can easily be seen that this is a consequence of the fact that the global game-theoretic upper expectation will only depend on the restrictions of the local models to $\setofgenextvariablesb{}(\statespace{})$.

\section{Game-theoretic Upper Expectations}\label{Sect: Game-theoretic Upper Expectations}

When mathematically modelling an uncertain process, one typically starts out with information about its local behaviour, that is, how its state $X_k$ will evolve from one time instant to the next.
We will represent this information using so-called `local' upper expectations; upper expectations on $\setofgenextvariables{}(\statespace{})$ where $\statespace{}$ is the finite state space of the process.
Most often, one is interested in more general behaviour of the process, though, which confronts us with the question of how to combine the individual local assessments---represented by local upper expectations in our case---to obtain a single global uncertainty model.
We consider one possible way of doing so; using the game-theoretic approach proposed by Shafer and Vovk.
We will leave out most of the contextual discussion surrounding the following definitions.
For more details, we refer the interested reader to \cite{Shafer:2005wx,Vovk2019finance,TJOENS202130,deCooman:2008km}.

As explained in the introduction, we consider sequences $X_1, X_2, ..., X_n , ...$ of uncertain states that take values in a finite state space $\statespace{}$.
We call any finite string $\sit \coloneqq (x_1,...,x_n)\in\statespace_{1:n} \coloneqq\statespace^n$ of possible state values a \emph{situation} and we denote the set of all situations by $\statespace^\ast \coloneqq\cup_{n\in\natz} \mathscr{X}_{1:n}$.
In particular, the unique empty string $x_{1:0}$, denoted by $\Box$, is called the \emph{initial situation}, and $\statespace_{1:0} \coloneqq\{\Box\}$.
In order to model the local behaviour of an uncertain process, we attach to each situation $x_{1:n}\in\situations$ an upper expectation $\lupprev{x_{1:n}}$ on $\setofgenextvariables(\statespace)$.
Such an upper expectation $\lupprev{x_{1:n}}$---which we will call a \emph{local upper expectation}---represents a subject's beliefs about what the next state of the process will be, given that it was in the states $x_1 \cdots x_n$ at times $k=1$ through $k=n$.
For instance, if we adopt a behavioural interpretation, the upper expectation $\lupprev{x_{1:n}}(f)$ for some $f \in \setofgengambles{}(\statespace{})$ is the subject's infimum selling price for the gamble $f(X_{n+1})$ that takes the value $f(x)$ if $X_{n+1}=x$ for any $x \in \statespace{}$, given that he observed the history $X_1=x_1 \cdots X_n=x_n$.
However, the local upper expectations could equally well be interpreted in terms of upper envelopes of linear expectations---which may in turn come from a set of probability mass functions.
We do not enforce any interpretation for the local models $\lupprev{x_{1:n}}$; we simply assume that they are maps on $\setofgenextvariables{}(\statespace{})$ that satisfy \ref{coherence: const is const}--\ref{coherence: continuity wrt non-negative functions}.
A collection of local upper expectations $\lupprev{s}$, one for every $s \in \situations{}$, is called an \emph{imprecise probabilities tree}.
Note that an imprecise Markov chain under epistemic irrelevance \cite{deCooman:2009jz,HermansITIP,DECOOMAN201618}, for instance, corresponds to a special type of imprecise probabilities tree where, for any $n\in\nats{}$ and any $x_{1:n}\in\statespace{}^n$, the local model $\lupprev{x_{1:n}}$ does not depend on the previous $n-1$ states $x_{1:n-1}$, nor on the time point $n$.
So, in that case, we would have that $\lupprev{x_{1:n}} = \lupprev{x_{n}}$ for all $n\in\nats{}$ and all $x_{1:n}\in\statespace{}^n$.
We refer to \cite{8535240,DECOOMAN201618,8627473} for more details on how the game-theoretic framework presented here can be implemented in an imprecise Markov chain setting.

In order to describe uncertain processes on a more global level, we will use the notion of a \emph{path} $\omega$; an infinite sequence of state values.
The set of all paths is called the \emph{sample space} $\samplespace \coloneqq\statespace^\nats$.
For any path $\omega\in\samplespace$, the initial sequence that consists of its first $n$ state values is a situation in $\statespaceseq{1}{n}$ that is denoted by $\omega^n$.
The $n$-th state value is denoted by $\omega_n\in\statespace$.
A collection of paths $A \subseteq \samplespace$ is called an \emph{event}.
With any situation $\sit$, we associate the \emph{cylinder event} $\Gamma(\sit)\coloneqq\{\omega\in\samplespace\colon \omega^n=\sit\}$: the set of all paths $\omega\in\samplespace$ that `go through' the situation $\sit$.
Sometimes, when it is clear from the context, we will also use the notation `$\sit$' to denote the \emph{set} $\Gamma(\sit)$.
For example, we will use $\indica{\sit}$ as a shorthand notation for $\indica{\Gamma(\sit)}$.
Moreover, for any two extended real variables $g,h\in\setofgenextvariables(\samplespace{})$ and any situation~$s\in\situations$, we use $g \leq_s f$ to denote that $g(\omega) \leq f(\omega)$ for all $\omega\in\Gamma(s)$, and similarly for $\geq_s$, $>_s$ and $<_s$.

We will distinguish between local variables and global variables.
\emph{Local} variables are simply maps on the state space $\statespace{}$ and are typically interpreted as depending on the value of a single uncertain state $X_n$.
They were already used before, for instance, when we introduced local upper expectations.
In accordance with our earlier conventions, we use $\setofgenextvariables{}(\statespace{})$ to denote all local extended real variables, and similarly for $\setofgenextvariablesb{}(\statespace{})$ and $\setofgengambles{}(\statespace{})$.
\emph{Global} variables, on the other hand, are maps on the sample space $\samplespace{}$, and are therefore suitable for representing inferences that depend on the values of a large---possibly infinite---number of uncertain states.
For example, the hitting time of a subset $A\subseteq\statespace{}$ is described by the global (extended real) variable $\tau_A \in \setofgenextvariables{}(\samplespace{})$ that takes the value $\tau_A(\omega)\coloneqq \min\{n\in\nats \colon \omega_n\in A\}$ for any $\omega\in\samplespace{}$, and clearly depends on the values of an infinite number of subsequent states.
We denote the set of all \emph{global} extended real variables by $\setofextvariables \coloneqq\setofgenextvariables(\samplespace)$, and similarly for $\setofextvariablesb \coloneqq\setofgenextvariablesb(\samplespace)$ and $\setofgambles \coloneqq\setofgengambles(\samplespace)$.
For any natural $k\leq\ell$, a special type of global variable---that is not necessarily extended real-valued---is the projection map $X_{k:\ell}$; for any path $\omega\in\samplespace$, this variable assumes the value $X_{k:\ell}(\omega)\coloneqq(\omega_k,...,\omega_\ell)$.
As such, for any $k\in\nats{}$, $X_k=X_{k:k}$ can also be regarded as a type of global variable.
For any $m, n\in\nats$ and any map $f \colon \statespace^n \to \extreals$, this allows us to write $f(X_{m:m+n-1})$ to denote the extended real global variable defined by $f(X_{m:m+n-1}) \coloneqq f \circ X_{m:m+n-1}$.
In this way, we can elegantly associate a global variable $f(X_n)$ with any local variable $f \colon \statespace \to \extreals$ and any discrete time point $n\in\nats$.

Our aim now is to combine the local upper expectations $\lupprev{s}$---which only tell us something about state transitions---and construct a global uncertainty model in the form of a single upper expectation on the global variables $\setofextvariables$ (and conditional on the situations $\situations{}$).
A crucial tool to do so, is the notion of a supermartingale; a special type of process.

Any map $\process$ on $\situations$ is called a \emph{process}.
An extended real(-valued) process $\process$ is called bounded below if there is some $M\in\reals{}$ such that $\process(s) \geq M$ for all $s\in\situations$.
Furthermore, with any situation $s\in\situations$ and any extended real process $\process$, we can associate the local variable $\process(s\andstate)\in\setofgenextvariables(\statespace)$ defined by
$\process(s\andstate)(x) \coloneqq\process(sx)$ for all $x\in\statespace$.
The extended real variables $\liminf\process\in\setofextvariables$ and $\limsup\process\in\setofextvariables$, will be defined by
\begin{align*}
\liminf\process(\omega)\coloneqq\liminf_{n\to+\infty}\process(\omega^n)
\text{~~and~~}
\limsup\process(\omega)\coloneqq\limsup_{n\to+\infty}\process(\omega^n)
\end{align*}
for all $\omega\in\Omega$.
If $\liminf\process=\limsup\process$, we denote their common value by $\lim\process$.

For a given imprecise probabilities tree, a \emph{supermartingale} $\martingale$ is an extended real process such that
$\lupprev{s}(\martingale(s\andstate))\leq\martingale(s)$ for all $s\in\situations$.
So a supermartingale is an extended real process that, according to the local models $\lupprev{s}$, is expected to decrease.
When adopting a behavioural interpretation, supermartingales can be seen to represent betting strategies that are allowed by our subject.
Roughly speaking, the condition that $\lupprev{s}(\martingale(s\andstate))\leq\martingale(s)$ with $s=\sit$ then means that our subject---
for the sake of simplicity, we ignore the subtlety about the extended real-valuedness---is willing to receive the price $\martingale(\sit)$ for giving away the uncertain variable $\martingale(\sit X_{n+1})$ that will be evaluated in the next time instant.
Hence, if we take him up on his commitments, we can pay him $\martingale(\sit)$ to receive $\martingale(\sit X_{n+1})$.
The next time instant, if the state of the process turns out to be $x_{n+1} \in \statespace{}$, we obtain the---possibly negative---payoff $\martingale(x_{1:n+1})$.
By repeating this procedure, we find that the supermartingale $\martingale{}$ represents a possible evolution of our capital when we would gamble against the subject.
We will denote the set of all \emph{bounded below} supermartingales for a given imprecise probabilities tree by $\setofextsupmartb{}$.

In the framework of Shafer and Vovk, the role of our subject above is taken up by a player called `Forecaster', whereas supermartingales represent possible betting strategies for a second player called `Skeptic'.
Given this game-theoretic setting, they consider the following question: How can Skeptic use Forecaster's assessments to determine selling and buying prices for a gamble $f$ whose uncertain payoff depends on the process state at multiple or even an infinite number of time instances?
Shafer and Vovk argue that Skeptic should certainly agree on selling $f$ for a price $\alpha$ such that, if Skeptic starts with an initial capital $\alpha$ and gambles in an appropriate way against Forecaster, he will end up with a higher capital than the payoff $f(\omega)$ corresponding to $f$ \emph{irrespectively of the path $\omega \in \samplespace{}$ taken by the process}.
Indeed, selling $f$ for a price $\alpha$ means that Skeptic receives $\alpha - f$.
If Skeptic is then able to turn the initial capital $\alpha$ into a final capital $K$ such that $K(\omega) \geq f(\omega)$ for all paths $\omega \in \samplespace{}$, his net payoff $K - f$ is non-negative for all $\omega$.
Hence, Skeptic should accept the transaction of selling $f$ for $\alpha$.
The infimum of these prices $\alpha$ is what Shafer and Vovk then call the (global) game-theoretic upper expectation of $f$.

More formally, given an imprecise probabilities tree consisting of local upper expectations $\lupprev{s}$ for all $s\in\situations$, we use its compatible set of bounded below supermartingales $\setofextsupmartb{}$ to define the corresponding \emph{(global) game-theoretic upper expectation} $\upprevvovkk$ as follows.
\begin{definition}\label{def:upperexpectation2}
For any imprecise probabilities tree, the corresponding (global) game-theoretic upper expectation $\upprevvovkk(\cdot \vert \cdot) \colon \setofextvariables \times \situations \to \extreals$ is defined by
\begin{align}\label{upprev3}
\upprevvovkk (f \vert s)\coloneqq\inf \big\{ \martingale(s) \colon \martingale\in\setofextsupmartb \text{ and } \liminf\martingale \geq_s f \big\} \ \text{ for all } f\in\setofextvariables \text{ and all } s\in\situations.
\end{align}
\end{definition}

The game-theoretic lower expectation $\lowprevvovkk(\cdot \vert \cdot) \colon \setofextvariables \times \situations \to \extreals$ is defined by the conjugacy relation $\lowprevvovkk(f \vert s) \coloneqq -\upprevvovkk(-f \vert s)$ for all $f\in\setofextvariables$ and all $s\in\situations$.
We will show later in Corollary~\ref{corollary: Vovk is an upper expectation} that, for any $s\in\situations$, the map ${\upprevvovkk(\cdot \, \vert s)\colon\setofextvariables\to\extreals}$ satisfies~\ref{coherence: const is const}--\ref{coherence: continuity wrt non-negative functions}, which justifies calling $\upprevvovkk$ an upper expectation.
Mimicking the link between traditional expectations and probabilities, we call $\overline{\mathrm{P}}_{\mathrm{V}}(A\vert s)\coloneqq\upprevvovkk{}(\indica{A} \vert s)$, for any $A\subseteq\Omega$ and any $s\in\situations{}$, the \emph{game-theoretic upper probability} of the event $A$ conditional on the situation $s$. Similarly, we call $\underline{\mathrm{P}}_{\mathrm{V}}(A\vert s)\coloneqq\lowprevvovkk{}(\indica{A} \vert s)$ the \emph{game-theoretic lower probability} of $A$ conditional on $s$.
We will also let $\upprevvovkk{}(f) \coloneqq \upprevvovkk{}(f \vert \Box)$ and $\lowprevvovkk{}(f) \coloneqq \lowprevvovkk{}(f \vert \Box)$ for all $f\in\setofextvariables$.

Note that $\upprevvovkk{}$ does not depend on the values of the local models $\lupprev{s}$ on $\setofgenextvariables{}(\statespace{}) \setminus \setofgenextvariablesb{}(\statespace{})$, because the infimum in Definition~\ref{def:upperexpectation2} is taken over supermartingales that are bounded below.
This confirms our earlier claim in Section~\ref{Sect: Upper Expectations}, where we said that, as far as the global upper expectation $\upprevvovkk{}$ is concerned, we can assume without loss of generality that the local models $\lupprev{s}$ additionally satisfy Shafer and Vovk's axioms~\ref{coherence: sublinearity extended}--\ref{coherence: monotonicity extended}.
Our reason for adopting this particular definition, where only bounded below supermartingales are considered, will be discussed in Section~\ref{sect: Discussion}.
Intuitively, however, one could interpret this assumption as a concretisation of the fact that a subject (e.g. Skeptic) cannot borrow an infinite or even unbounded amount of money.

\section{Basic Properties of Game-Theoretic Upper Expectations}\label{Sect: Basic properties of game}

We start by establishing some basic, yet essential properties of game-theoretic upper expectations.
The main ones are an extended version of coherence, partial compatibility with the local upper expectations and a law of iterated upper expectations.
Most of these results are not entirely new and have already been proved in a slightly different setting; our contribution then consists in adapting their proofs to our setting.
We start with the following two, rather abstract lemmas about supermartingales.

\begin{lemma}\label{lemma: infima of supermartingales}
	Consider any $\martingale\in\setofextsupmartb{}$ and any situation~$s\in\situations$. Then
	\begin{equation*}
		\martingale(s) \geq\inf_{\omega\in\Gamma(s)} \limsup\martingale(\omega) \geq\inf_{\omega\in\Gamma(s)} \liminf\martingale(\omega).
	\end{equation*}
\end{lemma}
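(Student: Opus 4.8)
The second inequality is immediate: for every path $\omega$ we have $\limsup \martingale(\omega) \geq \liminf \martingale(\omega)$, so taking infima over $\omega \in \Gamma(s)$ preserves this. The real content is the first inequality, $\martingale(s) \geq \inf_{\omega \in \Gamma(s)} \limsup \martingale(\omega)$. The natural approach is to show that from the situation $s$, Skeptic's capital — i.e.\ the value of the supermartingale — cannot be forced to strictly increase along every continuation: there must be at least one path $\omega$ through $s$ along which $\limsup_n \martingale(\omega^n) \leq \martingale(s)$.

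The key step is a local one-step argument: for any situation $t \succeq s$ (i.e.\ $t$ going through $s$), the supermartingale condition $\lupprev{t}(\martingale(t\,\cdot)) \leq \martingale(t)$, combined with the bound $\inf \martingale(t\,\cdot) \leq \lupprev{t}(\martingale(t\,\cdot))$ from axiom~\ref{coherence: bounds} (applicable since $\martingale(t\,\cdot) \in \setofgenextvariablesb(\statespace)$ as $\martingale$ is bounded below), yields a child $x \in \statespace$ with $\martingale(tx) \leq \martingale(t)$. Iterating this starting from $t = s$, I would build a path $\omega \in \Gamma(s)$ such that $\martingale(\omega^n) \leq \martingale(s)$ for all $n \geq |s|$ (here $|s|$ denotes the length of $s$). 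For this path, $\limsup \martingale(\omega) = \limsup_n \martingale(\omega^n) \leq \martingale(s)$, whence $\inf_{\omega' \in \Gamma(s)} \limsup \martingale(\omega') \leq \martingale(s)$, which is what we want.

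One subtlety to handle carefully: if $\martingale(s) = +\infty$ the first inequality is trivial, so we may assume $\martingale(s) < +\infty$. Then, since $\martingale(s)$ is finite (it cannot be $-\infty$ as $\martingale$ is bounded below), the quantity $\lupprev{t}(\martingale(t\,\cdot))$ along the constructed path stays bounded above by $\martingale(s)$, so there is never an obstruction to extending the path; the selection of a child is always possible because the finite set $\statespace$ guarantees the infimum $\inf_x \martingale(tx)$ is attained. I would phrase the construction as: define $\omega_1, \dots, \omega_{|s|}$ to be the components of $s$, and recursively, having chosen $\omega^n$ with $n \geq |s|$ and $\martingale(\omega^n) \leq \martingale(s)$, pick $\omega_{n+1} \in \Argmin_{x \in \statespace} \martingale(\omega^n x)$ and note $\martingale(\omega^{n+1}) = \min_{x} \martingale(\omega^n x) \leq \lupprev{\omega^n}(\martingale(\omega^n \,\cdot)) \leq \martingale(\omega^n) \leq \martingale(s)$.

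**The main obstacle.** There is essentially no deep obstacle here — the argument is a clean greedy descent along the tree using finiteness of $\statespace$ plus axiom~\ref{coherence: bounds}. The only thing requiring a moment's care is the bookkeeping around extended-real values (ensuring $\martingale(s) = +\infty$ is dispatched separately and that $\martingale$ being bounded below rules out $-\infty$ causing trouble in the one-step inequality $\inf \martingale(t\,\cdot) \leq \lupprev{t}(\martingale(t\,\cdot))$), and making explicit that $\Argmin$ over the finite set $\statespace$ is nonempty.
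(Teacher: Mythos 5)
Your proof is correct and follows essentially the same route as the paper's: apply the bound $\inf\martingale(t\,\cdot)\leq\lupprev{t}(\martingale(t\,\cdot))\leq\martingale(t)$ from \ref{coherence: bounds} together with finiteness of $\statespace$ to pick a non-increasing child at every step, and iterate to build a path $\omega\in\Gamma(s)$ with $\limsup\martingale(\omega)\leq\martingale(s)$. The extra case split for $\martingale(s)=+\infty$ and the explicit remark on the nonempty argmin are harmless refinements of the same argument.
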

\begin{proof}
The proof is similar to that of \cite[Lemma 1]{DECOOMAN201618}, where instead real supermartingales were used.
Since $\martingale$ is a bounded below supermartingale, we have that $\lupprev{s}(\martingale(s\andstate))\leq\martingale(s)$, which by \ref{coherence: bounds} implies that $\inf_{x\in\statespace} \martingale(sx)\leq\martingale(s)$.
Hence, since $\statespace$ is finite, there is at least one $x\in\statespace$ such that $\martingale(sx)\leq\martingale(s)$.
Repeating this argument over and over again, leads us to the conclusion that there is some $\omega\in\Gamma(s)$ such that $\limsup_{n\to+\infty} \martingale(\omega^n)\leq\martingale(s)$ and therefore also $\inf_{\omega\in\Gamma(s)} \limsup\martingale(\omega)\leq\martingale(s)$.
The rest of the proof is now trivial.
\end{proof}

\begin{lemma}\label{lemma:positive:countable:linear:combination}
Consider any countable collection $\{\martingale_n\}_{n\in\natz}$ of supermartingales that have a common lower bound, and any countable collection of non-negative real numbers $\{\lambda_n\}_{n\in\natz}$ such that $\sum_{n\in\natz}\lambda_n$ is a real number $\lambda$.
Then $\martingale\coloneqq\sum_{n\in\natz}\lambda_n\martingale_n$ is again a bounded below supermartingale.
If, moreover, all $\martingale_n$ are non-negative, then so is~$\martingale$.
\end{lemma}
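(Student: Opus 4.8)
The plan is to reduce to the case where all the $\martingale_n$ are non-negative, and then to read the supermartingale inequality for the countable linear combination directly off the countable subadditivity of the local models (Proposition~\ref{proposition: countable subadditivity}).

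First I would fix a common real lower bound $M$ with $\martingale_n\geq M$ for all $n\in\natz$ and pass to the shifted processes $\martingale_n'\coloneqq\martingale_n-M$. Each $\martingale_n'$ is non-negative, and it is again a supermartingale: for every $s\in\situations$ we have $\martingale_n'(s\andstate)=\martingale_n(s\andstate)-M$, so \ref{coherence: const add} applied to $\lupprev{s}$ gives $\lupprev{s}(\martingale_n'(s\andstate))=\lupprev{s}(\martingale_n(s\andstate))-M\leq\martingale_n(s)-M=\martingale_n'(s)$. Since each $\martingale_n'(s)\in\nnegextreals$, the partial sums $\sum_{i=0}^{N}\lambda_i\martingale_i'(s)$ are non-decreasing in $N$ and hence converge in $\nnegextreals$; adding the convergent real series $\sum_{i=0}^{N}\lambda_i M\to\lambda M$ shows that $\sum_{n\in\natz}\lambda_n\martingale_n(s)$ converges in $\extreals$ to $\martingale'(s)+\lambda M$, where $\martingale'\coloneqq\sum_{n\in\natz}\lambda_n\martingale_n'$. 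In particular $\martingale=\martingale'+\lambda M$ is well defined and bounded below by $\lambda M$. (If every $\martingale_n$ is already non-negative one simply takes $M=0$, which also settles the final clause of the statement.)

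Next I would establish the supermartingale property for $\martingale'$. Fix $s\in\situations$. Then $\martingale'(s\andstate)=\sum_{n\in\natz}\lambda_n\martingale_n'(s\andstate)$ holds as an identity between local variables in $\setofgenextvariablesb(\statespace)$, each summand $\lambda_n\martingale_n'(s\andstate)$ being non-negative. Applying Proposition~\ref{proposition: countable subadditivity} to $\lupprev{s}$, then \ref{coherence: homog for non-negative lambda}, then the supermartingale inequality $\lupprev{s}(\martingale_n'(s\andstate))\leq\martingale_n'(s)$ for each $n$, and finally the monotonicity of multiplication by $\lambda_n\geq 0$ on $\extreals$, we obtain
\begin{align*}
\lupprev{s}(\martingale'(s\andstate))
&\leq \sum_{n\in\natz} \lupprev{s}(\lambda_n\martingale_n'(s\andstate))
= \sum_{n\in\natz} \lambda_n\lupprev{s}(\martingale_n'(s\andstate)) \\
&\leq \sum_{n\in\natz} \lambda_n\martingale_n'(s)
= \martingale'(s),
\end{align*}
so $\martingale'$ is a non-negative (hence bounded below) supermartingale. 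Transferring this back, one more application of \ref{coherence: const add} gives $\lupprev{s}(\martingale(s\andstate))=\lupprev{s}(\martingale'(s\andstate))+\lambda M\leq\martingale'(s)+\lambda M=\martingale(s)$ for all $s\in\situations$, whence $\martingale\in\setofextsupmartb$, and $\martingale\geq\lambda M$ is the required lower bound; if all $\martingale_n$ are non-negative then $M=0$ makes $\martingale\geq 0$.

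I expect the only real subtlety to be the bookkeeping with the $\extreals$-conventions — checking that all the countable sums involved are well defined, that the equality $\martingale'(s\andstate)=\sum_{n}\lambda_n\martingale_n'(s\andstate)$ and its evaluation termwise are legitimate, and that the use of $0\cdot(+\infty)=0$ inside \ref{coherence: homog for non-negative lambda} is harmless — rather than anything structural. The genuine content of the lemma is entirely carried by the countable subadditivity of the local upper expectations, which itself rests on their continuity with respect to non-decreasing sequences.
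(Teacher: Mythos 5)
Your proposal is correct and follows essentially the same route as the paper's proof: shift by the common lower bound to obtain non-negative processes, verify well-definedness and the lower bound $\lambda M$ via the convergent real series, and derive the supermartingale inequality from countable subadditivity (Proposition~\ref{proposition: countable subadditivity}) together with \ref{coherence: homog for non-negative lambda} and \ref{coherence: const add}. The only difference is organizational (you isolate the shifted processes as supermartingales before transferring back, whereas the paper runs the whole inequality chain on $\martingale$ directly), which does not change the argument.
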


\begin{proof}
We only prove the first statement, as the second is then trivially true.
Since all $\martingale_n$ have a common lower bound, say $B \in \reals{}$, the processes $\martingale{}_n - B$ will be non-negative and therefore, because all reals $\lambda_n$ are also non-negative, the sum $\sum_{n\in\natz}\lambda_n[\martingale_n(s)-B]$ exists and is non-negative for all $s \in \situations{}$.
Then, in order to see that $\martingale{}$ is well-defined, note that
\begin{align}\label{Eq: lemma:positive:countable:linear:combination}
\sum_{n\in\natz} \lambda_n[\martingale_n(s)-B] + \lambda B
&= \lim_{n\to+\infty} \Bigl( \sum_{i=0}^{n} \lambda_i[\martingale_i(s)-B] + \sum_{i=0}^{n}\lambda_i B \Bigr) \nonumber \\
&= \lim_{n\to+\infty} \sum_{i=0}^{n} \Bigl(\lambda_i[\martingale_i(s)-B] + \lambda_i B \Bigr)
= \sum_{n\in\natz}\lambda_n \martingale_n(s)
\eqqcolon \martingale{}(s),
\end{align}
for all $s\in\situations{}$, where the first step takes into account that $\lambda B$ is real and the third step takes into account that all $\lambda_i$ and $B$ are real.
The equality above, together with the non-negativity of $\sum_{n\in\natz}\lambda_n[\martingale_n(s)-B]$ immediately shows that $\martingale{}$ is bounded below by the real $\lambda B$.
It also shows that $\martingale{}$ is a supermartingale.
Indeed, for any $s\in\situations{}$, we find that
\begin{multline*}
\lupprev{s}(\martingale{}(s\cdot))
\overset{\eqref{Eq: lemma:positive:countable:linear:combination}}{=} \lupprev{s}\Big(\sum_{n\in\natz} \lambda_n[\martingale_n(s \cdot)-B] + \lambda B\Big)
\overset{\text{\ref{coherence: const add}}}{=} \lupprev{s}\Big(\sum_{n\in\natz} \lambda_n[\martingale_n(s \cdot)-B]\Big) + \lambda B \\
\leq \sum_{n\in\natz} \lupprev{s}\Bigl( \lambda_n [\martingale_n(s\andstate) - B] \Bigr) + \lambda B
\overset{\text{\ref{coherence: homog for non-negative lambda}}}{=} \sum_{n\in\natz} \lambda_n \lupprev{s}(\martingale_n(s\andstate) - B) + \lambda B \\
\overset{\text{\ref{coherence: const add}}}{=} \sum_{n\in\natz} \lambda_n \Big(\lupprev{s}(\martingale_n(s\andstate)) - B\Big) + \lambda B
{=} \sum_{n\in\natz} \lambda_n \lupprev{s}(\martingale_n(s\andstate))
\leq \sum_{n\in\natz} \lambda_n \martingale_n(s) = \martingale{}(s),
\end{multline*}
where we were allowed to apply \ref{coherence: const add} and \ref{coherence: homog for non-negative lambda} because $\sum_{n\in\natz} \lambda_n[\martingale_n(s \cdot)-B]$, all $\martingale_n(s \cdot) - B$ and all $\martingale_n(s \cdot)$ are bounded below, where the first inequality followed from Proposition~\ref{proposition: countable subadditivity} and where the last inequality followed from the non-negativity of all $\lambda_n$ and the fact that all $\martingale{}_n$ are supermartingales.
\end{proof}

The following result states that $\upprevvovkk{}$ satisfies a version of the coherence axioms for global extended real variables.
A first version of the result was stated in \cite[Chapter 8]{Shafer:2005wx}, yet, our proof is very similar to that of \cite[Prop. 14]{DECOOMAN201618}: we adapt it here to the fact that our bounded below supermartingales take values in $\extreals$ rather than $\reals{}$.

\begin{proposition}\label{Prop: Coherence of Global Game-theoretic upper expectation}
For all extended real variables $f,g\in\setofextvariables$, all $\lambda\in\nnegreals{}$, all $\mu\in\reals{}$ and all situations $s\in\situations$, $\upprevvovkk$ satisfies
\begin{enumerate}[leftmargin=*,ref={\upshape V}\arabic*,label={\upshape V\arabic*.}, series=global coherence]
\item \label{vovk coherence 1}
$\upprevvovkk(f \vert s)\leq\sup_{\omega \in \Gamma(s)} f(\omega)$;
\item \label{vovk coherence 2}
$\upprevvovkk(f+g \vert s)\leq\upprevvovkk(f \vert s) + \upprevvovkk(g \vert s)$;
\item \label{vovk coherence 3}
$\upprevvovkk(\lambda f \vert s)=\lambda \upprevvovkk(f \vert s)$.
\item \label{vovk coherence 4}
$f \leq_s g \Rightarrow \upprevvovkk(f \vert s)\leq\upprevvovkk(g \vert s)$;
\item \label{vovk coherence 5}
$\inf_{\omega \in \Gamma(s)} f(\omega)\leq\lowprevvovkk(f \vert s)\leq\upprevvovkk(f \vert s)\leq\sup_{\omega \in \Gamma(s)} f(\omega)$;
\item \label{vovk coherence 6}
$\upprevvovkk(f + \mu \vert s)=\upprevvovkk(f \vert s) + \mu$.
\end{enumerate}
\end{proposition}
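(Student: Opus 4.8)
The plan is to prove each of \ref{vovk coherence 1}--\ref{vovk coherence 6} by exhibiting or transforming the bounded below supermartingales that appear in the infimum of Definition~\ref{def:upperexpectation2}. The recurring idea is that the elementary operations on global variables occurring in these properties lift to operations on $\setofextsupmartb{}$: if $\martingale$ is a bounded below supermartingale then so are $\martingale+\mu$ for $\mu\in\reals{}$, $\lambda\martingale$ for $\lambda\in\posreals{}$, and $\martingale+\mathscr{N}$ for a second bounded below supermartingale $\mathscr{N}$. The first two follow from the local axioms \ref{coherence: const is const}, \ref{coherence: homog for positive lambda} together with \ref{coherence: const add} and \ref{coherence: homog for non-negative lambda}; the third is the special case of Lemma~\ref{lemma:positive:countable:linear:combination} with coefficients $1,1,0,0,\dots$ (it also follows directly from \ref{coherence: sublinearity}, using \ref{coherence: bounds} to keep all sums unambiguous). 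Moreover these operations transform the feasibility condition as expected: $\liminf(\martingale+\mu)=\liminf\martingale+\mu$, $\liminf(\lambda\martingale)=\lambda\liminf\martingale$, and $\liminf(\martingale+\mathscr{N})\geq_s\liminf\martingale+\liminf\mathscr{N}$, the last being unambiguous because a bounded below supermartingale has $\liminf>-\infty$ everywhere.

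For \ref{vovk coherence 1}, note that for any real $\beta\geq\sup_{\omega\in\Gamma(s)}f(\omega)$ the constant process $\martingale\equiv\beta$ lies in $\setofextsupmartb{}$ [by \ref{coherence: const is const}] and satisfies $\liminf\martingale=\beta\geq_s f$, so $\upprevvovkk(f\vert s)\leq\beta$; taking the infimum over all such $\beta$ gives the claim. The constant process $0$ shows $\upprevvovkk(0\vert s)\leq0$, and Lemma~\ref{lemma: infima of supermartingales} shows that any feasible $\martingale$ for $0$ has $\martingale(s)\geq\inf_{\omega\in\Gamma(s)}\liminf\martingale(\omega)\geq0$, so $\upprevvovkk(0\vert s)=0$ (the same argument in fact gives $\upprevvovkk(f\vert s)\geq\inf_{\omega\in\Gamma(s)}f(\omega)$ for every $f$). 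For \ref{vovk coherence 3}, the map $\martingale\mapsto\lambda\martingale$ is a bijection of $\setofextsupmartb{}$ carrying the feasible set for $f$ onto that for $\lambda f$, so $\upprevvovkk(\lambda f\vert s)=\lambda\upprevvovkk(f\vert s)$ when $\lambda>0$, while the case $\lambda=0$ reduces to $\upprevvovkk(0\vert s)=0$ since $0\cdot f=0$ by the convention $0\,(\pm\infty)=0$. For \ref{vovk coherence 6}, the bijection $\martingale\mapsto\martingale+\mu$ carries the feasible set for $f$ onto that for $f+\mu$, giving $\upprevvovkk(f+\mu\vert s)\leq\upprevvovkk(f\vert s)+\mu$; applying this inequality with $f+\mu$ in place of $f$ and $-\mu$ in place of $\mu$ yields equality.

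For \ref{vovk coherence 4}, if $f\leq_s g$ then every feasible $\martingale$ for $g$ is feasible for $f$, so the feasible set for $g$ is contained in that for $f$ and $\upprevvovkk(f\vert s)\leq\upprevvovkk(g\vert s)$. For \ref{vovk coherence 2} we may assume $\upprevvovkk(f\vert s)<+\infty$ and $\upprevvovkk(g\vert s)<+\infty$ (otherwise the right-hand side is $+\infty$ by the convention $+\infty-\infty=+\infty$), so both feasible sets are non-empty; given feasible $\martingale$ for $f$ and $\mathscr{N}$ for $g$, the process $\martingale+\mathscr{N}$ is a bounded below supermartingale with $\liminf(\martingale+\mathscr{N})\geq_s\liminf\martingale+\liminf\mathscr{N}\geq_s f+g$, whence $\upprevvovkk(f+g\vert s)\leq\martingale(s)+\mathscr{N}(s)$, and taking the infimum first over $\mathscr{N}$ and then over $\martingale$ gives \ref{vovk coherence 2}. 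Finally, the outer inequalities of \ref{vovk coherence 5} are \ref{vovk coherence 1} applied to $f$ and to $-f$ (recall $\lowprevvovkk(f\vert s)=-\upprevvovkk(-f\vert s)$), and the inner inequality $\lowprevvovkk(f\vert s)\leq\upprevvovkk(f\vert s)$ follows from $0=\upprevvovkk(0\vert s)\leq\upprevvovkk(f+(-f)\vert s)\leq\upprevvovkk(f\vert s)+\upprevvovkk(-f\vert s)$, using \ref{vovk coherence 4} and \ref{vovk coherence 2} and the fact that $f+(-f)\geq0$ pointwise (it equals $0$ where $f$ is real and $+\infty$ elsewhere).

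I expect no conceptual obstacle here; the one genuinely delicate point is the bookkeeping with $\extreals{}$-valued quantities. One must check that the $\liminf$ identities and inequalities in the first paragraph remain valid at points where $\martingale$, $\mathscr{N}$ or $f$ takes the value $+\infty$, that the sums $\martingale(s)+\mathscr{N}(s)$ and the interchange of sum and infimum behave correctly under the paper's arithmetic conventions, and that the feasible sets are non-empty exactly where this is needed so that the infimum manipulations make sense. The lower bound \ref{coherence: bounds}, which forces $\lupprev{t}(\cdot)>-\infty$ on bounded below variables (so that the supermartingale inequalities can be added without an $(+\infty)+(-\infty)$ clash), is the fact that makes this routine.
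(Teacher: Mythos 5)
Your proposal is correct and follows essentially the same route as the paper: constant supermartingales for \ref{vovk coherence 1}, summing two feasible supermartingales (via Lemma~\ref{lemma:positive:countable:linear:combination} and the superadditivity of $\liminf$) for \ref{vovk coherence 2}, the scaling and shifting bijections for \ref{vovk coherence 3} and \ref{vovk coherence 6} with the $\lambda=0$ case settled through Lemma~\ref{lemma: infima of supermartingales}, inclusion of feasible sets for \ref{vovk coherence 4}, and the $f+(-f)\geq 0$ argument for \ref{vovk coherence 5}. The extended-real bookkeeping you flag is exactly what the paper's proof spells out (e.g.\ its explicit $\varepsilon$-argument for $\liminf(\martingale_1+\martingale_2)\geq\liminf\martingale_1+\liminf\martingale_2$), and it goes through under the stated conventions.
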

\begin{proof}
\ref{vovk coherence 1}.
If $\sup_{\omega\in\Gamma(s)} f(\omega)=+\infty$, the inequality is trivially satisfied.
If this is not the case, consider any real $M \geq \sup_{\omega\in\Gamma(s)} f(\omega)$ and the real process $\martingale$ that assumes the constant value $M$.
Then clearly $\martingale$ is a bounded below supermartingale and moreover $\liminf\martingale(\omega)=M \geq f(\omega)$ for all $\omega\in\Gamma(s)$.
Hence, Definition \ref{def:upperexpectation2} implies that $\smash{\upprevvovkk(f \vert s)\leq\martingale(s)=M}$.
Since this is true for every real $M \geq \sup_{\omega\in\Gamma(s)} f(\omega)$, \ref{vovk coherence 1} follows.

\ref{vovk coherence 2}.
If either $\upprevvovkk(f \vert s)$ or $\upprevvovkk(g \vert s)$ equals $+\infty$, then the inequality is trivially true.
So suppose that $\upprevvovkk(f \vert s) < +\infty$ and $\upprevvovkk(g \vert s) < +\infty$ and consider any real $c_1 > \upprevvovkk(f \vert s)$ and any real $c_2 > \upprevvovkk(g \vert s)$.
Then there are two bounded below supermartingales $\martingale_1$ and $\martingale_2$ such that $\martingale_1(s) \leq c_1$ and $\martingale_2(s) \leq c_2$ and moreover $\liminf\martingale_1 \geq_s f$ and $\liminf\martingale_2 \geq_s g$.
Now consider the extended real process $\martingale \coloneqq\martingale_1 + \martingale_2$.
Then $\martingale$ is a bounded below supermartingale because of Lemma~\ref{lemma:positive:countable:linear:combination}, which we can apply because $\martingale_1$ and $\martingale_2$ are both bounded below and hence have a common lower bound [note that the countable sum in Lemma~\ref{lemma:positive:countable:linear:combination} can be turned into a finite sum by setting all remaining supermartingales equal to zero].
Moreover, we will show that $\liminf (\martingale_1 + \martingale_2) \geq \liminf\martingale_1 + \liminf\martingale_2$ and therefore that $\liminf\martingale \geq_s f+g$, which, by Definition \ref{def:upperexpectation2}, implies that $\upprevvovkk(f+g \vert s)\leq\martingale(s)=\martingale_1(s)+\martingale_2(s)\leq c_1 + c_2$.
Since this then holds for any real $c_1 > \upprevvovkk(f \vert s)$ and any real $c_2 > \upprevvovkk(g \vert s)$, it follows that $\upprevvovkk(f+g \vert s)\leq\upprevvovkk(f \vert s) + \upprevvovkk(g \vert s)$.

So consider any $\omega\in\samplespace$ and any real $\alpha_1$ and $\alpha_2$ such that $\liminf\martingale_1(\omega) > \alpha_1$ and $\liminf\martingale_2(\omega) > \alpha_2$.
This is always possible because $\martingale_1$ and $\martingale_2$ are bounded below.
Then there are two natural numbers $N_1$ and $N_2$ such that $\martingale_1(\omega^{n_1}) \geq \alpha_1$ and $\martingale_2(\omega^{n_2}) \geq \alpha_2$ for all $n_1 \geq N_1$ and all $n_2 \geq N_2$.
Hence, we have that $\martingale_1(\omega^{n}) + \martingale_2(\omega^{n})  \geq \alpha_1 + \alpha_2$ for all $n \geq \max\{N_1,N_2\}$, implying that $\liminf (\martingale_1 + \martingale_2)(\omega) \geq \alpha_1 + \alpha_2$.
Since this holds for any real $\alpha_1$ and $\alpha_2$ such that $\liminf\martingale_1(\omega) > \alpha_1$ and $\liminf\martingale_2(\omega) > \alpha_2$, we indeed find that $\liminf (\martingale_1 + \martingale_2)(\omega) \geq \liminf\martingale_1(\omega) + \liminf\martingale_2(\omega)$.

\ref{vovk coherence 3}.
For $\lambda \in \posreals{}$, it suffices to note that $\martingale$ is a bounded below supermartingale such that $\liminf\martingale \geq_s f$ if and only if $\lambda \martingale$ is a bounded below supermartingale such that $\liminf \lambda \martingale \geq_s \lambda f$.
If $\lambda=0$, then $\lambda \upprevvovkk(f \vert s)=0$ because $(+\infty) \cdot 0=(-\infty) \cdot 0=0$.
To see that also $\upprevvovkk(\lambda f \vert s)=0$, start by noting that $\lambda f=0$ and hence, because of \ref{vovk coherence 1}, $\upprevvovkk(\lambda f \vert s) \leq 0$.
That $\upprevvovkk(\lambda f \vert s) < 0$ is impossible, follows from Lemma~\ref{lemma: infima of supermartingales} and Definition \ref{def:upperexpectation2}.
Hence, we indeed have that $\upprevvovkk(\lambda f \vert s)=0$.

\ref{vovk coherence 4}.
Consider any two $f,g\in\setofextvariables$ such that $f \leq_s g$.
Then for any $\martingale\in\setofextsupmartb{}$ such that $\liminf\martingale \geq_s g$, we also have that $\liminf\martingale \geq_s f$, and hence, by Definition \ref{def:upperexpectation2}, $\upprevvovkk(f \vert s)\leq\upprevvovkk(g \vert s)$.

\ref{vovk coherence 5}.
The first and third inequality follow trivially from \ref{vovk coherence 1} and the definition of the conjugate lower expectation $\lowprevvovkk$.
To prove the second inequality, assume \emph{ex absurdo} that $\lowprevvovkk(f \vert s) > \upprevvovkk(f \vert s)$.
Then $0 > \upprevvovkk(f \vert s)-\lowprevvovkk(f \vert s)$ which, by \ref{vovk coherence 2} and the definition of the conjugate lower expectation $\lowprevvovkk$, implies that $0 > \upprevvovkk(f + (- f) \vert s)$.
Since, according to our convention, the extended real variable $f + (-f)$ only assumes values in $\{ 0 ,+\infty \}$, we have that $f + (-f) \geq 0$ and therefore, by \ref{vovk coherence 4} and \ref{vovk coherence 3}, that $\upprevvovkk(f + (- f) \vert s) \geq \upprevvovkk(0 \vert s)=0$.
This is a contradiction.

\ref{vovk coherence 6}.
For any $\martingale\in\setofextsupmartb{}$ such that $\liminf\martingale \geq_s f + \mu$, we have that $\martingale-\mu\in\setofextsupmartb{}$ because of \ref{coherence: const add} and moreover $\liminf (\martingale-\mu) \geq_s f$.
Hence, $\upprevvovkk(f \vert s)\leq\martingale(s)-\mu$ and therefore also $\upprevvovkk(f \vert s) + \mu\leq\martingale(s)-\mu + \mu=\martingale(s)$.
Since this holds for any $\martingale\in\setofextsupmartb{}$ such that $\liminf\martingale \geq_s f + \mu$, we have that $\upprevvovkk(f \vert s) + \mu\leq\upprevvovkk(f + \mu \vert s)$.
By applying this inequality to $f'=f+ \mu$ and $\mu'=-\mu$, we also find that $\upprevvovkk(f + \mu \vert s)-\mu\leq\upprevvovkk(f \vert s)$.
\end{proof}

In order to formulate our next result, we require the concept of an $n$-measurable variable.
For a given $n\in\natz$, we call a global variable $f$ \emph{$n$-measurable} if it is constant on the cylinder events $\Gamma(\sit)$ for all $\sit\in\statespaceseq{1}{n}$, that is, if $f=\tilde{f}(X_{1:n})$ for some map $\tilde{f}$ on $\statespace^n$.
We will then also use the notation $f(\sit)$ for its constant value $f(\omega)$ on all paths $\omega\in\Gamma(\sit)$.
Similarly, for a global variable $f$ that only depends on the $n$-th state $X_n$, we will use $f(x_n)$ to denote its constant value on the event $\{\omega\in\Omega \colon \omega_n=x_n\}$.
We call a global variable $f\in\setofextvariables$ \emph{finitary} if it is $n$-measurable for some $n\in\natz$.
With any situation $\sit\in\situations$ and any $(n+1)$-measurable extended real variable $f$, we now associate a local variable $f(\sit \cdot)$ defined by
$f(\sit \cdot)(\situa{n+1}) \coloneqq f(\situation{1}{n+1}) \text{ for all } \situa{n+1}\in\statespace$.
On the other hand, for any extended real process $\process$, we will define the global variable $\process(X_{1:n}) \coloneqq\process \circ X_{1:n}$ that only depends of the first $n$ states, and is therefore finitary.

Our proof of Proposition~\ref{prop: global compatible with local} also requires the following additional notation and terminology.
For any two situations $s,t \in \situations$, we write that $s \sqsubseteq t$, or equivalently that $t \sqsupseteq s$, when every path that goes through $t$ also goes through $s$.
In that case we say that $s$ \emph{precedes} $t$ or that $t$ \emph{follows} $s$.
When $s \sqsubseteq t$ and $ s \not= t$, we write $s \sqsubset t$ and similarly for the relation $\sqsupset$.
When neither $s \sqsubseteq t$ nor $t \sqsubseteq s$, we say that $s$ and $t$ are \emph{incomparable}.
\begin{proposition}[Partial compatibility with local models]\label{prop: global compatible with local}
Consider any situation $\sit\in\situations$ and any $(n+1)$-measurable extended real variable $f$ that is bounded below.
Then,
\begin{equation*}
		\upprevvovkk (f \vert \sit)=\lupprev{\sit}(f(\sit \cdot)).
\end{equation*}
\end{proposition}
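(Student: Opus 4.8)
The plan is to prove the two inequalities $\upprevvovkk(f\vert\sit)\leq\lupprev{\sit}(f(\sit\cdot))$ and $\upprevvovkk(f\vert\sit)\geq\lupprev{\sit}(f(\sit\cdot))$ separately, where I write $n$ for the length of $\sit$.

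For the inequality $\leq$, I would exhibit one concrete bounded below supermartingale that attains the value $\lupprev{\sit}(f(\sit\cdot))$ at $\sit$ and dominates $f$ in $\liminf$ along every path through $\Gamma(\sit)$. Fix $M\in\reals{}$ with $f\geq M$ and define a process $\martingale$ on $\situations$ by: $\martingale(t)$ is the constant value of $f$ on $\Gamma(t)$ whenever $t\sqsupseteq\sit$ and $\vert t\vert>n$ (well defined since $f$ is $(n+1)$-measurable); $\martingale(\sit)\coloneqq\lupprev{\sit}(f(\sit\cdot))$; and $\martingale(t)\coloneqq+\infty$ for every remaining $t$, i.e. every strict predecessor of $\sit$ and every $t$ incomparable with $\sit$. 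I would then verify that (i) $\martingale$ is bounded below by $M$, using \ref{coherence: bounds} to get $\lupprev{\sit}(f(\sit\cdot))\geq\inf f(\sit\cdot)\geq M$; (ii) $\martingale$ is a supermartingale: at $\sit$ we have $\martingale(\sit\cdot)=f(\sit\cdot)$ so the defining inequality holds with equality; at $t\sqsupset\sit$ with $\vert t\vert>n$ the local variable $\martingale(t\cdot)$ is the constant $\martingale(t)$, so \ref{coherence: const is const} (together with \ref{coherence: monotonicity} in the case $\martingale(t)=+\infty$, which gives $\lupprev{t}(+\infty)=+\infty$) again yields equality; at every other $t$ we have $\martingale(t)=+\infty$, so the inequality is trivial; (iii) for each $\omega\in\Gamma(\sit)$ the sequence $\martingale(\omega^m)$ is constant equal to $f(\omega)$ for all $m>n$, hence $\liminf\martingale(\omega)=f(\omega)$. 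Definition~\ref{def:upperexpectation2} then gives $\upprevvovkk(f\vert\sit)\leq\martingale(\sit)=\lupprev{\sit}(f(\sit\cdot))$.

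For the inequality $\geq$, I would take an arbitrary $\martingale\in\setofextsupmartb$ with $\liminf\martingale\geq_\sit f$ and show $\martingale(\sit)\geq\lupprev{\sit}(f(\sit\cdot))$. Applying Lemma~\ref{lemma: infima of supermartingales} at each child situation $\sit x$ with $x\in\statespace$ gives $\martingale(\sit x)\geq\inf_{\omega\in\Gamma(\sit x)}\liminf\martingale(\omega)\geq\inf_{\omega\in\Gamma(\sit x)}f(\omega)=f(\sit x)$, where the second inequality uses $\Gamma(\sit x)\subseteq\Gamma(\sit)$ and $\liminf\martingale\geq_\sit f$, and the last equality uses the $(n+1)$-measurability of $f$. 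Thus $\martingale(\sit\cdot)\geq f(\sit\cdot)$ pointwise on $\statespace$, and both local variables lie in $\setofgenextvariablesb(\statespace)$ since $\martingale$ and $f$ are bounded below. Then \ref{coherence: monotonicity} and the supermartingale inequality at $\sit$ give $\martingale(\sit)\geq\lupprev{\sit}(\martingale(\sit\cdot))\geq\lupprev{\sit}(f(\sit\cdot))$, and taking the infimum over all such $\martingale$ completes this direction.

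The main obstacle is the $\leq$ direction: one must exhibit a process that is a supermartingale at \emph{every} situation, and the delicate points are (a) the supermartingale condition at predecessors of $\sit$, which is why I set $\martingale=+\infty$ there rather than at the lower bound $M$, and (b) the extended-real bookkeeping—checking $\lupprev{t}(+\infty)=+\infty$ via \ref{coherence: const is const} and \ref{coherence: monotonicity}, and allowing $f(\sit\cdot)$, hence $\lupprev{\sit}(f(\sit\cdot))$, to be $+\infty$ while still being controlled from below by \ref{coherence: bounds}. The $\geq$ direction is essentially immediate once Lemma~\ref{lemma: infima of supermartingales} is invoked at the children of $\sit$.
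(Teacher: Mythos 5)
Your proposal is correct and follows essentially the same route as the paper: the lower bound via Lemma~\ref{lemma: infima of supermartingales} at the children of $\sit$ plus \ref{coherence: monotonicity} and the supermartingale inequality, and the upper bound via an explicit witness supermartingale equal to $\lupprev{\sit}(f(\sit\cdot))$ at $\sit$ and to the constant value of $f$ beyond level $n+1$. The only (immaterial) difference is that off the cone $\Gamma(\sit)$ you set the witness to $+\infty$ so the supermartingale condition holds trivially, whereas the paper assigns the constant $\lupprev{\sit}(f(\sit\cdot))$ there and invokes \ref{coherence: bounds}.
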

\begin{proof}
Our proof is similar to that of \cite[Corollary~3]{DECOOMAN201618}.
Consider any $\martingale\in\setofextsupmartb{}$ such that $\liminf\martingale \geq_{\sit} f$.
Then it follows from Lemma~\ref{lemma: infima of supermartingales} that, for all $x_{n+1}\in\statespace$,
\begin{align*}
\martingale(x_{1:n+1}) \geq\inf_{\omega\in\Gamma(x_{1:n+1})} \liminf\martingale(\omega) \geq\inf_{\omega\in\Gamma(x_{1:n+1})} f(\omega)=f(x_{1:n+1}).
\end{align*}
Hence, we have that $\martingale(\sit \cdot) \geq f(\sit \cdot)$, which implies by \ref{coherence: monotonicity} and the supermartingale character of $\martingale$ that
\begin{equation*}
\martingale(\sit) \geq \lupprev{\sit}(\martingale(\sit \cdot)) \geq \lupprev{\sit}(f(\sit \cdot)).
\end{equation*}
Since this holds for any $\martingale\in\setofextsupmartb{}$ such that $\liminf\martingale \geq_{x_{1:n}} f$, it follows from Definition \ref{def:upperexpectation2} that $\upprevvovkk (f \vert \sit) \geq \lupprev{\sit}(f(\sit \cdot))$.
To see that the inequality is an equality, consider the extended real process $\martingale$ defined by $\martingale(s) \coloneqq\lupprev{\sit}(f(\sit \cdot))$ for all $s \not\sqsupset \sit$, and by $\martingale(s) \coloneqq f(x_{1:n+1})$ for any $s\in\situations$ such that $s \sqsupseteq x_{1:n+1}$ for some $x_{n+1}\in\statespace$.
Then $\martingale$ is bounded below because $f$ is bounded below and $\lupprev{\sit}$ satisfies~\ref{coherence: bounds}.
It is also a supermartingale because $\lupprev{\sit}(\martingale(\sit \cdot))=\lupprev{\sit}(f(\sit \cdot))=\martingale(\sit)$ and, for all $s \not= \sit$, $\lupprev{s}(\martingale(s\andstate))=\martingale(s)$ because of \ref{coherence: bounds} and the fact that $\martingale{}(s\cdot)$ is constant and equal to $\martingale{}(s)$.
It is moreover easy to see that $\liminf\martingale \geq_{\sit} f$ is guaranteed because $f$ is $(n+1)$-measurable.
\end{proof}
We will show later on that this compatibility can be extended to the entire domain of the local models $\lupprev{s}$ provided that they additionally satisfy \ref{ext coherence lower cuts continuity}.

The following important result is an imprecise generalisation of the well-known `law of iterated expectations'.
The idea of the proof goes back to \cite[Proposition 8.7]{Shafer:2005wx}, yet, our proof is more similar to that of \cite[Theorem 16]{DECOOMAN201618}.


\begin{theorem}[Law of iterated upper expectations]\label{theorem: vovk iterated}
For any $f\in\setofextvariables$ and any $x_{1:n}\in\situations$, we have that
\begin{equation*}
\upprevvovkk(f \vert x_{1:n})=\upprevvovkk\Big(\upprevvovkk\left(f \vert x_{1:n} X_{n+1}\right) \Big\vert x_{1:n}\Big).
\end{equation*}
\end{theorem}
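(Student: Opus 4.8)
The plan is to prove two inequalities, one by composing supermartingales and one by a martingale-splicing argument, using that $\upprevvovkk(\cdot\vert x_{1:n}X_{n+1})$ is a local extended real variable on $\statespace$.

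First I would set up notation. Write $s=x_{1:n}$, let $g\in\setofgenextvariables(\statespace)$ be the local variable $g(x)\coloneqq\upprevvovkk(f\vert sx)$ for $x\in\statespace$, and note that by \ref{vovk coherence 5} the values $\upprevvovkk(f\vert sx)$ are bounded below by $\inf_{\omega\in\Gamma(sx)}f(\omega)\ge\inf_{\omega\in\Gamma(s)}f(\omega)$, so $g$ is bounded below and $\lupprev{s}(g)$ is well-defined. The statement to prove is $\upprevvovkk(f\vert s)=\lupprev{s}(g)$, since $\upprevvovkk(g(X_{n+1})\vert s)=\lupprev{s}(g)$ by Proposition~\ref{prop: global compatible with local} (as $g(X_{n+1})$ is $(n+1)$-measurable and bounded below).

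\textbf{The inequality $\upprevvovkk(f\vert s)\le\lupprev{s}(g)$ (gluing).} I would take any $\martingale\in\setofextsupmartb{}$ with $\liminf\martingale\ge_s g(X_{n+1})$; by Proposition~\ref{prop: global compatible with local} the infimum of such $\martingale(s)$ equals $\lupprev{s}(g)$, so it suffices to build, from each such $\martingale$ and each family of ``child'' supermartingales witnessing the values $\upprevvovkk(f\vert sx)$, a single supermartingale $\martingale'$ with $\liminf\martingale'\ge_s f$ and $\martingale'(s)$ only slightly larger than $\martingale(s)$. Concretely, fix $\epsilon>0$ and for each $x\in\statespace$ with $g(x)<+\infty$ pick $\martingale_x\in\setofextsupmartb{}$ with $\martingale_x(sx)\le g(x)+\epsilon$ and $\liminf\martingale_x\ge_{sx}f$ (when $g(x)=+\infty$ no child is needed). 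Shift each $\martingale_x$ by the real constant $\martingale(sx)-\martingale_x(sx)\ge -\epsilon$ — this keeps it a bounded below supermartingale by \ref{coherence: const add} — and then define $\martingale'$ to agree with $\martingale$ on all situations $t$ with $t\not\sqsupset sx$ for any $x$, and with the shifted $\martingale_x$ on situations following $sx$. One checks $\martingale'$ is a supermartingale (at $s$ the local inequality is inherited from $\martingale$ since $\martingale'(s\cdot)=\martingale(s\cdot)$; at each $sx$ it holds because $\martingale'(sx\cdot)$ is the shifted $\martingale_x(sx\cdot)$ whose local upper expectation is $\le\martingale_x(sx)+(\text{shift})=\martingale(sx)$; elsewhere unchanged), that it is bounded below (finitely many shifts, each child and $\martingale$ bounded below), and that $\liminf\martingale'\ge_s f$ because a path through $s$ goes through some $sx$ and beyond $sx$ we have $\martingale'=\martingale_x+(\text{const})$ so $\liminf\martingale'(\omega)\ge\liminf\martingale_x(\omega)+(\text{const})$; this is $\ge f(\omega)$ when $g(x)<+\infty$, and when $g(x)=+\infty$ we instead use $\liminf\martingale(\omega)\ge g(x)=+\infty\ge f(\omega)$ directly (so on those branches keep $\martingale'=\martingale$). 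Hence $\upprevvovkk(f\vert s)\le\martingale'(s)=\martingale(s)$; taking the infimum over $\martingale$ gives $\upprevvovkk(f\vert s)\le\lupprev{s}(g)$, and letting $\epsilon\to0$ removes the slack (in fact with the shift-by-$\martingale(sx)-\martingale_x(sx)$ trick there is no slack at $s$ at all, so $\epsilon$ is only needed to realise the children).

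\textbf{The inequality $\upprevvovkk(f\vert s)\ge\lupprev{s}(g)$ (restriction).} Here I would take any $\martingale\in\setofextsupmartb{}$ with $\liminf\martingale\ge_s f$. For each $x\in\statespace$, the same process $\martingale$ (viewed from $sx$) is a bounded below supermartingale with $\liminf\martingale\ge_{sx}f$, so by Definition~\ref{def:upperexpectation2}, $\upprevvovkk(f\vert sx)\le\martingale(sx)$, i.e.\ $g(x)\le\martingale(sx)=\martingale(s\cdot)(x)$ for all $x\in\statespace$, so $g\le\martingale(s\cdot)$ pointwise on $\statespace$. Both are bounded below, so \ref{coherence: monotonicity} gives $\lupprev{s}(g)\le\lupprev{s}(\martingale(s\cdot))\le\martingale(s)$, the last step being the supermartingale property of $\martingale$ at $s$. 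Taking the infimum over all such $\martingale$ yields $\lupprev{s}(g)\le\upprevvovkk(f\vert s)$.

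\textbf{The main obstacle} is the gluing argument: one must verify carefully that the spliced process $\martingale'$ is genuinely a supermartingale at the situation $s$ and at each $sx$ simultaneously, handle the $g(x)=+\infty$ branches where no finite-initial-capital child exists (by leaving $\martingale$ untouched there), and confirm the shifts are by \emph{real} constants so that \ref{coherence: const add} applies and boundedness below is preserved. The $\liminf$ bookkeeping — passing from $\liminf\martingale_x(\omega)\ge f(\omega)$ on $\Gamma(sx)$ to $\liminf\martingale'(\omega)\ge f(\omega)$ on $\Gamma(s)$ — is routine once the shift is recognised as a real additive constant. Everything else reduces to earlier results: Proposition~\ref{prop: global compatible with local} to identify $\upprevvovkk(g(X_{n+1})\vert s)$ with $\lupprev{s}(g)$, \ref{vovk coherence 5} for bounded-belowness of $g$, and \ref{coherence: bounds}/\ref{coherence: const add}/\ref{coherence: monotonicity} for the local manipulations.
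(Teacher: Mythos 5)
There is a genuine gap, and it sits in your very first reduction. You replace the right-hand side $\upprevvovkk\big(\upprevvovkk(f\vert s X_{n+1})\big\vert s\big)$ by $\lupprev{s}(g)$, with $g(x)\coloneqq\upprevvovkk(f\vert sx)$, invoking Proposition~\ref{prop: global compatible with local} and arguing via \ref{vovk coherence 5} that $g$ is bounded below. But the theorem is asserted for \emph{arbitrary} $f\in\setofextvariables$, and \ref{vovk coherence 5} only gives $g(x)\geq\inf_{\omega\in\Gamma(sx)}f(\omega)$, which is $-\infty$ whenever $f$ is unbounded below on that branch; e.g.\ for $f\equiv-\infty$ one has $g\equiv-\infty$. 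When $g$ is not bounded below, Proposition~\ref{prop: global compatible with local} does not apply, and the identification $\upprevvovkk(g(X_{n+1})\vert s)=\lupprev{s}(g)$ can genuinely fail, since \ref{coherence: const is const}--\ref{coherence: continuity wrt non-negative functions} leave $\lupprev{s}$ unconstrained outside $\setofgenextvariablesb(\statespace)$ and full compatibility needs \ref{ext coherence lower cuts continuity} (Example~\ref{example: Vovk's axioms do not necessarily satisfy lower cuts continuity}, Corollary~\ref{corollary: complete compatibility}): take a local model with $\lupprev{s}(h)=\sup h$ for bounded below $h$ and $\lupprev{s}(h)=0$ otherwise; then for $f\equiv-\infty$ both sides of the theorem equal $-\infty$, yet your restated goal reads $-\infty=\lupprev{s}(g)=0$. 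The same issue infects your ``restriction'' direction, where \ref{coherence: monotonicity} is applied to $g\leq\martingale(s\,\cdot)$ although \ref{coherence: monotonicity} only covers bounded below variables, and your ``gluing'' direction, where a child with $\martingale_x(sx)\leq g(x)+\epsilon$ cannot be chosen when $g(x)=-\infty$ (bounded below supermartingales are real-valued at $sx$). The paper sidesteps all of this by never routing the argument through $\lupprev{s}$: for one inequality it replaces $\martingale$ beyond $x_{1:n+1}$ by the constant $\martingale(x_{1:n+1})$ and applies Definition~\ref{def:upperexpectation2} directly, and for the other it picks the children relative to $\martingale(x_{1:n+1})+\epsilon$ (using Lemma~\ref{lemma: infima of supermartingales}), not relative to $g(x)+\epsilon$.

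Two further details in your gluing step also need repair even in the bounded below case: the ``shift by the real constant $\martingale(sx)-\martingale_x(sx)$'' requires $\martingale(sx)$ to be real, so the branches with $\martingale(sx)=+\infty$ must be treated separately (the paper grafts the constant $+\infty$ supermartingale there), and the supermartingale check at $s$ then goes through with an $\epsilon$ of slack as in the paper rather than with none. If you rewrite both inequalities directly against $\upprevvovkk(\cdot\vert s)$ and choose children relative to $\martingale(sx)$, your construction becomes essentially the paper's proof; the shortcut via \ref{coherence: monotonicity} in the restriction direction is a genuine simplification, but only valid when $g$ is bounded below.
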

\begin{proof}
Fix any $f\in\setofextvariables$ and any $x_{1:n}\in\situations$.
We first show that $\upprevvovkk(\upprevvovkk(f \vert x_{1:n} X_{n+1}) \vert x_{1:n})\leq\upprevvovkk(f \vert x_{1:n})$.
If $\upprevvovkk(f \vert x_{1:n})=+\infty$, this is trivially satisfied.
If not, then for any fixed real $\alpha > \upprevvovkk(f \vert x_{1:n})$ there is a bounded below supermartingale $\martingale$ such that $\martingale(x_{1:n})\leq\alpha$ and $\liminf\martingale \geq_{x_{1:n}} f$.
Then it is clear that, for all $x_{n+1}\in\statespace$, $\liminf\martingale \geq_{x_{1:n+1}} f$, and hence $\upprevvovkk(f \vert x_{1:n+1})\leq\martingale(x_{1:n+1})$ by Definition \ref{def:upperexpectation2}.
Let $\martingale'$ be the process that is equal to $\martingale$ for all situations that precede $x_{1:n}$ or are incomparable with $x_{1:n}$, and that is equal to the constant $\martingale(x_{1:n+1})$ for all situations that follow $x_{1:n+1}$ for some $x_{n+1}\in\statespace$.
Clearly, $\martingale'$ is again a bounded below supermartingale and, because of the reasoning above, $\upprevvovkk(f \vert x_{1:n} X_{n+1})\leq\martingale(x_{1:n} X_{n+1}) =_{x_{1:n}}  \liminf\martingale'$.
Hence, it follows from Definition \ref{def:upperexpectation2} that
$\upprevvovkk(\upprevvovkk(f \vert x_{1:n} X_{n+1}) \vert x_{1:n})\leq\martingale'(x_{1:n})=\martingale(x_{1:n})\leq\alpha.$
Since this holds for any real $\alpha > \upprevvovkk(f \vert x_{1:n})$, we indeed have that $\upprevvovkk(\upprevvovkk(f \vert x_{1:n} X_{n+1}) \vert x_{1:n})\leq\upprevvovkk(f \vert x_{1:n})$.

We now prove the other inequality.
Again, if $\upprevvovkk(\upprevvovkk(f \vert x_{1:n} X_{n+1}) \vert x_{1:n})=+\infty$ it trivially holds, so we can assume it to be real or equal to $-\infty$.
Fix any real $\alpha > \upprevvovkk(\upprevvovkk(f \vert x_{1:n} X_{n+1}) \vert x_{1:n})$ and any $\epsilon\in\posreals$.
Then there must be a bounded below supermartingale $\martingale$ such that $\martingale(x_{1:n})\leq\alpha$ and $\liminf\martingale \geq_{x_{1:n}} \upprevvovkk(f \vert x_{1:n} X_{n+1})$.
Consider any such bounded below supermartingale.
Then for any $x_{n+1}\in\statespace$, we have that $\liminf\martingale \geq_{x_{1:n+1}} \upprevvovkk(f \vert x_{1:n+1})$, which by Lemma~\ref{lemma: infima of supermartingales} implies that $\martingale(x_{1:n+1}) \geq \upprevvovkk(f \vert x_{1:n+1})$.
Fix any $x_{n+1}\in\statespace$.
Then $\martingale(x_{1:n+1})$ is either real or equal to $+\infty$ because $\martingale$ is bounded below.
If $\martingale(x_{1:n+1})$ is real, then since $\martingale(x_{1:n+1}) \geq \upprevvovkk(f \vert x_{1:n+1})$, it follows from Definition \ref{def:upperexpectation2} that there is a bounded below supermartingale $\martingale_{x_{1:n+1}}$ such that $\martingale_{x_{1:n+1}}(x_{1:n+1})\leq\martingale(x_{1:n+1}) + \epsilon$ and $\liminf\martingale_{x_{1:n+1}} \geq_{x_{1:n+1}} f$.
If $\martingale(x_{1:n+1})$ is $+\infty$, let $\martingale_{x_{1:n+1}}$ be the constant supermartingale that is equal to $+\infty$ everywhere.
So, for all $x_{n+1}\in\statespace$, we have found a bounded below supermartingale $\martingale_{x_{1:n+1}}$ such that $\martingale_{x_{1:n+1}}(x_{1:n+1})\leq\martingale(x_{1:n+1}) + \epsilon$ and $\liminf\martingale_{x_{1:n+1}} \geq_{x_{1:n+1}} f$.
Let $\martingale^\ast$ be the process that is equal to $\martingale + \epsilon$ for all situations that precede or are incomparable with $x_{1:n}$, and that is equal to $\martingale_{x_{1:n+1}}$ for all situations that follow $x_{1:n+1}$ for some $x_{n+1}\in\statespace$.
Note that $\liminf\martingale^\ast \geq_{x_{1:n}} f$ because, for each $x_{n+1}\in\statespace$, we have that $\liminf\martingale^\ast =_{x_{1:n+1}} \liminf\martingale_{x_{1:n+1}} \geq_{x_{1:n+1}} f$.
We moreover show that $\martingale^\ast$ is a bounded below supermartingale.

The process $\martingale^\ast$ is clearly bounded below because $\martingale{}$ and all $\martingale{}_{x_{1:n+1}}$ are bounded below and $\statespace{}$ is finite.
Furthermore, for any $x_{n+1}\in\statespace$, we have that $\martingale^\ast(x_{1:n+1})=\martingale_{x_{1:n+1}}(x_{1:n+1})\leq\martingale(x_{1:n+1}) + \epsilon$, implying that $\martingale^\ast(x_{1:n} \cdot)\leq\martingale(x_{1:n} \cdot) + \epsilon$ and therefore, by \ref{coherence: monotonicity} and \ref{coherence: const add}, that
\begin{equation*}
\lupprev{x_{1:n}}(\martingale^\ast(x_{1:n} \cdot))\leq\lupprev{x_{1:n}}(\martingale(x_{1:n} \cdot) + \epsilon) = \lupprev{x_{1:n}}(\martingale(x_{1:n} \cdot))+ \epsilon \leq  \martingale(x_{1:n}) + \epsilon=\martingale^\ast(x_{1:n}).
\end{equation*}
Moreover, for all situations $s \not\sqsupseteq x_{1:n}$, we have by \ref{coherence: const add} that $\lupprev{s}(\martingale^\ast(s\andstate))=\lupprev{s}(\martingale(s\andstate) + \epsilon)=\lupprev{s}(\martingale(s\andstate))+ \epsilon\leq\martingale(s) + \epsilon=\martingale^\ast(s)$, and for all $s\in\situations$ such that  $s \sqsupseteq x_{1:n+1}$ for some $x_{n+1}\in\statespace$, we have that $\lupprev{s}(\martingale^\ast(s\andstate))=\lupprev{s}(\martingale_{x_{1:n+1}}(s\andstate))\leq\martingale_{x_{1:n+1}}(s)=\martingale^\ast(s)$.
All together, we have that $\lupprev{s}(\martingale^\ast(s\andstate))\leq\martingale^\ast(s)$ for all $s \in \situations{}$, implying that $\martingale^\ast$ is a supermartingale that, as shown before, is bounded below.

Since $\liminf\martingale^\ast \geq_{x_{1:n}} f$ and $\martingale^\ast(x_{1:n})=\martingale(x_{1:n}) + \epsilon\leq\alpha + \epsilon$, Definition~\ref{def:upperexpectation2} now implies that $\upprevvovkk(f \vert x_{1:n})\leq\alpha + \epsilon$.
This holds for any $\epsilon\in\posreals$ and any real $\alpha > \upprevvovkk(\upprevvovkk(f \vert x_{1:n} X_{n+1}) \vert x_{1:n})$, so we indeed conclude that $\upprevvovkk(f \vert x_{1:n})\leq\upprevvovkk(\upprevvovkk(f \vert x_{1:n} X_{n+1}) \vert x_{1:n})$.
\end{proof}

\begin{corollary}\label{corollary: upprevvovk is supermartingale}
For any $f\in\setofextvariablesb$, the process $\process$, defined by $\process(s) \coloneqq\upprevvovkk(f \vert s)$ for all $s\in\situations$, is a bounded below supermartingale.
\end{corollary}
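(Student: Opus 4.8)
The plan is to read the result off two facts already established: the law of iterated upper expectations (Theorem~\ref{theorem: vovk iterated}) and the partial compatibility with the local models (Proposition~\ref{prop: global compatible with local}). In fact these two will give the sharper identity $\lupprev{s}(\process(s\andstate)) = \process(s)$ for every $s\in\situations$, which is more than enough. Boundedness below is immediate: since $f\in\setofextvariablesb$ there is a real $M$ with $f\geq M$, and then \ref{vovk coherence 5} gives $\process(s) = \upprevvovkk(f\vert s) \geq \inf_{\omega\in\Gamma(s)} f(\omega) \geq M$ for every $s\in\situations$, so $\process$ is bounded below by the real number $M$. It remains only to check the supermartingale inequality at an arbitrary situation.

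So I would fix $s = \sit\in\situations$ and write $h\coloneqq\process(\sit\andstate)\in\setofgenextvariablesb(\statespace)$ for the associated local variable, so $h(x) = \process(\sit x) = \upprevvovkk(f\vert\sit x)$ for all $x\in\statespace$ (bounded below by the previous step). Introduce the $(n+1)$-measurable global variable $g\coloneqq\tilde g(X_{1:n+1})\in\setofextvariablesb$ with $\tilde g(y_{1:n+1})\coloneqq h(y_{n+1})$ for all $y_{1:n+1}\in\statespace^{n+1}$. On $\Gamma(\sit)$ this $g$ takes the value $h(x)=\upprevvovkk(f\vert\sit x)$ on each subcylinder $\Gamma(\sit x)$, so $g =_\sit \upprevvovkk(f\vert\sit X_{n+1})$. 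Since $g$ is bounded below and $(n+1)$-measurable, Proposition~\ref{prop: global compatible with local} applies and yields $\upprevvovkk(g\vert\sit) = \lupprev{\sit}(g(\sit\andstate)) = \lupprev{\sit}(h) = \lupprev{\sit}(\process(\sit\andstate))$. Combining this with Theorem~\ref{theorem: vovk iterated} and \ref{vovk coherence 4} (used on the equality $g =_\sit \upprevvovkk(f\vert\sit X_{n+1})$ to replace one by the other under $\upprevvovkk(\cdot\vert\sit)$), one gets
\[
\process(\sit) = \upprevvovkk(f\vert\sit) = \upprevvovkk\bigl(\upprevvovkk(f\vert\sit X_{n+1})\bigm\vert\sit\bigr) = \upprevvovkk(g\vert\sit) = \lupprev{\sit}(\process(\sit\andstate)).
\]
As $\sit\in\situations$ was arbitrary, $\lupprev{s}(\process(s\andstate)) = \process(s)$ for all $s$, hence in particular $\lupprev{s}(\process(s\andstate))\leq\process(s)$; together with boundedness below this says precisely that $\process$ is a bounded below supermartingale.

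The only genuinely delicate point I anticipate is the passage from $\upprevvovkk(f\vert\sit X_{n+1})$---whose behaviour is really only pinned down on the cylinder $\Gamma(\sit)$---to an honestly globally $(n+1)$-measurable variable, which is what licenses invoking Proposition~\ref{prop: global compatible with local} in the form stated there. That is exactly why I would route the argument through $g$ and use \ref{vovk coherence 4} to record that $\upprevvovkk(\cdot\vert\sit)$ only sees values on $\Gamma(\sit)$. Everything else is a direct appeal to the coherence properties of Proposition~\ref{Prop: Coherence of Global Game-theoretic upper expectation}, the law of iterated upper expectations, and partial compatibility, so no further estimates are needed.
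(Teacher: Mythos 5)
Your proof is correct and follows essentially the same route as the paper: boundedness below from \ref{vovk coherence 5}, and the supermartingale (in fact equality) property by combining Theorem~\ref{theorem: vovk iterated} with Proposition~\ref{prop: global compatible with local} applied to the $(n+1)$-measurable, bounded below variable $\upprevvovkk(f \vert \sit X_{n+1})$. Your explicit construction of $g$ and the extra appeal to \ref{vovk coherence 4} merely make precise what the paper does implicitly with its notation $\upprevvovkk(f\vert x_{1:n}X_{n+1})$, so there is no substantive difference.
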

\begin{proof}
Consider any $f\in\setofextvariablesb$.
Then $\process$ is bounded below because $f$ is bounded below and $\upprevvovkk$ satisfies~\ref{vovk coherence 5}.
Moreover, if for any $s \in \situations{}$ we let $\upprevvovkk(f \vert s\andstate)$ be the (bounded below) local variable that assumes the value $\upprevvovkk(f \vert sx)$ for all $x\in\statespace$, then it follows from Proposition~\ref{prop: global compatible with local} and Theorem \ref{theorem: vovk iterated} that
\vspace{0.2cm}
\begin{align*}
\lupprev{x_{1:n}}(\process(x_{1:n} \cdot))
= \lupprev{x_{1:n}}\big(\upprevvovkk(f \vert x_{1:n} \cdot)\big)
&= \upprevvovkk\big(\upprevvovkk(f \vert x_{1:n} X_{n+1}) \vert x_{1:n}\big) \\
&= \upprevvovkk(f \vert x_{1:n})=\process(x_{1:n}) \text{ for all } x_{1:n}\in\situations.
\end{align*}
Hence, $\process$ is indeed a supermartingale, and therefore a bounded below supermartingale.
\end{proof}

\section{Equivalent definitions for $\upprevvovkk{}$}\label{Sect: equivalent characterisations}

We start the current section by presenting two technical results that are essential for our further analysis of Definition~\ref{def:upperexpectation2}: Doob's Convergence Theorem and L\'evy's Zero-one Law.
Both of them also hold in a precise measure-theoretic context, but our results do not require any measurability conditions, nor do they require the local models to be precise.
The game-theoretic versions we present here are due to Shafer, Vovk and Takemura \cite{shafer2011levy,ShaferVovk2014ITIP,Vovk2019finance}.
However, since our framework slightly differs from theirs, we have adapted their proofs to our setting.
Some of the involved arguments are rather lengthy and technical, though, so we have chosen to relegate these proofs
to an appendix at the end of the paper.

To state the results, we require the following terminology.
For any $s\in\situations$, we say that a supermartingale $\martingale\in\setofextsupmartb{}$ is an \emph{$s$-test supermartingale} if it is non-negative and $\martingale(s)=1$.
If $s=\Box$, we simply say it is a test supermartingale.
For any $s\in\situations$, we say that an event $A \subseteq\Omega$ is \emph{strictly almost sure (s.a.s.)} within $\Gamma(s)$ if there is an $s$-test supermartingale that converges to $+\infty$ on $\Gamma(s) \setminus A$.
In that case, we call the event $A^c$ \emph{strictly null} within $\Gamma(s)$.
If $s=\Box$, we drop the `within' and simply speak of `strictly almost sure'.
For any two $f,g\in\setofextvariables$, we will use the notation $f \geq_s g$ s.a.s.---and similarly for $\leq_s$, $>_s$ and $<_s$---to indicate that the event $\{\omega\in\Omega \colon f(\omega) \geq g(\omega)\}$ is strictly almost sure within $\Gamma(s)$.


It can easily be shown that an event $A\subseteq\Omega$ is strictly almost sure within $\Gamma(s)$ if and only $\overline{\mathrm{P}}_{\mathrm{V}}(A^c\vert s) = 0$ or, equivalently,\footnote{This follows from the fact that $\overline{\mathrm{P}}_{\mathrm{V}}(A^c\vert s)=\upprevvovkk(\indica{A^c}\vert s)=\upprevvovkk(1-\indica{A}\vert s)=1+\upprevvovkk(-\indica{A}\vert s)=1-\lowprevvovkk(\indica{A}\vert s)=1-\underline{\mathrm{P}}_{\mathrm{V}}(A\vert s)$, using~\ref{vovk coherence 6} for the third equality.} if and only if $\underline{\mathrm{P}}_{\mathrm{V}}(A\vert s) = 1$; we refer to \cite[Proposition~8.4]{Vovk2019finance} for an illustration in the case where $s=\Box$.
This is similar to the traditional measure-theoretic definition of an almost sure event; that is, a measurable event with (measure-theoretic) probability one.
In contrast with the measure-theoretic definition however, the game-theoretic approach provides a clear behavioural interpretation for strictly null events $A\subseteq\Omega$: it says that there is a strategy for Skeptic that allows him to start with a finite capital (in the situation $s$) and become infinitely rich on all paths $\omega\in A$ (that moreover go through $s$) without ever borrowing money.

Theorem~\ref{Theorem: Doob} below establishes a version of Doob's convergence law.
It states that a bounded below supermartingale converges to a real number strictly almost surely.
This is somewhat intuitive (yet, not trivial at all): since a supermartingale is bounded below and expected to decrease, one would expect it to converge to a real number.
We precede Theorem~\ref{Theorem: Doob} with a technical result about the limit behaviour of the test supermartingales that are involved in Theorem~\ref{Theorem: Doob}.

\begin{proposition}\label{Prop: Doob}
Consider any supermartingale $\martingale\in\setofextsupmartb{}$.
If $\martingale(s)$ is real for some $s\in\situations$, then there is a $s$-test supermartingale $\martingale^\ast$ that converges to $+\infty$ on all paths $\omega\in\Gamma(s)$ where $\martingale$ does not converge to an extended real number, and that converges to an extended real number on all paths $\omega\in\Gamma(s)$ where $\martingale{}$ converges to a real number.
\end{proposition}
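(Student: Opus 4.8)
The plan is to run the classical upcrossing argument inside the supermartingale calculus, the extra work being to arrange the resulting test supermartingale so that it also \emph{converges} on the paths along which $\martingale{}$ converges to a real number. Fix $s$ with $\martingale{}(s)$ real and fix a real lower bound $B$ for $\martingale{}$. Enumerate the countably many rational pairs $(a,b)$ with $B<a<b$ as $(a_j,b_j)$, $j\in\nats$, and fix strictly positive weights $c_j$ with $\sum_j c_j=1$ and $\sum_j c_j/(a_j-B)<+\infty$ (e.g.\ $c_j$ proportional to $2^{-j}\min\{1,a_j-B\}$).

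For each $j$ I would build an $s$-test supermartingale $\martingale{}^j$ as follows: outside $\Gamma(s)$ put $\martingale{}^j\equiv 1$ (a supermartingale there, since the local variable $\martingale{}^j(t\andstate)$ is then identically $1$), and inside $\Gamma(s)$ run, from initial capital $1$ at $s$, the gambling strategy ``hold cash until $\martingale{}$ drops to $\leq a_j$; then hold $\lambda_j\coloneqq 1/(a_j-B)$ units of the position whose one-step gain at $t$ is $\martingale{}(tx)-\martingale{}(t)$; sell out (return to cash) as soon as $\martingale{}$ reaches $\geq b_j$ and then wait for the next drop to $\leq a_j$; repeat'', switching permanently to cash if ever $\martingale{}$ or the running capital becomes $+\infty$. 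Using \ref{coherence: const add}, \ref{coherence: homog for non-negative lambda} and the supermartingale inequality $\lupprev{t}(\martingale{}(t\andstate))\leq\martingale{}(t)$, each step keeps the capital process a bounded-below supermartingale; since $\lambda_j(a_j-B)=1$ and $\martingale{}\geq B$, an open position never loses more than $1$ while the already-banked profit is non-negative, so the capital stays $\geq 0$; and each completed upcrossing of $[a_j,b_j]$ adds at least $\lambda_j(b_j-a_j)>0$. Hence $\martingale{}^j$ is an $s$-test supermartingale; $\martingale{}^j(\omega^n)\to+\infty$ on every $\omega\in\Gamma(s)$ along which $\martingale{}$ upcrosses $[a_j,b_j]$ infinitely often; and on every $\omega\in\Gamma(s)$ along which $\martingale{}$ converges to a real number the strategy performs only finitely many buys and sells and is therefore eventually frozen in a single state, so $\martingale{}^j(\omega^n)$ converges to a real number $\ell_j(\omega)\geq 0$, with moreover $\martingale{}^j(\omega^n)\leq \ell_j(\omega)+1+\lambda_j(\sup_m\martingale{}(\omega^m)-B)$ for all $n$, because the banked part of the capital is non-decreasing and an open position is worth at most $\lambda_j(\sup_m\martingale{}(\omega^m)-B)$.

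Now set $\martingale{}^{\ast}\coloneqq\sum_j c_j\martingale{}^j$. By Lemma~\ref{lemma:positive:countable:linear:combination} (all $\martingale{}^j$ are non-negative, so they share the lower bound $0$, and $\sum_j c_j=1$ is real), $\martingale{}^{\ast}$ is a non-negative, bounded-below supermartingale with $\martingale{}^{\ast}(s)=1$, hence an $s$-test supermartingale. If $\omega\in\Gamma(s)$ and $\martingale{}$ does not converge to an extended real number along $\omega$, then $\liminf_n\martingale{}(\omega^n)<\limsup_n\martingale{}(\omega^n)$ with $\liminf_n\martingale{}(\omega^n)\geq B$ real, so I can pick $j$ with $\liminf_n\martingale{}(\omega^n)<a_j<b_j<\limsup_n\martingale{}(\omega^n)$; then $\martingale{}$ upcrosses $[a_j,b_j]$ infinitely often and $\martingale{}^{\ast}(\omega^n)\geq c_j\martingale{}^j(\omega^n)\to+\infty$. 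If instead $\martingale{}$ converges to a real number along $\omega$, write $L\coloneqq\sum_j c_j\ell_j(\omega)\in[0,+\infty]$; since $\liminf_n\sum_{j\leq J}c_j\martingale{}^j(\omega^n)=\sum_{j\leq J}c_j\ell_j(\omega)$ for each $J$ we get $\liminf_n\martingale{}^{\ast}(\omega^n)\geq L$, so if $L=+\infty$ then $\martingale{}^{\ast}(\omega^n)\to+\infty$, while if $L<+\infty$ the displayed bound gives $\sum_j c_j\martingale{}^j(\omega^n)\leq L+1+(\sup_m\martingale{}(\omega^m)-B)\sum_j c_j\lambda_j<+\infty$ uniformly in $n$ (here $\sup_m\martingale{}(\omega^m)$ is finite, as $\martingale{}(\omega^m)$ converges), and together with $\martingale{}^j(\omega^n)\to\ell_j(\omega)$ for each $j$ a uniform-tail argument yields $\martingale{}^{\ast}(\omega^n)\to L$. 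In all cases $\martingale{}^{\ast}$ converges to an extended real number on such $\omega$, which is what is required.

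The hard part is the case $L<+\infty$ above: although each $\martingale{}^j$ may have an arbitrarily large running supremum along a path where $\martingale{}$ converges, one must show the tail $\sum_{j>J}c_j\martingale{}^j(\omega^n)$ is small uniformly in $n$, and this works precisely because a component's running supremum overshoots its limit $\ell_j(\omega)$ by at most the controlled amount $1+\lambda_j(\sup_m\martingale{}(\omega^m)-B)$, so that finiteness of $L$ together with the choice $\sum_j c_j\lambda_j<+\infty$ forces the dominating series to converge. The remaining work --- verifying the supermartingale property and non-negativity of each $\martingale{}^j$ step by step, and the harmless treatment of situations where $\martingale{}$ takes the value $+\infty$ --- is routine, but relies on the scaling $\lambda_j=1/(a_j-B)$, which is exactly what reconciles ``non-negative'' with ``gains a fixed amount per completed upcrossing''.
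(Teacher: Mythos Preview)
Your argument is correct and is a close cousin of the paper's, but the two differ in two respects worth noting.

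\medskip
\textbf{Similarities and differences with the paper.} Both proofs build, for every rational level pair, an upcrossing-type supermartingale that gains a fixed positive amount per completed upcrossing, and then take a convex combination over all pairs. The paper first normalises $\martingale$ (shift and scale so that it is non-negative with $\martingale(s)=1$) and then uses unit position sizes; you keep $\martingale$ as it is and instead scale each component by $\lambda_j=1/(a_j-B)$, which gives the non-negativity of $\martingale^j$ for free. The substantive difference is in the endgame. On paths where $\martingale$ converges to a real number, the paper shows a one-sided Cauchy estimate that is \emph{uniform} over all pairs: for every $\epsilon>0$ there is an $n^\ast$ with $\martingale^{a,b}(\omega^\ell)-\martingale^{a,b}(\omega^n)\geq -2\epsilon$ for all $\ell\geq n\geq n^\ast$ and all $(a,b)$; this requires a four-case analysis but works for \emph{any} positive weights summing to one. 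Your dominated-convergence route is shorter, but the price is the extra summability condition $\sum_j c_j\lambda_j<+\infty$ on the weights.

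\medskip
\textbf{A small gap to patch.} Your parenthetical ``here $\sup_m\martingale(\omega^m)$ is finite, as $\martingale(\omega^m)$ converges'' is not justified as written: since $\martingale$ is extended-real-valued, $\martingale(\omega^m)$ could equal $+\infty$ for finitely many $m\geq\lvert s\rvert$ and still converge to a real number, making your uniform bound vacuous. The repair is easy but should be stated. If $\martingale(\omega^{m_0})=+\infty$ for some first $m_0>\lvert s\rvert$, pick any enumerated $j$ with rational $a_j>\max_{\lvert s\rvert\leq m<m_0}\martingale(\omega^m)$ (this maximum is real); then $\martingale(s)\leq a_j$ so strategy $j$ buys at $s$ and, since $\martingale(\omega^m)<a_j<b_j$ for all $\lvert s\rvert\leq m<m_0$, is still holding at $m_0-1$, whence $\martingale^j(\omega^{m_0})=+\infty$ and $\ell_j(\omega)=+\infty$, forcing $L=+\infty$. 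Thus whenever $L<+\infty$ one has $\martingale(\omega^m)$ real for all $m\geq\lvert s\rvert$, and then convergence to a real limit does give $\sup_{m\geq\lvert s\rvert}\martingale(\omega^m)<+\infty$, so your uniform-tail argument goes through. (Relatedly, your claim that each $\ell_j(\omega)$ is real should be weakened to ``extended real''; this does not affect the rest of the proof.)
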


\begin{theorem}[Doob's Convergence Theorem]\label{Theorem: Doob}
Consider any supermartingale $\martingale\in\setofextsupmartb{}$.
If $\martingale(s)$ is real for some $s\in\situations$, then $\martingale$ converges to a real number strictly almost surely within $\Gamma(s)$.
\end{theorem}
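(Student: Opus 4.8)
The plan is to assemble the required $s$-test supermartingale from two ingredients: one handed to us by Proposition~\ref{Prop: Doob}, and one built directly from $\martingale$ itself. Fix a situation $s\in\situations$ with $\martingale(s)\in\reals$ and let $B\in\reals$ be a lower bound for $\martingale$; by Lemma~\ref{lemma: infima of supermartingales} (or directly by \ref{coherence: bounds} applied to the supermartingale inequality at $s$) we have $\martingale(s)\geq B$. The key observation is that the set of paths in $\Gamma(s)$ on which $\martingale$ fails to converge to a real number is exactly $N_1\cup N_2$, where $N_1$ consists of the $\omega\in\Gamma(s)$ on which $\{\martingale(\omega^n)\}_{n}$ does not converge in $\extreals$, and $N_2\coloneqq\{\omega\in\Gamma(s)\colon\lim_{n}\martingale(\omega^n)=+\infty\}$ — the case $\lim_{n}\martingale(\omega^n)=-\infty$ being impossible because of the bound $B$. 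So it suffices to produce an $s$-test supermartingale that converges to $+\infty$ on $N_1\cup N_2$.

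For $N_1$, I would invoke Proposition~\ref{Prop: Doob} directly: since $\martingale(s)$ is real, it provides an $s$-test supermartingale $\martingale_1$ converging to $+\infty$ on every path of $\Gamma(s)$ where $\martingale$ does not converge to an extended real number, that is, on $N_1$. For $N_2$, the idea is that $\martingale$, shifted and rescaled, already is the test supermartingale we need: set $\martingale_2\coloneqq\frac{\martingale + 1 - B}{\martingale(s)+1-B}$. Adding the real constant $1-B$ preserves the supermartingale inequality by \ref{coherence: const add}, and dividing by the positive real $\martingale(s)+1-B\geq 1$ preserves it by \ref{coherence: homog for positive lambda} (both axioms apply because $\martingale(\,\cdot\,)$ and $\martingale(\,\cdot\,)+1-B$ are bounded below); moreover $\martingale_2\geq 1/(\martingale(s)+1-B)>0$, so $\martingale_2\in\setofextsupmartb{}$ is non-negative, and $\martingale_2(s)=1$. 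Hence $\martingale_2$ is an $s$-test supermartingale, and it converges to $+\infty$ precisely on $N_2$.

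Finally I would combine the two by Lemma~\ref{lemma:positive:countable:linear:combination}, applied with coefficients $\tfrac12,\tfrac12$ (and all others zero) and using that $\martingale_1,\martingale_2$ are non-negative and so share the lower bound $0$: the process $\martingale^\ast\coloneqq\tfrac12\martingale_1+\tfrac12\martingale_2$ is a non-negative bounded below supermartingale with $\martingale^\ast(s)=\tfrac12+\tfrac12=1$, hence an $s$-test supermartingale. Since each summand is non-negative, and on the paths of $N_i$ the $i$-th summand tends to $+\infty$, $\martingale^\ast$ converges to $+\infty$ on $N_1\cup N_2$, which is exactly $\Gamma(s)\setminus\{\omega\in\Omega\colon\lim_{n}\martingale(\omega^n)\in\reals\}$. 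By the definition of strict almost-sureness within $\Gamma(s)$, this says that $\martingale$ converges to a real number strictly almost surely within $\Gamma(s)$.

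The only genuine subtlety I anticipate is realising that Proposition~\ref{Prop: Doob} does not by itself finish the argument: it controls oscillation (convergence failure in $\extreals$) but not divergence to $+\infty$, and the latter must be handled separately — which works out cleanly because a bounded-below supermartingale can be renormalised into an $s$-test supermartingale killing $N_2$, while divergence to $-\infty$ is excluded for free. Everything else — which axioms license the shift and the rescaling, and that a convex combination of two test supermartingales is again a test supermartingale — is routine bookkeeping.
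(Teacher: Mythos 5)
Your argument is correct and is essentially the paper's own proof: both use Proposition~\ref{Prop: Doob} to kill the oscillating paths and the shifted, renormalised $\martingale$ itself to kill the paths where it diverges to $+\infty$ (divergence to $-\infty$ being excluded by the lower bound), then combine the two via Lemma~\ref{lemma:positive:countable:linear:combination}. The only difference is cosmetic packaging: the paper forms $\tfrac{1}{\martingale(s)-B+1}(\martingale-B+\martingale^\ast)$ in one step, whereas you normalise the two pieces separately and take the convex combination $\tfrac12\martingale_1+\tfrac12\martingale_2$.
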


The following result, a version of L\'evy's zero-one law, captures (and extends) yet another intuitive idea:
the upper probability of an event $A\subseteq\Omega$ conditional on a situation $\omega^n$ should (or, is expected to) converge to $1$ as $n\to+\infty$ if $\omega\in A$.

\begin{theorem}[L\'evy's zero-one law]\label{Theorem: Levy}
	For any $f\in\setofextvariablesb$ and any $s\in\situations$, the event
	\begin{align*}
		A \coloneqq\Big\{\omega\in\Omega \colon \liminf_{n\to+\infty} \upprevvovkk(f \vert \omega^n) \geq f(\omega) \Big\} \text{ is strictly almost sure within } \Gamma(s).
	\end{align*}
\end{theorem}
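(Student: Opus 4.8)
The plan is to reduce the statement, via Doob's Convergence Theorem (Theorem~\ref{Theorem: Doob}), to showing that the supermartingale $\process$ built from $\upprevvovkk(f\mid\cdot)$ in Corollary~\ref{corollary: upprevvovk is supermartingale} converges strictly almost surely within $\Gamma(s)$ to something that dominates $f$. Concretely, fix $f\in\setofextvariablesb$ and $s\in\situations$, and recall from Corollary~\ref{corollary: upprevvovk is supermartingale} that $\process(t)\coloneqq\upprevvovkk(f\mid t)$ defines a bounded below supermartingale. Since $f$ is bounded below, $\process$ is bounded below, and by \ref{vovk coherence 5} we have $\process(t)\le\sup_{\omega\in\Gamma(t)}f(\omega)$. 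The key observation is that $\liminf_{n\to+\infty}\upprevvovkk(f\mid\omega^n)=\liminf\process(\omega)$ by definition of $\liminf\process$. So the event $A$ in the statement is exactly $\{\omega\in\Omega\colon\liminf\process(\omega)\ge f(\omega)\}$, and we must show this is s.a.s.\ within $\Gamma(s)$.

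First I would dispose of the infinite-valued part. Let $B\coloneqq\{\omega\in\Omega\colon f(\omega)=+\infty\}$. On $\Gamma(s)\cap B$ we need $\liminf\process(\omega)=+\infty$; on $\Gamma(s)\setminus B$ we need $\liminf\process(\omega)\ge f(\omega)$ with $f(\omega)$ real. For the real-valued part, the natural approach (this is the crux of Lévy's law) is: given $\epsilon\in\posreals$ and a real threshold, consider the event $A_\epsilon\coloneqq\{\omega\in\Gamma(s)\colon \liminf\process(\omega)\ge f(\omega)-\epsilon\}$ restricted to where $f$ is real, and build, for each situation $t\sqsupseteq s$ where $\process(t)$ is ``far below'' $f$ on $\Gamma(t)$, a test supermartingale detecting this. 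The trick is to use the natural extension characterisation: for any situation $t$ with $\upprevvovkk(f\mid t)<+\infty$ and any real $\alpha>\upprevvovkk(f\mid t)$ there is a bounded below supermartingale $\martingale_t$ with $\martingale_t(t)\le\alpha$ and $\liminf\martingale_t\ge_t f$. One then patches these local ``correction'' supermartingales along $\Gamma(s)$, weighted so the total initial cost is finite (using Lemma~\ref{lemma:positive:countable:linear:combination}), to obtain an $s$-test supermartingale that blows up precisely on the paths where $\process$ fails to catch up to $f$.

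The main obstacle — and the step I expect to be most delicate — is the construction of this patched $s$-test supermartingale witnessing that $\{\omega\in\Gamma(s)\colon\liminf\process(\omega)<f(\omega)\}$ is strictly null. The difficulty is twofold: (i) one has to aggregate countably many local corrections over the (countably many) situations following $s$ while keeping the initial capital finite, which requires a careful choice of weights and an appeal to Lemma~\ref{lemma:positive:countable:linear:combination} applied to supermartingales with a common lower bound; and (ii) one has to argue that on a path $\omega$ where $\liminf\process(\omega)<f(\omega)$, infinitely many of the local corrections ``fire'' along $\omega$, so that the aggregated capital is boosted infinitely often and hence tends to $+\infty$. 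Here one combines Proposition~\ref{Prop: Doob} / Theorem~\ref{Theorem: Doob} (applied to $\process$, which is real at $s$ since $f$ is bounded below and $\upprevvovkk(f\mid s)\le\sup_{\Gamma(s)}f$; if $\sup_{\Gamma(s)}f=+\infty$ one first truncates via $f^{\wedge c}$ and uses that $\upprevvovkk(f^{\wedge c}\mid\cdot)$ is then an $\reals$-valued supermartingale, then lets $c\to+\infty$) with the supermartingale inequality $\lupprev{t}(\process(t\,\cdot))=\process(t)$ and \ref{coherence: bounds} to quantify how much capital is gained each time we pass through a situation where $\process$ undershoots $f$.

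Once the strictly null exceptional event is in hand, combining it with the (strictly null) divergence event from Theorem~\ref{Theorem: Doob} and taking a countable union over $\epsilon=1/k$ (using that a countable union of strictly null events within $\Gamma(s)$ is strictly null — again via a weighted sum of the corresponding $s$-test supermartingales and Lemma~\ref{lemma:positive:countable:linear:combination}) yields that $\Gamma(s)\setminus A$ is strictly null, i.e.\ $A$ is s.a.s.\ within $\Gamma(s)$, as claimed.
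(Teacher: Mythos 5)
Your reduction and bookkeeping steps are fine in outline, but the heart of the theorem is precisely the step you leave unspecified, and the mechanism you do sketch for it would not work. You propose to witness that $\{\omega\in\Gamma(s)\colon\liminf\process(\omega)<f(\omega)\}$ is strictly null by taking a countable \emph{weighted sum} of local correction supermartingales (one for each undershooting situation, with summable weights, via Lemma~\ref{lemma:positive:countable:linear:combination}) and arguing that on a bad path infinitely many corrections ``fire'', so the capital is ``boosted infinitely often and hence tends to $+\infty$''. That inference fails: each local correction $\martingale_t$ (with $\martingale_t(t)\le\alpha$ and $\liminf\martingale_t\geq_t f$) gains only a bounded amount in the limit, and once you scale it by a summable weight $\lambda_t$ the total limiting gain along any path is at most $\sum_t\lambda_t\,f(\omega)<+\infty$; an additive mixture with summable weights therefore stays bounded on the bad paths and is useless as a witness. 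What is needed — and what the paper's proof supplies — is a \emph{multiplicative, compounding} construction: fix rational levels $0<a<b$, define alternating cuts $V_k^{a,b}$ (where $\upprevvovkk(f\mid\cdot)$ first drops below $a$) and $U_k^{a,b}$ (where a purchased supermartingale, bought with the entire current capital, first exceeds $b$), and define the test process by the \emph{ratios} $\martingale_k^{a,b}(tx)\,\mathscr{T}^{a,b}(t)/\martingale_k^{a,b}(t)$ on $[V_k^{a,b},U_k^{a,b})$. Reinvestment makes each completed cycle multiply the capital by at least $b/a>1$, and on a path with $\liminf_n\upprevvovkk(f\mid\omega^n)<a<b<f(\omega)$ infinitely many cycles complete (the purchased supermartingale must rise above $b$ because its $\liminf$ dominates $f>b$ there), so the capital genuinely diverges; the countable convex combination (and Lemma~\ref{lemma:positive:countable:linear:combination}) enters only at the very end, to mix the processes $\mathscr{T}^{a,b}$ over all rational pairs $(a,b)$. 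Your plan contains no trace of this reinvestment/upcrossing idea, so the exceptional event is never shown to be strictly null.

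Two smaller points. First, the appeal to Doob's Convergence Theorem is not needed and does no real work here: knowing that $\process$ converges to a real number strictly almost surely tells you nothing about whether its limit dominates $f$; the paper's proof of Theorem~\ref{Theorem: Levy} does not use Theorem~\ref{Theorem: Doob} at all. Second, your truncation-and-let-$c\to+\infty$ device for handling $\sup_{\Gamma(s)}f=+\infty$ can be made to work (take a countable union over integer $c$, again by mixing test supermartingales), but it is superfluous: the paper simply normalises so that $\inf f>0$ using \ref{vovk coherence 6} and boundedness below, and the whole argument goes through for extended real-valued $f$ directly, since the purchased supermartingales are allowed to be extended real-valued and only their values on the relevant cut situations need to be real.
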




One of the major consequences of Doob's convergence theorem and L\'evy's zero-one law is that they allow us to draw some interesting conclusions about Definition~\ref{def:upperexpectation2}.
In particular, we can use them to show that the resulting game-theoretic upper expectation is not impacted much by changes that concern the limit behaviour of supermartingales and, more specifically, how this limit behaviour relates to the variable $f$ at hand;
see Proposition~\ref{prop: liminf can be replaced by lim in definition} and \ref{prop: MonotoneConvergenceSAS}
below.
As was the case for the previous results in this section, the ideas underlying the proofs of the following results are due to Shafer, Vovk and Takemura \cite{Vovk2019finance,shafer2011levy}.



Our first result shows that, in Definition~\ref{def:upperexpectation2}, we can restrict ourselves to the bounded below supermartingales that converge within $\Gamma(s)$.
That is, the limit inferior in Definition~\ref{def:upperexpectation2} can be replaced by a limit.

\begin{proposition}\label{prop: liminf can be replaced by lim in definition}
For any $f\in\setofextvariables$ and any $s\in\situations$, we have that
\begin{align*}
\upprevvovkk(f \vert s) =\inf \left\{ \martingale(s) \colon \martingale\in\setofextsupmartb{} \text{ and } \lim \martingale \geq_s f \right\},
\end{align*}
where the condition $\lim \martingale \geq_s f$ is taken to implicitly imply that $\lim \martingale$ exists within $\Gamma(s)$.
\end{proposition}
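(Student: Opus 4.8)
The plan is to establish the two inequalities between the infima separately. The inequality ``$\leq$'' (the left-hand side is at most the right-hand side) is immediate: whenever $\martingale\in\setofextsupmartb{}$ is such that $\lim\martingale$ exists within $\Gamma(s)$ and $\lim\martingale\geq_s f$, then $\liminf\martingale$ coincides with $\lim\martingale$ on $\Gamma(s)$, so $\liminf\martingale\geq_s f$ and $\martingale$ already lies in the set over which the infimum in Definition~\ref{def:upperexpectation2} is taken; hence the infimum on the right-hand side, being taken over a subset, is at least $\upprevvovkk(f\vert s)$.

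For the converse, fix any $\martingale\in\setofextsupmartb{}$ with $\liminf\martingale\geq_s f$ and with $\martingale(s)$ real, together with any $\epsilon\in\posreals{}$. (Restricting to supermartingales with $\martingale(s)$ real does not change the value of $\upprevvovkk(f\vert s)$ unless that value is $+\infty$, in which case the inequality to be proved is trivial.) Since $\martingale(s)$ is real, Proposition~\ref{Prop: Doob} provides an $s$-test supermartingale $\martingale^\ast$ that converges to $+\infty$ on every path $\omega\in\Gamma(s)$ where $\martingale$ does not converge to an extended real number, and converges to some extended real number on every path $\omega\in\Gamma(s)$ where $\martingale$ converges to a real number. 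I would then set $\martingale'\coloneqq\martingale+\epsilon\martingale^\ast$; by Lemma~\ref{lemma:positive:countable:linear:combination} (with the countable sum reduced to a finite one by padding with the zero supermartingale, as in the proof of \ref{vovk coherence 2}) $\martingale'$ is again a bounded below supermartingale, and $\martingale'(s)=\martingale(s)+\epsilon$.

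Next I would check that $\lim\martingale'$ exists within $\Gamma(s)$ and that $\lim\martingale'\geq_s f$, by a case split on $\omega\in\Gamma(s)$. If $\martingale$ does not converge to an extended real number along $\omega$, then $\martingale^\ast(\omega^n)\to+\infty$ and, since $\martingale$ is bounded below, $\martingale'(\omega^n)\to+\infty\geq f(\omega)$. If $\martingale(\omega^n)\to+\infty$, then $\martingale'(\omega^n)\geq\martingale(\omega^n)\to+\infty\geq f(\omega)$ because $\martingale^\ast\geq0$. Finally, if $\martingale(\omega^n)$ converges to a real number $a$, then the hypothesis $\liminf\martingale\geq_s f$ gives $a\geq f(\omega)$, while $\martingale^\ast(\omega^n)$ converges to some $b\in\nnegextreals{}$, so $\martingale'(\omega^n)$ converges to $a+\epsilon b\in[a,+\infty]$, which exists and is $\geq a\geq f(\omega)$. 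Thus $\martingale'$ belongs to the set over which the infimum on the right-hand side is taken, so that infimum is at most $\martingale'(s)=\martingale(s)+\epsilon$; taking the infimum over all admissible $\martingale$ with $\martingale(s)$ real and then letting $\epsilon\downarrow0$ yields the inequality ``$\geq$'', completing the proof.

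The main obstacle is the construction of a suitable ``corrector'' supermartingale. It is tempting to invoke only Doob's Convergence Theorem (Theorem~\ref{Theorem: Doob}) to obtain a test supermartingale converging to $+\infty$ off the set where $\martingale$ converges, but such a supermartingale may oscillate on the convergence set, which would destroy the convergence of $\martingale+\epsilon\martingale^\ast$. The key is therefore the sharper Proposition~\ref{Prop: Doob}, which additionally guarantees that $\martingale^\ast$ itself converges (to an extended real number) on the paths where $\martingale$ converges to a real number. Minor points to handle carefully are the $\extreals{}$-arithmetic in the sum $a+\epsilon b$ when $b=+\infty$, and the observation---used both to apply Proposition~\ref{Prop: Doob} and to dispose of the $+\infty$ case---that the infimum defining $\upprevvovkk(f\vert s)$ is unaffected by restricting to supermartingales whose value at $s$ is real.
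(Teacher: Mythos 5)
Your proposal is correct and follows essentially the same route as the paper's own proof: both invoke Proposition~\ref{Prop: Doob} to obtain the corrector $s$-test supermartingale, perturb by $\epsilon$ times it, and establish existence of the limit and domination of $f$ on $\Gamma(s)$ via the same case analysis on whether $\martingale$ fails to converge, converges to $+\infty$, or converges to a real number. The only cosmetic difference is that the paper redefines the perturbed process to be constant outside $\Gamma(s)$ and verifies the supermartingale property directly, whereas you obtain it from Lemma~\ref{lemma:positive:countable:linear:combination}; both are valid.
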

\begin{proof}
The inequality `$\leq$' is trivially satisfied since $\liminf\martingale =_s \lim \martingale$ for any bounded below supermartingale $\martingale$ such that the limit $\lim \martingale$ exists within $\Gamma(s)$.
It remains to prove the other inequality.
If $\upprevvovkk(f \vert s)=+\infty$, this is trivially satisfied.
Otherwise, fix any real $\alpha > \upprevvovkk(f \vert s)$.
Then, due to Definition \ref{def:upperexpectation2}, there is a supermartingale $\martingale\in\setofextsupmartb{}$ such that $\martingale(s)\leq\alpha$ and $\liminf\martingale \geq_s f$.
Because $\martingale$ is bounded below and $\alpha$ is real, $\martingale(s)$ is also real.
So, by Proposition~\ref{Prop: Doob}, there is an $s$-test supermartingale $\martingale^\ast$ that converges to $+\infty$ on all paths $\omega\in\Gamma(s)$ where $\martingale$ does not converge in $\extreals$ and converges in $\extreals$ on all paths $\omega\in\Gamma(s)$ where $\martingale$ converges in $\reals{}$.

Fix any $\epsilon\in\posreals$ and consider the process $\martingale'$ defined by $\martingale'(t) \coloneqq \martingale(t) + \epsilon \martingale^\ast(t)$ for all situations $t \sqsupseteq s$ and by $\martingale'(t) \coloneqq \martingale(s) + \epsilon \martingale^\ast(s)\leq\alpha + \epsilon$ for all situations $t \not\sqsupseteq s$.
Then $\martingale'$ is bounded below because $\martingale$ and $\martingale^\ast$ are bounded below.
Moreover, as we will now show, it is also a supermartingale.
On the one hand, for all situations $t \sqsupseteq s$, we have that
\begin{align*}
\lupprev{t}(\martingale'(t\cdot))
= \lupprev{t}(\martingale(t\cdot) + \epsilon \martingale^\ast(t\cdot))
\overset{\text{\ref{coherence: sublinearity}, \ref{coherence: homog for positive lambda}}}{\leq} \lupprev{t}(\martingale(t\cdot)) + \epsilon \lupprev{t}(\martingale^\ast(t\cdot))
\leq \martingale(t) + \epsilon \martingale^\ast(t)
= \martingale'(t),
\end{align*}
where the second inequality follows from the fact that $\martingale$ and $\martingale^\ast$ are bounded below supermartingales, and from the positivity of $\epsilon$.
On the other hand, for all situations $t\not\sqsupseteq s$, it can easily be seen that the local variable $\martingale'(t\cdot)$ is equal to the constant $\martingale(s) + \epsilon \martingale^\ast(s) = \martingale'(t)$.
This constant is furthermore real because $\martingale{}'(t)$ is bounded below and $\martingale(s)+\epsilon\martingale^\ast(s)\leq\alpha+\epsilon$.
Hence, due to \ref{coherence: const is const}: $\lupprev{t}(\martingale'(t\cdot)) = \martingale'(t)$.
So we can conclude that $\martingale'$ is indeed a supermartingale and more specifically, a bounded below supermartingale.
We also have that $\liminf\martingale' \geq_s f$ because $\epsilon \martingale^\ast$ is non-negative and $\liminf\martingale \geq_s f$.
We will now show that, moreover, for all $\omega\in\Gamma(s)$, this process $\martingale'$ converges in $\extreals$.

For any $\omega\in\Gamma(s)$, if $\martingale$ does not converge in $\extreals$, $\martingale^\ast$ converges to $+\infty$ and hence also $\martingale'$ because $\martingale$ is bounded below and $\epsilon$ is positive.
If $\martingale$ does converge in $\extreals$, it converges either to a real number or to $+\infty$ (convergence to $-\infty$ is impossible because it is bounded below).
If $\martingale$ converges to a real number, $\martingale^\ast$ converges in $\extreals$ and hence $\martingale'$ also converges in $\extreals$.
If $\martingale$ converges to $+\infty$, then so does $\martingale'$ because $\epsilon \martingale^\ast$ is non-negative.
Hence, for all $\omega\in\Gamma(s)$, $\martingale'$ converges in $\extreals$ and the limit $\lim \martingale'(\omega)$ therefore exists.

Now, recall that $\lim \martingale'=\liminf\martingale' \geq_s f$ and that $\martingale\in\setofextsupmartb{}$.
Hence, we have that
\begin{align*}
\inf \left\{ \martingale(s) \colon \martingale\in\setofextsupmartb{} \text{ and } \lim \martingale \geq_s f \right\}
\leq \martingale'(s)
= \martingale(s)+\epsilon\martingale^\ast
\leq \alpha + \epsilon.
\end{align*}
This holds for any $\epsilon\in\posreals$ and any $\alpha > \upprevvovkk(f \vert s)$, which implies that indeed
\begin{align*}
\inf \left\{ \martingale(s) \colon \martingale\in\setofextsupmartb{} \text{ and } \lim \martingale \geq_s f \right\}\leq\upprevvovkk(f \vert s). \tag*{\qedhere}
\end{align*}
\end{proof}

The following result shows that the condition $\liminf \martingale{} \geq_s f$ in Definition~\ref{def:upperexpectation2} should in fact merely hold strictly almost surely:
\begin{proposition}\label{prop: MonotoneConvergenceSAS}
Consider any $f\in\setofextvariables$ and any $s\in\situations$.
Then
\begin{align}\label{Eq: prop: MonotoneConvergenceSAS 1}
	\upprevvovkk(f \vert s) &=\inf \Big\{ \martingale(s) :  \martingale\in\setofextsupmartb \text{ and } \liminf\martingale \geq_s f \text{ s.a.s.} \Big\}.
\end{align}
\end{proposition}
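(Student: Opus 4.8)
The plan is to dispose of one inequality by a trivial feasibility comparison and to establish the other by the same perturbation device used in the proof of Proposition~\ref{prop: liminf can be replaced by lim in definition}. Write $R$ for the right-hand side of~\eqref{Eq: prop: MonotoneConvergenceSAS 1}. Since any $\martingale\in\setofextsupmartb$ with $\liminf\martingale\geq_s f$ also satisfies $\liminf\martingale\geq_s f$ s.a.s., the infimum defining $R$ ranges over a larger set than the one in Definition~\ref{def:upperexpectation2}, so $R\leq\upprevvovkk(f\vert s)$ holds trivially. It remains to show $\upprevvovkk(f\vert s)\leq R$; for this I would fix an arbitrary $\martingale\in\setofextsupmartb$ with $\liminf\martingale\geq_s f$ s.a.s. and prove $\upprevvovkk(f\vert s)\leq\martingale(s)$, after which taking the infimum over all such $\martingale$ finishes the argument. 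If $\martingale(s)=+\infty$ this is trivial, so assume $\martingale(s)$ is real (it cannot be $-\infty$ since $\martingale$ is bounded below, say by $M\in\reals$).

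By the definition of strict nullity, the event $B\coloneqq\{\omega\in\Gamma(s):\liminf\martingale(\omega)<f(\omega)\}$ is strictly null within $\Gamma(s)$, so there is an $s$-test supermartingale $\martingale^\ast$ that converges to $+\infty$ on $B$. Fix $\epsilon\in\posreals$ and, exactly as in the proof of Proposition~\ref{prop: liminf can be replaced by lim in definition}, define the process $\martingale'$ by $\martingale'(t)\coloneqq\martingale(t)+\epsilon\martingale^\ast(t)$ for all $t\sqsupseteq s$ and $\martingale'(t)\coloneqq\martingale(s)+\epsilon$ for all $t\not\sqsupseteq s$ (here $\martingale^\ast(s)=1$, so the two pieces match at $t=s$). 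Then $\martingale'$ is bounded below, namely by $\min\{M,\martingale(s)+\epsilon\}$, and it is a supermartingale: for $t\sqsupseteq s$ every child of $t$ also follows $s$, so by~\ref{coherence: sublinearity} and~\ref{coherence: homog for positive lambda} (applicable since $\martingale(t\andstate)$ and $\martingale^\ast(t\andstate)$ are bounded below),
\begin{equation*}
\lupprev{t}(\martingale'(t\andstate))\leq\lupprev{t}(\martingale(t\andstate))+\epsilon\,\lupprev{t}(\martingale^\ast(t\andstate))\leq\martingale(t)+\epsilon\martingale^\ast(t)=\martingale'(t),
\end{equation*}
while for $t\not\sqsupseteq s$ the local variable $\martingale'(t\andstate)$ is constant and equal to the real number $\martingale(s)+\epsilon=\martingale'(t)$, so~\ref{coherence: const is const} gives $\lupprev{t}(\martingale'(t\andstate))=\martingale'(t)$.

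Next I would verify that $\liminf\martingale'\geq_s f$ holds on all of $\Gamma(s)$, not merely s.a.s. On $\Gamma(s)\setminus B$ we have $\martingale'(\omega^n)\geq\martingale(\omega^n)$ for every $n$ because $\epsilon\martingale^\ast\geq0$, hence $\liminf\martingale'(\omega)\geq\liminf\martingale(\omega)\geq f(\omega)$; and on $B$ we have $\martingale'(\omega^n)\geq M+\epsilon\martingale^\ast(\omega^n)\to+\infty$, so $\liminf\martingale'(\omega)=+\infty\geq f(\omega)$. Thus $\martingale'\in\setofextsupmartb$ with $\liminf\martingale'\geq_s f$, and Definition~\ref{def:upperexpectation2} yields $\upprevvovkk(f\vert s)\leq\martingale'(s)=\martingale(s)+\epsilon$. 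Letting $\epsilon\downarrow 0$ gives $\upprevvovkk(f\vert s)\leq\martingale(s)$, and taking the infimum over all admissible $\martingale$ gives $\upprevvovkk(f\vert s)\leq R$, as desired.

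I do not expect a genuine obstacle here. The only delicate points are that the modification of $\martingale$ on situations not following $s$ preserves the bounded below supermartingale property — which, as in Proposition~\ref{prop: liminf can be replaced by lim in definition}, follows from~\ref{coherence: const is const} together with the fact that $\martingale'(t\andstate)$ is constant for such $t$ — and the observation that adding the non-negative process $\epsilon\martingale^\ast$ upgrades ``$\liminf\martingale\geq_s f$ s.a.s.'' to ``$\liminf\martingale'\geq_s f$ everywhere'', which is exactly the purpose of the defining property of the test supermartingale $\martingale^\ast$.
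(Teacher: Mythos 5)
Your proof is correct and follows essentially the same route as the paper: the easy inequality by inclusion of feasible supermartingales, and the converse by adding $\epsilon$ times the $s$-test supermartingale witnessing strict nullity of the exception set, which upgrades the s.a.s.\ domination to genuine domination within $\Gamma(s)$ at the cost of $\epsilon$ in the starting capital. The only cosmetic differences are that the paper works with an arbitrary $\alpha$ exceeding the right-hand side and applies Lemma~\ref{lemma:positive:countable:linear:combination} to $\martingale_\alpha+\epsilon\martingale^\ast_\alpha$ directly (no modification outside the situations following $s$ is needed), whereas you verify the supermartingale property of your modified process by hand via \ref{coherence: const is const}--\ref{coherence: homog for positive lambda}.
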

\begin{proof}
Since every supermartingale $\martingale$ that satisfies $\liminf\martingale \geq_s f$ also satisfies $\liminf\martingale \geq_s f$ s.a.s., we clearly have that
\begin{equation*}
\upprevvovkk(f \vert s) \geq\inf \Big\{ \martingale(s) \colon  \martingale\in\setofextsupmartb \text{ and } \liminf\martingale \geq_s f \text{ s.a.s.} \Big\},
\end{equation*}
so it remains to prove the other inequality.
If the right hand side of Equation \eqref{Eq: prop: MonotoneConvergenceSAS 1} is equal to $+\infty$, then this inequality is trivially satisfied.
So consider the case where it is not.
Fix any $\alpha\in\reals{}$ such that $\alpha >\inf \big\{\martingale(s) \colon \martingale\in\setofextsupmartb \text{ and } \liminf\martingale \geq_s f \text{ s.a.s.} \big\}$ and any $\epsilon\in\posreals$.
Then there is some bounded below supermartingale $\martingale_\alpha$ such that $\liminf\martingale_\alpha \geq_s f \text{ s.a.s.}$ and
\begin{equation}\label{Eq: prop: MonotoneConvergenceSAS 2}
\martingale_\alpha(s)\leq\alpha.
\end{equation}
Since $\liminf\martingale_\alpha \geq_s f$ s.a.s., there is some $s$-test supermartingale $\martingale_\alpha^\ast$ that converges to $+\infty$ on $A \coloneqq\{ \omega\in\Gamma(s) \colon \liminf\martingale_\alpha(\omega) < f(\omega)\}$.
Consider the extended real process $\martingale_\alpha + \epsilon \martingale_\alpha^\ast$.
This process is again a bounded below supermartingale because of Lemma~\ref{lemma:positive:countable:linear:combination} [which we can apply because $\martingale_\alpha$ and $\martingale_\alpha^\ast$ are both bounded below and hence have a common lower bound].
Since $\martingale_\alpha^\ast$ converges to $+\infty$ on $A$ and because $\martingale_\alpha$ is bounded below, we have that $\liminf (\martingale_\alpha + \epsilon \martingale_\alpha^\ast)(\omega)=+\infty \geq f(\omega)$ for all $\omega\in A$.
Moreover, for all $\omega\in\Gamma(s) \setminus A$, we also have that $\liminf (\martingale_\alpha + \epsilon \martingale_\alpha^\ast)(\omega) \geq f(\omega)$, because $\liminf\martingale_\alpha(\omega) \geq f(\omega)$ and because $\epsilon \martingale_\alpha^\ast$ is non-negative.
Hence, $\liminf (\martingale_\alpha + \epsilon \martingale_\alpha^\ast) \geq_s f$ and consequently $\upprevvovkk(f \vert s) \leq (\martingale_\alpha + \epsilon \martingale_\alpha^\ast)(s)$.
It therefore follows from Equation \eqref{Eq: prop: MonotoneConvergenceSAS 2} that
\begin{align*}
\upprevvovkk(f \vert s)
&\leq (\martingale_\alpha + \epsilon \martingale_\alpha^\ast)(s)
= \martingale_\alpha(s) + \epsilon\leq\alpha + \epsilon.
\end{align*}
As this holds for any $\epsilon\in\posreals{}$, we have that $\upprevvovkk(f \vert s)\leq\alpha$, and since this is true for every $\alpha\in\reals{}$ such that $\alpha >\inf \big\{\martingale(s) \colon \martingale\in\setofextsupmartb \text{ and } \liminf\martingale \geq_s f \text{ s.a.s.} \big\}$, it follows that
\begin{align*}
\upprevvovkk(f \vert s)
\leq\inf \Big\{ \martingale(s) :  \martingale\in\setofextsupmartb \text{ and } \liminf\martingale \geq_s f \text{ s.a.s.} \Big\}. \tag*{\qedhere}
\end{align*}
\end{proof}

Clearly, the infimum in~\eqref{Eq: prop: MonotoneConvergenceSAS 1} is taken over a larger set compared to the infimum in Definition~\ref{def:upperexpectation2}.
Though the resulting game-theoretic upper expectation is not impacted by this difference, it does make sure that the infimum in~\eqref{Eq: prop: MonotoneConvergenceSAS 1} becomes attained:
\begin{proposition}\label{prop: inf becomes min in definition}
For any $f\in\setofextvariablesb$ and any $s \in \situations{}$, the infimum in Equation \eqref{Eq: prop: MonotoneConvergenceSAS 1} is attained.
\end{proposition}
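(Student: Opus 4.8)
The plan is to exhibit, for the given $f\in\setofextvariablesb$ and $s\in\situations$, an explicit supermartingale attaining the infimum in~\eqref{Eq: prop: MonotoneConvergenceSAS 1}, namely the ``conditional game-theoretic upper expectation process'' $\process$ defined by $\process(t)\coloneqq\upprevvovkk(f\vert t)$ for all $t\in\situations$. Since $f\in\setofextvariablesb$, Corollary~\ref{corollary: upprevvovk is supermartingale} tells us that $\process$ is a bounded below supermartingale, so $\process\in\setofextsupmartb$; moreover $\process(s)=\upprevvovkk(f\vert s)$, which by Proposition~\ref{prop: MonotoneConvergenceSAS} is precisely the value of the infimum in~\eqref{Eq: prop: MonotoneConvergenceSAS 1}. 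Hence it suffices to check that $\process$ is a \emph{feasible} point of that infimum, i.e. that $\liminf\process\geq_s f$ strictly almost surely within $\Gamma(s)$; once this is done, the infimum is attained at $\process$.

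This feasibility is exactly what L\'evy's zero-one law delivers. Unwinding the definitions, for every $\omega\in\Omega$ we have $\liminf\process(\omega)=\liminf_{n\to+\infty}\process(\omega^n)=\liminf_{n\to+\infty}\upprevvovkk(f\vert\omega^n)$, so the event $A$ appearing in Theorem~\ref{Theorem: Levy} is literally $\{\omega\in\Omega\colon\liminf\process(\omega)\geq f(\omega)\}$. Theorem~\ref{Theorem: Levy} asserts that this event is strictly almost sure within $\Gamma(s)$, which is precisely the statement $\liminf\process\geq_s f$ s.a.s. Combining the two observations, $\process\in\setofextsupmartb$ satisfies $\liminf\process\geq_s f$ s.a.s. and $\process(s)=\upprevvovkk(f\vert s)=\inf\{\dots\}$, so the infimum in~\eqref{Eq: prop: MonotoneConvergenceSAS 1} is attained at $\process$.

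I do not expect any genuine obstacle here: the only thing to spot is that Theorem~\ref{Theorem: Levy} is the tool converting the merely almost-sure domination requirement $\liminf\martingale\geq_s f$ s.a.s. into an exact statement, and that Corollary~\ref{corollary: upprevvovk is supermartingale} simultaneously supplies a single supermartingale realising the value $\upprevvovkk(f\vert s)$ at $s$ and carrying the correct limit behaviour. No case distinction on $\upprevvovkk(f\vert s)$ is needed: the same $\process$ works when $\upprevvovkk(f\vert s)=+\infty$, with $\process(s)=+\infty$. The boundedness below of $\process$ is already part of Corollary~\ref{corollary: upprevvovk is supermartingale} (it follows from \ref{vovk coherence 5} together with $f$ being bounded below), so no extra verification is required.
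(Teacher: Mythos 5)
Your proposal is correct and follows essentially the same route as the paper: both take the process $\process(t)\coloneqq\upprevvovkk(f\vert t)$, invoke Corollary~\ref{corollary: upprevvovk is supermartingale} to get a bounded below supermartingale, and use L\'evy's zero-one law (Theorem~\ref{Theorem: Levy}) to establish $\liminf\process\geq_s f$ s.a.s., so that the infimum in~\eqref{Eq: prop: MonotoneConvergenceSAS 1} is attained at $\process$.
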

\begin{proof}
Consider any $f\in\setofextvariablesb$ and any $s \in \situations{}$.
Let $\process$ be the extended real process defined by $\process(t) \coloneqq\upprevvovkk(f \vert t)$ for all $t\in\situations$.
Then $\process$ is a bounded below supermartingale because of Corollary~\ref{corollary: upprevvovk is supermartingale}.
Moreover, because of Theorem \ref{Theorem: Levy}, we have that $\liminf\process \geq_s f$ strictly almost surely.
Since $\process(s)=\upprevvovkk(f \vert s)$, this concludes the proof.
\end{proof}

\section{Continuity of $\upprevvovkk$ with respect to monotone sequences}\label{Sect: Continuity}

We now turn to the most important subject in this paper: continuity properties of $\upprevvovkk$.
Apart from their intrinsic theoretical value, these properties also have great practical relevance, in the sense that they provide possible ways to calculate (upper) expectations that would otherwise be difficult or even impossible to calculate numerically.
For example, suppose that we have some variable $f\in\setofextvariables{}$ and some situation $s\in\situations{}$ for which calculating $\upprevvovkk{}(f\vert s)$ directly is not feasible.
If we can find a sequence of simpler functions $f_n$ that converges to $f$ in such a way that $\upprevvovkk{}$ is continuous with respect to this convergence, then we can use $\upprevvovkk{}(f_n \vert s)$ to approximate $\upprevvovkk{}(f\vert s)$ provided that $n$ is large enough.
If the sequence of functions $f_n$ is moreover simple enough such that all individual $\upprevvovkk{}(f_n \vert s)$ can be calculated directly, then we obtain a practical method for calculating $\upprevvovkk{}(f \vert s)$.

We start by establishing results similar to those presented in an earlier conference paper \cite{TJoens2018Continuity}, where we used a version of $\upprevvovkk{}$ with real supermartingales instead of extended real ones.
These results mainly concern continuity with respect to monotone sequences that are bounded below.
The first one shows that, similar to the local models $\lupprev{s}$, the global upper expectation $\upprevvovkk{}$ also satisfies continuity with respect to non-decreasing sequences that are bounded below.
The idea behind this result goes back to \cite[Theorem 6.6]{ShaferVovk2014ITIP}, but an updated version can now also be found in \cite[Proposition~8.3]{Vovk2019finance}.
Once more, the setting for which \cite[Proposition~8.3]{Vovk2019finance} is stated slightly differs from ours; the authors do not necessarily consider a finite state space, and we do not impose the additional axioms \ref{coherence: sublinearity extended}--\ref{coherence: monotonicity extended} on the local models.
Moreover, they only give an explicit proof for the case that there is a single, fixed local model $\lupprev{}$ in all situations.
For these reasons, we provide an independent proof here.



\begin{theorem}\label{theorem: upward convergence game}
Consider any $s\in\situations$ and any non-decreasing sequence $\{f_n\}_{n\in\natz}$ in $\setofextvariablesb$ that converges pointwise to a variable $f\in\setofextvariablesb$.
Then we have that
$\upprevvovkk(f \vert s)=\lim_{n\to+\infty} \upprevvovkk(f_n \vert s)$.
\end{theorem}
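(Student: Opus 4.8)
The plan is to establish the two inequalities separately. The inequality $\upprevvovkk(f\vert s)\geq\lim_{n\to+\infty}\upprevvovkk(f_n\vert s)$ is immediate: from $f_n\leq f$ and \ref{vovk coherence 4} we get $\upprevvovkk(f_n\vert s)\leq\upprevvovkk(f_{n+1}\vert s)\leq\upprevvovkk(f\vert s)$, so the limit exists in $\extreals$ (it is bounded below by $\inf f_0>-\infty$ thanks to \ref{vovk coherence 5}) and does not exceed $\upprevvovkk(f\vert s)$. After subtracting a common lower bound of $\{f_n\}_{n\in\natz}$ and invoking \ref{vovk coherence 6}, I would assume without loss of generality that all $f_n$, and hence $f$, are non-negative, and I would assume $h\coloneqq\lim_{n\to+\infty}\upprevvovkk(f_n\vert s)<+\infty$, since otherwise $\upprevvovkk(f\vert s)\geq h=+\infty$ already finishes the proof.

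The heart of the argument is to build a single supermartingale witnessing $\upprevvovkk(f\vert s)\leq h$. For each $n\in\natz$, let $\martingale_n$ be the process $t\mapsto\upprevvovkk(f_n\vert t)$; by Corollary~\ref{corollary: upprevvovk is supermartingale} each $\martingale_n\in\setofextsupmartb{}$, and by \ref{vovk coherence 5} it is non-negative, while by \ref{vovk coherence 4} the sequence $\{\martingale_n\}_{n\in\natz}$ is non-decreasing. I would then set $\martingale\coloneqq\lim_{n\to+\infty}\martingale_n$ (pointwise on $\situations$) and claim $\martingale\in\setofextsupmartb{}$: it is clearly bounded below (by $0$), and for every $t\in\situations$ the local variable $\martingale(t\andstate)$ is the non-decreasing pointwise limit of the $\martingale_n(t\andstate)$ in $\setofgenextvariablesb(\statespace)$, so the local upward continuity property \ref{coherence: continuity wrt bounded below functions} (valid for the local models by Proposition~\ref{prop: coherence properties}) gives $\lupprev{t}(\martingale(t\andstate))=\lim_{n\to+\infty}\lupprev{t}(\martingale_n(t\andstate))\leq\lim_{n\to+\infty}\martingale_n(t)=\martingale(t)$, using that each $\martingale_n$ is a supermartingale. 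Note that $\martingale(s)=\lim_{n\to+\infty}\upprevvovkk(f_n\vert s)=h$.

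It then remains to check that $\liminf\martingale\geq_s f$ holds strictly almost surely within $\Gamma(s)$, since Proposition~\ref{prop: MonotoneConvergenceSAS} will then immediately give $\upprevvovkk(f\vert s)\leq\martingale(s)=h$, completing the proof. For this I would apply L\'evy's zero-one law (Theorem~\ref{Theorem: Levy}) to each $f_n$: the event $A_n\coloneqq\bigl\{\omega\in\Omega:\liminf_{m\to+\infty}\upprevvovkk(f_n\vert\omega^m)\geq f_n(\omega)\bigr\}$ is strictly almost sure within $\Gamma(s)$, and since $\martingale(\omega^m)\geq\upprevvovkk(f_n\vert\omega^m)$ for every $m$, one has $\liminf\martingale(\omega)\geq f_n(\omega)$ for all $\omega\in A_n$. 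I would then note the (standard) auxiliary fact that a countable intersection of events that are each strictly almost sure within $\Gamma(s)$ is again strictly almost sure within $\Gamma(s)$ — proved by taking $\sum_{n\in\natz}2^{-(n+1)}\martingale_n^{\ast}$ of the associated $s$-test supermartingales $\martingale_n^{\ast}$, which is an $s$-test supermartingale by Lemma~\ref{lemma:positive:countable:linear:combination} and diverges to $+\infty$ wherever any $\martingale_n^{\ast}$ does — so that $A\coloneqq\bigcap_{n\in\natz}A_n$ is strictly almost sure within $\Gamma(s)$; on $A$ we have $\liminf\martingale(\omega)\geq\sup_{n\in\natz}f_n(\omega)=f(\omega)$, and since a superset of a strictly almost sure event is strictly almost sure, $\liminf\martingale\geq_s f$ s.a.s. follows. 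The main obstacle is exactly the construction of $\martingale$ and the verification that it is a supermartingale: the naive approach of summing near-optimal supermartingales for the increments $f_n-f_{n-1}$ fails, because subadditivity gives no control on $\sum_n\upprevvovkk(f_n-f_{n-1}\vert s)$; the key observation is that the canonical supermartingales $\upprevvovkk(f_n\vert\cdot)$ already form a monotone sequence whose pointwise limit is again a supermartingale, precisely because of the \emph{local} upward continuity axiom.
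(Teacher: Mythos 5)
Your proof is correct and follows essentially the same route as the paper's: you construct the limit process $\martingale(t)=\lim_n\upprevvovkk(f_n\vert t)$, verify it is a bounded below supermartingale via Corollary~\ref{corollary: upprevvovk is supermartingale} and the local continuity property \ref{coherence: continuity wrt bounded below functions}, establish $\liminf\martingale\geq_s f$ strictly almost surely by combining the L\'evy-law test supermartingales through Lemma~\ref{lemma:positive:countable:linear:combination}, and conclude with Proposition~\ref{prop: MonotoneConvergenceSAS}. The only difference is cosmetic: you intersect the events $A_n$ using $\martingale\geq\martingale_n$, whereas the paper reaches the same conclusion via a $\sup$/$\liminf$ interchange.
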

\begin{proof}
As $f_0\in\setofextvariablesb$ is bounded below and the sequence $\{f_n\}_{n\in\natz}$ is non-decreasing, there is an $M\in\reals{}$ such that $f_n \geq M$ for all $n\in\natz$ and therefore, $f$ is also bounded below by $M$.
Hence, since $\upprevvovkk$ is constant additive [\ref{vovk coherence 6}], we can assume without loss of generality that $f$ and all $f_n$ are non-negative.

That $\lim_{n\to+\infty} \smash{\upprevvovkk(f_n \vert s)}$ exists, follows from the non-decreasing character of $\{f_n\}_{n\in\natz}$ and \ref{vovk coherence 4}.
Moreover, we have that $\smash{\upprevvovkk(f \vert s)} \geq \lim_{n\to+\infty} \smash{\upprevvovkk(f_n \vert s)}$ because $f \geq f_n$ [since $\{f_n\}_{n\in\natz}$ is non-decreasing] and because $\upprevvovkk$ satisfies~\ref{vovk coherence 4}.
It remains to prove the converse inequality.

For any $n\in\natz$, consider the extended real process $S_n$, defined by $S_n(t) \coloneqq\upprevvovkk(f_n \vert t)$ for all $t\in\situations$ and the extended real process $S$ defined by the limit $S(t) \coloneqq\lim_{n\to+\infty}S_n(t)$ for all $t\in\situations$.
This limit exists because $\{S_n(t)\}_{n\in\natz}$ is a non-decreasing sequence for all $t\in\situations$, due to the monotonicity [\ref{vovk coherence 4}] of $\smash{\upprevvovkk}$.
As $f_n$ is non-negative for all $n\in\natz$, $S_n$ is non-negative for all $n\in\natz$ because of \ref{vovk coherence 5} and therefore $S$ is also non-negative.
As a result, $S$ and all $S_n$ are non-negative extended real processes.

It now suffices to prove that $S$ is a bounded below supermartingale such that $\liminf S \geq_s f$ s.a.s. because it will then follow from Proposition~\ref{prop: MonotoneConvergenceSAS} that
\begin{align*}
	\upprevvovkk(f \vert s)
	&=\inf \Big\{ \martingale(s) :  \martingale\in\setofextsupmartb \text{ and } \liminf\martingale \geq_s f \text{ s.a.s.} \Big\} \\
	&\leq S(s)=\lim_{n\to+\infty}\upprevvovkk(f_n \vert s).
\end{align*}
This is what we now set out to do.

We first show that $S$ is a supermartingale; that it is bounded below follows trivially from its non-negativity.
For all situations $t\in\situations$, we already know that
$\{S_n(t\andstate)\}_{n\in\natz}$ is a non-decreasing sequence that converges to $S(t\andstate)$.
Since $S_n$ and $S$ are non-negative, we also have that $S_n(t\andstate), S(t\andstate)\in\smash{\setofgenextvariablesb(\statespace)}$.
Then, due to \ref{coherence: continuity wrt bounded below functions}, we have that
\begin{align}\label{Eq: upward monotone game}
\lupprev{t}(S(t\andstate))=\lim_{n\to+\infty} \lupprev{t}(S_n(t\andstate)) \text{ for all } t\in\situations.
\end{align}
$S_n$ is a supermartingale for all $n\in\natz$ because of Corollary~\ref{corollary: upprevvovk is supermartingale}, so it follows that $\lupprev{t}(S_n(t\andstate)) \leq S_n(t)$ for all $n\in\natz$ and all $t\in\situations$.
This implies, together with Equation \eqref{Eq: upward monotone game}, that
\begin{align*}
 \lupprev{t}(S(t\andstate))\leq\lim_{n\to+\infty} S_n(t)=S(t) \text{ for all } t\in\situations.
\end{align*}
Hence, $S$ is a supermartingale.

To prove that $\liminf S \geq_s f$ s.a.s., we will use L\'evy's zero-one law.
It follows from Theorem~\ref{Theorem: Levy} that, for all $n\in\natz$, there is an $s$-test supermartingale $\martingale_n$ that converges to $+\infty$ on the event
\begin{align*} \label{eq: proof upward convergence eq1}
	A_n \coloneqq\Big\{ \omega\in\Gamma(s) \colon \liminf_{m \to +\infty} \upprevvovkk(f_n \vert \omega^m) &< f_n(\omega) \Big\}.
\end{align*}
Now, consider the extended real process $\martingale$, defined by
\begin{equation*}
	\martingale(t) \coloneqq\sum_{n\in\natz} \lambda_n \martingale_n(t) \text{ for all } t\in\situations,
\end{equation*}
where the coefficients $\lambda_n > 0$ sum to $1$.
Then it follows from Lemma~\ref{lemma:positive:countable:linear:combination} that $\martingale$ is again a non-negative supermartingale.
Moreover, it is clear that $\martingale(s)=1$ and hence, $\martingale$ is an $s$-test supermartingale.


We show that $\martingale$ converges to $+\infty$ on all paths $\omega\in\Gamma(s)$ such that $\liminf_{m \to +\infty} S(\omega^m) < f (\omega)$.
Clearly, $\martingale$ converges to $+\infty$ on $\cup_{n\in\natz} A_n \eqqcolon A$.
Consider now any path $\omega\in\Gamma(s)$ for which $\liminf_{m \to +\infty} S(\omega^m) < f (\omega)$.
As explained before, $S_n(t)$ is non-decreasing in $n$ for all $t\in\situations$, so we have that $\sup_{n\in\natz} S_n(\omega^m)=\lim_{n\to+\infty} S_n(\omega^m)=S(\omega^m)$ for all $m\in\natz$. Since $\liminf_{m \to +\infty} S(\omega^m) < f (\omega)$, this implies that
\begin{align*}
 \liminf_{m \to +\infty} \sup_{n\in\natz} S_n(\omega^m)
 < \lim_{n\to+\infty} f_n(\omega).
\end{align*}
Since $\sup_{n\in\natz} \liminf_{m \to +\infty} S_n(\omega^m)\leq\liminf_{m \to +\infty} \sup_{n\in\natz} S_n(\omega^m)$ [because we obviously have that $S_n(\omega^m)\leq \sup_{n\in\natz} S_n(\omega^m)$ for all $n,m \in\natz$], this implies that
\begin{align}\label{eq: proof upward convergence eq2}
	\sup_{n\in\natz} \liminf_{m \to +\infty} \upprevvovkk(f_n \vert \omega^m)
	= \sup_{n\in\natz} \liminf_{m \to +\infty} S_n(\omega^m)
	\leq \liminf_{m \to +\infty} \sup_{n\in\natz} S_n(\omega^m)
	&< \lim_{n\to+\infty} f_n(\omega).
\end{align}
Hence, there is some $n_\omega \in \natz$ such that
\begin{equation*}
\sup_{n\in\natz} \liminf_{m \to +\infty} \upprevvovkk(f_n \vert \omega^m) <  f_{n_\omega}(\omega),
\end{equation*}
and therefore, we see that also
\begin{equation*}
\liminf_{m \to +\infty} \upprevvovkk(f_{n_\omega} \vert \omega^m) <  f_{n_\omega}(\omega).
\end{equation*}
So $\omega\in A_{n_\omega} \subseteq A$ and, as a consequence, $\martingale$ converges to $+\infty$ on $\omega$.
Hence, the $s$-test supermartingale $\martingale$ converges to $+\infty$ on all paths $\omega\in\Gamma(s)$ such that $\liminf_{m \to +\infty} S(\omega^m) < f (\omega)$, and therefore $\liminf S \geq_s f$ strictly almost surely.
\end{proof}

The fact that $\upprevvovkk{}$ satisfies continuity with respect to non-decreasing sequences, together with the properties in Proposition~\ref{Prop: Coherence of Global Game-theoretic upper expectation}, immediately implies that $\upprevvovkk{}$ is an upper expectation.

\begin{corollary}\label{corollary: Vovk is an upper expectation}
For any $s\in\situations$, the map $\upprevvovkk(\cdot \vert s) \colon \setofextvariables \to \extreals$ is an upper expectation on $\setofextvariables$.
\end{corollary}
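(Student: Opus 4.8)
The plan is simply to check the five defining axioms \ref{coherence: const is const}--\ref{coherence: continuity wrt non-negative functions} of an upper expectation on the domain $\mathscr{D}=\setofextvariables=\setofgenextvariables(\samplespace)$, with $s\in\situations$ fixed throughout; each of them turns out to be a near-immediate consequence of results already established, so this is essentially a bookkeeping exercise rather than anything deep.

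For \ref{coherence: const is const}, I would combine \ref{vovk coherence 1} and \ref{vovk coherence 5}: for a constant $c\in\reals$, \ref{vovk coherence 1} gives $\upprevvovkk(c\vert s)\leq\sup_{\omega\in\Gamma(s)}c=c$, while \ref{vovk coherence 5} gives $c=\inf_{\omega\in\Gamma(s)}c\leq\upprevvovkk(c\vert s)$, so $\upprevvovkk(c\vert s)=c$. Axiom \ref{coherence: sublinearity} is exactly \ref{vovk coherence 2}, restricted from $\setofextvariables$ to its subset $\setofextvariablesb$; axiom \ref{coherence: homog for positive lambda} is the $\lambda\in\posreals$ instance of \ref{vovk coherence 3}, again restricted to $\setofextvariablesb$; and axiom \ref{coherence: monotonicity} follows from \ref{vovk coherence 4}, since for $f,g\in\setofextvariablesb$ with $f\leq g$ pointwise on $\samplespace$ we in particular have $f\leq_s g$, hence $\upprevvovkk(f\vert s)\leq\upprevvovkk(g\vert s)$.

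It remains to verify the continuity axiom \ref{coherence: continuity wrt non-negative functions}. Any non-decreasing sequence $\{f_n\}_{n\in\natz}$ of non-negative variables in $\setofgenextvariablesb(\samplespace)=\setofextvariablesb$ is in particular a non-decreasing sequence in $\setofextvariablesb$ whose pointwise limit $f\coloneqq\lim_{n\to+\infty}f_n$ is again non-negative and hence lies in $\setofextvariablesb$; Theorem~\ref{theorem: upward convergence game} therefore applies and yields $\upprevvovkk(f\vert s)=\lim_{n\to+\infty}\upprevvovkk(f_n\vert s)$, which is precisely \ref{coherence: continuity wrt non-negative functions}. Since $\setofgenextvariablesb(\samplespace)\subseteq\setofextvariables\subseteq\setofgenextvariables(\samplespace)$, this shows that $\upprevvovkk(\cdot\vert s)$ is an upper expectation on $\setofextvariables$.

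I do not expect any genuine obstacle here, since everything has already been proved in Proposition~\ref{Prop: Coherence of Global Game-theoretic upper expectation} and Theorem~\ref{theorem: upward convergence game}. The only points to be careful about are matching the quantifier ranges---axioms \ref{coherence: sublinearity}--\ref{coherence: monotonicity} in Definition~\ref{def: upper expectation} are required only on the bounded-below subdomain, whereas \ref{vovk coherence 2}--\ref{vovk coherence 4} hold on all of $\setofextvariables$, so passing to the subdomain is harmless---and noting that pointwise $\leq$ on $\samplespace$ implies $\leq_s$, so that \ref{vovk coherence 4} does deliver \ref{coherence: monotonicity}.
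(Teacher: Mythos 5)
Your proof is correct and follows essentially the same route as the paper's: the paper likewise reads off \ref{coherence: const is const} from \ref{vovk coherence 5}, \ref{coherence: sublinearity}--\ref{coherence: monotonicity} from \ref{vovk coherence 2}--\ref{vovk coherence 4}, and \ref{coherence: continuity wrt non-negative functions} from Theorem~\ref{theorem: upward convergence game}. Your extra care about the quantifier ranges (axioms only required on the bounded-below subdomain, and pointwise $\leq$ implying $\leq_s$) is exactly the right bookkeeping, just spelled out more explicitly than in the paper.
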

\begin{proof}
 \ref{coherence: const is const} follows from \ref{vovk coherence 5}.
Properties~\ref{coherence: sublinearity}, \ref{coherence: homog for positive lambda} and \ref{coherence: monotonicity} respectively follow from \ref{vovk coherence 2}, \ref{vovk coherence 3} and \ref{vovk coherence 4}.
Finally, \ref{coherence: continuity wrt non-negative functions} follows from Theorem~\ref{theorem: upward convergence game}.
\end{proof}

Another immediate consequence of Theorem~\ref{theorem: upward convergence game} is that $\upprevvovkk{}$ satisfies a version of Fatou's lemma:

\begin{corollary}[Fatou's Lemma]\label{corollary: Fatou general}
For any situation $s\in\situations$ and any sequence $\{f_n\}_{n\in\natz}$ in $\setofextvariablesb$ that is uniformly bounded below, we have that
$\upprevvovkk(f \vert s)\leq\liminf_{n\to+\infty} \upprevvovkk(f_n \vert s)$ where $f \coloneqq\liminf_{n\to+\infty} f_n$.
\end{corollary}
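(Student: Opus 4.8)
The plan is to reduce Fatou's lemma to Theorem~\ref{theorem: upward convergence game} by the classical trick of passing to running infima. First I would let $M\in\reals{}$ be a uniform lower bound for the sequence, so that $f_n\geq M$ for all $n\in\natz$, and define for each $n\in\natz$ the variable $g_n\coloneqq\inf_{k\geq n} f_k$. Since $f_k\geq M$ for all $k\geq n$, each $g_n$ is bounded below by $M$ and hence lies in $\setofextvariablesb$; moreover $\{g_n\}_{n\in\natz}$ is non-decreasing and converges pointwise to $\sup_{n\in\natz}\inf_{k\geq n} f_k=\liminf_{n\to+\infty} f_n=f$, which is likewise bounded below by $M$ and therefore also belongs to $\setofextvariablesb$. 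This is precisely the bookkeeping that makes Theorem~\ref{theorem: upward convergence game} applicable, and it is the sole place where the uniform-lower-bound hypothesis is used.

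Next I would apply Theorem~\ref{theorem: upward convergence game} to the non-decreasing sequence $\{g_n\}_{n\in\natz}$ and the situation $s$ to conclude that $\upprevvovkk(f\vert s)=\lim_{n\to+\infty}\upprevvovkk(g_n\vert s)$, the limit existing by the monotonicity property \ref{vovk coherence 4}. Then I would estimate each term: since $g_n=\inf_{k\geq n} f_k\leq f_k$ for every $k\geq n$, monotonicity \ref{vovk coherence 4} gives $\upprevvovkk(g_n\vert s)\leq\upprevvovkk(f_k\vert s)$ for all $k\geq n$, and taking the infimum over such $k$ yields $\upprevvovkk(g_n\vert s)\leq\inf_{k\geq n}\upprevvovkk(f_k\vert s)$ for all $n\in\natz$.

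Finally, letting $n\to+\infty$ in this last inequality and combining it with the identity from Theorem~\ref{theorem: upward convergence game} gives $\upprevvovkk(f\vert s)=\lim_{n\to+\infty}\upprevvovkk(g_n\vert s)\leq\lim_{n\to+\infty}\inf_{k\geq n}\upprevvovkk(f_k\vert s)=\liminf_{n\to+\infty}\upprevvovkk(f_n\vert s)$, which is the desired inequality. I do not expect any genuine obstacle here: the argument is the standard measure-theoretic proof of Fatou's lemma transported verbatim, with Lebesgue's monotone convergence theorem replaced by Theorem~\ref{theorem: upward convergence game} and monotonicity of the integral replaced by \ref{vovk coherence 4}; the only care needed is to verify that all the variables involved ($g_n$ and $f$) remain in $\setofextvariablesb$ so that Theorem~\ref{theorem: upward convergence game} may legitimately be invoked.
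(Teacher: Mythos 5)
Your proposal is correct and follows essentially the same route as the paper's proof: pass to the running infima $g_n=\inf_{k\geq n}f_k$, apply Theorem~\ref{theorem: upward convergence game} to this non-decreasing bounded-below sequence, and then use monotonicity [\ref{vovk coherence 4}] via $g_n\leq f_n$ to bound the limit by $\liminf_{n\to+\infty}\upprevvovkk(f_n\vert s)$. The only cosmetic difference is that you take the infimum of $\upprevvovkk(f_k\vert s)$ over all $k\geq n$ before passing to the limit, whereas the paper compares $g_k$ with $f_k$ directly and takes a limit inferior; both are the standard Fatou argument.
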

\begin{proof}
Consider any $s\in\situations$ and any sequence $\{f_n\}_{n\in\natz}$ in $\setofextvariablesb$ that is uniformly bounded below.
For all $k\in\natz$, let $g_k$ be the global variable defined by $g_k(\omega) \coloneqq\inf_{n \geq k}{f_n}(\omega)$ for all $\omega\in\Omega$.
Then $\lim_{k \to +\infty}{g_k} = \liminf_{n\to+\infty} f_n \eqqcolon f$.
Furthermore, $\{g_k\}_{k\in\natz}$ is clearly non-decreasing and it is a sequence in $\setofextvariablesb$ because $\{f_n\}_{n\in\natz}$ is uniformly bounded below.
Hence, we can use Theorem \ref{theorem: upward convergence game} to find that
\begin{equation*}
\upprevvovkk(f \vert s)=\lim_{k \to +\infty} \upprevvovkk(g_k \vert s)=\liminf_{k \to +\infty} \upprevvovkk(g_k \vert s) \leq  \liminf_{k \to +\infty} \upprevvovkk(f_k \vert s),
\end{equation*}
where the inequality holds because, for all $k\in\natz$, $g_k \leq f_k$ and therefore, because of \ref{vovk coherence 4}, also $\upprevvovkk(g_k \vert s)\leq\upprevvovkk(f_k \vert s)$.
\end{proof}

The following result is entirely new and states that $\upprevvovkk{}$ is continuous with respect to non-increasing sequences of lower cuts.
So $\upprevvovkk{}(\cdot \vert s)$ satisfies \ref{ext coherence lower cuts continuity} on $\setofextvariables{}$ for any $s \in \situations{}$.

\begin{proposition}\label{Proposition: Continuity w.r.t. lower cuts}
For any $f\in\setofextvariables$ and any $s\in\situations$, we have that \/
$\lim_{\alpha \to \, -\infty} \upprevvovkk(f^{\vee \alpha} \vert s)=\upprevvovkk(f \vert s)$.
\end{proposition}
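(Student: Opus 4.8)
The plan is to establish the two inequalities $\upprevvovkk(f\vert s)\leq\lim_{\alpha\to-\infty}\upprevvovkk(f^{\vee\alpha}\vert s)$ and $\lim_{\alpha\to-\infty}\upprevvovkk(f^{\vee\alpha}\vert s)\leq\upprevvovkk(f\vert s)$ separately. For the first one I would simply observe that $f^{\vee\alpha}$ is non-decreasing in $\alpha\in\reals{}$, so by monotonicity [\ref{vovk coherence 4}] the quantity $\upprevvovkk(f^{\vee\alpha}\vert s)$ is non-decreasing in $\alpha$; hence $\lim_{\alpha\to-\infty}\upprevvovkk(f^{\vee\alpha}\vert s)$ exists in $\extreals{}$ and equals $\inf_{\alpha\in\reals{}}\upprevvovkk(f^{\vee\alpha}\vert s)$. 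Moreover $f^{\vee\alpha}(\omega)=\max\{f(\omega),\alpha\}\geq f(\omega)$ for every $\omega\in\samplespace{}$, so $f\leq_s f^{\vee\alpha}$, and a further application of [\ref{vovk coherence 4}] gives $\upprevvovkk(f\vert s)\leq\upprevvovkk(f^{\vee\alpha}\vert s)$ for all $\alpha\in\reals{}$, whence $\upprevvovkk(f\vert s)\leq\lim_{\alpha\to-\infty}\upprevvovkk(f^{\vee\alpha}\vert s)$. This is the easy direction.

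For the reverse inequality, the key point I would exploit is that the restriction to \emph{bounded below} supermartingales in Definition~\ref{def:upperexpectation2} renders the lower cut harmless: a supermartingale bounded below by $M$ cannot distinguish $f$ from $f^{\vee M}$ as far as its $\liminf$ is concerned. Concretely, consider any $\martingale\in\setofextsupmartb{}$ with $\liminf\martingale\geq_s f$, and let $M\in\reals{}$ be a real lower bound of $\martingale$. Since $\martingale(t)\geq M$ for all $t\in\situations{}$, we have $\liminf\martingale(\omega)\geq M$ for every $\omega\in\samplespace{}$; combining this with $\liminf\martingale\geq_s f$ yields $\liminf\martingale(\omega)\geq\max\{f(\omega),M\}=f^{\vee M}(\omega)$ for all $\omega\in\Gamma(s)$, i.e. $\liminf\martingale\geq_s f^{\vee M}$. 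By Definition~\ref{def:upperexpectation2} this gives $\upprevvovkk(f^{\vee M}\vert s)\leq\martingale(s)$, and therefore, using the first paragraph, $\lim_{\alpha\to-\infty}\upprevvovkk(f^{\vee\alpha}\vert s)=\inf_{\alpha\in\reals{}}\upprevvovkk(f^{\vee\alpha}\vert s)\leq\upprevvovkk(f^{\vee M}\vert s)\leq\martingale(s)$. Since this holds for every $\martingale\in\setofextsupmartb{}$ with $\liminf\martingale\geq_s f$, taking the infimum over all such $\martingale$ and invoking Definition~\ref{def:upperexpectation2} once more (with the convention $\inf\emptyset=+\infty$, which takes care of the case $\upprevvovkk(f\vert s)=+\infty$) yields $\lim_{\alpha\to-\infty}\upprevvovkk(f^{\vee\alpha}\vert s)\leq\upprevvovkk(f\vert s)$, and combined with the easy direction this proves the claim.

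I do not expect any genuine obstacle here: unlike the neighbouring continuity results in this section, this one needs neither L\'evy's zero--one law nor Doob's convergence theorem, only the coherence properties of $\upprevvovkk{}$ and the boundedness-below built into Definition~\ref{def:upperexpectation2}. The only step requiring a little care is the interchange of the limit in $\alpha$ with the infimum over supermartingales, but this is handled cleanly by noting that $\lim_{\alpha\to-\infty}\upprevvovkk(f^{\vee\alpha}\vert s)$ is a monotone limit and hence equals $\inf_{\alpha\in\reals{}}\upprevvovkk(f^{\vee\alpha}\vert s)$, so that it suffices to bound the single term $\upprevvovkk(f^{\vee M}\vert s)$ rather than to control the net uniformly in $\alpha$.
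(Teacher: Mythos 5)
Your proof is correct and follows essentially the same route as the paper's: the easy direction via monotonicity [\ref{vovk coherence 4}], and the converse by noting that any bounded below supermartingale witnessing $\liminf\martingale\geq_s f$ automatically satisfies $\liminf\martingale\geq_s f^{\vee\alpha}$ once $\alpha$ is below its lower bound. The only difference is presentational: the paper fixes a real $c>\upprevvovkk(f\vert s)$ and a near-optimal supermartingale, whereas you bound by $\martingale(s)$ for an arbitrary admissible supermartingale and take the infimum at the end, which is an equivalent way to handle the same estimate (including the $+\infty$ case).
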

\begin{proof}
$\upprevvovkk(f^{\vee \alpha} \vert s)$ is non-decreasing in $\alpha$ because $f^{\vee \alpha}$ is non-decreasing in $\alpha$ and because $\upprevvovkk$ is monotone [\ref{vovk coherence 4}], and therefore $\lim_{\alpha \to \, -\infty} \upprevvovkk(f^{\vee \alpha} \vert s)$ exists.
Moreover, $f^{\vee \alpha} \geq f$ for all $\alpha\in\reals{}$, implying, by the monotonicity [\ref{vovk coherence 4}] of $\upprevvovkk$, that $\lim_{\alpha \to \, -\infty} \upprevvovkk(f^{\vee \alpha} \vert s) \geq \upprevvovkk(f \vert s)$.
It therefore only remains to prove the converse inequality.

If $\upprevvovkk(f \vert s)=+\infty$, then $\lim_{\alpha \to \, -\infty} \upprevvovkk(f^{\vee \alpha} \vert s)\leq\upprevvovkk(f \vert s)$ holds trivially.
If $\upprevvovkk(f \vert s) < +\infty$, fix any real $c > \upprevvovkk(f \vert s)$.
Then it follows from the definition of $\upprevvovkk(f \vert s)$ that there is some supermartingale $\martingale\in\setofextsupmartb$ such that $\martingale(s) \leq c$ and $\liminf\martingale \geq_s f$.
Since $\martingale$ is bounded below, it immediately follows that there is some $B \in\reals{}$ such that $\liminf\martingale \geq \alpha$ for all $\alpha\leq B$.
For such an $\alpha\leq B$, we have that $\liminf\martingale \geq_s f^{\vee \alpha}$, which by Definition~\ref{def:upperexpectation2} implies that $\upprevvovkk(f^{\vee \alpha} \vert s)\leq\martingale(s) \leq c$.
This holds for all $\alpha\leq B$, so we infer that $\lim_{\alpha \to \, -\infty} \upprevvovkk(f^{\vee \alpha} \vert s) \leq c$, and since this holds for any $c > \upprevvovkk(f \vert s)$, we conclude that indeed $\lim_{\alpha \to \, - \infty} \upprevvovkk(f^{\vee \alpha} \vert s)\leq\upprevvovkk(f \vert s)$.
\end{proof}

Proposition~\ref{Proposition: Continuity w.r.t. lower cuts} shows that $\upprevvovkk{}$ on $\setofextvariables{} \times \situations{}$ is uniquely determined by its values on $\setofextvariablesb{} \times \situations{}$.
Moreover, by Theorem~\ref{theorem: upward convergence game}, its values on $\setofextvariablesb{} \times \situations{}$ are on their turn uniquely determined by its values on $\setofgambles{} \times \situations{}$ (for any $f\in \setofextvariablesb{}$, there obviously is at least one sequence in $\setofgambles{}$ that converges non-decreasingly to $f$).
Together, these observations imply that $\upprevvovkk{}$ is uniquely determined by its values on the gambles in $\setofextvariables{}$.

Proposition~\ref{Proposition: Continuity w.r.t. lower cuts} also immediately establishes our claim from Section~\ref{Sect: Upper Expectations}, that the (global) game-theoretic upper expectation is only fully compatible with the local models if the local models additionally satisfy \ref{ext coherence lower cuts continuity}:

\begin{corollary}[Compatibility with local models]\label{corollary: complete compatibility}
For any imprecise probabilities tree \/ $\lupprev{}$, we have that  \/
		$\upprevvovkk (f \vert \sit)=\lupprev{\sit}(f(\sit \cdot))$
for all $\sit\in\situations$ and all \/ $(n+1)$-measurable variables $f\in\setofextvariables{}$, if and only if the local upper expectations \/ $\lupprev{s}$ all satisfy \ref{ext coherence lower cuts continuity} on $\setofgenextvariables{}(\statespace{})$.
\end{corollary}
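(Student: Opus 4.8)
The plan is to derive this equivalence directly from two results already established: Proposition~\ref{prop: global compatible with local}, which gives the compatibility identity $\upprevvovkk(f\vert\sit)=\lupprev{\sit}(f(\sit\cdot))$ for \emph{bounded below} $(n+1)$-measurable variables, and Proposition~\ref{Proposition: Continuity w.r.t. lower cuts}, which is precisely \ref{ext coherence lower cuts continuity} for the global operator $\upprevvovkk(\cdot\vert\sit)$. The bridge between them is the elementary observation that for an $(n+1)$-measurable $f\in\setofextvariables$ and any real $\alpha$, the truncation $f^{\vee\alpha}$ is again $(n+1)$-measurable and bounded below, and that truncation commutes with restriction to the local level, i.e. $(f^{\vee\alpha})(\sit\cdot)=(f(\sit\cdot))^{\vee\alpha}$, which is immediate from the definitions of $f^{\vee\alpha}$ and $f(\sit\cdot)$.

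For the \emph{if} direction, I would assume every $\lupprev{s}$ satisfies \ref{ext coherence lower cuts continuity}, fix $\sit=x_{1:n}\in\situations$ and an arbitrary $(n+1)$-measurable $f\in\setofextvariables$, and apply Proposition~\ref{prop: global compatible with local} to each $f^{\vee\alpha}$ to get $\upprevvovkk(f^{\vee\alpha}\vert\sit)=\lupprev{\sit}((f^{\vee\alpha})(\sit\cdot))=\lupprev{\sit}((f(\sit\cdot))^{\vee\alpha})$ for every real $\alpha$. Letting $\alpha\to-\infty$, the left-hand side converges to $\upprevvovkk(f\vert\sit)$ by Proposition~\ref{Proposition: Continuity w.r.t. lower cuts}, while the right-hand side converges to $\lupprev{\sit}(f(\sit\cdot))$ by \ref{ext coherence lower cuts continuity} applied to the local variable $f(\sit\cdot)\in\setofgenextvariables(\statespace)$; hence the two are equal. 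For the \emph{only if} direction, I would assume the compatibility identity holds for every situation and every finitary variable of the matching measurability, fix $s=x_{1:n}\in\situations$ and an arbitrary local variable $g\in\setofgenextvariables(\statespace)$, and construct the $(n+1)$-measurable variable $f\in\setofextvariables$ defined by $f(\omega):=g(\omega_{n+1})$ for all $\omega\in\samplespace$. Then $f(s\cdot)=g$ and, for each real $\alpha$, $f^{\vee\alpha}$ is $(n+1)$-measurable with $(f^{\vee\alpha})(s\cdot)=g^{\vee\alpha}$, so the hypothesis gives $\upprevvovkk(f\vert s)=\lupprev{s}(g)$ and $\upprevvovkk(f^{\vee\alpha}\vert s)=\lupprev{s}(g^{\vee\alpha})$. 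Combining these with Proposition~\ref{Proposition: Continuity w.r.t. lower cuts} yields $\lupprev{s}(g)=\upprevvovkk(f\vert s)=\lim_{\alpha\to-\infty}\upprevvovkk(f^{\vee\alpha}\vert s)=\lim_{\alpha\to-\infty}\lupprev{s}(g^{\vee\alpha})$, which is exactly \ref{ext coherence lower cuts continuity} for $\lupprev{s}$; since $s$ and $g$ were arbitrary, all local models satisfy it.

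I do not expect a real obstacle here: the statement is essentially a recombination of earlier results, and the argument is short. The only place demanding a little care is that Proposition~\ref{prop: global compatible with local} is only available for bounded below finitary variables, so one must first pass to the truncations $f^{\vee\alpha}$ rather than apply it to $f$ directly, and correspondingly verify the (trivial) identity $(f^{\vee\alpha})(\sit\cdot)=(f(\sit\cdot))^{\vee\alpha}$. One should also note in passing that the range of $\sit$ over $\situations$ (including $\sit=\Box$, with $1$-measurable variables) means the argument indeed covers \emph{every} local model $\lupprev{s}$.
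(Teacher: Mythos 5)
Your proposal is correct and follows essentially the same route as the paper: the \emph{if} direction applies Proposition~\ref{prop: global compatible with local} to the truncations $f^{\vee\alpha}$ and passes to the limit via Proposition~\ref{Proposition: Continuity w.r.t. lower cuts} together with the local axiom \ref{ext coherence lower cuts continuity}, exactly as the paper does. The \emph{only if} direction differs only cosmetically (you argue directly with an explicit lift $f(\omega)\coloneqq g(\omega_{n+1})$, the paper argues by contraposition with an arbitrary $(n+1)$-measurable lift), so no substantive difference or gap.
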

\begin{proof}
Consider any $\sit\in\situations$ and any $(n+1)$-measurable extended real variable $f \in \setofextvariables{}$.
Clearly, $f^{\vee c}$ is bounded below and remains to be $(n+1)$-measurable for any $c \in \reals{}$.
Due to Proposition~\ref{prop: global compatible with local}, we have that $\upprevvovkk(f^{\vee c} \vert \sit)=\lupprev{\sit}(f^{\vee c}(\sit \cdot))$ for any $c \in \reals{}$.
Then, because $\upprevvovkk(\cdot\vert\sit)$ satisfies \ref{ext coherence lower cuts continuity} due to Proposition~\ref{Proposition: Continuity w.r.t. lower cuts}, we clearly also have that $\upprevvovkk (f \vert \sit)=\lupprev{\sit}(f(\sit \cdot))$ if $\lupprev{\sit}$ satisfies \ref{ext coherence lower cuts continuity}.

On the other hand, consider the case that there is some $\sit\in\situations{}$ such that $\lupprev{\sit}$ does not satisfy \ref{ext coherence lower cuts continuity}, meaning that there is at least a single $f\in\setofgenextvariables{}(\statespace{})$ such that $\lupprev{\sit}(f) \not= \lim_{c\to \, -\infty} \lupprev{\sit}(f^{\vee c})$ [the limit on the right hand side still exists because of monotonicity (\ref{coherence: monotonicity})].
Let $g\in\setofextvariables{}$ be an $(n+1)$-measurable variable such that $g(\sit \cdot) = f$.
Then, for all $c\in\reals{}$, $g^{\vee c}$ is bounded below and $(n+1)$-measurable and we clearly also have that $g^{\vee c}(\sit \cdot) = f^{\vee c}$.
Hence,
\begin{align*}
\lupprev{\sit}(g(\sit \cdot))
= \lupprev{\sit}(f)
\not= \lim_{c\to \, -\infty} \lupprev{\sit}(f^{\vee c})
= \lim_{c\to \, -\infty} \lupprev{\sit}(g^{\vee c}(\sit \cdot))
= \lim_{c\to \, -\infty} \upprevvovkk{}(g^{\vee c} \vert \sit)
= \upprevvovkk{}(g\vert \sit),
\end{align*}
where the second to last step follows from Proposition~\ref{prop: global compatible with local} and the last step follows from Proposition~\ref{Proposition: Continuity w.r.t. lower cuts}.
\end{proof}

As we have illustrated in Example~\ref{example: Vovk's axioms do not necessarily satisfy lower cuts continuity}, an upper expectation---and therefore also any local upper expectation---does not necessarily satisfy \ref{ext coherence lower cuts continuity}.
Hence, if we find compatibility (on the entire domain) between local and global models desirable, then we will need to impose \ref{ext coherence lower cuts continuity} as an additional axiom on the local models $\lupprev{s}$.
This is also the case even when we adopt the stronger axioms \ref{coherence: sublinearity extended}--\ref{coherence: monotonicity extended} (again, this follows from Example~\ref{example: Vovk's axioms do not necessarily satisfy lower cuts continuity}).
We therefore do not see what advantage could be gained from additionally imposing \ref{coherence: sublinearity extended}--\ref{coherence: monotonicity extended} on the local models:
it imposes constraints on how the values of the local models can be chosen, yet does not imply any additional properties for the global game-theoretic upper expectation
($\upprevvovkk{}$ only depends on the values of the local models on $\setofgenextvariablesb{}(\statespace{})$ and, on this restricted domain, axioms~\ref{coherence: sublinearity extended}--\ref{coherence: monotonicity extended} are as general as \ref{coherence: sublinearity}--\ref{coherence: monotonicity} according to Proposition~\ref{prop: extension with E6 is an extension}).
If one moreover finds it desirable to have complete compatibility with the local models, then one should additionally impose \ref{ext coherence lower cuts continuity} on the local models, irrespectively of whether he is already imposing \ref{coherence: sublinearity extended}--\ref{coherence: monotonicity extended}, at which point \ref{coherence: sublinearity extended}--\ref{coherence: monotonicity extended} become redundant due to Proposition~\ref{prop: local models with lower cuts satisfy vovk extended axioms}.

\section{Behaviour of $\upprevvovkk$ with respect to sequences of finitary variables}\label{Sect: Continuity wrt n-measurables}

Even though the upper expectation $\upprevvovkk{}$ is continuous with respect to non-decreasing sequences and with respect to non-increasing sequences of lower cuts, it is not necessarily continuous with respect to general pointwise convergence; see \cite[Example 1]{TJoens2018Continuity} for an illustration.\footnote{The version of the game-theoretic upper expectation used in \cite[Example 1]{TJoens2018Continuity} is with real-valued supermartingales instead of extended real-valued ones.
However, this does not make any difference because \cite[Example 1]{TJoens2018Continuity} only involves upper expectations of gambles and, on that domain, both versions of the game-theoretic upper expectation coincide; see Proposition~\ref{prop: two versions of vovk upprev are equal} further on.}
However, in many cases, the sequence of interest will be composed out of variables that are finitary; the individual variables then only depend on a finite number of states.
Compared to general sequences, sequences of finitary variables tend to be more well-behaved and therefore allow us to establish stronger continuity results.
Moreover, upper expectations of finitary variables or, more specifically, finitary gambles can often be computed rather efficiently; see \cite{8629186}.
If these computational methods were to be combined with the appropriate continuity properties---which tend to be stronger for sequences of finitary variables--we would also be able to compute upper expectations of a great deal of non-finitary variables.

Because of their importance in this section, we will denote the set of all bounded below variables $f \in \setofextvariablesb{}$ such that $f=\lim_{n\to+\infty}f_n$ for some sequence $\{f_n\}_{n\in\natz}$ of finitary variables by $\setoflimitsoffinmeasb{}$.
Our first result indicates that the variables in $\setoflimitsoffinmeasb$ play a crucial role in the characterisation of game-theoretic upper expectations.
It states that the upper expectation $\upprevvovkk{}(f \vert s)$ of any variable $f \in \setofextvariables{}$, conditional on any $s \in \situations{}$, is the lower envelope of the upper expectations $\upprevvovkk{}(g \vert s)$ of variables $g\in\setoflimitsoffinmeasb$ that dominate $f$ on the domain $\Gamma(s)$.
Once more, this result allows us to conclude that $\upprevvovkk{}$ is uniquely characterised by its values on a constrained domain; that of the (bounded below) limits of finitary variables.
We refer to \cite{Tjoens2019NaturalExtensionISIPTA,TJOENS202130} for a more detailed discussion.

\begin{proposition}\label{Prop: limits to general}
Consider any $f\in\setofextvariables$ and any $s\in\situations$.
Then
\begin{align}
	\upprevvovkk (f \vert s) &=\inf{\Big\{ \upprevvovkk(g \vert s) \colon g\in\setoflimitsoffinmeasb{} \text{ and }  g \geq_s f  \Big\}} \label{Eq: Prop: limits to general 1}
	= \inf{\Big\{ \upprevvovkk(g \vert s) \colon g\in\setoflimitsoffinmeasb{} \text{ and }  g \geq f  \Big\}}.
\end{align}
\end{proposition}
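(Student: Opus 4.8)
I would write $L \coloneqq \upprevvovkk(f\vert s)$ and denote by $M$ and $R$ the first and the second infimum on the right-hand side of the claimed identities, respectively; the aim is then to show $L = M = R$. The two ``easy'' inequalities come first. Since $g \geq f$ implies $g \geq_s f$, the infimum $R$ is taken over a subcollection of the one over which $M$ is taken, so $R \geq M$; and for every $g \in \setoflimitsoffinmeasb{}$ with $g \geq_s f$ we have $\upprevvovkk(g\vert s) \geq \upprevvovkk(f\vert s) = L$ by monotonicity [\ref{vovk coherence 4}], so $M \geq L$. Hence $L \leq M \leq R$, and everything reduces to proving the single inequality $R \leq L$.

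For $R \leq L$ the case $L = +\infty$ is trivial, so I would fix reals $c > L$ and $\epsilon > 0$ and, using Definition~\ref{def:upperexpectation2}, pick a bounded below supermartingale $\martingale \in \setofextsupmartb{}$ with $\martingale(s) \leq c$ and $\liminf\martingale \geq_s f$; note that $\martingale(s)$ is then real. Mimicking the construction in the proof of Proposition~\ref{prop: liminf can be replaced by lim in definition}, I would take an $s$-test supermartingale $\martingale^{\ast}$ as provided by Proposition~\ref{Prop: Doob} and set $\martingale'(t) \coloneqq \martingale(t) + \epsilon\,\martingale^{\ast}(t)$ for all $t \sqsupseteq s$ and $\martingale'(t) \coloneqq \martingale(s) + \epsilon$ for all $t \not\sqsupseteq s$. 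Then $\martingale'$ is again a bounded below supermartingale with $\martingale'(s) = \martingale(s) + \epsilon \leq c + \epsilon$, its limit exists in $\extreals$ along every path in $\Gamma(s)$, and $\lim\martingale' =_s \liminf\martingale' \geq_s f$.

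The crucial step is then to exhibit a member of $\setoflimitsoffinmeasb{}$ built from $\martingale'$ that dominates $f$ \emph{everywhere}. I would put $g_n \coloneqq \martingale'(X_{1:n}) + n\,\indica{\Omega \setminus \Gamma(s)}$; this is finitary because $\martingale'(X_{1:n})$ is $n$-measurable and $\indica{\Omega \setminus \Gamma(s)}$ is $\lvert s\rvert$-measurable ($\Omega \setminus \Gamma(s)$ being a finite union of cylinders of length $\lvert s\rvert$). For $\omega \in \Gamma(s)$ the indicator vanishes and $g_n(\omega) = \martingale'(\omega^n) \to \lim\martingale'(\omega)$, while for $\omega \notin \Gamma(s)$ we have $\omega^n \not\sqsupseteq s$ for all $n$, so $g_n(\omega) = \martingale(s) + \epsilon + n \to +\infty$; hence $g \coloneqq \lim_{n\to+\infty} g_n$ exists pointwise, is bounded below, equals $\lim\martingale'$ on $\Gamma(s)$ and $+\infty$ off $\Gamma(s)$, so that $g \in \setoflimitsoffinmeasb{}$ and $g \geq f$ on all of $\Omega$. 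Finally, since $\martingale' \in \setofextsupmartb{}$ and $\liminf\martingale' =_s \lim\martingale' =_s g$, Definition~\ref{def:upperexpectation2} gives $\upprevvovkk(g\vert s) \leq \martingale'(s) \leq c + \epsilon$. As $g$ lies in both infimum collections, this yields $M \leq c + \epsilon$ and $R \leq c + \epsilon$; letting $\epsilon \downarrow 0$ and then $c \downarrow L$ gives $R \leq L$, and with $L \leq M \leq R$ we conclude $L = M = R$.

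The main obstacle, and the reason for the two technical detours above, is that the obvious candidate $\liminf\martingale$ need not belong to $\setoflimitsoffinmeasb{}$: a pointwise limit inferior of finitary variables is in general not a pointwise \emph{limit} of finitary variables. This forces us to work instead with a supermartingale that genuinely converges pointwise — convergent in $\extreals$ on $\Gamma(s)$ thanks to the Doob-type argument behind Proposition~\ref{Prop: Doob}, and eventually constant off $\Gamma(s)$ thanks to the explicit form of the construction in Proposition~\ref{prop: liminf can be replaced by lim in definition} — and to add the term $n\,\indica{\Omega \setminus \Gamma(s)}$, so that the resulting limit dominates $f$ everywhere and not merely on $\Gamma(s)$, which is precisely what distinguishes the second equality from the first.
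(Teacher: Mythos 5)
Your proof is correct and follows essentially the same route as the paper: pass to a bounded below supermartingale that converges on $\Gamma(s)$ (the paper invokes Proposition~\ref{prop: liminf can be replaced by lim in definition} as a black box, you replay its $\epsilon\,\martingale^\ast$ construction from Proposition~\ref{Prop: Doob}), take $g$ to be the pointwise limit of the $n$-measurable variables built from that supermartingale, and force $g=+\infty$ off $\Gamma(s)$ so that $g\geq f$ everywhere, giving $\upprevvovkk(g\vert s)\leq\martingale'(s)$. The only cosmetic difference is how the last step is achieved: the paper redefines the supermartingale to be $+\infty$ on situations not following $s$ (and re-checks the supermartingale property), whereas you add $n\,\indica{\Omega\setminus\Gamma(s)}$ to the finitary approximants, which produces the same limit variable $g$ and the same bound.
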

\begin{proof}
Because $\upprevvovkk$ is monotone [\ref{vovk coherence 4}], we have that $\upprevvovkk(f \vert s)\leq\upprevvovkk(g \vert s)$ for any $g\in\setoflimitsoffinmeasb{}$ such that $f \leq_s g$.
It therefore follows immediately that
\begin{align*}
\upprevvovkk (f \vert s) \leq\inf{\Big\{ \upprevvovkk(g \vert s) \colon g\in\setoflimitsoffinmeasb{} \text{ and }  g \geq_s f  \Big\}}
\leq\inf{\Big\{ \upprevvovkk(g \vert s) \colon g\in\setoflimitsoffinmeasb{} \text{ and }  g \geq f  \Big\}},
\end{align*}
where the last inequality follows from the fact that $g\geq f$ implies $g\geq_s f$ for any $g\in\setofextvariables{}$.
It remains to prove that $\inf{\Big\{ \upprevvovkk(g \vert s) \colon g\in\setoflimitsoffinmeasb{} \text{ and }  g \geq f  \Big\}}\leq\upprevvovkk(f\vert s)$.

Consider any $\martingale'\in\setofextsupmartb{}$ such that $\lim \martingale'$ exists within $\Gamma(s)$ and such that $\lim \martingale' \geq_s f$.
Let $\martingale{}$ be the extended real process defined by $\martingale{}(t)\coloneqq\martingale{}'(t)$ for all $t\sqsupseteq s$, and by $\martingale(t)\coloneqq+\infty$ for all $t \not\sqsupseteq s$.
We show that $\martingale{}$ is a bounded below supermartingale such that $\lim \martingale{} \geq f$.
The process $\martingale{}$ is bounded below because $\martingale{}'$ is bounded below.
Moreover, we have, for all $t\sqsupseteq s$, that $\lupprev{t}(\martingale{}(t\cdot))=\lupprev{t}(\martingale{}'(t\cdot))\leq\martingale{}'(t)=\martingale{}(t)$ because $\martingale{}'$ is a supermartingale, and, for all $t \not\sqsupseteq s$, we also have that $\lupprev{t}(\martingale{}(t\cdot))\leq\martingale{}(t)$ because then $\martingale{}(t)=+\infty$.
Hence, $\martingale{}$ is indeed a bounded below supermartingale.
Furthermore, note that $\lim\martingale=_s\lim\martingale'\geq_s f$ and, for any path $\omega$ not going through $s$, that $\lim\martingale{}(\omega)=+\infty\geq f(\omega)$, which all together implies that $\lim\martingale{}\geq f$.

Now, let $\{g_n\}_{n \in \natz{}}$ be the sequence defined by $g_n(\omega) \coloneqq \martingale{}(\omega^n)$ for all $n \in \natz{}$ and all $\omega \in \samplespace{}$.
Then it is clear that $\{g_n\}_{n \in \natz{}}$ is a sequence of $n$-measurable, and therefore finitary, extended real variables that is uniformly bounded below.
Moreover, since $\lim \martingale{}$ exists everywhere, we have that $g(\omega) \coloneqq \lim_{n \to +\infty} g_n(\omega) = \lim_{n \to +\infty} \martingale{}(\omega^n)$ exists for all $\omega \in \samplespace{}$.
Hence, $g\in\setoflimitsoffinmeasb{}$ and because $\lim \martingale \geq f$ also $g \geq f$.
It furthermore follows from Definition \ref{def:upperexpectation2} that $\upprevvovkk(g \vert s)\leq\martingale(s)$ because $\lim \martingale \geq_s g$ (since, in fact, $\lim \martingale = g$).
This implies that
\begin{align*}
\inf{\Big\{ \upprevvovkk(g \vert s) \colon g\in\setoflimitsoffinmeasb{} \text{ and }  g \geq f  \Big\}}\leq\martingale(s)=\martingale'(s).
\end{align*}
Since this holds for any $\martingale'\in\setofextsupmartb{}$ such that $\lim \martingale'$ exists within $\Gamma(s)$ and $\lim \martingale' \geq_s f$, it follows from Proposition~\ref{prop: liminf can be replaced by lim in definition} that
\begin{align*}
\inf{\Big\{ \upprevvovkk(g \vert s) \colon g\in\setoflimitsoffinmeasb{} \text{ and }  g \geq f  \Big\}}\leq\upprevvovkk (f \vert s).
\tag*{\qedhere}
\end{align*}
\end{proof}

The result above is expressed in terms of (bounded below) limits of finitary variables, but we could just as well have replaced them by (bounded below) limits of $n$-measurable gambles because, as we intend to prove next, each $f\in\setoflimitsoffinmeasb{}$ is a pointwise limit of a sequence of $n$-measurable gambles.
As a first step to establish this result, we aim to transition from any sequence $\{g_n\}_{n\in\natz}$ of finitary variables to a sequence $\{g^\xi_n\}_{n\in\natz}$ of $n$-measurable variables, without changing the essential characteristics of the sequence.
To this end, let $\{g^\xi_n\}_{n\in\natz}$ and $\xi\colon\nats\to\natz$ be defined by the following recursive expressions, where $\smash{g^\xi_0\coloneqq c\in\extreals}$ is a freely chosen extended real number and $\xi(1)\coloneqq0$:
\begin{align}\label{Eq: lemma: limits of n-measurables are equal to limits of finitary}
g^\xi_n \coloneqq
\begin{cases}
g_{\xi(n)} &\text{ if $g_{\xi(n)}$ is $n$-measurable;} \\
g^\xi_{n-1} &\text{ otherwise,}
\end{cases}
\qquad \text{ and } \qquad
\xi(n+1) \coloneqq
\begin{cases}
\xi(n)+1 &\text{ if $g_{\xi(n)}$ is $n$-measurable;}\\
\xi(n) &\text{ otherwise,}
\end{cases}
\end{align}
for all $n\in\nats$.
Note that $\{g_n\}_{n\in\natz}$ is a subsequence of $\{g^\xi_n\}_{n\in\natz}$ and that the additional terms in $\{g^\xi_n\}_{n\in\natz}$ do not impact the limit behaviour, nor, for a suitable choice of $c$, the (possibly) monotone character of the original sequence.

\begin{lemma}\label{lemma: limits of n-measurables are equal to limits of finitary}
For any sequence $\{g_n\}_{n\in\natz}$ of finitary variables, the sequence $\{g^\xi_n\}_{n\in\natz}$ is a sequence of $n$-measurable variables.
\end{lemma}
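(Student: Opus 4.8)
The plan is to verify directly, by induction on $n\in\nats$, that each $g^\xi_n$ as defined by the recursion in~\eqref{Eq: lemma: limits of n-measurables are equal to limits of finitary} is $n$-measurable. The only subtlety is that the recursion refers to $\xi(n)$, and $g_{\xi(n)}$ is only some finitary variable, i.e.\ it is $m$-measurable for some $m\in\natz$ that need not equal $n$; we must make sure that whenever the first branch of the recursion fires (``$g_{\xi(n)}$ is $n$-measurable''), the variable we pick really is $n$-measurable, and that whenever the second branch fires, the inherited variable $g^\xi_{n-1}$ is still $n$-measurable.

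First I would record the elementary fact that an $m$-measurable variable is automatically $n$-measurable for every $n\geq m$: if $f$ is constant on every cylinder $\Gamma(x_{1:m})$, then since each $\Gamma(x_{1:n})$ with $n\geq m$ is contained in a single such cylinder, $f$ is constant on $\Gamma(x_{1:n})$ as well. This monotonicity of the ``$n$-measurable'' property in $n$ is what makes the inherited-branch case work.

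Next I would set up the induction. The base case is $n=1$: by definition $\xi(1)=0$, and $g^\xi_1$ equals $g_0$ if $g_0$ is $1$-measurable and equals $g^\xi_0=c$ (a constant, hence certainly $1$-measurable) otherwise; in the first branch we get a $1$-measurable variable by the very condition guarding that branch, so $g^\xi_1$ is $1$-measurable either way. For the inductive step, assume $g^\xi_{n-1}$ is $(n-1)$-measurable. If $g_{\xi(n)}$ is $n$-measurable, then $g^\xi_n=g_{\xi(n)}$ is $n$-measurable by the guard of that branch. Otherwise $g^\xi_n=g^\xi_{n-1}$, which is $(n-1)$-measurable by the induction hypothesis, hence $n$-measurable by the monotonicity fact above. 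In both cases $g^\xi_n$ is $n$-measurable, completing the induction; since $c$ was chosen extended real and each $g_m$ is extended real valued, every $g^\xi_n$ is an extended real variable, so $\{g^\xi_n\}_{n\in\natz}$ is a sequence of $n$-measurable variables.

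There is essentially no obstacle here: the argument is a routine induction, and the only thing one must be careful about is not to confuse ``finitary'' (= $m$-measurable for \emph{some} $m$) with ``$n$-measurable for the specific index $n$'', which is exactly what the guard conditions in~\eqref{Eq: lemma: limits of n-measurables are equal to limits of finitary} are engineered to handle. One could also remark, although it is not needed for this lemma, that $\xi$ is non-decreasing and that $\{g_m\}_{m\in\natz}$ is recovered as a subsequence of $\{g^\xi_n\}_{n\in\natz}$, since that is what makes the construction useful in the sequel; but the statement as given requires only the $n$-measurability claim, which the induction above establishes.
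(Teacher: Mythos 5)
Your proof is correct and follows essentially the same route as the paper's: an induction on $n$ whose base case is the constant $g^\xi_0=c$ and whose step splits on the guard condition, using the fact that an $(n-1)$-measurable variable is automatically $n$-measurable. Your explicit justification of that monotonicity fact via cylinder sets is a harmless elaboration of what the paper leaves implicit.
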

\begin{proof}
We prove this by induction.
$g^\xi_0=c$ is clearly $0$-measurable.
To prove the induction step, suppose that $g^\xi_{k-1}$ is $(k-1)$-measurable for some $k\in\nats$.
Then either we have that $g_{\xi(k)}$ is $k$-measurable, which directly implies that $g^\xi_{k}=g_{\xi(k)}$ is $k$-measurable.
Otherwise, $g^\xi_k$ is equal to $g^\xi_{k-1}$ implying that $g^\xi_k$ is $(k-1)$-measurable and therefore automatically $k$-measurable.
This concludes the induction step and hence, $\{g^\xi_n\}_{n\in\natz}$ is a sequence of $n$-measurable variables.
\end{proof}

We now establish our earlier claim that the variables in $\setoflimitsoffinmeasb{}$ are essentially limits of $n$-measurable gambles and therefore, that Proposition~\ref{Prop: limits to general} turns out to be stronger than it first appears.

\begin{proposition}\label{proposition: limits of n-measurables are equal to limits of finitary}
For any $f\in\setoflimitsoffinmeasb{}$, $f$ is the pointwise limit of a sequence $\{f_n\}_{n\in\natz}$ of $n$-measurable gambles.
Furthermore, we can guarantee that $B \leq f_n \leq \sup f$ for all $n\in\natz$, where $B$ is any real if \/ $\inf f = +\infty$ and $B=\inf f$ otherwise.
\end{proposition}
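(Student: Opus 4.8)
The plan is to reduce, using the reindexing introduced in \eqref{Eq: lemma: limits of n-measurables are equal to limits of finitary}, to a sequence of $n$-measurable variables, and then to truncate that sequence both from below and from above in order to turn its terms into gambles with the required bounds while preserving pointwise convergence. Concretely: since $f\in\setoflimitsoffinmeasb{}$, I would first fix a sequence $\{g_n\}_{n\in\natz}$ of finitary variables with $\lim_{n\to+\infty}g_n=f$, choose $c\in\extreals{}$ arbitrarily, and form $\{g^\xi_n\}_{n\in\natz}$ as in \eqref{Eq: lemma: limits of n-measurables are equal to limits of finitary}. By Lemma~\ref{lemma: limits of n-measurables are equal to limits of finitary}, each $g^\xi_n$ is $n$-measurable. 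Since every finitary $g_k$ is $m$-measurable for some $m\in\natz{}$, the map $\xi$ (which is non-decreasing and increases precisely at those $n$ for which $g_{\xi(n)}$ is $n$-measurable) is unbounded; hence $\{g_n\}_{n\in\natz}$ appears as a subsequence of $\{g^\xi_n\}_{n\in\natz}$, with the remaining terms merely repeating the preceding one, so $\lim_{n\to+\infty}g^\xi_n=\lim_{n\to+\infty}g_n=f$ pointwise.

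Next I would fix the truncation levels. Since $f$ is bounded below, $\sup f\in\reals{}\cup\{+\infty\}$ and $\sup f\geq\inf f$, and with $B$ as in the statement one has $B\leq\inf f$ (hence $B\leq f$ pointwise) and $B\leq\sup f$. For $n\in\natz{}$ put $u_n\coloneqq\min\{\max\{n,B\},\sup f\}$, so that each $u_n$ is a real number, $B\leq u_n\leq\sup f$, and $\{u_n\}_{n\in\natz}$ is non-decreasing with $\lim_{n\to+\infty}u_n=\sup f$. Then define
\begin{equation*}
f_n\coloneqq(g^\xi_n\vee B)\wedge u_n\qquad\text{for all }n\in\natz{}.
\end{equation*}
Each $f_n$ is $n$-measurable, being built from the $n$-measurable variable $g^\xi_n$ by taking pointwise maxima and minima with constants, and it is real-valued because $B\leq f_n\leq u_n$; hence $f_n$ is an $n$-measurable gamble, and $B\leq f_n\leq u_n\leq\sup f$ is precisely the asserted bound. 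Finally, fixing any $\omega\in\samplespace{}$, since $g^\xi_n(\omega)\to f(\omega)$ and $u_n\to\sup f$ in $\extreals{}$ and since $\max$ and $\min$ are continuous for the order topology on $\extreals{}$, one obtains $f_n(\omega)=(g^\xi_n(\omega)\vee B)\wedge u_n\to(f(\omega)\vee B)\wedge\sup f=f(\omega)$, the last equality because $B\leq\inf f\leq f(\omega)\leq\sup f$; thus $f_n\to f$ pointwise.

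The step I expect to require the most care is the simultaneous handling of the two independent dichotomies $\inf f\in\{\text{finite},+\infty\}$ and $\sup f\in\{\text{finite},+\infty\}$: the upper cut $u_n$ must be chosen real, lying between $B$ and $\sup f$, and increasing to $\sup f$, in such a way that the truncation $(\,\cdot\vee B)\wedge u_n$ yields genuine gambles and yet — because $f(\omega)$ always lies in $[B,\sup f]$ — leaves the pointwise limit unchanged, even though $g^\xi_n$ itself may be unbounded or take the values $\pm\infty$. If one prefers not to invoke joint continuity of $\max$ and $\min$ on $\extreals{}$, the convergence in the last display can instead be checked by splitting into the cases $f(\omega)<\sup f$ and $f(\omega)=\sup f$ (and, in the latter, $\sup f$ finite versus $\sup f=+\infty$), which is routine.
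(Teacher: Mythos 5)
Your proof is correct and follows essentially the same route as the paper: pass to the $n$-measurable sequence $\{g^\xi_n\}_{n\in\natz}$ via Lemma~\ref{lemma: limits of n-measurables are equal to limits of finitary} and then truncate below by $B$ and above by a real level tending to $\sup f$, exactly as in the paper's construction $f_n=\max\{\min\{g^\xi_n,n,\sup f\},B\}$. The only differences (arbitrary $c$ instead of $c=0$, the level $\min\{\max\{n,B\},\sup f\}$ with the truncations applied in the opposite order, and convergence via continuity of the lattice operations on $\extreals$ rather than a direct computation) are cosmetic.
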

\begin{proof}
Fix any $f\in\setoflimitsoffinmeasb$.
Then, according to the definition of $\setoflimitsoffinmeasb$, $f$ is the pointwise limit of a sequence $\{g_n\}_{n\in\natz}$ of finitary variables.
Let $\{g^\xi_n\}_{n\in\natz}$ be the sequence defined by the recursive expression~\eqref{Eq: lemma: limits of n-measurables are equal to limits of finitary}, with $c=0$, which by Lemma~\ref{lemma: limits of n-measurables are equal to limits of finitary} is a sequence of $n$-measurable variables.
As explained in the text that preceeds Lemma~\ref{lemma: limits of n-measurables are equal to limits of finitary}, the sequences $\{g_n\}_{n\in\natz}$ and $\{g^\xi_n\}_{n\in\natz}$ have the same limit behaviour, so $\{g^\xi_n\}_{n\in\natz}$ converges pointwise to $f$.
Let $B$ be any real if $\inf f = +\infty$ and let $B \coloneqq \inf f$ if $\inf f \in \reals$ [the case where $\inf f=-\infty$ is impossible because $f$ is bounded below].
Let $\{f_n\}_{n\in\natz}$ be the sequence defined by bounding each $g^\xi_n$ above by $\min\{n,\sup f\}$ and below by $B$; so $f_n(\omega) \coloneqq \max\{ \min\{g^\xi_n(\omega), n, \sup f \}, B \}$ for all $\omega \in \samplespace{}$ and all $n\in\natz$.
Then it is clear that $\{f_n\}_{n\in\natz}$ is a sequence of $n$-measurable gambles because $\{g^\xi_n\}_{n \in \natz{}}$ is a sequence of $n$-measurable (possibly extended real) variables.
It also converges pointwise to $f$ because
\begin{align*}
f(\omega)
= \max\Big\{ \min\{f(\omega),\sup f\}, B \Big\}
&=  \max\Big\{ \min\big\{ \lim_{n \to +\infty} g^\xi_n(\omega), \lim_{n \to +\infty} n, \sup f \big\}, B \Big\} \\
&= \lim_{n \to +\infty} \max\Big\{ \min\{g^\xi_n(\omega), n, \sup f\} , B \Big\}
= \lim_{n \to +\infty} f_n(\omega),
\end{align*}
for all $\omega \in \samplespace{}$, where the first equality follows from the fact that $B \leq \inf f \leq f$.
Moreover, for all $n\in\natz$, we clearly have that $B\leq f_n$, and also $f_n\leq\sup f$ because $\min\{g^\xi_n(\omega), n, \sup f \} \leq \sup f$ for all $\omega\in\Omega$ and $B\leq \inf f \leq \sup f$.
Hence, $\{f_n\}_{n\in\natz}$ satisfies all of the conditions in the proposition.
\end{proof}


We now present two main results.
The first one states that $\upprevvovkk{}$ is continuous with respect to non-increasing sequences of finitary bounded above variables.
The second one says that, for any $f \in \setoflimitsoffinmeasb{}$, there is always a sequence of $n$-measurable gambles---and therefore also a sequence of finitary gambles---that converges pointwise to $f$ and for which $\upprevvovkk{}$ is continuous.

\begin{theorem}\label{Theorem: cont. wrt non-increasing n-measurables}
For any $s\in\situations$ and any non-increasing sequence $\smash{\{f_n\}_{n\in\natz}}$ of finitary, bounded above variables that converges pointwise to a variable $f\in\setofextvariables$, we have that
$\lim_{n\to+\infty} \smash{\upprevvovkk(f_n \vert s)=\upprevvovkk(f \vert s)}$.
\end{theorem}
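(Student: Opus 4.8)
The plan is to establish the non-trivial inequality $\lim_{n\to+\infty}\upprevvovkk(f_n\vert s)\leq\upprevvovkk(f\vert s)$; that the limit exists and dominates $\upprevvovkk(f\vert s)$ is immediate from monotonicity~[\ref{vovk coherence 4}], since $f\leq f_{n+1}\leq f_n$ for all $n\in\natz$. The argument has three stages: a reduction to \emph{bounded} finitary variables by means of lower cuts, a further reduction to \emph{$n$-measurable} gambles by means of the recursive construction~\eqref{Eq: lemma: limits of n-measurables are equal to limits of finitary}, and the core argument for non-increasing sequences of $n$-measurable gambles.

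For the first reduction, observe that $\{f_n\}_{n\in\natz}$ is non-increasing with $f_0$ bounded above, so $f_n\leq M$ for all $n$ and some real $M$. Fix any $\alpha\in\reals{}$. The lower cuts $\{f_n^{\vee\alpha}\}_{n\in\natz}$ again form a non-increasing sequence of finitary variables---finitariness is preserved, since $f_n^{\vee\alpha}$ is constant on every cylinder on which $f_n$ is---that is now uniformly bounded (below by $\alpha$, above by $M$) and converges pointwise to the gamble $f^{\vee\alpha}$. If the theorem holds for such sequences, then $\lim_n\upprevvovkk(f_n^{\vee\alpha}\vert s)=\upprevvovkk(f^{\vee\alpha}\vert s)$, whence $\lim_n\upprevvovkk(f_n\vert s)\leq\upprevvovkk(f^{\vee\alpha}\vert s)$ by~[\ref{vovk coherence 4}] (as $f_n\leq f_n^{\vee\alpha}$); letting $\alpha\to-\infty$ and applying continuity with respect to lower cuts (Proposition~\ref{Proposition: Continuity w.r.t. lower cuts}) gives $\lim_n\upprevvovkk(f_n\vert s)\leq\upprevvovkk(f\vert s)$. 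For the second reduction, given a non-increasing, uniformly bounded sequence $\{g_n\}_{n\in\natz}$ of finitary gambles, apply the recursive expressions~\eqref{Eq: lemma: limits of n-measurables are equal to limits of finitary} with $c\coloneqq\sup g_0$: by Lemma~\ref{lemma: limits of n-measurables are equal to limits of finitary}, $\{g^\xi_n\}_{n\in\natz}$ is a sequence of $n$-measurable variables; it remains uniformly bounded, it contains $\{g_n\}_{n\in\natz}$ as a subsequence, and---since $c=\sup g_0$ dominates every $g_m$---the repeated terms that are inserted preserve monotonicity. Thus $\{g^\xi_n\}_{n\in\natz}$ is a non-increasing sequence of $n$-measurable gambles with the same pointwise limit $f$, and $\lim_n\upprevvovkk(g^\xi_n\vert s)=\lim_n\upprevvovkk(g_n\vert s)$ because the latter is a monotone subsequence of the former.

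So assume now that $\{f_n\}_{n\in\natz}$ is a non-increasing, uniformly bounded sequence of $n$-measurable gambles with pointwise limit the gamble $f$. For each $n$, let $\process^{(n)}(t)\coloneqq\upprevvovkk(f_n\vert t)$; by Corollary~\ref{corollary: upprevvovk is supermartingale} this is a bounded below supermartingale with $\lupprev{t}(\process^{(n)}(t\andstate))=\process^{(n)}(t)$ for all $t\in\situations$, and $\process^{(n)}(t)=f_n(t)$ whenever $\lvert t\rvert\geq n$ (since $f_n$ is $n$-measurable); moreover $\process^{(n)}\geq\process^{(n+1)}$ by~[\ref{vovk coherence 4}]. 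Put $\process^{(\infty)}\coloneqq\inf_n\process^{(n)}$. Then $\process^{(\infty)}$ is bounded below, and it is again a supermartingale: for each $t$, the $\process^{(n)}(t\andstate)$ are uniformly bounded gambles on the \emph{finite} set $\statespace{}$ decreasing pointwise---hence uniformly---to $\process^{(\infty)}(t\andstate)$, so property~\ref{coherence: uniform convergence} (which holds for every upper expectation, Proposition~\ref{prop: coherence properties}) gives $\lupprev{t}(\process^{(\infty)}(t\andstate))=\lim_n\lupprev{t}(\process^{(n)}(t\andstate))=\inf_n\process^{(n)}(t)=\process^{(\infty)}(t)$. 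Also $\process^{(\infty)}(s)=\lim_n\upprevvovkk(f_n\vert s)$. As to its limit behaviour: for $\omega\in\Gamma(s)$ and $m\geq n$ we have $\process^{(\infty)}(\omega^m)\leq\process^{(n)}(\omega^m)=f_n(\omega)$, so $\limsup_m\process^{(\infty)}(\omega^m)\leq f(\omega)$ for \emph{every} $\omega\in\Gamma(s)$; and since $\process^{(\infty)}\geq\upprevvovkk(f\vert\cdot)$, L\'evy's zero-one law (Theorem~\ref{Theorem: Levy}) gives $\liminf_m\process^{(\infty)}(\omega^m)\geq f(\omega)$ strictly almost surely within $\Gamma(s)$, so $\lim\process^{(\infty)}=f$ strictly almost surely within $\Gamma(s)$. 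It remains to prove the one non-trivial fact that $\process^{(\infty)}(s)\leq\upprevvovkk(f\vert s)$.

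I expect this last step to be the main obstacle, and it is here that finitariness is genuinely needed---recall $\upprevvovkk$ is \emph{not} continuous along arbitrary pointwise-convergent sequences. Writing $\process(t)\coloneqq\upprevvovkk(f\vert t)$, which is a tight bounded below supermartingale by Corollary~\ref{corollary: upprevvovk is supermartingale}, the claim is equivalent to showing that the non-negative process $D\coloneqq\process^{(\infty)}-\process$, which satisfies $D(t)\leq\lupprev{t}(D(t\andstate))$ for all $t$ (by subadditivity of $\lupprev{t}$) and $\lim D=0$ strictly almost surely within $\Gamma(s)$ (by the previous paragraph), has $D(s)=0$. The idea is to exploit the compactness of $\samplespace{}=\statespace{}^{\nats}$ together with the continuity of the finitary variables $f_n$: iterating the inequality $D(t)\leq\lupprev{t}(D(t\andstate))\leq\max_{x\in\statespace{}}D(tx)$ produces at every level a situation following $s$ where $D\geq D(s)$, and a compactness argument combined with the attainedness of the infima $\upprevvovkk(f_n\vert\cdot)$ (Corollary~\ref{corollary: upprevvovk is supermartingale}) shows that, were $D(s)>0$, this would yield a non-strictly-null set of paths in $\Gamma(s)$ along which $D$ does not converge to $0$, contradicting $\lim D=0$ strictly almost surely. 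Hence $D(s)=0$, and $\lim_n\upprevvovkk(f_n\vert s)=\process^{(\infty)}(s)=\process(s)=\upprevvovkk(f\vert s)$, which completes the proof. The ideas behind this argument go back to Shafer, Vovk and Takemura~\cite{Vovk2019finance,shafer2011levy}, whose construction we adapt to the present setting; the remaining details are filled in below.
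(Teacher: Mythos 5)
Your two reductions (lower cuts plus Proposition~\ref{Proposition: Continuity w.r.t. lower cuts}, then the $\xi$-construction of Lemma~\ref{lemma: limits of n-measurables are equal to limits of finitary}) are sound, and so is the setup of the third stage: $\process^{(\infty)}\coloneqq\inf_n\upprevvovkk(f_n\vert\cdot)$ is indeed a bounded supermartingale (the uniform-convergence argument via \ref{coherence: uniform convergence} on the finite set $\statespace{}$ works), $\limsup\process^{(\infty)}\leq f$ holds everywhere, and L\'evy's law gives $\lim D=0$ s.a.s.\ within $\Gamma(s)$ for $D\coloneqq\process^{(\infty)}-\upprevvovkk(f\vert\cdot)$. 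The genuine gap is the final step, which is the entire content of the theorem. Your proposed mechanism---iterate $D(t)\leq\lupprev{t}(D(t\andstate))\leq\max_{x\in\statespace}D(tx)$ and conclude that $D(s)>0$ would yield a \emph{non-strictly-null} set of paths on which $D\not\to0$---does not work as described. The iteration (K\"onig's lemma on the finitely branching tree) only produces a nonempty closed set of paths in $\Gamma(s)$ along which $D\geq D(s)$; nothing in ``compactness plus attainedness of the infima $\upprevvovkk(f_n\vert\cdot)$'' bounds the size of that set from below, and closed sets---indeed single paths---are typically strictly null, so their mere existence does not contradict $\lim D=0$ s.a.s. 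Strict nullity is a statement about test supermartingales, so to contradict it you must bring the witnessing $s$-test supermartingale into the construction; exhibiting paths cannot suffice, and the attainedness result (Corollary~\ref{corollary: upprevvovk is supermartingale} together with Proposition~\ref{prop: inf becomes min in definition}) is not the missing ingredient.

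The step can in fact be completed, but by a different argument from the one you gesture at: fix $\epsilon\in\posreals$, take an $s$-test supermartingale $\martingale$ that converges to $+\infty$ on $\{\omega\in\Gamma(s)\colon D(\omega^n)\not\to0\}$, truncate it to $\martingale_B\coloneqq\min\{\martingale,B\}$ with $B$ large (Lemma~\ref{LemmaBoundedSupermartingale}), and run your K\"onig iteration on $Y\coloneqq D-\epsilon\martingale_B$ instead of on $D$: one checks that $Y(t)\leq\lupprev{t}(Y(t\andstate))$ for all $t$, that $\limsup Y\leq_s0$ on \emph{every} path of $\Gamma(s)$ (on paths where $D\to0$ because $Y\leq D$, on the others because eventually $\martingale_B=B$), and hence, along a path where $Y$ never drops below $Y(s)$, that $Y(s)\leq0$, i.e.\ $D(s)\leq\epsilon\martingale_B(s)\leq\epsilon$. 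The paper avoids this reformulation entirely: it takes a near-optimal bounded below supermartingale $\martingale$ for $f$ and uses the pointwise convergence $f_n\downarrow f$, the $n$-measurability and the finiteness of $\statespace{}$ (Lemma~\ref{Lemma: convergence of n-measurables}) to build a \emph{bounded} stopping variable $h$ with $\martingale(\omega^{h(\omega)})+\epsilon\geq f_{h(\omega)}(\omega)$, stops $\martingale$ at the corresponding cut to get $\upprevvovkk(f_h\vert s)\leq\alpha+\epsilon$ (Lemma~\ref{Lemma: upprev of n-measurables < upprev of limit}), and then exploits the non-increasing character to pass to $f_{\sup h}$ and the tail of the sequence. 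Either route supplies the missing idea; as written, your proposal does not contain it.
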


\begin{theorem}\label{Theorem: limit of n-meas}
For any $s\in\situations$ and any $f\in\setoflimitsoffinmeasb{}$, there is a sequence $\smash{\{f_n\}_{n\in\natz}}$ of $n$-measurable gambles that is uniformly bounded below and that converges pointwise to $f$ such that \/ $\lim_{n\to+\infty} \smash{\upprevvovkk(f_n \vert s)=\upprevvovkk(f \vert s)}$.
\end{theorem}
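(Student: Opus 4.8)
The plan is to build the required sequence by a diagonal argument that uses \emph{both} continuity results already available for $\upprevvovkk{}$: continuity along non-decreasing sequences (Theorem~\ref{theorem: upward convergence game}) and continuity along non-increasing sequences of finitary, bounded-above variables (Theorem~\ref{Theorem: cont. wrt non-increasing n-measurables}). Fix $s\in\situations$ and $f\in\setoflimitsoffinmeasb{}$. By Proposition~\ref{proposition: limits of n-measurables are equal to limits of finitary} I may start from a sequence $\{e_n\}_{n\in\natz}$ of $n$-measurable gambles converging pointwise to $f$ with $B\leq e_n\leq\sup f$ for all $n$, where $B$ is a lower bound of $f$. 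For each $k\in\natz$ set $u_k\coloneqq\inf_{n\geq k}e_n$ and, for $N\geq k$, set $\ell_{k,N}\coloneqq\min_{k\leq n\leq N}e_n$ (extended by $\ell_{k,N}\coloneqq e_k$ for $N<k$).

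First I would record the two limiting statements. The sequence $\{u_k\}_{k\in\natz}$ is non-decreasing, each $u_k$ is a gamble with $B\leq u_k\leq e_k$, and it converges pointwise to $\liminf_{n}e_n=f$; hence Theorem~\ref{theorem: upward convergence game} gives $\upprevvovkk(u_k\vert s)\to\upprevvovkk(f\vert s)$ (possibly $+\infty$). For fixed $k$, the sequence $\{\ell_{k,N}\}_{N\in\natz}$ is non-increasing, consists of finitary gambles each bounded above by $e_k$, and converges pointwise to $u_k$; hence Theorem~\ref{Theorem: cont. wrt non-increasing n-measurables} gives $\upprevvovkk(\ell_{k,N}\vert s)\to\upprevvovkk(u_k\vert s)$ as $N\to+\infty$. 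Since $\ell_{k,N}$ is a gamble, $\upprevvovkk(\ell_{k,N}\vert s)$ is real and, by monotonicity \ref{vovk coherence 4}, non-increasing in $N$ and never below $\upprevvovkk(u_k\vert s)$; moreover $\upprevvovkk(u_k\vert s)$ is real because $u_k$ is a gamble (use \ref{vovk coherence 5}). I may therefore pick $N_k\geq\max\{k,N_{k-1}+1\}$ with $\upprevvovkk(u_k\vert s)\leq\upprevvovkk(\ell_{k,N_k}\vert s)\leq\upprevvovkk(u_k\vert s)+1/k$, and set $\phi_k\coloneqq\ell_{k,N_k}$, an $N_k$-measurable gamble with $\phi_k\geq B$.

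Next I would check that $\{\phi_k\}$ already behaves correctly, before re-indexing. Pointwise convergence $\phi_k\to f$ follows by squeezing: $u_k\leq\phi_k\leq e_k$ pointwise, while $u_k=\inf_{n\geq k}e_n\uparrow\liminf_n e_n=f$ and $e_k\to f$ by construction. For the upper expectations, squeezing again gives $\upprevvovkk(u_k\vert s)\leq\upprevvovkk(\phi_k\vert s)\leq\upprevvovkk(u_k\vert s)+1/k$, so $\upprevvovkk(\phi_k\vert s)\to\upprevvovkk(f\vert s)$, whether this limit is finite or $+\infty$. Finally I would turn $\{\phi_k\}$ into an index-matched sequence of $n$-measurable gambles by applying the padding construction of Lemma~\ref{lemma: limits of n-measurables are equal to limits of finitary} to $\{\phi_k\}$ (with the free constant chosen to be $B$). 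This produces a sequence $\{f_n\}_{n\in\natz}$ of $n$-measurable variables which is a ``stutter'' of $\{\phi_k\}$: every $f_n$ equals some $\phi_{\kappa(n)}$ with $\kappa$ non-decreasing, surjective and $\kappa(n)\to+\infty$. Hence all $f_n$ are gambles bounded below by $B$, the sequence is uniformly bounded below, and it inherits both limits, so $f_n\to f$ pointwise and $\upprevvovkk(f_n\vert s)\to\upprevvovkk(f\vert s)$. (If one wished, the last convergence could even be relaxed to $\limsup_n\upprevvovkk(f_n\vert s)\leq\upprevvovkk(f\vert s)$ and combined with Fatou's lemma, Corollary~\ref{corollary: Fatou general}, since $f=\liminf_n f_n$.)

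The one genuinely non-routine point — and hence the main obstacle — is identifying the right ``shape'' for the approximants: the nested truncation $\inf_{n\geq k}$ (non-decreasing in $k$) followed by $\min_{k\leq n\leq N}$ (non-increasing in $N$) is exactly what makes the two continuity theorems applicable in succession, while the elementary squeeze $u_k\leq\phi_k\leq e_k$ simultaneously forces pointwise convergence to $f$. Everything else — verifying the hypotheses of Theorems~\ref{theorem: upward convergence game} and~\ref{Theorem: cont. wrt non-increasing n-measurables}, the diagonal choice of the $N_k$, and the re-indexing via Lemma~\ref{lemma: limits of n-measurables are equal to limits of finitary} — is bookkeeping, with the only subtlety being to confirm that $u_k$, $\ell_{k,N}$ and $\phi_k$ are genuine gambles so that all the intermediate upper expectations stay real.
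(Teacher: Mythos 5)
Your proof is correct, but it takes a genuinely different route from the paper's. The paper builds its approximants explicitly as $f_{h_\ell}^{\wedge \ell}$, i.e.\ upper cuts combined with the stopping-index construction of Lemma~\ref{Lemma: upprev of n-measurables < upprev of limit}; it then needs the Moore--Smith double-limit Lemma~\ref{lemma: Moore-smith} to get pointwise convergence, and it obtains $\lim_\ell\upprevvovkk(f_{h_\ell}^{\wedge\ell}\vert s)=\upprevvovkk(f\vert s)$ by sandwiching between Theorem~\ref{theorem: upward convergence game} (applied to $f^{\wedge\ell}\uparrow f$) and Fatou's lemma (Corollary~\ref{corollary: Fatou general}). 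You instead reduce everything to the two monotone-continuity theorems already on record: the running infima $u_k=\inf_{n\geq k}e_n$ give a non-decreasing approach to $f$ handled by Theorem~\ref{theorem: upward convergence game}, the finite minima $\ell_{k,N}$ give, for each $k$, a non-increasing sequence of finitary bounded-above gambles handled by Theorem~\ref{Theorem: cont. wrt non-increasing n-measurables}, and a diagonal choice of $N_k$ plus the elementary squeeze $u_k\leq\phi_k\leq e_k$ does the rest; the heavy Lemma~\ref{Lemma: upprev of n-measurables < upprev of limit} thus enters only indirectly, as the engine behind Theorem~\ref{Theorem: cont. wrt non-increasing n-measurables}, which you use as a black box. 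What your approach buys is modularity and a shorter verification (no Moore--Smith lemma, no explicit Fatou step, no $(\sup h)$-measurability bookkeeping); what the paper's approach buys is a more explicit description of the approximating gambles, which it reuses in the surrounding discussion. Both proofs finish identically with the padding construction of Lemma~\ref{lemma: limits of n-measurables are equal to limits of finitary}. Two cosmetic points: replace $1/k$ by $1/(k+1)$ (or start at $k=1$) so the error term is defined at $k=0$, and note that the first few padded terms equal the constant $B$ rather than some $\phi_{\kappa(n)}$---harmless, since with $c=B$ they are gambles bounded below by $B$ and there are only finitely many of them.
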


Both of the results above have already led to valuable theoretical insights in the literature: Theorem~\ref{Theorem: cont. wrt non-increasing n-measurables} was crucial to obtain an equivalence result about hitting times and probabilities in imprecise Markov chains \cite{8627473}.
Theorem~\ref{Theorem: limit of n-meas} on the other hand, further establishes the importance of finitary variables and their limits when it comes to characterising $\upprevvovkk{}$ \cite{Tjoens2019NaturalExtensionISIPTA,TJOENS202130}.
In fact, Theorem~\ref{Theorem: limit of n-meas} was a key result in obtaining our alternative characterisation presented in \cite{Tjoens2019NaturalExtensionISIPTA} (or, more recently, the characterisation presented in \cite{TJOENS202130}).
For this reason, a version of Theorem~\ref{Theorem: limit of n-meas} was also already included in an online report \cite{Tjoens2019ContinuityARXIV} that serves as a technical reference for \cite{Tjoens2019NaturalExtensionISIPTA}.



The remainder of this section is devoted to the proofs of Theorem~\ref{Theorem: cont. wrt non-increasing n-measurables} and~\ref{Theorem: limit of n-meas}.
We start with two technical lemmas that are key in proving them both.

\begin{lemma}\label{Lemma: convergence of n-measurables}
Consider any global variable $h\in\setofextvariables$ taking values in $\natz{}$.
If\/ $h(\omega)=h(\tilde{\omega})$ for every $\omega\in\samplespace$ and $\tilde{\omega}\in\Gamma(\omega^{h(\omega)})$, then $h$ is bounded below and above---and therefore a gamble---and $h$ is\/ $(\sup h)$-measurable, with $\sup h \coloneqq \sup_{\omega\in\Omega} h(\omega)\in\natz{}$.
\end{lemma}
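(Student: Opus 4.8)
The plan is to establish two things: that $h$ is bounded above by some $N\in\natz$, and that $h$ is $(\sup h)$-measurable. Once $h$ is bounded above, it is automatically bounded below by $0$, since it takes values in $\natz$, so it is a gamble; moreover it then takes only the finitely many values $\{0,\dots,N\}$, whence $\sup h=\max h\in\natz$. The measurability will then follow quickly. The real content is the boundedness, and this is exactly where the finiteness of $\statespace$ is used.

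First I would reformulate the hypothesis: for every $\omega\in\samplespace$, on the cylinder event $\Gamma(\omega^{h(\omega)})$ the variable $h$ is constant and equal to $h(\omega)$, which is the length of the situation $\omega^{h(\omega)}$. Call a situation $s$ of length $n$ \emph{good} if $h(\omega)\le n$ for all $\omega\in\Gamma(s)$. Two observations: (i) if $s\sqsubseteq t$ and $s$ is good, then $t$ is good, since $\Gamma(t)\subseteq\Gamma(s)$; equivalently, every situation preceding a non-good situation is itself non-good; and (ii) for every $\omega$, the situation $\omega^{h(\omega)}$ is good, by the reformulated hypothesis, so every path goes through at least one good situation. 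Now suppose, towards a contradiction, that $h$ is unbounded. Then for no $N$ are all situations of length $N$ good, so by (i) the set of non-good situations is a subset of $\situations$ closed under taking preceding situations and containing situations of every length; since $\statespace$ is finite, this tree is finitely branching, so by König's lemma it contains a path $\omega^\ast$ all of whose initial situations are non-good --- contradicting (ii) applied to $\omega^\ast$. (Alternatively, this can be phrased as a compactness argument: $\samplespace=\statespace^\nats$ is compact in the product topology, the good cylinders $\Gamma(\omega^{h(\omega)})$ form an open cover of it, and the largest length occurring among a finite subcover is an upper bound for $h$.) Hence $h\le N$ for some $N\in\natz$, so $\sup h=\max h\in\natz$.

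Finally, for measurability, set $N\coloneqq\sup h$. For any $\omega\in\samplespace$ we have $h(\omega)\le N$, so $\Gamma(\omega^N)\subseteq\Gamma(\omega^{h(\omega)})$, and by hypothesis $h$ equals $h(\omega)$ throughout $\Gamma(\omega^{h(\omega)})$, hence throughout $\Gamma(\omega^N)$. Thus $h$ is constant on $\Gamma(s)$ for every $s\in\statespace^N$, i.e.\ $h=\tilde h(X_{1:N})$, where $\tilde h$ maps each $s\in\statespace^N$ to the common value of $h$ on $\Gamma(s)$; this is precisely $N$-measurability, and together with the boundedness this finishes the proof. The step I expect to be the main obstacle is the boundedness argument: the statement genuinely fails without finiteness of $\statespace$ (for instance $\statespace=\nats$ with $h(\omega)=\omega_1$ satisfies the hypothesis but is unbounded), so no direct pointwise manipulation suffices and a König/compactness argument is essential.
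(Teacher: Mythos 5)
Your proposal is correct and in essence the same argument as the paper's: the paper also proves boundedness by contradiction, using the finiteness of $\statespace{}$ to recursively pigeonhole a path $\omega$ along whose cylinders $\Gamma(\omega^n)$ the variable $h$ stays unbounded, and then contradicts the hypothesis at level $n=h(\omega)$ --- i.e.\ it carries out by hand exactly the K\"onig/compactness step you invoke, just tracking ``$\sup_{\Gamma(s)}h=+\infty$'' instead of your ``non-good'' situations. Your measurability argument (from $h(\omega)\leq\sup h$ infer $\Gamma(\omega^{\sup h})\subseteq\Gamma(\omega^{h(\omega)})$ and apply the hypothesis) is identical to the paper's.
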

\begin{proof}
$h$ is clearly bounded below because it takes value in $\natz$.
Assume \emph{ex absurdo} that $h$ is not bounded above; so $\sup h =+\infty$.
Then we have that
\begin{equation*}
 \sup_{\omega\in\samplespace} h(\omega)=\sup_{x_{1}\in\statespace} \sup_{\omega\in\Gamma(x_{1})} h(\omega)=+\infty.
\end{equation*}
Since $\statespace$ is finite, there is clearly some $x_{1}^\ast\in\statespace$ for which $\sup_{\omega\in\Gamma(x_{1}^\ast)} h(\omega)=+\infty$.
Similarly, we also find that
\begin{equation*}
 \sup_{\omega\in\Gamma(x_{1}^\ast)} h(\omega)=\sup_{x_{2}\in\statespace} \sup_{\omega\in\Gamma(x_{1}^\ast x_{2})} h(\omega)=+\infty.
\end{equation*}
Since $\statespace$ is finite, there is again some $x_{2}^\ast\in\statespace$ for which $\sup_{\omega\in\Gamma(x_{1}^\ast x_{2}^\ast)} h(\omega)=+\infty$.
We can continue in this way and construct a path $\omega=x_{1}^\ast x_{2}^\ast ... x_{n}^\ast ...$ for which
\begin{equation}\label{Eq: Lemma: convergence of n-measurables}
	\sup_{\omega'\in\Gamma(\omega^n)} h(\omega')=+\infty \text{ for all } n\in\natz.
\end{equation}
However, $h$ takes values in $\natz$, so $h(\omega)\in\natz$.
This implies, together with the assumption that $h(\omega')=h(\omega)$ for every $\omega'\in\Gamma(\omega^{h(\omega)})$, that
\begin{equation*}
	\sup \set*{h(\omega') \colon \omega'\in\Gamma\big(\omega^{h(\omega)}\big)}
	= \sup \set*{h(\omega) \colon \omega'\in\Gamma\big(\omega^{h(\omega)}\big)}=h(\omega) \in\natz.
\end{equation*}
This is in contradiction with Equation~\eqref{Eq: Lemma: convergence of n-measurables} [for $n=h(\omega)$], so $h$ is bounded above, which together with the fact that $h$ is bounded below, implies that $h$ is a gamble.
The fact that $h$ is bounded above and that it takes values in $\natz{}$, also clearly implies that $\sup h \in\natz{}$.
To see that $h$ is $(\sup h)$-measurable, consider any $\omega\in\Omega$ and any $\tilde{\omega}\in\Gamma(\omega^{\sup h})$.
Then $\tilde{\omega}\in\Gamma(\omega^{h(\omega)})$ because $h(\omega)\leq\sup h$ and therefore, by assumption, we have that $h(\omega) =  h(\tilde{\omega})$.
\end{proof}

For the following technical lemma, we will associate with any sequence $\{f_n\}_{n\in\natz}$ of $n$-measurable gambles and any global variable $h\in\setofextvariables$ taking values in $\natz$, the global variable $f_{h}\in\setofextvariables$ defined by $f_h(\omega) \coloneqq f_{h(\omega)}(\omega)$.
We will also need the notion of a \emph{cut} $U\subset\situations{}$: a collection of pairwise incomparable situations.
We call a cut $U$ \emph{complete} if for all $\omega\in\Omega$ there is some $u\in U$ such that $\omega\in\Gamma(u)$.
Otherwise, we call $U$ \emph{partial}.
We will also use the simpler notation $s$ to denote the cut $\{s\}$ that consists of the single situation $s\in\situations$.
For any situation $s$ and any cut $U$, we write $s \sqsubset U$ if $s \sqsubset u$ for all $u\in U$.
So, if $s\not=\Box$ and $s\sqsubset U$, then $U$ must be partial.
Conversely, we write $U \sqsubset s$ if there is a $u\in U$ such that $u \sqsubset s$.
In a similar way, we extend the relations $\sqsubseteq$, $\sqsupset$ and $\sqsupseteq$.
Analogously to what we did before for situations, we say that a path $\omega\in\samplespace$ goes through a cut $U$ when there is some $n\in\natz$ such that $\omega^n\in U$.

\begin{lemma}\label{Lemma: upprev of n-measurables < upprev of limit}
Consider any $s\in\situations{}$ and any sequence $\{f_n\}_{n\in\natz}$ of $n$-measurable gambles that converges pointwise to a variable $f\in\setofextvariables$ that is bounded above.
Then, for any $m\in\natz$ and any $\alpha\in\reals$ such that $\upprevvovkk(f \vert s) < \alpha$, there is a gamble $h\colon\Omega\to\natz$ that is $(\sup h)$-measurable such that $m\leq h$ and \/ $\upprevvovkk{}(f_h \vert s) \leq \alpha$.
\end{lemma}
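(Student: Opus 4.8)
The plan is to realise $h$ as a bounded stopping time and to prove the inequality $\upprevvovkk(f_h\vert s)\leq\alpha$ by stopping a suitable supermartingale. First I would fix $m\in\natz$ and $\alpha\in\reals$ with $\upprevvovkk(f\vert s)<\alpha$, write $k$ for the length of $s$ and set $m'\coloneqq\max\{m,k\}$, and choose a real $\alpha'$ with $\upprevvovkk(f\vert s)<\alpha'<\alpha$ together with an $\epsilon\in\posreals$ such that $\alpha'+\epsilon<\alpha$. By Definition~\ref{def:upperexpectation2} there is then a bounded below supermartingale $\martingale\in\setofextsupmartb$ with $\martingale(s)\leq\alpha'$ and $\liminf\martingale\geq_s f$. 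Let $B\in\reals$ be a lower bound for $\martingale$ and let $M\in\reals$ be such that $f\leq M$ (which exists since $f$ is bounded above).

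Next I would define $h\colon\Omega\to\natz$ by $h(\omega)\coloneqq m'$ for $\omega\notin\Gamma(s)$ and, for $\omega\in\Gamma(s)$, by $h(\omega)\coloneqq\min\bigl\{n\geq m'\colon\martingale(\omega^n)\geq f_n(\omega^n)-\epsilon\bigr\}$, where $f_n(\omega^n)$ denotes the constant value of the $n$-measurable variable $f_n$ on $\Gamma(\omega^n)$. The first point to check is that this minimum is attained (so that $h(\omega)\in\natz$) for every $\omega\in\Gamma(s)$: this follows from $\liminf\martingale(\omega)\geq f(\omega)$ together with $f_n(\omega)\to f(\omega)$, via a short case distinction according to whether $f(\omega)$ is real (where one uses $\martingale(\omega^n)>f(\omega)-\epsilon/2$ and $f_n(\omega)<f(\omega)+\epsilon/2$ eventually) or $f(\omega)=-\infty$ (where one uses $f_n(\omega)<B-1$ eventually while $\martingale(\omega^n)\geq B$); the case $f(\omega)=+\infty$ is excluded by $f\leq M$. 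Second, I would verify the consistency property $h(\tilde\omega)=h(\omega)$ for every $\tilde\omega\in\Gamma(\omega^{h(\omega)})$: since $h(\omega)\geq m'\geq k$, the situation $\omega^{h(\omega)}$ already encodes whether $\omega\in\Gamma(s)$, and when $\omega\in\Gamma(s)$ the defining minimum only involves the prefixes $\omega^{m'},\dots,\omega^{h(\omega)}$, all shared by $\tilde\omega$. Lemma~\ref{Lemma: convergence of n-measurables} then yields that $h$ is a gamble taking values in $\natz$ and is $(\sup h)$-measurable; moreover $m\leq m'\leq h$, and writing $N\coloneqq\sup h\in\natz$ we get that $f_h$ is a well-defined ($N$-measurable) variable.

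Finally I would build the stopped supermartingale $\martingale'$, setting $\martingale'(t)\coloneqq\martingale\bigl(t^{\,j}\bigr)$ with $j\coloneqq\min\{\ell(t),h(\omega)\}$ for any $\omega\in\Gamma(t)$ when $t\sqsupseteq s$ (well-defined by the consistency of $h$, where $\ell(t)$ is the length of $t$), and $\martingale'(t)\coloneqq\martingale(s)$ when $t\not\sqsupseteq s$. A routine case analysis — using \ref{coherence: const is const} at every situation where $\martingale'(t\andstate)$ is constant, and the supermartingale inequality for $\martingale$ at the situations below $s$ that have not yet stopped — shows that $\martingale'$ is a bounded below supermartingale with $\martingale'(s)=\martingale(s)$. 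Since $h\leq N$, on each $\omega\in\Gamma(s)$ the process $\martingale'$ is eventually equal to the constant $\martingale(\omega^{h(\omega)})$, so $\lim\martingale'(\omega)=\martingale(\omega^{h(\omega)})\geq f_{h(\omega)}(\omega^{h(\omega)})-\epsilon=f_h(\omega)-\epsilon$ by the definition of $h(\omega)$; off $\Gamma(s)$ the process $\martingale'$ is constant, so $\lim\martingale'$ exists and $\liminf(\martingale'+\epsilon)=\lim\martingale'+\epsilon\geq_s f_h$. As $\martingale'+\epsilon\in\setofextsupmartb$ by \ref{coherence: const add}, Definition~\ref{def:upperexpectation2} gives $\upprevvovkk(f_h\vert s)\leq(\martingale'+\epsilon)(s)=\martingale(s)+\epsilon\leq\alpha'+\epsilon<\alpha$, as wanted. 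I expect the main obstacles to be the consistency check that lets us invoke Lemma~\ref{Lemma: convergence of n-measurables} — so that boundedness of $h$, and hence finiteness of $N$, comes for free — and the bookkeeping needed to verify that the stopped process $\martingale'$ really is a supermartingale at every situation, including those incomparable with, or preceding, $s$.
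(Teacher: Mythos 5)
Your proposal is correct and takes essentially the same route as the paper's own proof: the same $\epsilon$-relaxed first-entrance time $h$ (first $n\geq\max\{m,\ell(s)\}$ with $\martingale(\omega^n)+\epsilon\geq f_n(\omega)$), the same prefix-consistency check feeding into Lemma~\ref{Lemma: convergence of n-measurables}, and the same stopped supermartingale argument — the paper realises the stopping via the cut $U=\{\omega^{h(\omega)}\colon\omega\in\samplespace\}$ and a process $\martingale_U$ that differs from your $\martingale'$ only in its (irrelevant) values outside $\Gamma(s)$, while your explicit choice of $\alpha'$ and $\epsilon$ with $\alpha'+\epsilon<\alpha$ is in fact a slightly tidier way of discharging $\epsilon$ than the paper's closing ``holds for every $\epsilon$'' step. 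The one small repair: at situations where $\martingale'(t\andstate)$ is constant, that constant may be $+\infty$ (the frozen value $\martingale(\omega^{h(\omega)})$ need not be real since $\martingale$ is extended real-valued), so there you should justify $\lupprev{t}(\martingale'(t\andstate))\leq\martingale'(t)$ by \ref{coherence: bounds} rather than \ref{coherence: const is const}, exactly as the paper does.
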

\begin{proof}
Fix any $m\in\natz$, any $\alpha\in\reals$ such that $\upprevvovkk(f \vert s) < \alpha$ and any $\epsilon\in\posreals$.
According to the definition of $\upprevvovkk(f \vert s)$, there is a supermartingale $\martingale\in\setofextsupmartb{}$ such that $\martingale(s)\leq\alpha$ and $\liminf\martingale \geq_s f$.
We start by showing that, for any $\omega\in\Gamma(s)$ and any $n^\ast\in\natz$, there is a natural number $n\geq n^\ast$ such that $\martingale(\omega^n) + \epsilon \geq f_n(\omega)$.

So consider any $\omega\in\Gamma(s)$.
First note that $\liminf \martingale(\omega) + \epsilon > f(\omega)$ because $\liminf\martingale\geq_s f$, $\liminf\martingale(\omega)> -\infty$ [$\martingale$ is bounded below] and $f(\omega)<+\infty$ [$f$ is bounded above].
This implies that there is a real $\beta$ such that $\liminf \martingale(\omega) + \epsilon > \beta > f(\omega)$.
Then, since $\{f_n(\omega)\}_{n\in\natz}$ converges to $f(\omega)$ and $\beta$ is a real such that $\beta > f(\omega)$, there is an index $N(\omega)\in\natz$ such that $\beta > f_n(\omega)$ for all $n\geq N(\omega)$.
Furthermore, by the definition of the limit inferior and the fact that $\beta$ is a real such that $\liminf \martingale(\omega) + \epsilon > \beta$, there is a second index $M(\omega)\in\natz$ such that $\martingale(\omega^n) + \epsilon > \beta$ for all $n \geq M(\omega)$.
Combined with the previous, we obtain that $\martingale(\omega^n) + \epsilon > \beta > f_n(\omega)$ for all $n\geq \max\{N(\omega),M(\omega)\}$.
Since this holds for all $n \geq \max\{N(\omega),M(\omega)\}$, there is for any $n^\ast\in\natz$ also an $n\geq n^\ast$ such that $\martingale(\omega^n)+\epsilon\geq f_n(\omega)$.

Let $\ell$ be the length of the string $s$ and consider the variable $h\in\setofextvariables$ defined by
\begin{equation*}
h(\omega) \coloneqq
\begin{cases}
\inf \big\{n \geq \max\{\ell,m\} \colon \martingale(\omega^n)+\epsilon \geq f_n(\omega) \big\} &\text{ if } \omega\in\Gamma(s); \\
\max\{\ell,m\} &\text{ otherwise, }
\end{cases}
\text{ for all } \omega\in\Omega.
\end{equation*}
It clearly follows from the argument above that $h$ takes values in $\natz$.
We will now moreover show that $h(\omega)=h(\tilde{\omega})$ for any $\omega\in\Omega$ and any $\tilde{\omega}\in\Gamma(\omega^{h(\omega)})$, implying that $h$ satisfies the conditions in Lemma~\ref{Lemma: convergence of n-measurables}.

Consider any $\omega\in\Omega$ and any $\tilde{\omega}\in\Gamma(\omega^{h(\omega)})$.
We distinguish two cases: $\omega\in\Gamma(s)$ and $\omega\not\in\Gamma(s)$.
If $\omega\in\Gamma(s)$, then it follows from the definition of $h$ that $\martingale(\omega^{h(\omega)})+\epsilon \geq f_{h(\omega)}(\omega)$.
Since $\omega^{h(\omega)}=\tilde{\omega}^{h(\omega)}$ [because $\tilde{\omega}\in\Gamma(\omega^{h(\omega)})$] and since $f_{h(\omega)}$ is $h(\omega)$-measurable, this implies that $\martingale(\tilde{\omega}^{h(\omega)})+\epsilon \geq f_{h(\omega)}(\tilde{\omega})$.
Then, according to the definition of $h$ and since $\tilde{\omega}\in\Gamma(s)$ [because $h(\omega)\geq\ell$ and $\omega\in\Gamma(s)$, and therefore $\tilde{\omega}\in\Gamma(\omega^{h(\omega)})\subseteq\Gamma(s)$], we have that $h(\tilde{\omega})\leq h(\omega)$.
On the other hand, since $\tilde{\omega}\in\Gamma(s)$ and $\omega\in\Gamma(\tilde{\omega}^{h(\tilde{\omega})})$ [because $\omega^{h(\omega)}=\tilde{\omega}^{h(\omega)}$ and $h(\tilde{\omega}) \leq h(\omega)$], we can infer, in exactly the same way as before, that $h(\omega) \leq h(\tilde{\omega})$.
So we conclude that $h(\omega) = h(\tilde{\omega})$ in case that $\omega\in\Gamma(s)$.
If $\omega\not\in\Gamma(s)$, then $\tilde{\omega}\not\in\Gamma(s)$ because $h(\omega)\geq\ell$ and therefore $\Gamma(\omega^{h(\omega)})\cap\Gamma(s)=\emptyset$.
Then it follows immediately from the definition of~$h$ that $h(\omega)=h(\tilde{\omega})$.
Hence, $h$ satisfies the conditions in Lemma~\ref{Lemma: convergence of n-measurables}, so we have that $\sup h \in\natz$ and that $h$ is a $(\sup h)$-measurable gamble.
Furthermore, we trivially have that $h \geq \ell$ and $h\geq m$.

Let $U \coloneqq\{t\in\situations \colon (\exists \omega\in\samplespace)  t=\omega^{h(\omega)} \}$.
Note that any two (different) situations $t$ and $u$ in $U$ are incomparable.
Indeed, assume \emph{ex absurdo} that this is not the case.
Then there are at least two different situations $t,u \in U$ such that $t \sqsubseteq u$.
Let $\omega$ and $\tilde{\omega}$ be two paths such that $t=\omega^{h(\omega)}$ and $u=\tilde{\omega}^{h(\tilde{\omega})}$.
Since $t \sqsubseteq u$, we have that $\tilde{\omega}\in\Gamma(t)=\Gamma(\omega^{h(\omega)})$, which due to our previous considerations implies that $h(\tilde{\omega})=h(\omega)$.
Hence, taking into account that $\tilde{\omega}\in\Gamma(\omega^{h(\omega)})$, we infer that $\omega^{h(\omega)}=\tilde{\omega}^{h(\tilde{\omega})}$ and therefore that $t=u$.
This is in contradiction with our assumption that $t$ and $u$ are different, so we conclude that all situations in $U$ are pairwise incomparable and therefore, that $U$ is a cut.
$U$ is also a complete cut. To see why, observe that since $h$ takes values in $\natz$, it follows that, for any $\omega\in\samplespace{}$, $\omega^{h(\omega)}$ is a situation, which by definition is an element of $U$; it is moreover clear that for this situation $\omega^{h(\omega)}$, we have that $\omega\in\Gamma(\omega^{h(\omega)})$. Hence, $U$ is a complete cut, because for all $\omega\in\samplespace$, there is a situation $u\in U$ such that $\omega\in\Gamma(u)$.

For any situation $t \sqsupseteq U$, let us write $u(t)$ to denote the unique situation in $U$ such that $u(t) \sqsubseteq t$.
That $u(t)$ is unique follows from the fact that the situations in $U$ are incomparable.
Indeed, if there would be a second $u'(t)\in U$ such that $u'(t) \sqsubseteq t$, this would imply that either $u(t) \sqsubseteq u'(t)$ or $u'(t) \sqsubseteq u(t)$, which is impossible since $u(t)$ and $u'(t)$ are incomparable.
Now let $\martingale_U$ be the extended real process defined by
\begin{align*}
\martingale_U(t) \coloneqq
\begin{cases}
\martingale(u(t)) &\text{ if } U \sqsubset t; \\
\martingale(t) &\text{ if } U \not\sqsubset t,
\end{cases}
\text{ for all } t\in\situations.
\end{align*}
The process $\martingale_U$ is bounded below because $\martingale$ is bounded below.
To see that $\martingale_U$ is a supermartingale, note that
\begin{align*}
\martingale_U(t\andstate) \coloneqq
\begin{cases}
\martingale(u(t)) &\text{ if } U \sqsubseteq t; \\
\martingale(t\andstate) &\text{ if } U \not\sqsubseteq t,
\end{cases}
\text{ for all } t\in\situations.
\end{align*}
So for any situation $t\sqsupseteq U$, we have that $\martingale_U(t)=\martingale(u(t))$ and that $\martingale_U(t\cdot)=\martingale(u(t))$, which implies that $\lupprev{t}(\martingale_{U}(t\cdot))=\martingale_U(t)$ because of \ref{coherence: bounds}.
On the other hand, for any situation $t\not\sqsupseteq U$, we also have that $\lupprev{t}(\martingale_{U}(t\cdot))\leq\martingale_U(t)$ because $\lupprev{t}(\martingale(t\cdot))\leq\martingale(t)$ [$\martingale$ is a supermartingale].
As a consequence, $\martingale_U$ is a bounded below supermartingale.

For any $\omega\in\samplespace$, we now let $u(\omega)$ be the unique situation in $U$ such that $\omega\in\Gamma(u(\omega))$.
Then clearly $u(\omega)=\omega^{h(\omega)}$.
Moreover, for an $m\in\natz$ large enough such that $U\sqsubset\omega^m$, we also have that $\martingale_U(\omega^m) = \martingale(u(\omega^m)) = \martingale(u(\omega))$.
Hence,
\begin{align*}
\lim_{m \to +\infty} \martingale_U(\omega^m)=\lim_{m \to +\infty} \martingale(u(\omega))=\martingale(u(\omega))=\martingale(\omega^{h(\omega)}) \text{ for all } \omega\in\samplespace.
\end{align*}
Therefore, by definition of $h$, we have that
\begin{align*}
	\lim_{m \to +\infty} \martingale_U(\omega^m)+\epsilon
	=\martingale(\omega^{h(\omega)})+\epsilon
	\geq f_{h(\omega)}(\omega)
	\eqqcolon f_{h}(\omega) \text{ for all } \omega\in\Gamma(s).
\end{align*}
Then by Definition~\ref{def:upperexpectation2} and taking into account that $\martingale_U\in\setofextsupmartb{}$ and therefore $[\martingale_U + \epsilon]\in\setofextsupmartb{}$ [because the local models $\lupprev{s}$ satisfy \ref{coherence: const add}], it follows that $\upprevvovkk(f_h\vert s)\leq\martingale_U(s)+\epsilon$.
Moreover, $\martingale_U(s)=\martingale(s)$ because $h \geq \ell$ and therefore $U \not\sqsubset s$, so we also have that $\upprevvovkk(f_h\vert s)\leq\martingale(s)+\epsilon\leq\alpha+\epsilon$.
Since this inequality holds for any $\epsilon\in\posreals$, we infer that $\upprevvovkk(f_h\vert s)\leq\alpha$, which together with the fact that $h$ is a $(\sup h)$-measurable gamble such that $m\leq h$, establishes the lemma.
\end{proof}

The idea underlying the proof of Theorem~\ref{Theorem: cont. wrt non-increasing n-measurables} originates in a result by some of us \cite[Theorem~3]{DeBock2014Continuity}, however, once more, real supermartingales were adopted there.
Moreover, our result here considers sequences of (extended real) finitary variables that are bounded above, instead of sequences of $n$-measurable gambles.

\medskip
\begin{proofof}{Theorem~\ref{Theorem: cont. wrt non-increasing n-measurables}}
Fix any $s\in\situations$ and any non-increasing sequence $\{f_n\}_{n\in\natz}$ of finitary, bounded above variables that converges pointwise to a variable $f\in\setofextvariables$.
Note that $f$ is then also bounded above.
Because $f_n \geq f_{n+1} \geq f$ for all $n\in\natz$ and $\upprevvovkk$ is monotone [\ref{vovk coherence 4}], the limit $\lim_{n\to+\infty} \upprevvovkk(f_n \vert s)$ exists and we have that $\lim_{n\to+\infty} \upprevvovkk(f_n \vert s) \geq \upprevvovkk(f \vert s)$.
So we are left to show that $\lim_{n\to+\infty} \upprevvovkk(f_n \vert s) \leq \upprevvovkk(f \vert s)$.

Consider the sequence $\{f_n^{\vee -n}\}_{n\in\natz}$ and note that it suffices to show that $\lim_{n\to+\infty} \upprevvovkk(f_n^{\vee -n} \vert s) \leq \upprevvovkk(f \vert s)$, where the limit $\lim_{n\to+\infty} \upprevvovkk(f_n^{\vee -n} \vert s)$ exists because $\{f_n^{\vee -n}\}_{n\in\natz}$ is clearly non-increasing [since $\{f_n\}_{n\in\natz}$ is non-increasing] and $\upprevvovkk$ is monotone [\ref{vovk coherence 4}].
Indeed, it will then follow that $\lim_{n\to+\infty} \upprevvovkk(f_n \vert s) \leq \upprevvovkk(f \vert s)$ because $f_n \leq f_n^{\vee -n}$ for all $n\in\natz$ and therefore, by \ref{vovk coherence 4}, that $\lim_{n\to+\infty} \upprevvovkk(f_n \vert s) \leq \lim_{n\to+\infty} \upprevvovkk(f_n^{\vee -n} \vert s)\leq \upprevvovkk(f \vert s)$.

Since $\{f_n\}_{n\in\natz}$ is a sequence of finitary variables that converges non-increasingly to $f$, the same holds for the sequence $\{f_n^{\vee -n}\}_{n\in\natz}$.
In fact, $\{f_n^{\vee -n}\}_{n\in\natz}$ is even a sequence of finitary gambles because each $f_n$ is bounded above.
Now let $g_n \coloneqq f_n^{\vee -n}$ for all $n\in\natz$ and consider the sequence $\{g_n^{\xi}\}_{n\in\natz}$ defined by the recursive expression~\eqref{Eq: lemma: limits of n-measurables are equal to limits of finitary}, with $c\in\reals$ such that $c\geq g_0$.
Due to Lemma~\ref{lemma: limits of n-measurables are equal to limits of finitary}, $\{g_n^{\xi}\}_{n\in\natz}$ is a sequence of $n$-measurable variables.
Since $\{g_n\}_{n\in\natz} = \{f_n^{\vee -n}\}_{n\in\natz}$ is a sequence of finitary gambles that converges non-increasingly to $f$, it follows from Equation~\eqref{Eq: lemma: limits of n-measurables are equal to limits of finitary} and our choice of $c$ that $\{g_n^{\xi}\}_{n\in\natz}$ is a sequence of $n$-measurable gambles that converges non-increasingly to $f$.
Indeed, it can easily be checked that the transition from $\{g_n\}_{n\in\natz}$ to $\{g_n^{\xi}\}_{n\in\natz}$ preserves limit behaviour, the non-increasing character and the fact that the individual elements of the sequence are gambles.
In the same way, we can also see that
\begin{equation}\label{Eq: Theorem: cont. wrt non-increasing n-measurables}
\lim_{n\to+\infty}\upprevvovkk(f_n^{\vee -n} \vert s)=\lim_{n\to+\infty}\upprevvovkk(g_n\vert s)=\lim_{n\to+\infty}\upprevvovkk(g_n^{\xi}\vert s).
\end{equation}
Consider any real $\alpha > \upprevvovkk(f\vert s)$, which is guaranteed to exist because $f$ is bounded above and therefore, by \ref{vovk coherence 1}, $\upprevvovkk(f \vert s) < +\infty$.
Then since $\{g_n^\xi\}_{n\in\natz}$ is a sequence of $n$-measurable gambles that converges non-increasingly to~$f$, Lemma~\ref{Lemma: upprev of n-measurables < upprev of limit} implies that there is a gamble $h\colon\Omega\to\natz$ that is $(\sup h)$-measurable and is such that $\upprevvovkk(g_h^\xi \vert s) \leq \alpha$ [we simply let $m=0$ in the lemma].
Since $\{g_n^\xi\}_{n\in\natz}$ is non-increasing and $\upprevvovkk$ is monotone [\ref{vovk coherence 4}], we have that $\upprevvovkk(g_h^\xi \vert s) \geq \upprevvovkk(g_{\sup h}^\xi \vert s) \geq \lim_{n\to+\infty}\upprevvovkk(g_n^\xi \vert s)$, so we infer that $\lim_{n\to+\infty}\upprevvovkk(g_n^\xi \vert s) \leq \alpha$.
Recalling Equation~\eqref{Eq: Theorem: cont. wrt non-increasing n-measurables}, it follows that $\lim_{n\to+\infty}\upprevvovkk(f_n^{\vee -n} \vert s) \leq \alpha$.
Since this holds for any real $\alpha > \upprevvovkk(f\vert s)$, we conclude that $\lim_{n\to+\infty}\upprevvovkk(f_n^{\vee -n} \vert s) \leq \upprevvovkk(f\vert s)$ as desired.
\end{proofof}
\medskip

For any countable net $\{c_{\, (m,n)}\}_{m,n\in\natz}$ in $\extreals$, we say that $c \coloneqq \lim_{(m,n) \to +\infty} c_{\, (m,n)}$ is the Moore-Smith limit of $\{c_{\, (m,n)}\}_{m,n\in\natz}$ if, for each neighbourhood $A$ of $c$, there is a couple $(m^\ast,n^\ast)\in\natz^2$ such that $c_{\, (m,n)}\in A$ for all $m\geq m^\ast$ and all $n\geq n^\ast$.
Then, for any countable net $\{f_{\, (m,n)}\}_{m,n\in\natz}$ in $\setofextvariables$ such that $\lim_{(m,n) \to +\infty} f_{\, (m,n)}(\omega)$ exists for all $\omega\in\Omega$, we write $\lim_{(m,n) \to +\infty} f_{\, (m,n)}$ to denote the variable in $\setofextvariables$ defined by $\lim_{(m,n) \to +\infty} f_{\, (m,n)}(\omega)$ for all $\omega\in\Omega$.

\begin{lemma}\label{lemma: Moore-smith}
Consider any sequence $\{f_n\}_{n\in\natz}$ in $\setofgambles{}$ that converges pointwise to some variable $f \in \setofextvariablesb{}$.
Then we have that $\lim_{(m,n) \to +\infty} f_n^{\wedge m} = f$.
\end{lemma}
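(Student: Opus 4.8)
The plan is to reduce the claim to a pointwise statement. Since the notation $\lim_{(m,n)\to+\infty}f_{(m,n)}$ denotes, by definition, the variable whose value at each $\omega$ is the Moore--Smith limit $\lim_{(m,n)\to+\infty}f_{(m,n)}(\omega)$, it suffices to fix an arbitrary $\omega\in\samplespace{}$ and to verify that the net $\{f_n^{\wedge m}(\omega)\}_{m,n\in\natz}$ converges, with Moore--Smith limit $f(\omega)$, in $\extreals{}$ equipped with the order topology. I would write $a\coloneqq f(\omega)$ and record the two facts I will use: since $f\in\setofextvariablesb{}$ is bounded below, $a>-\infty$, so $a\in\reals{}\cup\{+\infty\}$; and since $\{f_n\}_{n\in\natz}$ converges pointwise to $f$, the sequence of reals $\{f_n(\omega)\}_{n\in\natz}$ converges to $a$ in $\extreals{}$. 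From here I would split into two cases according to the value of $a$, and in each case it is enough to treat the basic neighbourhoods of $a$ (intervals $(a-\epsilon,a+\epsilon)$ if $a\in\reals{}$, rays $(c,+\infty]$ if $a=+\infty$), since every neighbourhood contains such a basic one.

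For the case $a\in\reals{}$: given $0<\epsilon<1$, I would choose $n^\ast\in\natz$ with $\vert f_n(\omega)-a\vert<\epsilon$ for all $n\geq n^\ast$ (possible since $f_n(\omega)\to a$), and $m^\ast\in\natz$ with $m^\ast>a+1$ (possible since $a+1\in\reals{}$ and $\natz$ is unbounded above and contains $0$). Then for all $m\geq m^\ast$ and all $n\geq n^\ast$ one has $f_n(\omega)<a+\epsilon<a+1<m^\ast\leq m$, so $f_n^{\wedge m}(\omega)=\min\{f_n(\omega),m\}=f_n(\omega)\in(a-\epsilon,a+\epsilon)$; this is exactly the defining condition for $\lim_{(m,n)\to+\infty}f_n^{\wedge m}(\omega)=a$. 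For the case $a=+\infty$: given $c\in\reals{}$, I would choose $n^\ast\in\natz$ with $f_n(\omega)>c$ for all $n\geq n^\ast$, and $m^\ast\in\natz$ with $m^\ast>c$. Then for all $m\geq m^\ast$ and all $n\geq n^\ast$ both $f_n(\omega)>c$ and $m\geq m^\ast>c$, hence $f_n^{\wedge m}(\omega)=\min\{f_n(\omega),m\}>c$, i.e. $f_n^{\wedge m}(\omega)\in(c,+\infty]$; this gives $\lim_{(m,n)\to+\infty}f_n^{\wedge m}(\omega)=+\infty=a$. In either case $\lim_{(m,n)\to+\infty}f_n^{\wedge m}(\omega)=f(\omega)$, and since $\omega$ was arbitrary this yields $\lim_{(m,n)\to+\infty}f_n^{\wedge m}=f$, proving the lemma.

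I do not expect any real obstacle: the argument is a routine $\epsilon$--$N$ estimate once the definition of the Moore--Smith limit is unwound. The only points that need a little attention are the case distinction on whether $f(\omega)$ is real or $+\infty$ (the value $-\infty$ being ruled out by bounded-belowness of $f$) and the correct handling of the order-topology neighbourhoods of $+\infty$; the hypothesis that each $f_n$ is a gamble is used only implicitly (it guarantees that $f_n^{\wedge m}$ is well-defined and real-valued), but it is not actually needed for the pointwise double-limit computation above.
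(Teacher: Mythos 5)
Your proof is correct and follows essentially the same route as the paper's: fix $\omega$, split on whether $f(\omega)$ is real or $+\infty$ (the value $-\infty$ being excluded by bounded-belowness), and choose $n^\ast$ from pointwise convergence and $m^\ast$ large enough that the upper cut at $m$ no longer affects $f_n(\omega)$, so that the Moore--Smith condition holds. The only differences are cosmetic (the restriction $\epsilon<1$ and $m^\ast>a+1$, and the explicit mention of the order-topology neighbourhood basis), so nothing further is needed.
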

\begin{proof}
Consider any $\omega \in \samplespace{}$.
First consider the case that $f(\omega) \in \reals{}$ and fix any $\epsilon\in\posreals$.
Then there is an $n^\ast \in \natz$ such that $\vert f_{n}(\omega) - f(\omega) \vert \leq \epsilon$ for all $n \geq n^\ast$.
Consider any $m^\ast \geq f(\omega) + \epsilon$.
Then for all $n \geq n^\ast$ and all $m \geq m^\ast$, we have that $f_{n}(\omega) \leq f(\omega) + \epsilon \leq m$, so $f_{n}^{\wedge m}(\omega) = f_{n}(\omega)$ and therefore $\vert f_n^{\wedge m}(\omega) - f(\omega) \vert = \vert f_n(\omega) - f(\omega) \vert \leq \epsilon$.
So we have that $\lim_{(m,n) \to +\infty} f_n^{\wedge m}(\omega) = f(\omega)$.
If $f(\omega) = + \infty$, fix any $\alpha > 0$.
Then there is an $n^\ast \in \natz$ such that $f_{n}(\omega) \geq \alpha$ for all $n \geq n^\ast$.
If we now take $m^\ast \geq \alpha$, then clearly also $f_{n}^{\wedge m}(\omega) \geq \alpha$ for all $n \geq n^\ast$ and all $m \geq m^\ast$.
Hence, we have that $\lim_{(m,n) \to +\infty} f_n^{\wedge m}(\omega) = f(\omega)$ which, together with our earlier considerations, allows us to conclude that $\lim_{(m,n) \to +\infty} f_n^{\wedge m} = f$.
\end{proof}

\begin{proofof}{Theorem~\ref{Theorem: limit of n-meas}}
Fix any $s\in\situations$ and any $f\in\setoflimitsoffinmeasb{}$.
According to Proposition~\ref{proposition: limits of n-measurables are equal to limits of finitary}, there is a sequence $\{f_n\}_{n\in\natz}$ of $n$-measurable gambles that converges pointwise to $f$ and such that $B \leq f_n \leq \sup f$ for all $n\in\natz$, where $B$ is any real if $\inf f = +\infty$ and $B\coloneqq\inf f$ if $\inf f \in\reals$ [$\inf f = -\infty$ is impossible because $f$ is bounded below].
Fix any $\ell\in\nats$ and note that the sequence $\{f_n^{\wedge \ell}\}_{n\in\natz}$ is a sequence of $n$-measurable gambles that converges pointwise to $f^{\wedge \ell}$ because $\{f_n\}_{n\in\natz}$ is a sequence of $n$-measurable gambles that converges pointwise to $f$.
Moreover, $f^{\wedge \ell}$ is bounded above by $\ell$, so Lemma~\ref{Lemma: upprev of n-measurables < upprev of limit} guarantees that, for any $m\in\natz$ and any $\alpha\in\reals$ such that $\upprevvovkk(f^{\wedge \ell} \vert s) < \alpha$, there is a gamble $h\colon\Omega\to\natz$ that is $(\sup h)$-measurable such that $m\leq h$ and $\upprevvovkk(f_h^{\wedge \ell} \vert s) \leq \alpha$.
Since $f^{\wedge \ell}$ is both bounded below and above and $\upprevvovkk$ satisfies \ref{vovk coherence 5}, we have that $\upprevvovkk(f^{\wedge \ell} \vert s)\in\reals$ and therefore, that $\upprevvovkk(f^{\wedge \ell} \vert s) < \upprevvovkk(f^{\wedge \ell} \vert s) + 1/\ell$.
So in particular, Lemma~\ref{Lemma: upprev of n-measurables < upprev of limit} guarantees that, for any $m\in\natz$, there is a gamble $h\colon\Omega\to\natz$ that is $(\sup h)$-measurable such that $m\leq h$ and $\upprevvovkk(f_h^{\wedge \ell} \vert s) \leq \upprevvovkk(f^{\wedge \ell} \vert s) + 1/\ell$.
This holds for any $\ell\in\nats$, so there is a sequence $\{h_\ell\}_{\ell\in\natz}$ of $(\sup h_\ell)$-measurable gambles $h_\ell\colon\Omega\to\natz$ such that $h_0=0$ and, for all $\ell\in\nats$, satisfies $h_{\ell} \geq \sup h_{\ell-1}+1$ and $\upprevvovkk(f_{h_\ell}^{\wedge \ell} \vert s) \leq \upprevvovkk(f^{\wedge \ell} \vert s) + 1/\ell$.
We now show that $\{f_{h_\ell}^{\wedge \ell}\}_{\ell\in\natz}$ is a sequence of finitary gambles that is uniformly bounded below and that converges pointwise to $f$ such that $\lim_{\ell\to+\infty}\upprevvovkk(f_{h_\ell}^{\wedge \ell} \vert s) = \upprevvovkk(f \vert s)$.

Each $f_{h_\ell}^{\wedge \ell}$ is a gamble because it is bounded above by $\ell$ and because, since each $f_n$ is bounded below by $B$, $f_{h_\ell}^{\wedge \ell}$ is bounded below by $\min\{B,\ell\}$.
It then also immediately follows that $\{f_{h_\ell}^{\wedge \ell}\}_{\ell\in\natz}$ is uniformly bounded below by $\min\{B,0\}$.
Moreover, each $f_{h_\ell}^{\wedge \ell}$ is finitary because $h_\ell$ is $(\sup h_\ell)$-measurable.
Indeed, the latter implies that $h_\ell(\omega)=h_\ell(\tilde{\omega})$ for any $\omega\in\Omega$ and any $\tilde{\omega}\in\Gamma(\omega^{(\sup h_\ell)})$.
Then we also have that
\begin{equation*}
f_{h_\ell}^{\wedge \ell}(\omega)
\coloneqq f_{h_\ell(\omega)}^{\wedge \ell}(\omega)
=  f_{h_\ell(\tilde{\omega})}^{\wedge \ell}(\omega)
=  f_{h_\ell(\tilde{\omega})}^{\wedge \ell}(\tilde{\omega})
\eqqcolon f_{h_\ell}^{\wedge \ell}(\tilde{\omega}),
\end{equation*}
where the third step follows from the fact that $f_{h_\ell(\tilde{\omega})}^{\wedge \ell}$ is $h_\ell(\tilde{\omega})$-measurable and that $\tilde{\omega}\in\Gamma(\omega^{(\sup h_\ell)})\subseteq\Gamma(\omega^{h_\ell(\tilde{\omega})})$ because $h_\ell(\tilde{\omega}) \leq \sup h_\ell$.
As a consequence, $f_{h_\ell}^{\wedge \ell}$ is $(\sup h_\ell)$-measurable too and therefore, finitary.

To see that $\{f_{h_\ell}^{\wedge \ell}\}_{\ell\in\natz}$ converges pointwise to $f$, recall that $\{f_n\}_{n\in\natz}$ is a sequence of gambles that converges pointwise to $f\in\setoflimitsoffinmeasb$.
So Lemma~\ref{lemma: Moore-smith} implies that $\lim_{(\ell,n)\to+\infty}f_n^{\wedge\ell}=f$, meaning that, for any $\omega\in\Omega$ and any neighbourhood $A$ of $f(\omega)$, there is a couple $(\ell^\ast,n^\ast)\in\natz^2$ such that $f_n^{\wedge\ell}(\omega)\in A$ for all $\ell\geq\ell^\ast$ and all $n\geq n^\ast$.
Then, since $\{h_\ell\}_{\ell\in\natz}$ is increasing in $\ell$ [because $h_{\ell}\geq\sup h_{\ell-1}+1$ for all $\ell\in\nats$], there is an $\ell'\in\natz$ such that $\ell\geq\ell^\ast$ and $h_\ell(\omega)\geq n^\ast$ for all $\ell\geq\ell'$.
Together with the previous, this implies that $f_{h_\ell}^{\wedge\ell}(\omega)\coloneqq f_{h_\ell(\omega)}^{\wedge\ell}(\omega) \in A$ for all $\ell\geq\ell'$.
Since there is such an $\ell'\in\natz$ for any $\omega\in\Omega$ and any neighbourhood $A$ of $f(\omega)$, we have that $\lim_{\ell\to+\infty}f_{h_\ell}^{\wedge\ell}=f$.

Finally, to see that $\lim_{\ell\to+\infty}\upprevvovkk(f_{h_\ell}^{\wedge \ell} \vert s) = \upprevvovkk(f \vert s)$, recall that $\{h_\ell\}_{\ell\in\natz}$ is such that $\upprevvovkk(f_{h_\ell}^{\wedge \ell} \vert s) \leq \upprevvovkk(f^{\wedge \ell} \vert s) + 1/\ell$ for all $\ell\in\nats$.
So we have that
\begin{align*}
\limsup_{\ell\to+\infty} \upprevvovkk(f_{h_\ell}^{\wedge \ell} \vert s) \leq \limsup_{\ell\to+\infty} \Big(\upprevvovkk(f^{\wedge \ell} \vert s) + 1/\ell \Big) = \limsup_{\ell\to+\infty} \upprevvovkk(f^{\wedge \ell} \vert s) = \upprevvovkk(f\vert s),
\end{align*}
where the last equality follows from Theorem~\ref{theorem: upward convergence game} which we can apply because $\{f^{\wedge \ell}\}_{\ell\in\natz}$ is a non-decreasing sequence in $\setofextvariablesb$ [because $f$ is bounded below] that converges pointwise to $f\in\setofextvariablesb$.
On the other hand, we have that $\liminf_{\ell\to+\infty} \upprevvovkk(f_{h_\ell}^{\wedge \ell} \vert s) \geq \upprevvovkk(f\vert s)$ because of Corollary~\ref{corollary: Fatou general} and the fact that $\{f_{h_\ell}^{\wedge \ell}\}_{\ell\in\natz}$ is uniformly bounded below by $\min\{B,0\}$ and converges pointwise to $f$.
Hence, we conclude that $\lim_{\ell\to+\infty}\upprevvovkk(f_{h_\ell}^{\wedge \ell} \vert s) = \upprevvovkk(f \vert s)$.

Our last step of the proof consists in modifying $\{f_{h_\ell}^{\wedge \ell}\}_{\ell\in\natz}$ such that it becomes a sequence of $n$-measurable gambles that still is uniformly bounded below and converges pointwise to $f$ in such a way that it is continuous with respect to $\upprevvovkk$.
We consider the sequence $\{f'_\ell\}_{\ell\in\natz}\coloneqq\big\{(f_{h_\ell}^{\wedge \ell})^{\, \xi}\big\}_{\ell\in\natz}$ defined through the recursive expression~\eqref{Eq: lemma: limits of n-measurables are equal to limits of finitary}, with $c=0$.
Then Lemma~\ref{lemma: limits of n-measurables are equal to limits of finitary} guarantees that $\{f'_\ell\}_{\ell\in\natz}$ is a sequence of $n$-measurable variables.
Moreover, from \eqref{Eq: lemma: limits of n-measurables are equal to limits of finitary}, it should be clear that $\{f'_\ell\}_{\ell\in\natz}$ is also a sequence of gambles that is uniformly bounded below and that converges pointwise to $f$ such that $\lim_{\ell\to+\infty}\upprevvovkk(f'_\ell \vert s) = \upprevvovkk(f \vert s)$.
Indeed, in the same way as we have argued in the proof of Theorem~\ref{Theorem: cont. wrt non-increasing n-measurables}, this follows from the fact that $\{f_{h_\ell}^{\wedge \ell}\}_{\ell\in\natz}$ is a sequence of gambles that is uniformly bounded below and that converges pointwise to $f$ such that $\lim_{\ell\to+\infty}\upprevvovkk(f_{h_\ell}^{\wedge \ell} \vert s) = \upprevvovkk(f \vert s)$.
So we conclude that $\{f'_\ell\}_{\ell\in\natz}$ is a sequence of $n$-measurable gambles that is uniformly bounded below and that converges pointwise to $f$ such that $\lim_{\ell\to+\infty}\upprevvovkk(f'_\ell \vert s) = \upprevvovkk(f \vert s)$.
\end{proofof}

\section{Discussion and alternative versions of game-theoretic upper expectations}\label{sect: Discussion}

An important contribution of this article is that we provide an overview of the main properties that are satisfied by a particular game-theoretic upper expectation operator, for the case where state spaces are assumed to be finite.
Some of these properties were already shown to hold in a slightly different setting, and our contribution consists in adapting them to our setting.
Other properties, mainly situated in Section~\ref{Sect: Continuity} and~\ref{Sect: Continuity wrt n-measurables}, are entirely new and create novel insights.
Section~\ref{Sect: Continuity wrt n-measurables}, for example, shows that the game-theoretic upper expectation $\upprevvovkk{}$ behaves in a particularly interesting way with respect to (limits of) finitary variables:
Proposition~\ref{Prop: limits to general} implies that $\upprevvovkk{}$ is uniquely characterised by its values on the domain $\setoflimitsoffinmeasb{}$, and Theorems~\ref{Theorem: cont. wrt non-increasing n-measurables} and~\ref{Theorem: limit of n-meas} show that $\upprevvovkk{}$ has rather strong continuity properties when it comes to sequences of finitary variables.

These results have already proved useful elsewhere, in showing how game-theoretic upper expectations can be alternatively characterised without the use of supermartingales \cite{TJOENS202130}.
Moreover, in that same paper \cite{TJOENS202130}, we also use these properties to relate the game-theoretic upper expectation to a more traditional measure-theoretic model.
Specifically, we show there that game-theoretic and measure-theoretic upper expectations coincide when all local models $\lupprev{s}$ are precise---that is, when they correspond to linear expectations---and that, in a general imprecise setting, the game-theoretic upper expectation is always more conservative (higher) than the measure-theoretic one.
Recently, we discovered that an even stronger relation exists; game-theoretic and measure-theoretic upper expectations coincide on a domain that includes all monotone limits of finitary gambles and all bounded below Borel measurable variables \cite{TJoens2021Equivalence}.
We also refer to \cite[Section~9]{Vovk2019finance} for further details on the relation between game-theoretic and measure-theoretic models.

It also becomes apparent, as a consequence of Proposition~\ref{prop: extension with E6 is an extension}, that Shafer and Vovk's axiomatisation for a local upper expectation can be weakened---and therefore generalised---while leaving the resulting global game-theoretic upper expectation unaffected.
This weaker axiomatisation is moreover equivalent to a particular extension of coherence to extended real-valued variables; see Proposition~\ref{Prop: alt. characterisation upper exp.}.
Finally, our axiomatisation, and even Shafer and Vovk's stronger axiomatisation, do not suffice in order to guarantee compatibility of the local models with the global game-theoretic upper expectation.
Indeed, it follows from Example~\ref{example: Vovk's axioms do not necessarily satisfy lower cuts continuity} and Corollary~\ref{corollary: complete compatibility} that such compatibility is only guaranteed if we additionally impose \ref{ext coherence lower cuts continuity} on the local models.


Now, before we conclude the article, we want to clarify why we have chosen to work with the specific game-theoretic upper expectation in Definition~\ref{def:upperexpectation2}.
As already mentioned in the introduction, it seems appropriate to motivate this, because many slightly different versions of this definition have appeared in the literature \cite{Shafer:2005wx,shafer2011levy,ShaferVovk2014ITIP,DECOOMAN201618}, and it may perhaps not be entirely clear what these differences entail.

Most of the versions that appear in the literature only differ in how the supermartingales are allowed to behave.
In Section~\ref{Sect: equivalent characterisations}, it was shown that the definition of $\upprevvovkk{}$ is fairly robust with respect to changes that concern the limit behaviour of supermartingales and, more specifically, how this limit behaviour relates to the variable $f$ in consideration: see also \cite[Sections~7--8]{Vovk2019finance}.
However, there are two issues that do impact the resulting game-theoretic upper expectation, and which were often also considered in the past: whether to define supermartingales as real processes or as extended real processes, and whether they are required to be bounded below or not.
Let us first focus on the latter issue.

Considering that supermartingales are interpreted as capital processes, we think it is more natural to assume that they should be bounded below, simply because one cannot borrow an infinite or unbounded amount of money.
But there is more to it than this interpretational argument, because by restricting ourselves to bounded below supermartingales we avoid the undesirable situation where the upper expectation would be lower than the lower expectation \cite[Example~1]{DECOOMAN201618}.
This would occur irrespectively of whether supermartingales are assumed to be real-valued or extended real-valued.
Indeed, let $\setofubextsupmartb{}$ be the set of all (not necessarily bounded below) extended real-valued supermartingales, let $\smash{\setofubextsupmartbb{} \subseteq \setofubextsupmartb{}}$ be the subset of all real-valued supermartingales, and let $\ubupprevvovk{}$ and $\smash{\uboldupprevvovk{}}$ be the game-theoretic upper expectations that are obtained by replacing the set $\setofextsupmartb{}$ in Definition~\ref{def:upperexpectation2} with the respective sets $\setofubextsupmartb{}$ and $\setofubextsupmartbb{}$.
Since $\smash{\setofubextsupmartbb{} \subseteq \setofubextsupmartb{}}$, we clearly have that $\smash{\ubupprevvovk{}(f \vert s) \leq \uboldupprevvovk{}(f \vert s)}$ and $\smash{\uboldlowprevvovk{}(f \vert s) \leq \ublowprevvovk{}(f \vert s)}$ for all $f \in \setofextvariables{}$ and all $s \in \situations{}$.
Now, as was shown in \cite[Example~1]{DECOOMAN201618}, there is some $f \in \setofextvariables{}$ and some $s \in \situations{}$ (and some imprecise probabilities tree) such that $\smash{\uboldlowprevvovk{}(f \vert s) > \uboldupprevvovk{}(f \vert s)}$, which then also implies that $\ublowprevvovk{}(f \vert s) > \ubupprevvovk{}(f \vert s)$.
In order to prevent this from happening, we limit ourselves to bounded below (extended real-valued or real-valued) supermartingales.

Now we are left with the question of whether to allow supermartingales to become extended real-valued or not.
Let us write $\smash{\setofextsupmartbb{}}$ to denote the set of all real-valued bounded below supermartingales and let~$\smash{\oldupprevvovk{}}$ be the corresponding game-theoretic upper expectation:
\begin{equation*}
\oldupprevvovk (f \vert s) \coloneqq \inf\big\{ \martingale(s) \colon \martingale\in\setofextsupmartbb \text{ and } \liminf\martingale \geq_s f \big\} \ \text{ for all } f\in\setofextvariables \text{ and all } s\in\situations.
\end{equation*}
At first sight, we would be inclined to use the game-theoretic upper expectation $\oldupprevvovk$ with real-valued supermartingales, because it allows for a more direct interpretation.
Indeed, if we interpret a supermartingale as the possible evolution of a person's capital, it is not clear to us what it means if this person's capital were to become infinite in value.
Moreover, on the domain $\setofgambles{}\times\situations{}$, the version $\smash{\oldupprevvovk}$ with real supermartingales has rather desirable properties;
as we will show below with Proposition~\ref{prop: two versions of vovk upprev are equal}, it coincides with our version $\upprevvovkk{}$ on the domain $\setofgambles{} \times \situations{}$.
All things considered, it ought not to surprise that $\oldupprevvovk$ was frequently adopted in earlier work by both ourselves \cite{DECOOMAN201618,TJoens2018Continuity} and by Shafer and Vovk \cite{Shafer:2005wx,ShaferVovk2014ITIP}.
However, as we will soon point out, it has a rather undesirable property once it is applied to the entire domain $\setofextvariables{}\times\situations{}$.

Before we do so, we want to draw the attention to the fact that there is yet another, maybe even more intuitive, possible way of defining game-theoretic upper expectations: using bounded (above and below) real-valued supermartingales.
On the one hand, as we already explained, we consider it sensible to assume that supermartingales should be bounded below because one cannot borrow an infinite or unbounded amount of money.
On the other hand, for similar reasons, we could also impose that supermartingales should be bounded above; receiving an infinite or unbounded amount of money seems impossible in practice too.
We therefore think it is appropriate to also consider a definition of the game-theoretic upper expectation with bounded (and hence real-valued) supermartingales.
Let $\smash{\setofextsupmartbbb{}}$ be the set of all such bounded (real-valued) supermartingales and let~$\smash{\oldupprevvovkb{}}$ be the corresponding game-theoretic upper expectation:
\begin{equation*}
\oldupprevvovkb (f \vert s) \coloneqq \inf\big\{ \martingale(s) \colon \martingale\in\setofextsupmartbbb \text{ and } \liminf\martingale \geq_s f \big\} \ \text{ for all } f\in\setofextvariables \text{ and all } s\in\situations.
\end{equation*}
Despite that it allows for a rather direct interpretation, such a version $\oldupprevvovkb{}$ of the game-theoretic upper expectation has, to the best of our knowledge, never been used in the literature.
The reason, presumably, is that for extended real variables $f$ that are not bounded above, the value of $\oldupprevvovkb(f)$ does not provide any information because it will simply be equal to $+\infty$; that is, the infimum over the empty set.
However, as our next result shows, this version $\oldupprevvovkb{}$ coincides with our version $\upprevvovkk{}$---and therefore also with the version $\oldupprevvovk$ that uses real-valued bounded below supermartingales---on the domain $\setofgambles{}\times\situations{}$ of all gambles (and all situations).

\begin{proposition}\label{prop: two versions of vovk upprev are equal}
For any gamble $f\in\setofgambles{}$ and any situation $s\in\situations{}$, we have that $\upprevvovkk{}(f \vert s) = \oldupprevvovk{}(f \vert s) = \oldupprevvovkb{}(f \vert s)$.
\end{proposition}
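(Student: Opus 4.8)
The plan is to reduce everything to a single truncation argument, with the easy direction coming for free from set inclusions. Since every bounded real-valued supermartingale is in particular real-valued and bounded below, and every real-valued bounded below supermartingale is in particular extended real-valued and bounded below, we have $\setofextsupmartbbb{}\subseteq\setofextsupmartbb{}\subseteq\setofextsupmartb{}$. As an infimum over a larger set of supermartingales is no larger, this immediately yields $\upprevvovkk{}(f\vert s)\leq\oldupprevvovk{}(f\vert s)\leq\oldupprevvovkb{}(f\vert s)$ for every $f\in\setofextvariables{}$ and every $s\in\situations{}$. It therefore suffices to prove the reverse inequality $\oldupprevvovkb{}(f\vert s)\leq\upprevvovkk{}(f\vert s)$ when $f$ is a gamble.

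For this reverse inequality, I would first note that, since $f$ is bounded above, \ref{vovk coherence 1} gives $\upprevvovkk{}(f\vert s)\leq\sup_{\omega\in\Gamma(s)}f(\omega)<+\infty$, so we may fix an arbitrary real $\alpha>\upprevvovkk{}(f\vert s)$. By Definition~\ref{def:upperexpectation2} there is a supermartingale $\martingale\in\setofextsupmartb{}$ with $\martingale(s)\leq\alpha$ and $\liminf\martingale\geq_s f$; being bounded below and having $\martingale(s)\leq\alpha$, it is bounded below by some real $B$ and $\martingale(s)$ is real. I would then pick any real $c\geq\max\{\sup f,\martingale(s)\}$ and define the process $\martingale'$ by $\martingale'(t)\coloneqq\min\{\martingale(t),c\}$ for all $t\in\situations{}$, i.e. $\martingale'$ is $\martingale$ truncated from above at $c$.

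The two things to verify are that $\martingale'\in\setofextsupmartbbb{}$ and that $\liminf\martingale'\geq_s f$. For the former: $\martingale'$ is real-valued since it lies between $\min\{B,c\}$ and $c$; and it is a supermartingale because, for each $t\in\situations{}$, the local variable $\martingale'(t\andstate)=\min\{\martingale(t\andstate),c\}$ is in $\setofgenextvariablesb{}(\statespace{})$ and satisfies $\lupprev{t}(\martingale'(t\andstate))\leq\lupprev{t}(\martingale(t\andstate))\leq\martingale(t)$ by \ref{coherence: monotonicity} and the supermartingale property of $\martingale$, while also $\lupprev{t}(\martingale'(t\andstate))\leq\lupprev{t}(c)=c$ by \ref{coherence: monotonicity} and \ref{coherence: const is const}; hence $\lupprev{t}(\martingale'(t\andstate))\leq\min\{\martingale(t),c\}=\martingale'(t)$. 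For the limit condition I would use the elementary identity $\liminf_n\min\{a_n,c\}=\min\{\liminf_n a_n,c\}$ (valid for any extended real sequence $\{a_n\}$ and any real $c$), together with $\liminf\martingale\geq_s f$ and $f\leq\sup f\leq c$, to get $\liminf\martingale'(\omega)=\min\{\liminf\martingale(\omega),c\}\geq\min\{f(\omega),c\}=f(\omega)$ for all $\omega\in\Gamma(s)$. Consequently $\oldupprevvovkb{}(f\vert s)\leq\martingale'(s)=\min\{\martingale(s),c\}=\martingale(s)\leq\alpha$, and letting $\alpha\downarrow\upprevvovkk{}(f\vert s)$ gives $\oldupprevvovkb{}(f\vert s)\leq\upprevvovkk{}(f\vert s)$, which together with the first paragraph proves the triple equality.

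I do not expect a genuine obstacle here: the whole content is the observation that truncation from above preserves both the supermartingale inequality and (trivially) boundedness below, and that it is compatible with the limit inferior once the cap $c$ is a real constant at least $\sup f$; the choice of $c\geq\martingale(s)$ is only needed so that the truncation does not raise the relevant initial value. Notably, this argument uses $\liminf$ directly and does not need Proposition~\ref{prop: liminf can be replaced by lim in definition}, nor anything about $\martingale$ beyond its being bounded below and dominating $f$ in the limit.
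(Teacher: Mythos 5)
Your proposal is correct and follows essentially the same route as the paper: the easy chain of inequalities from the inclusions of supermartingale classes, and for the converse the truncation $\min\{\martingale,c\}$ with $c\geq\sup f$, whose supermartingale property and compatibility with the limit inferior are exactly what the paper isolates as Lemma~\ref{LemmaBoundedSupermartingale} and Lemma~\ref{Lemma: the minimum-limit inferior switch} (with cap $B=\sup f$). The only differences are cosmetic: the $\alpha$-approximation and the extra requirement $c\geq\martingale(s)$ are unnecessary, since $\oldupprevvovkb{}(f\vert s)\leq\min\{\martingale(s),\sup f\}\leq\martingale(s)$ already holds for every admissible $\martingale\in\setofextsupmartb{}$.
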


This result relies on the following two lemmas.

\begin{lemma}\label{LemmaBoundedSupermartingale}
For any $\martingale\in\setofextsupmartb$ and any $B\in\reals$, the process $\martingale_B$, defined by
$\martingale_B(s)\coloneqq\min\{\martingale(s), B\}$ for all $s\in\situations$, is a bounded real-valued supermartingale.
\end{lemma}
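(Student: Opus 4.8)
The plan is to check the two assertions of the lemma — boundedness and real-valuedness of $\martingale_B$, and the supermartingale inequality for $\martingale_B$ — essentially by unwinding the definitions, using only the defining axioms of the local models $\lupprev{s}$.

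For the first part, since $\martingale\in\setofextsupmartb$ there is some $M\in\reals$ with $\martingale\geq M$, and then for every $s\in\situations$ the value $\martingale_B(s)=\min\{\martingale(s),B\}$ lies in the bounded real interval $[\min\{M,B\},B]$. Hence $\martingale_B$ is a real-valued process that is bounded, below by $\min\{M,B\}$ and above by $B$.

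For the supermartingale property, I would fix an arbitrary $s\in\situations$ and observe that the associated local variable $\martingale_B(s\andstate)$ is just $\martingale(s\andstate)^{\wedge B}$, i.e.\ $\martingale_B(s\andstate)(x)=\min\{\martingale(sx),B\}$ for all $x\in\statespace$. Three facts are then relevant: $\martingale_B(s\andstate)\leq\martingale(s\andstate)$ pointwise; $\martingale_B(s\andstate)\leq B$ pointwise; and $\martingale_B(s\andstate)$, $\martingale(s\andstate)$ and the constant $B$ are all bounded below (the first two by $M$, the last trivially), so that the monotonicity axiom \ref{coherence: monotonicity} of $\lupprev{s}$ — which is postulated only on $\setofgenextvariablesb(\statespace)$ — may legitimately be applied to each of them. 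Applying \ref{coherence: monotonicity} to $\martingale_B(s\andstate)\leq\martingale(s\andstate)$ and using that $\martingale$ is a supermartingale gives $\lupprev{s}(\martingale_B(s\andstate))\leq\lupprev{s}(\martingale(s\andstate))\leq\martingale(s)$; applying \ref{coherence: monotonicity} to $\martingale_B(s\andstate)\leq B$ together with \ref{coherence: const is const} gives $\lupprev{s}(\martingale_B(s\andstate))\leq\lupprev{s}(B)=B$. Combining the two inequalities yields $\lupprev{s}(\martingale_B(s\andstate))\leq\min\{\martingale(s),B\}=\martingale_B(s)$. As $s$ was arbitrary, this is exactly the supermartingale inequality, and together with the first paragraph it establishes that $\martingale_B$ is a bounded real-valued supermartingale.

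There is no genuine obstacle here — the statement is routine. The one point that needs a moment of care is the applicability of \ref{coherence: monotonicity}: because that axiom is imposed only on bounded below variables, one must first note that $\martingale_B(s\andstate)$ inherits the lower bound of $\martingale$ before comparing it, via $\lupprev{s}$, with $\martingale(s\andstate)$ and with the constant $B$.
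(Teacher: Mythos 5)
Your proof is correct and follows essentially the same route as the paper's: monotonicity [\ref{coherence: monotonicity}] applied to $\martingale_B(s\andstate)\leq\martingale(s\andstate)$ and to $\martingale_B(s\andstate)\leq B$, together with \ref{coherence: const is const} and the supermartingale property of $\martingale$, after first noting that all variables involved are bounded below. The only cosmetic difference is that the paper splits into the cases $\martingale(s)\leq B$ and $\martingale(s)>B$ and uses one bound in each, whereas you derive both bounds unconditionally and take the minimum — the same ingredients either way.
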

\begin{proof}
It is clear that, since $\martingale$ is a bounded below extended real process, $\martingale_B$ is a bounded (above and below) real process.
Moreover, $\martingale_B(s)\leq\martingale(s)$ for all $s\in\situations$, so it follows that $\martingale_B(s\andstate)\leq\martingale(s\andstate)$ for all $s\in\situations$.
Fix any $s\in\situations$.
If $\martingale(s) \leq B$, then
\begin{align*}
\lupprev{s}(\martingale_B(s\andstate))\leq\lupprev{s}(\martingale(s\andstate))\leq\martingale(s)=\martingale_B(s),
\end{align*}
where the first equality follows from the monotonicity [\ref{coherence: monotonicity}] of $\lupprev{s}$ (and the fact that $\martingale(s\andstate)$ and $\martingale_B(s\andstate)$ are bounded below).
If $\martingale(s) > B$, it follows from $\martingale_B(s\andstate) \leq B$ and the monotonicity [\ref{coherence: monotonicity}] of $\lupprev{s}$ that
\begin{align*}
\lupprev{s}(\martingale_B(s\andstate))\leq\lupprev{s}( B) \overset{\text{\ref{coherence: const is const}}}{=} B=\martingale_B(s).
\end{align*}
So, we conclude that $\lupprev{s}(\martingale_B(s\andstate))\leq\martingale_B(s)$ for all situations $s\in\situations$.
Hence, $\martingale_B$ is a bounded real-valued supermartingale.
\end{proof}

\begin{lemma}\label{Lemma: the minimum-limit inferior switch}
For any extended real process $\process$ and any path $\omega\in\samplespace$, we have that
\begin{equation*}
\min \left\{B,\liminf_{n\to+\infty}\process(\omega^n)\right\}=\liminf_{n\to+\infty} \min\{B,\process(\omega^n)\} \text{ for all $B\in\reals$.}
\end{equation*}
\end{lemma}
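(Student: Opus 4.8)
This is an elementary fact about limits inferior in $\extreals$; the key point is that $t\mapsto\min\{B,t\}$ is a continuous, non-decreasing function on $\extreals$, so it commutes with $\liminf$. I would not invoke any machinery about supermartingales or upper expectations here: the statement is purely about a single sequence of extended reals $\{a_n\}_{n\in\natz}$ with $a_n\coloneqq\process(\omega^n)$ and a fixed real $B$, and asks to show $\min\{B,\liminf_n a_n\}=\liminf_n\min\{B,a_n\}$.

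\textbf{Key steps.} First I would abbreviate $a_n\coloneqq\process(\omega^n)$ and $L\coloneqq\liminf_{n\to+\infty}a_n\in\extreals$, so the goal becomes $\min\{B,L\}=\liminf_{n\to+\infty}\min\{B,a_n\}$. Write $b_n\coloneqq\min\{B,a_n\}$ and $L'\coloneqq\liminf_{n\to+\infty}b_n$; note $L'$ exists in $\extreals$ since every sequence has a limit inferior there, and $b_n\leq B$ for all $n$ forces $L'\leq B$. I would then prove the two inequalities $\min\{B,L\}\leq L'$ and $L'\leq\min\{B,L\}$ separately. For $L'\leq\min\{B,L\}$: since $b_n\leq B$ we get $L'\leq B$, and since $b_n\leq a_n$ we get $L'\leq L$ by monotonicity of $\liminf$; hence $L'\leq\min\{B,L\}$. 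For the reverse inequality $\min\{B,L\}\leq L'$: fix any real $c<\min\{B,L\}$ (if $\min\{B,L\}=-\infty$ there is nothing to prove; if it is $+\infty$ then $B=+\infty$, contradicting $B\in\reals$, so in fact $\min\{B,L\}\leq B<+\infty$ and such $c$ exist whenever $\min\{B,L\}>-\infty$). Then $c<L$, so by definition of $\liminf$ there is an $N$ with $a_n>c$ for all $n\geq N$; combined with $c<B$, this gives $\min\{B,a_n\}>c$ for all $n\geq N$, whence $L'\geq c$. Letting $c\uparrow\min\{B,L\}$ yields $L'\geq\min\{B,L\}$. Combining the two inequalities gives the claim, and since $B\in\reals$ was arbitrary, the statement follows.

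\textbf{Main obstacle.} There is no real obstacle; the only thing requiring a moment of care is the handling of the infinite cases in $\extreals$ (the subtlety that $L$ could be $\pm\infty$ while $B$ is finite, so that $\min\{B,L\}$ is automatically $\leq B<+\infty$), and making sure the ``fix a real $c$ below the target'' argument is vacuously fine when the target is $-\infty$. I would state these edge cases explicitly but dispatch them in a single line each.

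\begin{proof}
Fix any $B\in\reals{}$ and any $\omega\in\samplespace{}$, and write $a_n\coloneqq\process(\omega^n)$, $L\coloneqq\liminf_{n\to+\infty}a_n\in\extreals{}$, $b_n\coloneqq\min\{B,a_n\}$ and $L'\coloneqq\liminf_{n\to+\infty}b_n\in\extreals{}$. We must show that $\min\{B,L\}=L'$.

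Since $b_n\leq B$ for all $n\in\natz{}$, we have $L'\leq B$. Since $b_n\leq a_n$ for all $n\in\natz{}$, monotonicity of the limit inferior gives $L'\leq L$. Hence $L'\leq\min\{B,L\}$.

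For the converse inequality, note first that $B\in\reals{}$ implies $\min\{B,L\}\leq B<+\infty$. If $\min\{B,L\}=-\infty$, then the inequality $\min\{B,L\}\leq L'$ holds trivially. Otherwise $\min\{B,L\}\in\reals{}$, and we fix any real $c<\min\{B,L\}$. Then $c<L$, so by the definition of the limit inferior there is an $N\in\natz{}$ such that $a_n>c$ for all $n\geq N$. Since also $c<B$, it follows that $b_n=\min\{B,a_n\}>c$ for all $n\geq N$, and therefore $L'\geq c$. As this holds for every real $c<\min\{B,L\}$, we conclude that $L'\geq\min\{B,L\}$.

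Combining both inequalities yields $\min\{B,L\}=L'$, which is the desired equality. Since $B\in\reals{}$ was arbitrary, this proves the lemma.
\end{proof}
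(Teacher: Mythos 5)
Your proof is correct and follows essentially the same elementary route as the paper: the easy inequality $\liminf_n\min\{B,\process(\omega^n)\}\leq\min\{B,\liminf_n\process(\omega^n)\}$ by monotonicity, and then the converse via the definition of the limit inferior. The only cosmetic difference is that you argue the converse directly with a threshold $c<\min\{B,L\}$, whereas the paper argues by contradiction using $\inf_{n\geq m}\min\{B,\process(\omega^n)\}=\min\{B,\inf_{n\geq m}\process(\omega^n)\}$; both handle the extended-real edge cases correctly.
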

\begin{proof}
Fix any $B\in\reals$.
It is easy to check that
$\min \left\{B,\liminf_{n\to+\infty}\process(\omega^n)\right\} \geq \liminf_{n\to+\infty} \min\{B,\process(\omega^n)\}$.
We prove the converse inequality by contradiction.
Suppose that
\begin{align*}
\min \left\{B,\liminf_{n\to+\infty}\process(\omega^n) \right\} >
\liminf_{n\to+\infty} \min\{B,\process(\omega^n)\},
\end{align*}
or, equivalently, that
\begin{equation*}
\min \left\{B,\liminf_{n\to+\infty}\process(\omega^n) \right\} > \sup_{m}\inf_{n \geq m} \min\{B,\process(\omega^n)\}.
\end{equation*}
Then there is some $\epsilon\in\posreals$ such that
\begin{equation*}
\min \set*{B,\liminf_{n\to+\infty}\process(\omega^n)}-\epsilon >\inf_{n \geq m} \min\{B,\process(\omega^n)\}=\min \set*{B,\inf_{n \geq m}\process(\omega^n)}
\end{equation*}
for all $ m\in\natz$.
Since $\min \{B,\liminf_{n\to+\infty}\process(\omega^n)\}-\epsilon < B$, it can only be that
\begin{align*}
\inf_{n \geq m}\process(\omega^n) < \min \set*{B,\liminf_{n\to+\infty}\process(\omega^n)}-\epsilon\leq\liminf_{n\to+\infty}\process(\omega^n)-\epsilon \text{ for all $m\in\natz$,}
\end{align*}
from which we infer that
$\inf_{n \geq m}\process(\omega^n) < \sup_{k}\inf_{n \geq k}\process(\omega^n)-\epsilon$ for all $m\in\natz$.
This contradicts the definition of the supremum operator.
\end{proof}

\begin{proofof}{Proposition~\ref{prop: two versions of vovk upprev are equal}}
Since $\setofextsupmartbbb{}\subseteq\setofextsupmartbb{}\subseteq\setofextsupmartb{}$, it is clear that $\upprevvovkk{}(f \vert s) \leq \oldupprevvovk{}(f \vert s) \leq \oldupprevvovkb{}(f \vert s)$.
So it suffices to prove that $\oldupprevvovkb{}(f \vert s) \leq \upprevvovkk{}(f \vert s)$.
Consider any $\martingale{}\in\setofextsupmartb{}$ such that $\liminf \martingale{} \geq_s f$.
Let $B \coloneqq \sup f$ and let $\martingale_{B}$ be defined by $\martingale_{B}(t)\coloneqq\min\{\martingale(t), B\}$ for all $t\in\situations$.
Since $f$ is a gamble, $\sup f$ is real and hence, due to Lemma~\ref{LemmaBoundedSupermartingale}, $\martingale_{B}$ is a bounded real-valued supermartingale.
Moreover, we have that $B \geq f$ and $\liminf \martingale{} \geq_s f$, and therefore that $\min\{B,\liminf \martingale{}\} \geq_s f$, which by Lemma~\ref{Lemma: the minimum-limit inferior switch} implies that $\liminf \martingale_{B} \geq_s f$.
Together with the fact that $\martingale{}_B \in \setofextsupmartbbb{}$, this implies by the definition of $\oldupprevvovkb{}$ that $\oldupprevvovkb{}(f \vert s) \leq \martingale{}_B(s) \leq \martingale{}(s)$.
Since this holds for any $\martingale{}\in\setofextsupmartb{}$ such that $\liminf \martingale{} \geq_s f$, we infer from the definition of $\upprevvovkk{}$ that $\oldupprevvovkb{}(f \vert s) \leq \upprevvovkk{}(f \vert s)$.
\end{proofof}

\vspace*{0.8em}

So the versions $\upprevvovkk{}$, $\oldupprevvovk{}$ and $\oldupprevvovkb{}$ are all mathematically equivalent on $\setofgambles{}\times\situations{}$. We therefore prefer $\oldupprevvovkb{}$ over both $\upprevvovkk{}$ and $\oldupprevvovk{}$ on $\setofgambles{}\times\situations{}$ because it can be given a (more) direct interpretation.
However, if we consider the entire domain $\setofextvariables{}\times\situations{}$, then the version $\oldupprevvovkb{}$ is unsuitable because, as explained above, it assigns $+\infty$ to every variable that is not bounded above.
Our remaining two options are then to use either $\upprevvovkk{}$ or $\oldupprevvovk{}$. From a purely interpretational point of view, we prefer $\oldupprevvovk{}$.
Unfortunately, the version $\oldupprevvovk{}$---and also the version $\oldupprevvovkb{}$---does not satisfy continuity with respect to non-decreasing sequences, which we consider to be an important mathematical drawback.

\begin{example}
Consider an uncertain process with state space $\statespace{} \coloneqq \{0,1\}$ and let $\lupprev{\Box}(f) \coloneqq f(0)$ for all $f \in \setofgenextvariables{}(\statespace{})$.
It can easily be checked that $\lupprev{\Box}$ satisfies properties \ref{coherence: const is const}--\ref{coherence: continuity wrt non-negative functions}.
For any $n \in \nats{}$, the variable $n \indica{1}(X_1)$ is a gamble, so Proposition~\ref{prop: two versions of vovk upprev are equal} implies that $\upprevvovkk{}(n \indica{1}(X_1)) = \oldupprevvovk{}(n \indica{1}(X_1)) = \oldupprevvovkb{}(n \indica{1}(X_1))$.
Then, taking into account the boundedness of $n \indica{1}(X_1)$, it follows from Proposition~\ref{prop: global compatible with local} that $\oldupprevvovkb{}(n \indica{1}(X_1)) = \oldupprevvovk{}(n \indica{1}(X_1)) = \upprevvovkk{}(n \indica{1}(X_1)) = \lupprev{\Box}(n \indica{1})$, which by definition of \/ $\lupprev{\Box}$ leads us to conclude that $\oldupprevvovkb{}(n \indica{1}(X_1)) = \oldupprevvovk{}(n \indica{1}(X_1)) = n \indica{1}(0) = 0$ for all $n \in \nats{}$.

On the other hand, consider the upper expectation $\oldupprevvovk{} \big(+\infty \indica{1}(X_1) \big)$.
It is defined as the infimum of $\martingale{}(\Box)$ over real-valued supermartingales $\martingale{}\in\setofextsupmartbb{}$ such that \/ $\liminf \martingale{} \geq +\infty \indica{1}(X_1)$.
Any such supermartingale $\martingale{}$ should converge to $+\infty$ on all paths $\omega \in \Gamma(1)$ such that $\omega^{1} = 1$.
Since $\martingale{} \in \setofextsupmartbb{} \subseteq \setofextsupmartb{}$, we can then apply Lemma~\ref{lemma: infima of supermartingales} to infer that $\martingale{}(1) = +\infty$.
This contradicts the real-valuedness of $\martingale{}$, which allows us to conclude that there is no $\martingale{}\in\setofextsupmartbb{}$ such that \/ $\liminf \martingale{} \geq +\infty \indica{1}(X_1)$.
Hence, $\oldupprevvovk{} \big(+\infty \indica{1}(X_1) \big) = +\infty$ and therefore, because $\oldupprevvovk{}\big(+\infty \indica{1}(X_1) \big)\leq\oldupprevvovkb{}\big(+\infty \indica{1}(X_1) \big)$ (recall that $\setofextsupmartbbb{}\subseteq\setofextsupmartbb{}$), also $\oldupprevvovkb{} \big(+\infty \indica{1}(X_1) \big) = +\infty$.

All together, we have that $\lim_{n \to +\infty} n \indica{1}(X_1) = +\infty \indica{1}(X_1)$ but
\begin{equation*}
\lim_{n \to +\infty} \oldupprevvovk{} \big(n \indica{1}(X_1) \big) \not= \oldupprevvovk{} \big(+\infty \indica{1}(X_1) \big) \ \text{ and } \ \lim_{n \to +\infty} \oldupprevvovkb{} \big(n \indica{1}(X_1) \big) \not= \oldupprevvovkb{} \big(+\infty \indica{1}(X_1) \big),
\end{equation*}
which implies that both $\oldupprevvovk{}$ and $\oldupprevvovkb{}$ fail to satisfy continuity with respect to non-decreasing sequences.
\hfill$\Diamond$
\end{example}

We have chosen to work with the version $\upprevvovkk{}$ in this paper, and therefore implicitly assumed that global (game-theoretic) upper expectations satisfy continuity with respect to non-decreasing sequences.
This is in line with how we introduced our local uncertainty models $\lupprev{s}$: the continuity property~\ref{coherence: continuity wrt non-negative functions} was directly adopted as part of their definition.
The fact that our global upper expectation $\upprevvovkk{}$ (and each local upper expectation) satisfies this non-decreasing continuity is mathematically convenient, but it does require a definition that is more abstract compared to $\oldupprevvovk{}$ or $\oldupprevvovkb{}$.
Nonetheless, we do not regard this as an issue because, on the domain $\setofgambles{}\times\situations{}$, our version $\upprevvovkk{}$ is equivalent to the version $\oldupprevvovkb{}$ (or $\oldupprevvovk{}$).
So one could just as well consider it to be defined as $\oldupprevvovkb{}$ (or $\oldupprevvovk{}$) on this restricted domain.
Then, as far as the values of $\upprevvovkk{}$ on $(\setofextvariables{}\setminus\setofgambles{})\times\situations{}$ are concerned, recall that these are uniquely determined by the values on $\setofgambles{}\times\situations{}$ through the continuity of $\upprevvovkk{}$; see the discussion below Proposition~\ref{Proposition: Continuity w.r.t. lower cuts}.
Hence, we can regard $\upprevvovkk{}$ on $(\setofextvariables{}\setminus\setofgambles{})\times\situations{}$ as an extension of $\upprevvovkk{}$ on $\setofgambles{}\times\situations{}$, where the latter is justified by its equivalence with $\oldupprevvovkb{}$, and where the extension is motivated by  assumptions of continuity.

\section*{Acknowledgements}

Natan T'Joens's research was supported and funded by the Special Research Fund (BOF) of Ghent University (reference number: 356).
The research by Jasper De Bock and Gert de Cooman was funded through project number 3G028919 of the Research Foundation – Flanders (FWO).
We are also indebted to an anonymous reviewer who provided helpful feedback.

\bibliographystyle{plain}
\bibliography{references}

\begin{thebibliography}{10}

\bibitem{Augustin:2014di}
T.~Augustin, F.~Coolen, G.~de~Cooman, and M.~Troffaes.
\newblock {\em {Introduction to Imprecise Probabilities}}.
\newblock Wiley, Chichester, 2014.

\bibitem{billingsley1995probability}
P.~Billingsley.
\newblock {\em Probability and Measure}.
\newblock Wiley, New York, 1995.

\bibitem{DeBock2014Continuity}
J.~De~Bock and G.~de~Cooman.
\newblock Continuity of imprecise stochastic processes with respect to the
  pointwise convergence of monotone sequences.
\newblock arXiv:1402.3056, 2014.

\bibitem{DECOOMAN201618}
G.~{d}e Cooman, J.~{D}e Bock, and S.~Lopatatzidis.
\newblock Imprecise stochastic processes in discrete time: global models,
  imprecise {M}arkov chains, and ergodic theorems.
\newblock {\em International Journal of Approximate Reasoning}, 76:18--46,
  2016.

\bibitem{deCooman:2008km}
G.~de~Cooman and F.~Hermans.
\newblock {Imprecise probability trees: Bridging two theories of imprecise
  probability}.
\newblock {\em Artificial Intelligence}, 172:1400--1427, 2008.

\bibitem{deCooman:2009jz}
G.~de~Cooman, F.~Hermans, and E.~Quaeghebeur.
\newblock {Imprecise Markov chains and their limit behaviour}.
\newblock {\em Probability in the Engineering and Informational Sciences},
  23(4):597--635, 2009.

\bibitem{troffaes2014}
G.~{d}e Cooman and M.~Troffaes.
\newblock {\em Lower Previsions}.
\newblock Wiley, Chichester, 2014.

\bibitem{HermansITIP}
F.~Hermans and D.~\v{S}kulj.
\newblock {Stochastic Processes}.
\newblock In T.~Augustin, F.~Coolen, G.~de~Cooman, and M.~Troffaes, editors,
  {\em Introduction to Imprecise Probabilities}, pages 258--278. Wiley,
  Chichester, 2014.

\bibitem{8627473}
T.~Krak, N.~T'Joens, and J.~De~Bock.
\newblock Hitting times and probabilities for imprecise {M}arkov chains.
\newblock In {\em Proceedings of the 11th International Symposium on Imprecise
  Probabilities: Theories and Applications}, volume 103, pages 265--275, 2019.

\bibitem{8535240}
S.~Lopatatzidis.
\newblock {\em Robust Modelling and Optimisation in Stochastic Processes using
  Imprecise Probabilities, with an Application to Queueing Theory}.
\newblock PhD thesis, Ghent University, 2017.

\bibitem{Quaeghebeur:2012vw}
E.~Quaeghebeur, G.~de~Cooman, and F.~Hermans.
\newblock {Accept {\&} reject statement-based uncertainty models}.
\newblock {\em International Journal of Approximate Reasoning}, 57:69--102,
  2015.

\bibitem{Shafer:2005wx}
G.~Shafer and V.~Vovk.
\newblock {\em {Probability and Finance: It's Only a Game!}}
\newblock Wiley, New York, 2001.

\bibitem{Vovk2019finance}
G.~Shafer and V.~Vovk.
\newblock {\em Game-Theoretic Foundations for Probability and Finance}.
\newblock Wiley, Hoboken, New Jersey, 2019.

\bibitem{shafer2011levy}
G.~Shafer, V.~Vovk, and A.~Takemura.
\newblock L{\'e}vy's zero--one law in game-theoretic probability.
\newblock {\em Journal of Theoretical Probability}, 25:1--24, 2012.

\bibitem{shiryaev1995probability}
A.N. Shiryaev.
\newblock {\em Probability}.
\newblock Springer-Verlag, New York, 1996.

\bibitem{TJoens2021Equivalence}
N.~T'Joens and J.~De~Bock.
\newblock Global upper expectations for discrete-time stochastic processes: In
  practice, they are all the same!
\newblock Extended version of a contribution submitted for possible publication
  in the Proceedings of ISIPTA 2021, see ArXiv:2102.13075.

\bibitem{Tjoens2019ContinuityARXIV}
N.~T'Joens, J.~De~Bock, and G.~de~Cooman.
\newblock Continuity properties of game-theoretic upper expectations.
\newblock arXiv:1902.09406, 2019.

\bibitem{Tjoens2019NaturalExtensionISIPTA}
N.~T'Joens, J.~De~Bock, and G.~{de Cooman}.
\newblock In search of a global belief model for discrete-time uncertain
  processes.
\newblock In {\em Proceedings of the 11th International Symposium on Imprecise
  Probabilities: Theories and Applications}, volume 103, pages 377--385, 2019.

\bibitem{TJOENS202130}
N.~T'Joens, J.~{De Bock}, and G.~{de Cooman}.
\newblock A particular upper expectation as global belief model for
  discrete-time finite-state uncertain processes.
\newblock {\em International Journal of Approximate Reasoning}, 131:30 -- 55,
  2021.

\bibitem{TJoens2018Continuity}
N.~T'Joens, G.~de~Cooman, and J.~De~Bock.
\newblock Continuity of the {S}hafer-{V}ovk-{V}ille operator.
\newblock In {\em Proceedings of the 9th International Conference on Soft
  Methods in Probability and Statistics}, volume 832, pages 200--207, 2018.

\bibitem{8629186}
N.~T'Joens, T.~Krak, J.~De~Bock, and G.~{d}e Cooman.
\newblock A recursive algorithm for computing inferences in imprecise {M}arkov
  chains.
\newblock In {\em Proceedings of the 15th European Conference on Symbolic and
  Quantitative Approaches to Reasoning with Uncertainty}, volume 11726, pages
  455--465, 2019.

\bibitem{ville1939etudecritique}
J.~Ville.
\newblock {\em \'Etude critique de la notion de collectif}.
\newblock Th\`eses fran\c caises de l'entre-deux-guerres. Numdam, 1939.

\bibitem{ShaferVovk2014ITIP}
V.~Vovk and G.~Shafer.
\newblock {Game-theoretic Probability}.
\newblock In T.~Augustin, F.~Coolen, G.~de~Cooman, and M.~Troffaes, editors,
  {\em Introduction to Imprecise Probabilities}, pages 114--134. Wiley,
  Chichester, 2014.

\bibitem{Walley:1991vk}
P.~Walley.
\newblock {\em {Statistical Reasoning with Imprecise Probabilities}}.
\newblock Chapman and Hall, London, 1991.

\end{thebibliography}

\appendix

\section{Proofs of the results in Section~\ref{Sect: equivalent characterisations}}

In the following proofs, we will frequently use the terminology and notations concerning cuts, as introduced in the paragraph above Lemma~\ref{Lemma: upprev of n-measurables < upprev of limit}.
Moreover, for any two cuts $U$ and $V$, we will write $U \sqsubset V$ if $(\forall v\in V)(\exists u\in U)u \sqsubset v$, and similarly for the relations $\sqsubseteq$, $\sqsupset$ and $\sqsupseteq$.
We will also consider the following sets:
\begin{align*}
[U,V] \coloneqq\{s\in\situations : (\exists u\in U)(\exists v\in V)u \sqsubseteq s \sqsubseteq v \}, \\
[U,V) \coloneqq\{s\in\situations : (\exists u\in U)(\exists v\in V)u \sqsubseteq s \sqsubset v \}, \\
(U,V] \coloneqq\{s\in\situations : (\exists u\in U)(\exists v\in V)u \sqsubset s \sqsubseteq v \}, \\
(U,V) \coloneqq\{s\in\situations : (\exists u\in U)(\exists v\in V)u \sqsubset s \sqsubset v \}.
\end{align*}

\begin{proofof}{Proposition~\ref{Prop: Doob}}
Let $t\in\situations{}$ be any fixed situation where $\martingale{}(t)$ is real.
We can assume that $\martingale$ is non-negative and that $\martingale(t)=1$ without loss of generality.
Indeed, because the original supermartingale is bounded below and real in $t$, we can obtain such a process by translating and scaling---by adding a positive constant and then multiplying the supermartingale by a positive real---the originally considered supermartingale in an appropriate way.
This process will then again be a (bounded below) supermartingale because the local models $\lupprev{s}$ satisfy \ref{coherence: const add} and \ref{coherence: homog for non-negative lambda}.
Moreover, the new supermartingale will have the same convergence character as the original one.

To start, fix any couple of rational numbers $0<a<b$ and consider the following recursively constructed sequences of cuts $\{U_k^{a,b}\}_{k\in\nats}$ and $\{V_k^{a,b}\}_{k\in\nats}$.
	Let $V_1^{a,b} \coloneqq\{ s \sqsupseteq t \colon   \ \martingale(s) < a \text{ and } (\forall s'\in[t,s)) \ \martingale(s') \geq a \}$
	and, for $k\in\nats$,
	\begin{enumerate}
		\item let
			$U_k^{a,b} \coloneqq\{s\in\situations \colon V_{k}^{a,b} \sqsubset s : \martingale(s) > b \text{ and } (\forall s'\in (V_{k}^{a,b} , s)) \ \martingale(s') \leq b \}$;
		\item
		let
			$V_{k+1}^{a,b} \coloneqq\{ s\in\situations \colon  U_{k}^{a,b} \sqsubset s , \ \martingale(s) < a \text{ and } (\forall s'\in (U_{k}^{a,b} , s)) \ \martingale(s') \geq a \}$.
	\end{enumerate}
	Note that all $U_k^{a,b}$ and all $V_k^{a,b}$ are indeed (partial or complete) cuts.

	Next, consider the extended real process $\martingale^{a,b}$ defined by $\martingale^{a,b}(s) \coloneqq\martingale(t)$ for all $s \not\sqsupset t$ and by
	\begin{align}\label{Eq: proof Doob: def M^a,b}
	 	\martingale^{a,b} (sx) \coloneqq\begin{cases}
	 		\martingale^{a,b}(s) + \left[\martingale(sx) -\martingale(s) \right]   &\text{ if } s\in  [V_{k}^{a,b} , U_{k}^{a,b}) \text{ for some } k\in\nats;\\
	 		\martingale^{a,b}(s) &\text{ otherwise,}
	 		\end{cases}
	 \end{align}
	 for all $sx \sqsupset t$ with $x \in \statespace$.
	 We prove that $\martingale^{a,b}$ is a non-negative supermartingale that converges to $+\infty$ on all paths $\omega\in\Gamma(t)$ such that
	 \begin{equation}\label{Eq: doob conv to infty}
	 \liminf\martingale(\omega) < a < b < \limsup\martingale(\omega).
	 \end{equation}
	 For any situation~$s$ and for any $k\in\nats$, when $U_k^{a,b} \sqsubset s$, we denote by $u_k^s$ the (necessarily unique) situation in $U_k^{a,b}$ such that $u_k^s \sqsubset s$.
	 Similarly, for any $k\in\nats$, when $V_k^{a,b} \sqsubset s$, we denote by $v_k^s$ the (necessarily unique) situation in $V_k^{a,b}$such that $v_k^s \sqsubset s$.
	 Note that $ V_1^{a,b} \sqsubset U_1^{a,b} \sqsubset V_2^{a,b} \sqsubset \cdots \sqsubset V_n^{a,b} \sqsubset U_n^{a,b} \sqsubset \cdots $.
	 Hence, for any situation~$s$ we can distinguish the following three cases:
	 \begin{itemize}
	 \item
	 The first case is that $V_1^{a,b} \not\sqsubset s$. Then we have that
	 \begin{equation}\label{proof Doob: first case}
	 \martingale^{a,b} (s)=\martingale^{a,b} (t)=\martingale(t)=1.
	 \end{equation}
	  \item
	 The second case is that $V_k^{a,b} \sqsubset s$ and $U_{k}^{a,b} \not\sqsubset s$ for some $k\in\nats$.
	 Then by applying Equation \eqref{Eq: proof Doob: def M^a,b} for each subsequent step and cancelling out the intermediate terms, which is possible because $\martingale$ is real for any situation $s'\in\situations$ such that $V_{k'}^{a,b} \sqsubseteq s'$ and $U_{k'}^{a,b} \not\sqsubseteq s'$ for some $k'\in\nats$ [this follows readily from the definition of the cuts $V_{k'}^{a,b}$ and $U_{k'}^{a,b}$], we have that
	 \begin{equation}\label{proof Doob: second case}
	 \martingale^{a,b} (s)- \martingale^{a,b} (t)
	= \sum_{\ell=1}^{k-1}
	 \left[ \martingale(u_\ell^{s})-\martingale(v_{\ell}^{s}) \right] + \martingale(s)-\martingale(v_{k}^{s}).
	 \end{equation}

	 \item
	 The third case is that $U_k^{a,b} \sqsubset s$ and $V_{k+1}^{a,b} \not\sqsubset s$ for some $k\in\nats$. Then we have that
	 \begin{equation}\label{proof Doob: third case}
	 \martingale^{a,b} (s)- \martingale^{a,b} (t)
	=\sum_{\ell=1}^{k} [\martingale(u_\ell^{s})-\martingale(v_{\ell}^{s})],
	 \end{equation}
	 where, again, we used the fact that $\martingale$ is real for any situation $s'\in\situations$ such that $V_{k'}^{a,b} \sqsubseteq s'$ and $U_{k'}^{a,b} \not\sqsubseteq s'$ for some $k'\in\nats$.
	\end{itemize}

	That $\martingale^{a,b}(s)$ is non-negative, is trivially satisfied in the first case because $\martingale{}$ is non-negative.
	To see that this is also true for the third case, observe that $0 < b < \martingale(u_\ell^{s})$ and $0\leq\martingale(v_\ell^{s}) < a$ for all $\ell\in\{ 1,...,k\}$.
	This implies that $\martingale(u_\ell^{s})-\martingale(v_\ell^{s}) > b-a > 0$ for all $\ell\in\{ 1,...,k\}$ and therefore directly that $\martingale^{a,b}(s)$ is non-negative because of Equation \eqref{proof Doob: third case} and the fact that $\martingale^{a,b}(t) = \martingale(t)$ is non-negative.
	In the second case, it follows from Equations \eqref{proof Doob: first case}, \eqref{proof Doob: second case} and \eqref{proof Doob: third case} that
	\begin{align}\label{proof Doob: eq martingale differences}
	\martingale^{a,b} (s)
	=\martingale^{a,b} (v_{k}^{s}) + \martingale(s)-\martingale(v_{k}^{s}).
	\end{align}
	We prove by induction that $\martingale^{a,b} (v_{\ell}^{s}) \geq \martingale(v_{\ell}^{s})$ for all $\ell\in\{ 1,...,k\}$, and therefore, by Equation \eqref{proof Doob: eq martingale differences} and because $\martingale$ is non-negative, that $\martingale^{a,b} (s)$ is non-negative.

	If $\ell=1$, then either $v_1^s=t$ or $v_1^s \not= t$.
	If $v_1^s=t$, then $\martingale^{a,b}(v_1^s)=\martingale^{a,b}(t)=\martingale(t)=\martingale(v_1^s)$.
	If $v_1^s \not= t$, we have, by the definition of $V_1^{a,b}$, that $\martingale(v_1^s) < a$ and $a\leq\martingale(t)=\martingale^{a,b}(t)=\martingale^{a,b}(v_1^s)$.
	Hence, in both cases, we have that $\martingale^{a,b}(v_1^s) \geq \martingale(v_1^s)$.
	Now suppose that $\martingale^{a,b} (v_{\ell}^{s}) \geq \martingale(v_{\ell}^{s})$ for some $\ell\in\{ 1,...,k-1\}$.
	Then, $\martingale^{a,b} (v_{\ell+1}^{s})=\martingale^{a,b} (v_{\ell}^{s}) + [\martingale(u_{\ell}^{s})-\martingale(v_{\ell}^{s})] \geq \martingale(u_{\ell}^{s}) > b > a> \martingale(v_{\ell+1}^{s})$, which concludes our induction step.
	So indeed $\martingale^{a,b} (v_{\ell}^{s}) \geq \martingale(v_{\ell}^{s})$ for all $\ell\in\{ 1,...,k\}$.

	Next, we show that $\lupprev{s}(\martingale^{a,b}(s\andstate))\leq\martingale^{a,b}(s)$ for all $s\in\situations$, and hence, that $\martingale$ is a non-negative supermartingale.
	Consider any $s\in\situations$.
	If $s\in  [V_{k}^{a,b} , U_{k}^{a,b})$ for some $k\in\nats$, it follows from Equation~\eqref{Eq: proof Doob: def M^a,b} that
	 \begin{align*}
	 \lupprev{s}(\martingale^{a,b}(s\andstate))
	=\lupprev{s}(\martingale^{a,b}(s) + \martingale(s\andstate)-\martingale(s))
	 &\overset{\text{\ref{coherence: const add}}}{=} \lupprev{s}(\martingale(s\andstate)) + \martingale^{a,b}(s)-\martingale(s)
	 \leq \martingale^{a,b}(s),
	 \end{align*}
	 where we were allowed to use \ref{coherence: const add} because $\martingale(s) < +\infty$ as a consequence of $s\in  [V_{k}^{a,b} , U_{k}^{a,b})$ and the definitions of $V_{k}^{a,b}$ and $U_{k}^{a,b}$, and where the last step follows from $\martingale$ being a supermartingale and $0 \leq \martingale(s) < +\infty$.
	 Otherwise, if $s \not\in [V_{k}^{a,b} , U_{k}^{a,b})$ for all $k\in\nats$, we have that $\lupprev{s}(\martingale^{a,b}(s\andstate))=\lupprev{s}(\martingale^{a,b}(s))=\martingale^{a,b}(s)$, where we have used \ref{coherence: bounds} for the last inequality.
	 Hence, we have that $\lupprev{s}(\martingale^{a,b}(s\andstate))\leq\martingale^{a,b}(s)$ for all $s\in\situations$, and we can therefore infer that $\martingale^{a,b}$ is indeed a non-negative supermartingale.

	 Let us now show that $\martingale^{a,b}$ converges to $+\infty$ on all paths $\omega\in\Gamma(t)$ for which Equation \eqref{Eq: doob conv to infty} holds.
	 Consider such a path $\omega$.
	 First, it follows from $\liminf\martingale(\omega) < a$ that there exists some $n_1\in\natz$ such that $\omega^{n_1} \sqsupseteq t$ and $\martingale(\omega^{n_1}) < a$.
	 Take the first such $n_1$.
	 Then it follows from the definition of $V_1^{a,b}$ that $\omega^{n_1}\in V_1^{a,b}$.
	 Next, it follows from $\limsup\martingale (\omega) > b$ that there exists some $m_1\in\natz$ for which $m_1 > n_1$ and $\martingale(\omega^{m_1}) > b$.
	 Take the first such $m_1$.
	 Then it follows from the definition of $U_1^{a,b}$ that $\omega^{m_1}\in U_1^{a,b}$.
	 Repeating similar arguments over and over again allows us to conclude that $\omega$ goes through all the cuts $V_1^{a,b} \sqsubset U_1^{a,b} \sqsubset V_2^{a,b} \sqsubset \cdots \sqsubset V_k^{a,b} \sqsubset U_k^{a,b} \sqsubset \cdots $.
	 For all $n> n_1$, let $k_n \in \nats$ be the index such that $V_{k_n}^{a,b} \sqsubset \omega^n$ and $V_{k_n+1}^{a,b} \not\sqsubset \omega^n$.
	 Note that $k_n \to +\infty$ for $n \to +\infty$.
	 Now, if $V_{k_n}^{a,b} \sqsubset \omega^n$ and $U_{k_n}^{a,b} \not\sqsubset \omega^n$ for some $n > n_1$, then we use Equation~\eqref{proof Doob: second case} to see that $\martingale^{a,b}(\omega^n)- \martingale^{a,b}(t)$ is bounded below by $(k_{n}-1)(b-a)+\martingale(\omega^n) - a \geq (k_{n}-1)(b-a)-a$ [$\martingale$ is non-negative].
	 If on the other hand $U_{k_n}^{a,b} \sqsubset \omega^n$ and $V_{k_n+1}^{a,b} \not\sqsubset \omega^n$ for some $n > n_1$, then Equation~\eqref{proof Doob: third case} implies that $\martingale^{a,b}(\omega^n)- \martingale^{a,b}(t)$ is bounded below by $k_{n} (b-a) \geq (k_{n}-1)(b-a) -a$.
	 All together, $\martingale^{a,b}(\omega^n)- \martingale^{a,b}(t)$ is bounded below by $(k_{n}-1)(b-a)-a$ for all $n > n_1$, which implies that
	 \begin{align*}
	 \lim_{n \to +\infty} \big( \martingale^{a,b}(\omega^n)- \martingale^{a,b}(t) \big)
	 \geq \lim_{n \to +\infty} (k_{n}-1)(b-a)-a = +\infty,
	 \end{align*}
	 because $\lim_{n\to +\infty} k_n = +\infty$ and $(b-a) > 0$.
	 This also implies that $\lim_{n \to +\infty} \martingale^{a,b}(\omega^n) = +\infty$ because $\martingale^{a,b}(t)=\martingale(t)=1$.

	We now use the countable set of rational couples $K \coloneqq\{ (a,b)\in\mathbb{Q}^2 : 0<a<b \}$ to define the process $\martingale^\ast$:
	\begin{equation*}
		\martingale^\ast \coloneqq\sum_{(a,b)\in K} w^{a,b} \martingale^{a,b},
	\end{equation*}
	with coefficients $w^{a,b}>0$ that sum to $1$.
	Hence, $\martingale^\ast$ is a countable convex combination of the non-negative supermartingales $\martingale^{a,b}$.
	By Lemma~\ref{lemma:positive:countable:linear:combination}, $\martingale^\ast$ is then a non-negative supermartingale.
	It is moreover clear that $\martingale^\ast(t)=\martingale(t)=1$, implying, together with its non-negativity, that $\martingale^\ast$ is a $t$-test supermartingale.
	We now show that $\martingale^\ast$ converges in the desired way as described by the proposition.

	If $\martingale$ does not converge to an extended real number on some path $\omega\in\Gamma(t)$, then $\liminf\martingale(\omega) < \limsup\martingale(\omega)$.
	Since
	$\liminf\martingale(\omega) \geq\inf_{s\in\mathscr{X}\textsuperscript{\raisebox{1pt}{\tiny $\ast$}}}{} \martingale(s) \geq 0$, there is at least one couple $(a',b')\in K$ such that $\liminf\martingale(\omega) < a' < b' < \limsup\martingale(\omega)$, and as a consequence $\martingale^{a',b'}$ converges to  $+\infty$ on $\omega$.
	Then also $\lim w^{a',b'} \martingale^{a',b'}(\omega)=+\infty$ since $w^{a',b'} > 0$.
	For all other couples $(a,b)\in K \setminus \{(a',b')\}$, we have that $w^{a,b} \martingale^{a,b}$ is non-negative, so $\martingale^\ast$ indeed converges to $+\infty$ on $\omega$.

	Finally, we show that $\martingale^\ast$ converges in $\extreals$ on every path $\omega\in\Gamma(t)$ where $\martingale$ converges to a real number.
	Fix any such $\omega\in\Gamma(t)$.
	Then for any $\epsilon\in\posreals$, there is an $n^\ast\in\natz$ such that, for all $\ell\geq n\geq n^\ast$, $\vert \martingale(\omega^\ell)-\martingale(\omega^n) \vert \leq \epsilon$ and therefore $\martingale(\omega^\ell)-\martingale(\omega^n) \geq -\epsilon$.
	Now fix any couple of rational numbers $0<a<b$ and, for any $i\in\nats$, let $v_i^{\omega}$ and $u_i^{\omega}$ be the situations in respectively $V_i^{a,b}$ and $U_i^{a,b}$ where $\omega$ passes through [if it passes through these cuts].
	We prove that $\martingale^{a,b}(\omega^\ell)-\martingale^{a,b}(\omega^n) \geq -2\epsilon$ for any $\ell\geq n\geq n^\ast$.
	To do so, let us distinguish the following four cases:
	\begin{itemize}
	\item
	$V_1^{a,b} \not\sqsubset \omega^n$ or $U_k^{a,b} \sqsubset \omega^n \not\sqsupset V_{k+1}^{a,b}$ for some $k\in\nats$ and moreover, $V_1^{a,b} \not\sqsubset \omega^\ell$ or $U_{k'}^{a,b} \sqsubset \omega^\ell \not\sqsupset V_{k'+1}^{a,b}$ for some $k'\in\nats$.
	Using Equations~\eqref{proof Doob: first case} and~\eqref{proof Doob: third case} for both $\omega^n$ and $\omega^\ell$, we get that
	\begin{align*}
	\martingale^{a,b}(\omega^\ell)-\martingale^{a,b}(\omega^n)
	&= [\martingale^{a,b}(\omega^\ell)-\martingale^{a,b}(t)]
	-[\martingale^{a,b}(\omega^n)-\martingale^{a,b}(t)] \\
	&= \sum_{i=1}^{k'}[\martingale(u_i^{\omega}) - \martingale(v_i^{\omega})]
	- \sum_{i=1}^{k}[\martingale(u_i^{\omega}) - \martingale(v_i^{\omega})],
	\end{align*}
	where we assume $k'=0$ if $V_1^{a,b} \not\sqsubset \omega^\ell$ and $k=0$ if $V_1^{a,b} \not\sqsubset \omega^n$.
	Since $k'\geq k$ [because $n\leq\ell$ and therefore $\omega^n\sqsubseteq\omega^\ell$] and $\martingale(u_i^{\omega}) - \martingale(v_i^{\omega})> b-a > 0$ for all $i\in\nats$, we have that $\martingale^{a,b}(\omega^\ell)-\martingale^{a,b}(\omega^n) \geq 0 > -2\epsilon$ [where we also implicitly use the convention that $+\infty-\infty=+\infty$].

	\item
	$V_1^{a,b} \not\sqsubset \omega^n$ or $U_k^{a,b} \sqsubset \omega^n \not\sqsupset V_{k+1}^{a,b}$ for some $k\in\nats$ and moreover, $V_{k'}^{a,b} \sqsubset \omega^\ell \not\sqsupset U_{k'}^{a,b}$ for some $k'\in\nats$.
	Using Equations~\eqref{proof Doob: first case} and~\eqref{proof Doob: third case} for $\omega^n$ and Equation~\eqref{proof Doob: second case} for $\omega^\ell$, we find that
	\begin{align}\label{Eq: proof Doob eq 7}
	\martingale^{a,b}(\omega^\ell)-\martingale^{a,b}(\omega^n)
	&= [\martingale^{a,b}(\omega^\ell)-\martingale^{a,b}(t)]
	-[\martingale^{a,b}(\omega^n)-\martingale^{a,b}(t)] \nonumber \\
	&= \Big[\sum_{i=1}^{k'-1}[\martingale(u_i^{\omega}) - \martingale(v_i^{\omega})] + \martingale(\omega^\ell) - \martingale(v_{k'}^{\omega})\Big]
	- \sum_{i=1}^{k}[\martingale(u_i^{\omega}) - \martingale(v_i^{\omega})],
	\end{align}
	where we assume $k=0$ if $V_1^{a,b} \not\sqsubset \omega^n$.
	Note that $k' \geq k+1$, because $k' \geq k$ [since $\omega^n \sqsubseteq \omega^\ell$] and $k'=k$ is impossible.
	Indeed, if $k=0$, $k'=k$ is impossible because $k'\in\nats$.
	Otherwise, if $k>0$, $k'=k$ would imply that $U_{k'}^{a,b} = U_{k}^{a,b} \sqsubset \omega^n \sqsubseteq \omega^\ell$, contradicting the assumption that $\omega^\ell \not\sqsupset U_{k'}^{a,b}$.
	Hence, taking into account that $\martingale(u_i^{\omega}) - \martingale(v_i^{\omega})> b-a > 0$ for all $i\in\nats$, we infer from~\eqref{Eq: proof Doob eq 7} that $\martingale^{a,b}(\omega^\ell)-\martingale^{a,b}(\omega^n) \geq \martingale(\omega^\ell) - \martingale(v_{k'}^{\omega})$ [again, also using the convention that $+\infty-\infty=+\infty$].
	Finally, observe that $\omega^{n^\ast} \sqsubseteq \omega^n \sqsubseteq v_{k+1}^{\omega} \sqsubseteq v_{k'}^{\omega}\sqsubset \omega^\ell$---the situation $v_{k+1}^{\omega}$ exists because $V_{k+1}^{a,b} \sqsubseteq V_{k'}^{a,b} \sqsubset \omega^\ell$---and therefore, recalling how $n^\ast$ was chosen,
	\begin{align*}
	\martingale^{a,b}(\omega^\ell)-\martingale^{a,b}(\omega^n) \geq \martingale(\omega^\ell) - \martingale(v_{k'}^{\omega}) \geq -\epsilon \geq -2\epsilon.
	\end{align*}

	\item
	$V_k^{a,b} \sqsubset \omega^n \not\sqsupset U_k^{a,b}$ for some $k\in\nats$ and $U_{k'}^{a,b} \sqsubset \omega^\ell \not\sqsupset V_{k'+1}^{a,b}$ for some $k'\in\nats$ [we automatically have that $V_1^{a,b}\sqsubset\omega^\ell$ because $V_k^{a,b} \sqsubset \omega^n\sqsubseteq\omega^\ell$].
	Using Equation~\eqref{proof Doob: second case} for $\omega^n$ and Equation~\eqref{proof Doob: third case} for $\omega^\ell$, we get that
	\begin{align*}
	\martingale^{a,b}(\omega^\ell)-\martingale^{a,b}(\omega^n)
	&= \sum_{i=1}^{k'}[\martingale(u_i^{\omega}) - \martingale(v_i^{\omega})]
	- \Big[\sum_{i=1}^{k-1}[\martingale(u_i^{\omega}) - \martingale(v_i^{\omega})] + \martingale(\omega^n) - \martingale(v_{k}^{\omega})\Big] \\
	&= \sum_{i=1}^{k'}[\martingale(u_i^{\omega}) - \martingale(v_i^{\omega})]
	- \Big[\sum_{i=1}^{k-1}[\martingale(u_i^{\omega}) - \martingale(v_i^{\omega})] + \martingale(\omega^n)\Big] + \martingale(v_{k}^{\omega}) \\
	&= \sum_{i=1}^{k'}[\martingale(u_i^{\omega}) - \martingale(v_i^{\omega})]
	-
	\sum_{i=1}^{k-1}[\martingale(u_i^{\omega}) - \martingale(v_i^{\omega})] - \martingale(\omega^n) + \martingale(v_{k}^{\omega}),
	\end{align*}
	where the second step follows because $\martingale(v_{k}^{\omega})$ is real [as a consequence of the definition of $V_k^{a,b}$] and the third step follows because $\sum_{i=1}^{k-1}[\martingale(u_i^{\omega})- \martingale(v_i^{\omega})] \geq 0$ [since all $\martingale(u_i^{\omega}) - \martingale(v_i^{\omega})$ are positive] and $\martingale(\omega^n) \geq 0$ [because $\martingale{}$ is non-negative].
	Using the fact that $k'\geq k$ and that all $\martingale(u_i^{\omega}) - \martingale(v_i^{\omega})$ are positive, the equation above implies that
	\begin{align*}
	\martingale^{a,b}(\omega^\ell)-\martingale^{a,b}(\omega^n)
	&\geq \sum_{i=k}^{k'}[\martingale(u_i^{\omega}) - \martingale(v_i^{\omega})]
	 - \martingale(\omega^n) + \martingale(v_{k}^{\omega}) \nonumber \\
	 &= \sum_{i=k+1}^{k'}[\martingale(u_i^{\omega}) - \martingale(v_i^{\omega})] + \martingale(u_k^{\omega})
	 	 - \martingale(\omega^n)
	 \geq \martingale(u_k^{\omega})
	 - \martingale(\omega^n),
	\end{align*}
	where the equality follows from the fact that $\martingale(v_{k}^{\omega})$ is real-valued and the last inequality follows once more from the positivity of all $\martingale(u_i^{\omega}) - \martingale(v_i^{\omega})$.
	Then since $\omega^{n^\ast} \sqsubseteq \omega^n \sqsubseteq u_k^{\omega}$---the situation $u_k^{\omega}$ exists because $U_k^{a,b} \sqsubseteq U_{k'}^{a,b} \sqsubset \omega^\ell$---we infer from our assumptions about $n^\ast$ that
	\begin{align*}
	\martingale^{a,b}(\omega^\ell)-\martingale^{a,b}(\omega^n)
	 \geq \martingale(u_k^{\omega})
	 - \martingale(\omega^n)
	 \geq -\epsilon \geq -2\epsilon.
	\end{align*}

	\item
	$V_k^{a,b} \sqsubset \omega^n \not\sqsupset U_k^{a,b}$ for some $k\in\nats$ and $V_{k'}^{a,b} \sqsubset \omega^\ell \not\sqsupset U_{k'}^{a,b}$ for some $k' \in\nats$.
	Using Equation~\eqref{proof Doob: second case} for both $\omega^n$ and $\omega^\ell$, we find that
	\begin{multline*}
	\martingale^{a,b}(\omega^\ell)-\martingale^{a,b}(\omega^n) \\
	\begin{aligned}
	&= \Big[\sum_{i=1}^{k'-1}[\martingale(u_i^{\omega}) - \martingale(v_i^{\omega})]
	+ \martingale(\omega^\ell) - \martingale(v_{k'}^{\omega}) \Big]
	- \Big[\sum_{i=1}^{k-1}[\martingale(u_i^{\omega}) - \martingale(v_i^{\omega})]
	+ \martingale(\omega^n) - \martingale(v_{k}^{\omega})\Big] \\
	&= \sum_{i=1}^{k'-1}[\martingale(u_i^{\omega}) - \martingale(v_i^{\omega})]
	+ \martingale(\omega^\ell) - \martingale(v_{k'}^{\omega})
	- \sum_{i=1}^{k-1}[\martingale(u_i^{\omega}) - \martingale(v_i^{\omega})]
	- \martingale(\omega^n) + \martingale(v_{k}^{\omega})\\
	&\geq \sum_{i=k}^{k'-1}[\martingale(u_i^{\omega}) - \martingale(v_i^{\omega})]
	+ \martingale(\omega^\ell) - \martingale(v_{k'}^{\omega})
	- \martingale(\omega^n) + \martingale(v_{k}^{\omega})
	\end{aligned}
	\end{multline*}
	where the two last steps follow in a similar way as before;
	first using the real-valuedness of $\martingale{}(v_k^{\omega})$ and the non-negativity of both $\martingale{}(\omega^n)$ and $\sum_{i=1}^{k-1}[\martingale(u_i^{\omega}) - \martingale(v_i^{\omega})]$, and then using the fact that $k'\geq k$ and that all $\martingale(u_i^{\omega}) - \martingale(v_i^{\omega})$ are positive.
	If $k'=k$, and therefore $\martingale{}(v_{k'}^{\omega}) = \martingale{}(v_{k}^{\omega})\in\reals{}$, it follows from the expression above that $\martingale^{a,b}(\omega^\ell)-\martingale^{a,b}(\omega^n)\geq \martingale(\omega^\ell)-\martingale(\omega^n)\geq-\epsilon\geq-2\epsilon$.
	Otherwise, if $k'>k$, then we use the real-valuedness of $\martingale(v_k^\omega)$ to deduce from the expression above that
	\begin{align*}
	\martingale^{a,b}(\omega^\ell)-\martingale^{a,b}(\omega^n)
	&\geq \sum_{i=k+1}^{k'-1}[\martingale(u_i^{\omega}) - \martingale(v_i^{\omega})]
	+ \martingale(\omega^\ell) - \martingale(v_{k'}^{\omega})
	+ \martingale(u_k^\omega) - \martingale(\omega^n) \\
	&\geq \martingale(\omega^\ell) - \martingale(v_{k'}^{\omega})
	+ \martingale(u_k^\omega) - \martingale(\omega^n) \geq -2\epsilon,
	\end{align*}
	where the last inequality follows from our assumptions about $n^\ast$ and the fact that $\omega^{n^\ast}\sqsubseteq \omega^{n} \sqsubseteq u_k^\omega \sqsubset v_{k'}^\omega \sqsubset \omega^\ell$.
	\end{itemize}
	Hence, we conclude that for any $\epsilon\in\posreals$, there is an $n^\ast\in\natz$ such that $\martingale^{a,b}(\omega^\ell)-\martingale^{a,b}(\omega^n) \geq -2\epsilon$ for all $\ell\geq n\geq n^\ast$ and any couple of rational numbers $0<a<b$.

	To see that this implies that $\martingale^\ast$ converges to an extended real number on $\omega$, assume \emph{ex absurdo} that it does not.
	Then there is some $\epsilon\in\posreals$ such that $\liminf\martingale^\ast(\omega)<\limsup\martingale^\ast(\omega)- 2\epsilon$.
	As proved above, there is an~$n^\ast$ such that, for all $\ell\geq n\geq n^\ast$ and any couple of rational numbers $0<a<b$, $\martingale^{a,b}(\omega^\ell)-\martingale^{a,b}(\omega^n) \geq -2\epsilon$ and therefore also $\martingale^{a,b}(\omega^\ell) \geq\martingale^{a,b}(\omega^n)-2\epsilon$.
	Then it follows directly from the definition of $\martingale^\ast$ that also $\martingale^\ast(\omega^\ell) \geq\martingale^\ast(\omega^n)-2\epsilon$ for all $\ell\geq n\geq n^\ast$.
	However, this is in contradiction with $\liminf\martingale^\ast(\omega)<\limsup\martingale^\ast(\omega)-2\epsilon$, because the latter would require that there is some couple $\ell\geq n\geq n^\ast$ such that $\martingale^\ast(\omega^\ell)< \martingale^\ast(\omega^n)-2\epsilon$.
	Hence, $\martingale^\ast$ converges to an extended real number on $\omega$.
\end{proofof}
\vspace*{0.8em}
\begin{proofof}{Theorem~\ref{Theorem: Doob}}
Due to Proposition~\ref{Prop: Doob}, there is an $s$-test supermartingale $\martingale^\ast$ that converges to $+\infty$ on every path $\omega\in\Gamma(s)$ where $\martingale$ does not converge to an extended real number.
Let $B\in\reals{}$ be a lower bound for $\martingale$ and let $\martingale'\coloneqq \tfrac{1}{\martingale(s)-B+1}(\martingale-B+\martingale^\ast)$.
Since both $\martingale^\ast$ and $\martingale-B$ are supermartingales [because of \ref{coherence: const add}], Lemma~\ref{lemma:positive:countable:linear:combination} implies that $(\martingale-B+\martingale^\ast)$ is a supermartingale too and therefore, since $1\leq \martingale(s)-B+1 < +\infty$ [$\martingale(s)$ is real], \ref{coherence: homog for non-negative lambda} implies that $\martingale'$ is a supermartingale.
Moreover, $\martingale'$ is non-negative because both $\martingale-B$ and $\martingale^\ast$ are non-negative and $\martingale(s)-B+1 \geq 1$ which, together with $\martingale'(s)=1$, allows us to conclude that $\martingale'$ is an $s$-test supermartingale.
Furthermore, consider any path $\omega\in\Gamma(s)$ such that $\martingale(\omega^n)$ does not converge to a real number.
Then either it converges to $+\infty$ or it does not converge in $\extreals$ [because $\martingale$ is bounded below].
In the first case, it follows from the non-negativity of $\martingale^\ast$ and the positivity of $\tfrac{1}{\martingale(s)-B+1}$ that $\martingale'$ also converges to $+\infty$ on $\omega$.
If $\martingale(\omega^n)$ does not converge in $\extreals$, then $\martingale^\ast$ converges to $+\infty$ on $\omega$ and therefore, because $\martingale-B$ is non-negative and $\tfrac{1}{\martingale(s)-B+1}$ is positive, $\martingale'$ also converges to $+\infty$ on $\omega$.
All together, we have that $\martingale'$ is an $s$-test supermartingale that converges to $+\infty$ on every path $\omega\in\Gamma(s)$ where $\martingale$ does not converge to a real number.
\end{proofof}

\vspace*{0.8em}
\begin{proofof}{Theorem~\ref{Theorem: Levy}}
	 Since $\upprevvovkk(\cdot \vert t)$ satisfies~\ref{vovk coherence 6} for any $t\in\situations$, we have, for any $c\in\reals{}$ and any $\omega \in \samplespace{}$, that $\liminf_{n\to+\infty} \upprevvovkk(f \vert \omega^n) \geq f(\omega)$ if and only if $\liminf_{n\to+\infty} \upprevvovkk(f + c \vert \omega^n) \geq f(\omega) + c$.
	 Therefore, and because f is bounded below, we can assume without loss of generality that $f$ is a global extended real variable such that $\inf f > 0$.

	We now associate with any couple of rational numbers $0<a<b$ the following recursively constructed sequences of cuts $\{U_k^{a,b}\}_{k\in\natz}$ and $\{V_k^{a,b}\}_{k\in\nats}$.
	Let $U_0^{a,b} \coloneqq\{ s \}$ and, for $k\in\nats$,
	\begin{enumerate}
		\item
		let
			$V_k^{a,b} \coloneqq\{ v\in\situations \colon  U_{k-1}^{a,b} \sqsubset v , \ \upprevvovkk(f \vert v) < a \text{ and } (\forall t\in (U_{k-1}^{a,b} , v)) \ \upprevvovkk(f \vert t) \geq a \}$;
		\item if $V_k^{a,b}$ is non-empty, choose a positive supermartingale $\martingale_{k}^{a,b}\in\setofextsupmartb{}$ such that
		$\martingale_{k}^{a,b}(v) < a$ and $\liminf\martingale_k^{a,b}  \geq_{v} f$ for all $v\in V_k^{a,b}$, and let
			$U_k^{a,b} \coloneqq\{u\in\situations \colon V_{k}^{a,b} \sqsubset u : \martingale_{k}^{a,b}(u) > b \text{ and } (\forall t\in (V_{k}^{a,b} , u)) \ \martingale_{k}^{a,b}(t) \leq b \}$;
		\item
		if $V_k^{a,b}$ is empty, let $U_k^{a,b} \coloneqq\emptyset$.
	\end{enumerate}
	Note that all $U_k^{a,b}$ and all $V_k^{a,b}$ are indeed (partial or complete) cuts.
	We now first show that, if $V_k^{a,b}$ is non-empty, there always is a supermartingale $\martingale_{k}^{a,b}$ that satisfies the conditions above.
	We infer from the definition of the cut $V_k^{a,b}$ that
	\begin{equation*}
	\inf \bigg\{ \martingale(v) :  \martingale\in\setofextsupmartb \text{ and } \liminf\martingale \geq_v f \bigg\} = \upprevvovkk(f \vert v) < a \text{ for all } v\in V_k^{a,b}.
	 \end{equation*}
	So, for all $v\in V_k^{a,b}$, we can choose a supermartingale $\martingale_{k,v}^{a,b}$ such that $\martingale_{k,v}^{a,b}(v) < a$ and $\liminf\martingale_{k,v}^{a,b} \geq_v f$.
	Consider now the extended real process $\martingale_{k}^{a,b}$ defined, for all $ t\in\situations$, by
	\begin{align*}
		\martingale_{k}^{a,b}(t) \coloneqq
		\begin{cases}
		\martingale_{k,v}^{a,b}(t) \ &\text{ if } v \sqsubseteq t \text{ for some } v\in V_k^{a,b}; \\
		a \ &\text{ otherwise.}
		\end{cases}
	\end{align*}
	It is clear that $\martingale_{k}^{a,b}(v) < a \text{ and } \liminf\martingale_{k}^{a,b}  \geq_v f$ for all $v\in V_k^{a,b}$.
	We furthermore show that $\martingale_{k}^{a,b}$ is a positive supermartingale.

	It follows from Lemma~\ref{lemma: infima of supermartingales} that, for all $v\in V_k^{a,b}$,
	\begin{align}\label{eq: Levy martingale larger than inf f}
	\martingale_{k,v}^{a,b}(t) \geq\inf_{\omega\in\Gamma(t)} \liminf\martingale_{k,v}^{a,b}(\omega) \geq\inf_{\omega\in\Gamma(t)} f(\omega) \geq\inf f > 0 \text{ for all } t \sqsupseteq v.
	\end{align}
	Since also $a > 0$, it follows that $\martingale_{k}^{a,b}$ is positive.
	To show that $\lupprev{t}(\martingale_{k}^{a,b}(t\andstate))\leq\martingale_{k}^{a,b}(t)$ for all $t\in\situations$, fix any $t\in\situations$ and consider two cases.
	If $V_k^{a,b} \sqsubseteq t$, then $\martingale_{k}^{a,b}(t)=\martingale_{k,v}^{a,b}(t)$ and $\martingale_{k}^{a,b}(t\andstate)=\martingale_{k,v}^{a,b}(t\andstate)$ for some $v\in V_k^{a,b}$, and therefore
	$\lupprev{t}(\martingale_{k}^{a,b}(t\andstate))=\lupprev{t}(\martingale_{k,v}^{a,b}(t\andstate))\leq\martingale_{k,v}^{a,b}(t)=\martingale_{k}^{a,b}(t)$.
	If $V_k^{a,b} \not\sqsubseteq t$, then for any $x \in \statespace{}$ we either have that $tx\in V_k^{a,b}$, which implies that $\martingale_{k}^{a,b}(tx) < a$, or $V_k^{a,b} \not\sqsubseteq tx$ and therefore $\martingale_{k}^{a,b}(tx)=a$.
	Hence, we infer that $\martingale_{k}^{a,b}(t\andstate)\leq a$, and therefore, by \ref{coherence: monotonicity} and \ref{coherence: bounds}, that $\lupprev{t}(\martingale_{k}^{a,b}(t\andstate))\leq a = \martingale_{k}^{a,b}(t)$.
	So we can conclude that $\martingale_{k}^{a,b}$ is a positive supermartingale.

	Next, consider the extended real process $\mathscr{T}^{a,b}$ defined by $\mathscr{T}^{a,b}(t) \coloneqq 1$ for all $t \not\sqsupset s$, and
	\begin{align}\label{Eq: Levy: def martingale T^ab}
	 	\mathscr{T}^{a,b} (tx) \coloneqq
	 	\begin{cases}
	 		\martingale_{k}^{a,b}(tx) \mathscr{T}^{a,b}(t) / \martingale_{k}^{a,b}(t)   &\text{ if } t\in  [V_{k}^{a,b} , U_{k}^{a,b}) \text{ for some } k\in\nats;\\
	 		\mathscr{T}^{a,b}(t) &\text{ otherwise,}
	 	\end{cases}
	 \end{align}
	 for all $t \sqsupseteq s$ and all $x \in \statespace{}$.
	 We prove that this process is a positive $s$-test supermartingale that converges to $+\infty$ on all paths $\omega\in\Gamma(s)$ such that
	 \begin{equation}\label{Eq: levy conv to infty}
	 \liminf_{n\to+\infty} \upprevvovkk(f \vert \omega^n) < a < b < f(\omega).
	 \end{equation}
	 That $\mathscr{T}^{a,b}$ is well-defined follows from the fact that, for any $k\in\nats$ and any $t\in  [V_{k}^{a,b} , U_{k}^{a,b})$, $\martingale_{k}^{a,b}(t)$ is positive and moreover real because of the definition of $U_{k}^{a,b}$.
	 The process $\mathscr{T}^{a,b}$ is also positive because, for any $k\in\nats$ and any $t\in [V_{k}^{a,b}, U_{k}^{a,b})$, $\martingale_k^{a,b}(t)$ is real and positive and $\martingale_k^{a,b}(t\andstate)$ is positive, and therefore $\martingale_{k}^{a,b}(t\andstate)/ \martingale_{k}^{a,b}(t)$ is positive.
	 Furthermore, if $t\in  [V_{k}^{a,b} , U_{k}^{a,b})$ for some $k\in\nats$, we have that
	 \begin{align*}
	 \lupprev{t}(\mathscr{T}^{a,b}(t\andstate))=\lupprev{t}(\martingale_{k}^{a,b}(t\andstate) \mathscr{T}^{a,b}(t) / \martingale_{k}^{a,b}(t))
	 &\overset{\text{\ref{coherence: homog for non-negative lambda},\ref{coherence: homogeneity wrt infty}}}{=} \lupprev{t}(\martingale_{k}^{a,b}(t\andstate)) \mathscr{T}^{a,b}(t) / \martingale_{k}^{a,b}(t)
	 \leq \mathscr{T}^{a,b}(t),
	 \end{align*}
	 where the second step also uses the fact that $\martingale_k^{a,b}(t\andstate)$ and $\mathscr{T}^{a,b}(t) / \martingale_{k}^{a,b}(t)$ are positive [since, as we have shown above, $\martingale_{k}^{a,b}(t)$ is real and positive, and $\mathscr{T}^{a,b}(t)$ is positive], and where the last step uses the supermartingale character of $\martingale_{k}^{a,b}$ together with the fact that $\mathscr{T}^{a,b}(t) / \martingale_{k}^{a,b}(t)$ is positive.
	 Otherwise, if $t \not\in  [V_{k}^{a,b} , U_{k}^{a,b})$ for all $k\in\nats$, then $\lupprev{t}(\mathscr{T}^{a,b}(t\andstate))=\lupprev{t}(\mathscr{T}^{a,b}(t))=\mathscr{T}^{a,b}(t)$ because of \ref{coherence: bounds}.
	 Hence, we have that $\lupprev{t}(\mathscr{T}^{a,b}(t\andstate))\leq\mathscr{T}^{a,b}(t)$ for all $t\in\situations$, which together with the fact that $\mathscr{T}^{a,b}(s)=1$, allows us to conclude that $\mathscr{T}^{a,b}$ is indeed a positive $s$-test supermartingale.

	 Next, we show that $\mathscr{T}^{a,b}$ converges to $+\infty$ on all paths $\omega\in\Gamma(s)$ for which \eqref{Eq: levy conv to infty} holds.
	 Consider such a path $\omega$.
	 Then $\omega$ goes through all the cuts $U_0^{a,b} \sqsubset V_1^{a,b} \sqsubset U_1^{a,b} \sqsubset ... \sqsubset V_k^{a,b} \sqsubset U_k^{a,b} \sqsubset ... $.
	 Indeed, it is trivial that $\omega$ goes through $U_0^{a,b}=\{s\}$.
	 Furthermore, it follows from $\liminf_{n\to+\infty} \upprevvovkk(f \vert \omega^n) < a$ that there is an $n_1\in\nats$ such that $\omega^{n_1} \sqsupset s$ and $\upprevvovkk(f \vert \omega^{n_1}) < a$.
	 Take the first such $n_1\in\nats$.
	 Then it follows from the definition of $V_1^{a,b}$ that $\omega^{n_1}\in V_1^{a,b}$.
	 Next, it follows from $\liminf_{n\to+\infty} \martingale_1^{a,b} (\omega^n) \geq f(\omega) > b$ that there exists some $m_1\in\nats$ for which $m_1 > n_1$ and $\martingale_1^{a,b} (\omega^{m_1}) > b$.
	 Take the first such $m_1$.
	 Then it follows from the definition of $U_1^{a,b}$ that $\omega^{m_1}\in U_1^{a,b}$.
	 Repeating similar arguments over and over again allows us to conclude that $\omega$ indeed goes through all the cuts $U_0^{a,b} \sqsubset V_1^{a,b} \sqsubset U_1^{a,b} \sqsubset ... \sqsubset V_k^{a,b} \sqsubset U_k^{a,b} \sqsubset ... $.

	 In what follows, we use the following notation.
	 For any $k\in\natz$, let $u_k^\omega$ be the (necessarily unique) situation in $U_k^{a,b}$ where $\omega$ goes through.
	 Similarly, for any $k\in\nats$, let $v_k^\omega$ be the (necessarily unique) situation in $V_k^{a,b}$ where $\omega$ goes through.
	 For all $n\in\natz$, let $k_n \in \natz$ be defined by $k_n \coloneqq 0$ if $V_1^{a,b} \not\sqsubset \omega^n$ and otherwise, let $k_n$ be such that $V_{k_n}^{a,b} \sqsubset \omega^n$ and $V_{k_n+1}^{a,b} \not\sqsubset \omega^n$.
	 Note that $k_n \to +\infty$ for $n \to +\infty$ because $\omega$ goes through all the cuts $V_1^{a,b} \sqsubset V_2^{a,b} \sqsubset ... \sqsubset V_k^{a,b} \sqsubset ... $.
	 For any $n\in\natz$ such that $k_n\geq1$, we now have one of the following two cases:
	 \begin{enumerate}
	  \item
	 The first case is that $\omega^n\in (V_{k_n}^{a,b} , U_{k_n}^{a,b}]$.
	 Then by applying Equation~\eqref{Eq: Levy: def martingale T^ab} for each subsequent step and cancelling out the intermediate terms, which is possible because $\martingale_{\ell}^{a,b}(s')$ is real for any $s'\in [V_{\ell}^{a,b} , U_{\ell}^{a,b})$ and any $\ell \in \nats$ [this follows readily from the definition of the cuts $V_{\ell}^{a,b}$ and $U_{\ell}^{a,b}$], we find that
	 \begin{equation*}
	 \mathscr{T}^{a,b} (\omega^n)=\Bigg( \prod_{\ell=1}^{{k_n}-1} \frac{\martingale_{\ell}^{a,b}(u_\ell^{\omega})}{\martingale_{\ell}^{a,b}(v_{\ell}^{\omega})} \Bigg) \frac{\martingale_{k_n}^{a,b}(\omega^n)}{\martingale_{k_n}^{a,b}(v_{k_n}^{\omega})}.
	 \end{equation*}
	 Since $\martingale_{k_n}^{a,b}(\omega^n) \geq\inf f > 0$ [due to Equation \eqref{eq: Levy martingale larger than inf f}], $\martingale_{\ell}^{a,b}(u_\ell^{\omega}) > b > 0$ for all $\ell\in\{ 1,...,k_n-1\}$ and $0 < \martingale_{\ell}^{a,b}(v_\ell^{\omega}) < a$ for all $\ell\in\{ 1,...,k_n\}$, we get that
	 \begin{equation*}
	 \mathscr{T}^{a,b} (\omega^n) \geq \Big( \frac{b}{a} \Big)^{k_n-1}  \frac{\martingale_{k_n}^{a,b}(\omega^n)}{a} \geq \Big( \frac{b}{a} \Big)^{k_n-1} \Big( \frac{\inf f}{a} \Big).
	 \end{equation*}
	 \item
	 The second case is that $\omega^n \in (U_{k_n}^{a,b} , V_{k_n+1}^{a,b}]$.
	 Then, by repeatedly applying Equation~\eqref{Eq: Levy: def martingale T^ab}, we have that
	 \begin{equation*}
	 \mathscr{T}^{a,b} (\omega^n)=\prod_{\ell=1}^{k_n} \frac{\martingale_{\ell}^{a,b}(u_\ell^{\omega})}{\martingale_{\ell}^{a,b}(v_{\ell}^{\omega})}.
	 \end{equation*}
	 Since $\martingale_{\ell}^{a,b}(u_\ell^{\omega}) > b > 0$ and $0 < \martingale_{\ell}^{a,b}(v_\ell^{\omega}) < a$ for all $\ell\in\{1,...,k_n\}$, we find that
	 \begin{equation*}
	 \mathscr{T}^{a,b}(\omega^n) > \Big(\frac{b}{a}\Big)^{k_n}.
	 \end{equation*}
	\end{enumerate}
	Since $\inf f > 0$, $a>0$ and $\frac{b}{a} > 1$, and since $\lim_{n\to+\infty} k_n=+\infty$, it follows from the two expressions above that indeed $\lim_{n\to+\infty} \mathscr{T}^{a,b} (\omega^n)=+\infty$.

	To finish, we use the countable set of rational couples $K \coloneqq\{ (a,b)\in\mathbb{Q}^2 : 0<a<b \}$ to define the process $\mathscr{T}$:
	\begin{equation*}
		\mathscr{T} \coloneqq\sum_{(a,b)\in K} w^{a,b} \mathscr{T}^{a,b},
	\end{equation*}
	with coefficients $w^{a,b}>0$ that sum to $1$.
	Hence, $\mathscr{T}$ is a countable convex combination of the positive $s$-test supermartingales $\mathscr{T}^{a,b}$.
	By Lemma~\ref{lemma:positive:countable:linear:combination}, $\mathscr{T}$ is then also a supermartingale.
	It is also positive, because all $w^{a,b} \mathscr{T}^{a,b}$ are positive.
	Since it is moreover clear that $\mathscr{T}(s)=1$, the process $\mathscr{T}$ is a positive $s$-test supermartingale.
	Furthermore, $\mathscr{T}$ converges to $+\infty$ on the paths $\omega\in\Gamma(s)$ where $\liminf_{n\to+\infty} \upprevvovkk(f \vert \omega^n) < f(\omega)$.
	Indeed, consider such a path $\omega$.
	Then since $f(\omega) \geq\inf f > 0$, there is at least one couple $(a',b')\in K$ such that $\liminf_{n\to+\infty} \upprevvovkk(f \vert \omega^n) < a' < b' < f(\omega)$, and as a consequence $\lim_{n\to+\infty} \mathscr{T}^{a',b'}(\omega^n)=+\infty$.
	Then also $\lim_{n\to+\infty} w^{a',b'} \mathscr{T}^{a',b'}(\omega^n)=+\infty$, and since $w^{a,b} \mathscr{T}^{a,b}$ is positive for all other couples $(a,b)\in K \setminus (a',b')$, the positive $s$-test supermartingale $\mathscr{T}$ indeed converges to $+\infty$ on $\omega$.
	\end{proofof}
\end{document}